\documentclass{amsart}
\usepackage[utf8]{inputenc}

\usepackage{amsmath}
\usepackage{amssymb}
\usepackage{stmaryrd}
\usepackage{tikz-cd}
\usepackage[shortlabels]{enumitem}
\usepackage{bbm}
\usepackage{bbold}
\usepackage{todonotes}
\usepackage{quiver}
\usepackage{url}
\usepackage{hyperref}
\usepackage{stmaryrd}
\usepackage{accents}

\newcommand{\axiom}[1]{\mathsf{#1}}
\newcommand{\ZFC}{\axiom{ZFC}}

\newcommand{\ZF}{\axiom{ZF}}

\newcommand{\DC}{\axiom{DC}}

\newcommand{\AD}{\axiom{AD}}

\newcommand{\sI}{\mathcal{I}}

\DeclareMathOperator{\add}{add}
\DeclareMathOperator{\non}{non}

\DeclareMathOperator{\proj}{proj}

\DeclareMathOperator{\dom}{dom}

\DeclareMathOperator{\trcl}{trcl}

\DeclareMathOperator{\Coll}{Coll}

\theoremstyle{plain}
\newtheorem{thm}{Theorem}[section]
\newtheorem{lemma}[thm]{Lemma}

\newtheorem{cor}[thm]{Corollary}
\newtheorem{claim}{Claim}[section]
\newtheorem{fact}[thm]{Fact}
\newtheorem*{remark}{Remark}
\theoremstyle{definition}
\newtheorem{definition}[thm]{Definition}

\newtheorem{exmp}[thm]{Example}
\newtheorem{quest}[thm]{Question}

\date{\today}
\subjclass[2020]{Primary 03E40; Secondary 03E15, 03E17, 03E60}
\keywords{idealized forcing, proper forcing, regularity properties}
\title{Universal domination and idealized forcing}
\author{Jonathan Schilhan}
\address{University of Vienna\\
Institute of Mathematics\\
Kurt Gödel Research Center\\
Kolingasse 14-16\\
1090 Vienna\\
Austria}
\email{jonathan.schilhan@univie.ac.at}
\thanks{This research was funded in whole or in part by the Austrian Science Fund (FWF) [10.55776/ESP5711024]. For open access purposes, the authors have applied a CC BY public copyright license to any author-accepted manuscript version arising from this submission.}

\begin{document}

\begin{abstract}
    We introduce the \emph{universality property} of a definable $\sigma$-ideal on a Polish space, which, on one hand, can serve as a benchmark for the properness of the associated idealized forcing of positive Borel sets ordered by inclusion, and, on the other hand, unifies many of the results that can be found in Zapletal's book \cite{Zapletal2008}. We show that under mild absoluteness assumptions, it implies properness, various dichotomy theorems, and closure under well-ordered unions in the Solovay model and under $\AD^+$, among other things. All major classes of proper idealized forcings studied in the book have this property. Further, we use this viewpoint to answer a question of Khomskii by showing that the naive idealized forcing for adding an eventually different real or a refining real is not proper below some condition. We also answer a question related to the definability of $\sigma$-ideals generated by Borel sets due to Kanovei, Sabok, and Zapletal. 
\end{abstract}

\maketitle

\section{Introduction}

\emph{Idealized forcing}, as developed by Zapletal in \cite{Zapletal2004}, \cite{Zapletal2008}, studies forcing notions consisting of $\mathcal{I}$-positive Borel sets ordered by inclusion, where $\mathcal{I}$ is a $\sigma$-ideal on a Polish space. This framework unifies many classical forcing notions in set theory -- Cohen, random, Sacks,  and Miller forcing, among others — and connects the study of cardinal invariants of the continuum with descriptive set theory. The case that is by far the best understood is where $\mathcal{I}$ is generated by $F_\sigma$ sets, or in other words, $\mathcal{I}$ is $\sigma$-generated by closed sets. This builds on work of Solecki \cite{Solecki1994}. In this case, the forcing is automatically proper, has the continuous reading of names, preserves Baire category, and, maybe most surprisingly, every strict intermediate extension is an extension by a single Cohen real (see \cite[Section 4.1]{Zapletal2008}). In particular, the forcing is either minimal or adds a Cohen real. To our knowledge, essentially no non-trivial general result is known in the next more complicated case where $\mathcal{I}$ is $\sigma$-generated by $G_\delta$ sets. In practice, these two classes cover the majority of naturally appearing idealized forcings. One of the main achievements of \cite{Zapletal2004} was in showing that for a wide class of cardinal invariants $\mathfrak{x}$, an inequality of the form $\mathfrak{x} < \mathfrak{c}$, if forceable at all, holds in the Sacks model. The upshot of this is a confirmation of the idea that Sacks forcing is optimal for adding a new real without affecting much else. Similar statements can be made for Miller forcing and adding an unbounded real, and many other examples exist (see \cite[p.94, p.97]{Zapletal2004}). 

Cardinal characteristics of the continuum, or their combinatorial reformulations, often directly give rise to an idealized forcing that, if all works out, should be optimal for increasing it in this sense. More precisely, given a Borel relation $R$ on the reals, the ideal $\mathcal{I}_R$ naturally associated to it consists of sets that are not $R$-$\omega$-dominating (see Definition~\ref{def:IR}). The corresponding idealized forcing $\mathbb{P}_R$ is the naive forcing designed precisely to add an $R$-dominating real, i.e. a real $d$ such that $x R d$, for all $x$ in the ground model. Adding an $R$-dominating real is exactly what needs to be done to increase the least size of an $R$-unbounded family, and the vast majority of cardinal invariants are of this form. A central question though is whether this forcing is proper in the first place. Many examples are of course known and \cite{Zapletal2008} provides a supply of arguments, but no clear general picture seems to emerge. A broadly applicable criterion for properness seems to be lacking. This also extends to the various other properties studied in \cite{Zapletal2008}, such as descriptive dichotomies, or the situation in the Solovay model and under determinacy. While there is a sense that the proofs do resemble each other often, no general view is presented that can explain this and one is left with the feeling that, regarding such fundamental questions, the theory of idealized forcing is rather chaotic. The main purpose of our article is to shine new light on this by giving a viewpoint that can unify many of the original results and hopefully make them more accessible. By doing this we are also answering some questions that have been left open in the literature.

The central notion of our paper is that of \emph{universal domination}, or more generally, \emph{universal pseudo-genericity}, and the resulting \emph{universality property} of an ideal (see Definition~\ref{def:univ} and \ref{def:univprop} for a precise definition). We say that a real $x$ is $\mathcal{I}$-pseudo-generic over a model $M$ if it avoids every $\mathcal{I}$-small Borel set in $M$. It is universally $\mathcal{I}$-pseudo-generic if for every forcing notion $\mathbb{Q} \in M$, there exists a $\mathbb{Q}$-generic filter $G$ over $M$ such that $x$ remains pseudo-generic over $M[G]$. The ideal $\mathcal{I}$ is said to have the \emph{universality property} if every generic pseudo-generic real over a countable model $M$ is already universally pseudo-generic. If $\mathcal{I} = \mathcal{I}_R$, for a relation $R$, we also speak of $R$ having the universality property. Our original motivation for universality comes from upcoming work \cite{SchilhanCM}, but it turns out that many interesting properties of idealized forcing are consequences of this notion. Specifically, assuming that $\mathcal{I}$ is provably $\mathbf{\Delta}^1_2$ on $\mathbf{\Sigma}^1_1$ (see Definition~\ref{def:provablydef}) and has the universality property, we have the following:

\begin{itemize}
\item $\mathbb{P}_{\mathcal{I}}$ is proper. In fact, the \emph{weak universality property} of $\mathcal{I}$ is equivalent to properness of $\mathbb{P}_{\mathcal{I}}$ (Theorem~\ref{thm:weakunivproper}).
\item Every generically added $\mathcal{I}$-pseudo-generic real is $\mathbb{P}_{\mathcal{I}}$-generic over some forcing extension of the ground model, in other words, is \emph{virtually $\mathbb{P}_{\mathcal{I}}$-generic} (Theorem~\ref{thm:virtgen}).
\item Every analytic set is either contained in a small Borel set or contains a positive Borel set. Assuming large cardinals, this can be extended to all universally Baire sets (Theorem~\ref{thm:dichotomy}).
\item In the Solovay model or assuming $\AD^+$, every set is either in $\mathcal{I}$ or contains an $\mathcal{I}$-positive Borel set, $\mathcal{I}$ is closed under well-ordered unions, and every total relation on the reals can be uniformized by a Borel function on an $\mathcal{I}$-positive set (Theorem~\ref{thm:AD+} and \ref{thm:solovay}).
\end{itemize}

Moreover, preservation properties of $\mathbb{P}_{\mathcal{I}}$, such as preserving Baire category, admit reformulations in terms of universality-like statements (see Section~\ref{sec:preserv}).

A major appeal of universality lies in the fact that it can often be verified purely through combinatorial arguments about the relation $R$, without needing to analyse the structure of conditions in $\mathbb{P}_{\mathcal{I}}$. Of course, one also needs to know that $\mathcal{I}$ is $\mathbf{\Delta}^1_2$ on $\mathbf{\Sigma}^1_1$. Assuming sufficient large cardinals, this assumption can usually be eliminated though. Also it turns out that, on its own, universality is still a very good test question. An open problem of Khomskii (see \cite[Example 2.5.6]{Khomskii}) asks whether the two forcing notions derived from the \emph{eventual difference} and the \emph{refining}/\emph{unsplitting}/\emph{reaping} relation are proper. Using a combinatorial construction, we show that these do not have the universality property and then turn the argument into non-properness of the forcing below a particular condition (see Section~\ref{sec:nonexample}).

On another note, let us remark that no natural example of a $\sigma$-ideal generated by Borel sets is known that is not $\mathbf{\Delta}^1_2$ on $\mathbf{\Sigma}^1_1$. In fact, Question 5.14 of Kanovei, Sabok, and Zapletal \cite{KanoveiSabokZapletal} even asks whether every such ideal is a \emph{game ideal}, a technical strengthening of this. It also asks whether every $\mathbf{\Delta}^1_2$ on $\mathbf{\Sigma}^1_1$ $\sigma$-ideal is a game ideal. We answer these questions in the negative. We show that there is a $\sigma$-ideal induced by an $F_\sigma$ relation, in particular it is $\sigma$-generated by $G_\delta$ sets, which gives rise to a proper forcing but is not $\mathbf{\Delta}^1_2$ on $\mathbf{\Sigma}^1_1$. Another such $F_\sigma$ relation does induce a $\mathbf{\Delta}^1_2$ on $\mathbf{\Sigma}^1_1$ $\sigma$-ideal but not a game ideal (Theorem~\ref{thm:univnotimplydef}).

All major examples of proper forcing in \cite{Zapletal2008} come from $\sigma$-ideals with the universality property and this will be the topic of Section~\ref{sec:examples}. These include the coanalytic \emph{porosity ideals} and \emph{analytic $P$-cover ideals} in the sense of \cite{FarahZapletal2006}.  As concrete applications, we establish universality of several natural idealized forcings for increasing cardinal invariants appearing in Cichoń's diagram. Specifically, we consider \emph{dominating forcing} induced by the well-known eventual dominance relation $<^*$, a slalom-based eventually different forcing, and a slalom-based localization forcing.

The paper is organized as follows. In the next section, we cover preliminaries on models of set theory, absoluteness, definability, and idealized forcing. In Section 3, we introduce the main notion of universal domination and develop its basic theory, including the equivalence with properness, and virtual genericity. We also give game characterizations of independent interest. In Section 4, we discuss preservation results, and in Section 5, we study the connection of universality to dichotomy theorems. Section 6 contains all the positive examples and shows how to view the results of \cite{Zapletal2008} within our framework. Section 7 presents the negative results on ``true" eventually different forcing and refining forcing. We study refining forcing in a bit more depth and show that under mild regularity assumptions it necessarily collapses the continuum to the dominating number $\mathfrak{d}$.

\section{Preliminaries}

\subsection{Models and absoluteness}\label{sec:modabs}

Throughout the paper we will say that a set or class $M$ is a model of set theory if $M$ is transitive and $(M,\in)$ models a sufficiently large finitely axiomatizable fragment of $\ZF + \DC$ including Powerset.\footnote{For instance, restricting $\ZF + \DC$ to only $\Sigma_{2026}$-formulas will be more than sufficient for everything in this paper.} 
In slight abuse of notation, we allow ourselves frequently to consider $M$ which are countable elementary submodels of some large fragment of the universe, so not outright transitive. In this case, one should identify $M$ with its transitive collapse, but to keep the notation as simple as possible we will never make this explicit. As such, we will often speak of forcing extensions $M[G]$ of $M$, and so on, when technically this only is defined for transitive models. This is fairly standard and no confusion or substantial issues shall ever arise from this.

We assume that the reader is familiar with the way various objects of descriptive set theory are relativized to models of set theory. When we speak about a particular Polish space $X$ we think of a fixed code of such a space and saying that $X \in M$ means that $M$ contains said code (which is a real). Similarly, a real $x \in X$ is a member of $M$ if an appropriate description of $x$ relative to the coding of $X$ is available to $M$, and so on.\footnote{E.g. see the expositions given in \cite{KanoveiSabokZapletal} or \cite{Moschovakis}.} We will make use of the common notation $A^{M}$ when we want to emphasize that the definition of the object $A$ is to be interpreted in $M$. Recall:

\begin{fact}[Mostowski Absoluteness]\label{fact:mostowskiabs}
    Let $M \subseteq N$ be (transitive) models of set theory, $x \in \omega^\omega \cap M$, and $\varphi(v)$ a $\Sigma^1_1$ formula. Then $M \models \varphi(x)$ iff $N \models \varphi(x)$.
\end{fact}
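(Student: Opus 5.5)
The plan is to reduce the claim to absoluteness of well-foundedness of a countable tree, which is the standard route to Mostowski absoluteness. Since $\varphi$ is $\Sigma^1_1$, the normal form theorem gives a recursive tree $T$ on $\omega\times\omega$ (more precisely, a tree recursive in a parameter that is itself recursive, so lying in every transitive model) such that for all $x\in\omega^\omega$,
\[
\varphi(x) \iff \exists y\in\omega^\omega\ \forall n\ (x\restriction n, y\restriction n)\in T.
\]
Equivalently, $\varphi(x)$ holds iff the section tree $T_x=\{s\in\omega^{<\omega} : (x\restriction |s|, s)\in T\}$ has an infinite branch. The equivalence between $\varphi$ and this tree representation is provable in the weak fragment of $\ZF+\DC$ we assume, so it holds inside both $M$ and $N$. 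Moreover, $T_x$ is computed from $x$ and $T$ by an absolute ($\Delta_0$, arithmetical) procedure, so $T_x^M=T_x^N=T_x$ and $T_x\in M$.

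It therefore suffices to show that $T_x$ is ill-founded in $M$ iff it is ill-founded in $N$.

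\emph{Forward direction.} A branch in $M$ is still a branch in $N$, since being a branch is arithmetical in the branch and the tree.

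\emph{Reverse direction.} This is the only nontrivial step. I use ranks instead of branches. Suppose $M\models$ ``$T_x$ is well-founded''. Then, by transfinite recursion inside $M$ (available in the fragment including the relevant replacement), $M$ contains a rank function
\[
\rho\colon T_x\to\mathrm{Ord}^M \quad\text{with}\quad \rho(t)<\rho(s) \text{ whenever } t \text{ properly extends } s.
\]
Since $M$ is transitive, $\rho$ is a genuine order-preserving map into true ordinals, and $\rho\in N$. An infinite branch $y$ in $N$ would then give a strictly decreasing sequence $\rho(y\restriction n)$ of ordinals, which is impossible. So $N\models$ ``$T_x$ is well-founded''.

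The main obstacle is only bookkeeping, namely ensuring that the fragment of set theory we assume suffices. It needs to prove the normal form theorem, and it needs to prove that a well-founded tree on $\omega$ admits a rank function, or equivalently via $\DC$, that a tree with no rank function has a branch. The fragment fixed in Section~\ref{sec:modabs} is generous enough for both. If $M$ is given as a countable elementary submodel, we pass to its transitive collapse, which fixes reals, so the statement transfers.
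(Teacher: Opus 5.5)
The paper gives no proof of this fact (it is recalled as standard), and your argument is exactly the standard one. You reduce $\varphi(x)$ to ill-foundedness of the section tree $T_x$, observe that branches in $M$ remain branches in $N$, and transfer well-foundedness upward via a rank function $\rho\in M$ whose values are genuine ordinals because $M$ is transitive.

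It is correct as written. As a minor aside, the existence of $\rho$ in $M$ does not even need \DC: since $T_x$ is a tree on $\omega$, a tree without a rank function yields a branch by repeatedly choosing the least suitable extension.
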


As a useful consequence, whenever $B \in M$ is a Borel set coded by two distinct reals $x_0, x_1 \in M \cap \omega^\omega$, these codes will still be equivalent in $N$. In particular, there is never any ambiguity in what the reinterpretation of a Borel set $B$ should be in any larger model. Similarly to the above, whenever we write $B \in M$ we treat $B$ as a Borel set in $V$ that has a code in $M$ and we have that $B^M = B \cap M$. This should all be fairly standard.

\begin{fact}[Shoenfield Absoluteness]
    Let $\omega_1^N \subseteq M \subseteq N$ be (transitive) models of set theory, $x \in \omega^\omega \cap M$, and $\varphi(v)$ a $\Sigma^1_2$ formula. Then $M \models \varphi(x)$ iff $N \models \varphi(x)$.
\end{fact}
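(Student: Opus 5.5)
The plan is the classical Shoenfield tree argument. Write the $\Sigma^1_2$ formula as $\varphi(x) \equiv \exists y\, \psi(x,y)$, where $\psi$ is $\Pi^1_1$. By the Kleene normal form, $\psi(x,y)$ holds iff a recursive tree $T_{x,y}$ on $\omega$, computable uniformly from $x$ and $y$, is well-founded. This is equivalent to the existence of an order-preserving rank function from $T_{x,y}$ into $\omega_1$. So $\varphi(x)$ holds iff there is a real $y$ together with a rank function $f\colon \omega^{<\omega} \to \omega_1$ witnessing that $T_{x,y}$ is well-founded.

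Next I build the Shoenfield tree $S_x$ on $\omega \times \omega_1$. Its nodes are pairs $(s,u)$ of length $n$, where $s$ is an initial segment of $y$ and $u$ lists rank values on the first $n$ sequences (in a fixed recursive enumeration of $\omega^{<\omega}$). The requirement is that these values be order-preserving on the part of $T_{x,y}$ already decided by $s$. Then $\varphi(x)$ holds iff $S_x$ is ill-founded, i.e. has an infinite branch. The tree $S_x$ is definable from $x$ and $\omega_1$ in a $\Delta_0$ way, so $S_x^M = S_x^N \cap M$. Here the hypothesis $\omega_1^N \subseteq M$ matters: I will use $S_x$ built with ordinals below $\omega_1^N$. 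Every rank function in $N$ takes values below $\omega_1^N$, and these ordinals lie in $M$. So $M$ computes the same tree $S_x^{N}$, as the tree on $\omega\times\omega_1^N$ with the same defining clauses. Well-foundedness of this tree is absolute between transitive models, since it is witnessed by a rank function into the ordinals, which $M$ contains, and ill-foundedness is absolute upward.

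Finally the two directions. If $M \models \varphi(x)$, pick a witness $y \in M$. Since $\Pi^1_1$ formulas are absolute by Fact~\ref{fact:mostowskiabs} (being negations of $\Sigma^1_1$), $N \models \psi(x,y)$, so $N\models\varphi(x)$. Conversely, if $N \models \varphi(x)$, the tree $S_x$ (on $\omega\times\omega_1^N$) has a branch in $N$, hence is ill-founded in $N$. By absoluteness of well-foundedness, it is ill-founded in $M$. The branch found in $M$ yields $y\in M$ and a rank function in $M$ witnessing well-foundedness of $T_{x,y}$ in $M$, so $M\models\varphi(x)$.

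The main obstacle is the careful bookkeeping that $M$ can build exactly the same tree $S_x$ even though $\omega_1^M$ may be larger than $\omega_1^N$. I handle this by fixing the ordinal bound $\omega_1^N$ (or simply any ordinal $\ge\omega_1^N$ in $M$) rather than "$\omega_1$" as computed inside each model. Any countable-in-$M$ well-founded tree admits a rank function into its own (countable) rank, so a bound of $\omega_1^N$ suffices in $M$ as well whenever the relevant rank is below it. This holds because $T_{x,y}$ is countable in $N$ and its rank is therefore $<\omega_1^N$.
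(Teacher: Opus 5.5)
The paper gives no proof of this Fact; it quotes the classical theorem. Your argument is the standard Shoenfield-tree proof, and its structure is right: the upward direction uses Mostowski, and the downward direction uses absoluteness of well-foundedness for a tree that lies in $M$. The bookkeeping in the downward direction needs repair, though.

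\emph{The $\omega_1$ inequality runs the other way.} Since $M\subseteq N$, any ordinal countable in $M$ is countable in $N$, so $\omega_1^M\le\omega_1^N$. This is the reverse of what your last paragraph says. The real danger is that $M$'s own tree on $\omega\times\omega_1^M$ is too narrow (think of $M=L$), and your first claim $S_x^M=S_x^N\cap M$ fails in exactly that case.

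\emph{The tree on $\omega\times\omega_1^N$ need not belong to $M$.} The hypothesis gives $\omega_1^N\subseteq M$, not $\omega_1^N\in M$. $M$ can have height exactly $\omega_1^N$: for example, $M=L_{\omega_1}$ and $N=V$ when $\omega_1$ is inaccessible in $L$. Then this tree is not an element of $M$, so the step ``ill-founded in $N$, hence ill-founded in $M$'' has nothing to apply to.

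\emph{The fix is local.} Given the $N$-witness $y$, choose $\alpha<\omega_1^N$ above the rank of $T_{x,y}$ and use the tree on $\omega\times\alpha$ instead. It is $\Delta_0$-definable from $x$ and $\alpha$, both of which lie in $M$, so the tree lies in $M$. It has a branch in $N$, and the rest of your argument then goes through verbatim.
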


It follows that there is an unambiguous way to reinterpret an analytic set in a forcing extension.

\subsection{Idealized forcing}

Recall that a $\sigma$-ideal on $X$ is a collection $\mathcal{I} \subsetneq \mathcal{P}(X)$ containing the empty set and closed under subsets and countable unions. We say that $A \subseteq X$ is $\mathcal{I}$-positive if $A \notin \mathcal{I}$. The collection of $\mathcal{I}$-positive sets is denoted $\mathcal{I}^+$.

\begin{definition}
    Let $\mathcal{I}$ be a $\sigma$-ideal on a Polish space $X$. Then $\mathbb{P}_\mathcal{I}$ is the forcing notion consisting of $\mathcal{I}$-positive Borel subsets of $X$ ordered by inclusion. 
\end{definition}

Since we will need to consider relativizations of ideals to different models $M$, let us adopt from now on the convention that whenever we speak of an ideal $\mathcal{I}$, we always think of a fixed definition for $\mathcal{I}$ that can have different interpretations $\mathcal{I}^M$. To simplify matters, we will also assume that the parameters in the definition of $\mathcal{I}$ are reals, and we write $\mathcal{I} \in M$ to say that $M$ contains these reals. This is a natural setting in which to state our results without introducing ambiguity. Also this covers essentially all interesting examples of ideals. Some straightforward generalisation would be to allow sets of ordinals as defining parameters when considering proper class (i.e. inner) models $M$; see more at the end of Section~\ref{sec:dichotomy}.

\begin{definition}\label{def:Ipseudogen}
    Let $\mathcal{I} \in M$ be a $\sigma$-ideal on a Polish space $X$ and let $x \in X$. Then we say that $x$ is \emph{$\mathcal{I}$-pseudo-generic} over $M$ iff $x \notin B$, for all Borel $B \in \mathcal{I}^M$. 
\end{definition}

\begin{fact}[see e.g. \cite{Zapletal2008}]
    Let $\mathcal{I} \in M$ be a $\sigma$-ideal on $X$ and let $G$ be $(\mathbb{P}_\mathcal{I})^M$-generic over $M$. Then there is some $x \in X$, such that
    \begin{enumerate}
        \item $\bigcap G = \bigcap_{p \in G} p =  \{ x\}$,
        \item $G = \{ p \in (\mathbb{P}_\mathcal{I})^M : x \in p \}$, 
        \item $x$ is $\mathcal{I}$-pseudo-generic over $M$.
    \end{enumerate}
    In this case, we say that $x$ is $\mathbb{P}_\mathcal{I}$-generic over $M$ and we write $M[G] = M[x]$.
\end{fact}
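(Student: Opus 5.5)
Fix $G$ that is $(\mathbb{P}_\mathcal{I})^M$-generic over $M$. I will work in $M[G]$, or in $V$ if $M$ is countable, and define $x$ from the closed sets in $G$. The plan has three steps: find a point in $\bigcap G$, show every condition of $G$ contains it (which gives uniqueness and items (2) and (3)), and finish with density arguments in $M$.

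The first step is to show that for every $n$ there is $p\in G$ on which the $n$-th coordinate is decided. Fix a countable basis for $X$ in $M$, or work in $\omega^\omega$ via a Borel isomorphism coded in $M$; that isomorphism is absolute by Mostowski absoluteness (Fact~\ref{fact:mostowskiabs}). For any condition $p$ and any $n$, we have $p=\bigcup_{s\in\omega^n} p\cap[s]$. By $\sigma$-additivity of $\mathcal{I}^M$, some $p\cap[s]$ is $\mathcal{I}$-positive. So the set of conditions deciding $x\restriction n$ is dense, and genericity yields a branch $x=\bigcup\{s : \exists p\in G\ (p\subseteq[s])\}$. This uses only that $M$ satisfies that $\mathcal{I}$ is a $\sigma$-ideal.

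The second step is the key claim: for every $p\in G$, $x\in p$ as reinterpreted. I will prove this by induction on the Borel rank of codes, with the statement ``for all codes $B$ in $M$, either $B\in G$ implies $x\in B$, or $X\setminus B\in G$ and $x\notin B$''. First, genericity gives that for every Borel $B\in M$ exactly one of $B$, $X\setminus B$ has a subset in $G$: the set of conditions contained in $B$ or in its complement is dense, because one of $p\cap B$ and $p\setminus B$ is positive. Upward closure of the filter then puts $B$ itself in $G$. The base case is basic clopen $[s]$, which holds by the construction of $x$. For a countable union $B=\bigcup_k B_k$ with $B\in G$, the density of $\{q : \exists k\ q\subseteq B_k\}$ below $B$ gives some $B_k\in G$, and the induction hypothesis gives $x\in B_k$. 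If instead $X\setminus B\in G$, then each $X\setminus B_k\in G$, so $x\notin B_k$ for all $k$. Complements are handled by the symmetric formulation. This yields (1) and also (2): if $x\in p$ with $p\in(\mathbb{P}_\mathcal{I})^M$, then $X\setminus p\notin G$, so some condition compatible with every member of $G$ lies below $p$, and hence $p\in G$. Uniqueness in (1) follows from the basic open sets, since $\bigcap G\subseteq\bigcap_n[x\restriction n]$.

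For (3), let $B\in\mathcal{I}^M$ be Borel. Then $X\setminus B$ is positive below every condition, so $X\setminus B\in G$ and the claim gives $x\notin B$. Finally, $M[G]=M[x]$ holds because $G$ is definable from $x$ via (2) and $x$ is definable from $G$.

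The main obstacle is the rank induction in the second step. It requires care that the Borel codes in $M$ are interpreted absolutely in $M[G]$ and in $V$, which Mostowski absoluteness handles, and that the dichotomy of the previous paragraph is applied with all density arguments carried out inside $M$.
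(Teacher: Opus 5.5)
Your argument is correct and is the standard one. The paper gives no proof of this fact and defers to Zapletal's book \cite{Zapletal2008}, where the generic point is likewise read off from the basic clopen sets in $G$, and $B\in G\iff x\in B$ is proved for all ground-model Borel codes by induction on the code, exactly as you do. The one fix is to state your inductive hypothesis as the conjunction $(B\in G\Rightarrow x\in B)\wedge(X\setminus B\in G\Rightarrow x\notin B)$, which is what your case analysis actually uses, since the disjunction you wrote is vacuously true whenever $B\notin G$.
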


We also usually just say that $G$ is a $\mathbb{P}_\mathcal{I}$-generic filter over $M$, when technically $\mathbb{P}_\mathcal{I}$ should be relativized to $M$.

\begin{definition}
    Let $\mathcal{M}$ be a collection of models and $\mathcal{I}$ a $\sigma$-ideal on $X$. Then we say that $\mathbb{P}_{\mathcal{I}}$ is \emph{$\mathcal{M}$-proper} iff for each $M \in \mathcal{M}$, with $\mathcal{I} \in M$, and each $p \in (\mathbb{P}_{\mathcal{I}})^M$,  the set $\{ x \in p : x \text{ is } \mathbb{P}_{\mathcal{I}}\text{-generic over } M \}$ is $\mathcal{I}$-positive.
\end{definition}

Let us note that, at least when $\mathcal{P}(\omega^\omega) \cap M$ is countable, the set of generics over $M$ is a Borel set. In fact: 

\begin{lemma}\label{lem:genericsBorel}
    Let $M$ be a model, $\mathbb{P}, \dot x \in M$, where $M \models \text{``$\dot x$ is a $\mathbb{P}$-name for a real''}$, and assume $\mathcal{P}(\mathbb{P})^M$ is countable. Then the set $\{ \dot x^H : \text{$H$ is $\mathbb{P}$-generic over $M$} \}$ is Borel.
\end{lemma}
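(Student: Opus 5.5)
Since $\mathcal{P}(\mathbb{P})^M$ is countable, enumerate the dense subsets of $\mathbb{P}$ lying in $M$ as $\langle D_n : n \in \omega\rangle$, and enumerate $\mathbb{P}$ itself (it is countable too, as $\mathbb{P} \subseteq \mathcal{P}(\mathbb{P})$ up to singletons). We may also work with the names $\dot x(k)$ for the coordinates of the real. We describe generics via the space $2^{\mathbb{P}}$ of characteristic functions of subsets of $\mathbb{P}$, a compact Polish space because $\mathbb{P}$ is countable. The plan is to show that $\mathcal{G} = \{ H \subseteq \mathbb{P} : H \text{ is } \mathbb{P}\text{-generic over } M\}$ is a $G_\delta$ subset of $2^{\mathbb{P}}$. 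We then show that the evaluation map $H \mapsto \dot x^H$ is continuous, indeed Borel, and injective modulo an appropriate quotient on $\mathcal{G}$. This makes the image Borel, or at least analytic, and we then upgrade it to Borel.

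First, $\mathcal{G}$ is Borel. Being a filter says that any two members of $H$ have a common extension in $H$ and that $H$ is upward closed, and this is a countable conjunction of clopen conditions. Meeting each $D_n$ is open. So $\mathcal{G}$ is $G_\delta$.

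Second, the map is Borel. For each $k, m$ we have $\dot x^H(k) = m$ iff there is some $p \in H$ with $p \Vdash_M \dot x(k) = m$. The forcing relation here is computed in $M$, so the set of such $p$ is just a fixed countable set. Hence $\{H \in \mathcal{G} : \dot x^H(k)=m\}$ is relatively open in $\mathcal{G}$, and $f\colon \mathcal{G} \to \omega^\omega$, $f(H) = \dot x^H$, is continuous.

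The main obstacle is that continuous images of Borel sets are in general only analytic, so we need a direct Borel description of the image $A = f[\mathcal{G}]$. I would characterize $A$ as follows: $y \in A$ iff the set $H_y$ of conditions compatible with ``$\dot x$ extends $y \restriction k$ for all $k$'' contains a generic filter. The trick is to replace this existential statement by a countable construction. A real $y$ lies in $A$ iff there is a generic $H$ containing, for every $k$, a condition forcing $\dot x \restriction k = y \restriction k$. Enumerate the $D_n$ and build $H$ by a fusion-like descending sequence $p_0 \ge p_1 \ge \dots$ with $p_{n} \in D_n$ and $p_n \Vdash \dot x\restriction n = y \restriction n$. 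The existence of such a sequence is a $\Sigma^1_1$ condition, that of a branch through a tree $T_y$ on the countable set $\mathbb{P}$, where $T_y$ depends continuously on $y$. To make it Borel I would use a rank/derivative argument. Let $T_y$ consist of finite sequences $(p_0,\dots,p_{n-1})$ with $p_i \in D_i$ descending and $p_{i} \Vdash \dot x \restriction i = y\restriction i$. Define, inside $M$, the set $Q_y$ of conditions $q$ such that $q \not\Vdash \dot x \restriction i \neq y \restriction i$ for every $i$. The key point is that $y \in A$ iff $Q_y \ne \emptyset$ and every $q$ in a suitable ``$y$-compatible'' family has extensions in each $D_n$ still in that family.

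Equivalently, I expect the cleanest route is as follows. The map $f$ is injective on equivalence classes, namely $f(H) = f(H')$ implies that $H \cap M[\dot x^H]$-relevant parts agree. More directly, by Lusin–Souslin it suffices to find a Borel set $\mathcal{G}' \subseteq \mathcal{G}$ on which $f$ is injective with $f[\mathcal{G}'] = A$. For this, choose $H$ canonically from $y$. Fix an enumeration of $\mathbb{P}$ and let $H_y$ be the filter built greedily from $y$: at stage $n$, pick the least $p \le p_{n-1}$ in $D_n$ that is compatible with $\dot x$ extending $y \restriction k$ for all $k$ relative to the remaining construction. This greedy choice can fail without backtracking, which is exactly the obstacle. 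So instead I would pass to the quotient $\mathbb{P}/\dot x$ and invoke the genericity of $y$ over $M$ for the complete subalgebra generated by $\dot x$. The image $A$ is exactly the set of $y$ generic over $M$ for the countable complete Boolean algebra $\mathbb{B}_{\dot x}$ of $M$ generated by $\dot x$. That set is $G_\delta$ by the same computation as in the first step: $y$ determines the filter $\{b : y \in [b]\}$, and meeting each dense set of $\mathbb{B}_{\dot x}$ in $M$ is open. Any such generic filter then extends to a $\mathbb{P}$-generic over $M$ by the standard quotient argument, since $\mathbb{P}/G_{\dot x}$ lies in $M[y]$ and has generics there by countability. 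This gives Borelness.
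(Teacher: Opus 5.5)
There is a genuine gap in your final step. Your first two computations are fine: the $M$-generic filters form a $G_\delta$ subset of $2^{\mathbb{P}}$, and $H\mapsto \dot x^H$ is continuous on it. You also correctly bring in Lusin--Souslin and the complete subalgebra $\mathbb{B}_{\dot x}$.

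The problem is the conclusion. You say $A$ is the set of reals $y$ that are $\mathbb{B}_{\dot x}$-generic, and that this set is $G_\delta$ because $y$ determines the filter $\{b : y\in[b]\}$ and meeting a dense set is open. You never define $[b]$, and no assignment of open sets to elements of $\mathbb{B}_{\dot x}$ can make this work, because the generic filter is not a continuous function of the generic real.

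In fact the $G_\delta$ claim is false. Take $\mathbb{P}$ to be random forcing and $\dot x$ the random real. Then $A$ is the set of random reals over a countable $M$. This set is dense, since every basic open set has positive measure. It is also meager, since it is disjoint from the comeager set of Cohen reals over $M$. A dense $G_\delta$ set would be comeager, so by Baire category $A$ is not $G_\delta$.

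Weakening ``open'' to ``Borel'' does not rescue the step. You would need every element, or a dense set of elements, of $\mathbb{B}_{\dot x}$ to be a Boolean value $[\![\dot x\in B]\!]$ with $B$ a Borel set coded in $M$. That is an extra nontrivial claim you have not justified. The natural candidate $[b]=\{\dot x^K : b\in K\}$ is just another instance of the set whose Borelness is in question.

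The missing move, which is the paper's proof, is to apply Lusin--Souslin on the side of filters rather than reals.

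\begin{itemize}
\item Let $\mathcal{K}\subseteq 2^{\mathbb{B}_{\dot x}}$ be the set of $M$-generic filters on $\mathbb{B}_{\dot x}$. This is a Polish space setting because $\mathbb{B}_{\dot x}\subseteq\mathcal{P}(\mathbb{P})^M$ is countable.
\item $\mathcal{K}$ is $G_\delta$ by your first computation. It suffices to meet the countably many sets $\pi[D]$, where $D\in M$ is dense open in $\mathbb{P}$ and $\pi$ is the projection to $\mathbb{B}_{\dot x}$.
\item $K\mapsto\dot x^K$ is continuous on $\mathcal{K}$ by your second computation.
\item This map is injective on $\mathcal{K}$, because $\dot x$ generates $\mathbb{B}_{\dot x}$. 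Two generic filters that agree on the generators $[\![\dot x(k)=m]\!]$ agree everywhere. To see this, argue by induction along the generation of $\mathbb{B}_{\dot x}$ inside $M$: genericity gives $\sup S\in K$ iff $S\cap K\neq\emptyset$ for $S\in M$, and $K$ is an ultrafilter.
\end{itemize}

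Lusin--Souslin then gives that the image is Borel. Your quotient argument shows the image is exactly $A$: every $K\in\mathcal{K}$ extends to a $\mathbb{P}$-generic filter over $M$. To build it, take a descending sequence through the countably many dense subsets of $\mathbb{P}$ in $M$, keeping each projection in $K$.

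So the ``canonical $H$ from $y$'' you were looking for is simply $H\cap\mathbb{B}_{\dot x}$. You never need to compute it from $y$ in a Borel way, because Lusin--Souslin does that work for you.
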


\begin{proof}
    By passing to a complete Boolean subalgebra of the regular open subsets of $\mathbb{P}$, we can assume without loss of generality that the evaluation of $\dot x$ uniquely determines the generic filter. To see that the set above is Borel simply note that it is the continuous injective image of a $G_\delta$ set in an appropriate Polish space of filters on $\mathbb{P}$.
\end{proof}

\begin{fact}[see e.g. \cite{Zapletal2008}]
    $\mathbb{P}_\mathcal{I}$ is proper (in the usual sense) iff it is $\mathcal{M}$-proper, where $\mathcal{M}$ is the collection of all countable elementary submodels of sufficiently large $H(\theta)$.
\end{fact}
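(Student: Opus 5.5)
The plan is to prove the fact by translating both sides into statements about a single Borel set. Fix a countable $M \prec H(\theta)$ with $\mathcal{I} \in M$, and identify $M$ with its transitive collapse as in Section~\ref{sec:modabs}. For $x \in X$ put $G_x = \{p \in (\mathbb{P}_\mathcal{I})^M : x \in p\}$. I would first show that $x$ is $\mathbb{P}_\mathcal{I}$-generic over $M$ if and only if $x \in \bigcup(A \cap M)$ for every maximal antichain $A \in M$ of $\mathbb{P}_\mathcal{I}$. Since $M$ is countable, each $A\cap M$ is countable, so each set $\bigcup(A\cap M)$ is Borel. Hence
\[
B_M = \bigcap_{A \in M \text{ max. antichain}} \bigcup (A\cap M)
\]
is a Borel set, being a countable intersection of Borel sets. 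This also recovers Lemma~\ref{lem:genericsBorel} in this case.

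The step I expect to need the most care is showing that $x \in B_M$ implies $G_x$ is a generic filter. The main point is that $G_x$ is a filter. Suppose $p, p' \in G_x$.
\begin{itemize}
\item If $p\cap p'\in\mathcal{I}$, then by elementarity $M$ contains a maximal antichain below which every element is contained in at most one of $p, p'$ or is disjoint from $p$. Concretely, take $\{p\setminus p'\}$ together with a maximal antichain below $p'$ and one disjoint from $p$, dropping null pieces.
\item Since $x$ lies in some element of this antichain, this gives a contradiction with $x\in p\cap p'$: the only escape would be $x \in p\cap p'$ itself, which is an $\mathcal{I}$-small Borel set in $M$. But $x$ avoids every such set, because for each small $C \in M$ the antichain $\{X\setminus C\}$ lies in $M$.
\item So $p\cap p'$ is positive and belongs to $G_x$.
\end{itemize}
Upward closure is immediate. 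Genericity follows because every dense $D \in M$ contains a maximal antichain $A \in M$ by elementarity, and $x \in \bigcup(A\cap M)$. The converse direction is clear: a generic filter meets each $A\cap M$, and $x$ lies in every member of $G_x$.

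Next I would use the standard fact that for a Borel set $C$ coded in the ground model and a condition $q$, we have $q \Vdash \dot x_{gen} \in C$ if and only if $q\setminus C \in \mathcal{I}$. One direction: if $q \setminus C$ is positive, it is a condition forcing $\dot x_{gen} \notin C$. The other direction: if $q\setminus C\in\mathcal{I}$, then any condition below $q$ forces the generic to avoid $q\setminus C$. Recall that $q$ is $(M,\mathbb{P}_\mathcal{I})$-generic iff $q \Vdash \dot G\cap A\cap M\neq\emptyset$ for each maximal antichain $A\in M$, which amounts to $q\Vdash \dot x_{gen}\in\bigcup(A\cap M)$. Combining these, $q$ is master for $M$ iff $q\setminus\bigcup(A\cap M)\in\mathcal{I}$ for each of the countably many $A$. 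Equivalently, $q\setminus B_M\in\mathcal{I}$.

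Finally I would compare the two properties for a given $p\in M$. Suppose $p\cap B_M\notin\mathcal{I}$. Then $q=p\cap B_M$ is a Borel positive condition below $p$, and $q \setminus B_M = \emptyset$, so $q$ is a master condition. Conversely, suppose $q\le p$ is master. Then $q\setminus B_M\in\mathcal{I}$ and $q$ is positive, so $p\cap B_M\supseteq q\cap B_M$ is positive. Therefore, for every countable $M\prec H(\theta)$ and every $p \in M$, a master condition below $p$ exists iff $\{x\in p: x \text{ generic over } M\}$ is $\mathcal{I}$-positive. Since properness is exactly the existence of such master conditions for club many (equivalently all sufficiently elementary) countable $M$, the fact follows.
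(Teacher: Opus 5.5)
Your argument is correct and is essentially the standard proof from Zapletal's book (Proposition 2.2.2), which the paper cites without reproducing: the $M$-generics in $p$ form the Borel set $p\cap\bigcap_{A}\bigcup(A\cap M)$, and a condition $q\le p$ is a master condition for $M$ exactly when $q$ minus this set lies in $\mathcal{I}$. The antichain detour in your filter step is unnecessary, since pseudo-genericity (which you already obtain from the singleton antichains $\{X\setminus C\}$) directly rules out $x\in p\cap p'$ whenever $p\cap p'\in\mathcal{I}$.
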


Let us remark that properness can be developed without issues in $\ZF + \DC$. See \cite{AsperoKaragila} for more details. $H(\theta)$ must be defined as the collection of all $x$ such that $\theta$ does not inject into $\trcl(x)$. Clearly $H(\theta) \subseteq V_\theta$, so this is a set. We only consider $H(\theta)$ which are models of set theory in the sense we have stipulated above. In particular, $\theta$ should be a strong limit cardinal in order for the Powerset axiom to be satisfied. There are of course arbitrary large $\theta$, such that $H(\theta)$, and thus also its elementary submodels, are models of set theory in this sense.

$\mathcal{M}$-properness for the collection $\mathcal{M}$ of all countable models has sometimes been called \emph{non-elementary properness (nep)} or \emph{proper for candidates}, see e.g. \cite{Shelah2004}, \cite{Kellner2012}. 

In the study of cardinal invariants, many ideals of interest, or viewed from another side, many notions of pseudo-genericity, are obtained from a relation on the reals in the following way:

\begin{definition}\label{def:IR}
    Let $X$, $Y$ be Polish spaces and $R \subseteq X\times Y$ be a relation. We say that a set $A \subseteq Y$ is \emph{$R$-$\omega$-dominating}, if for any $C \in [X]^{\aleph_0}$ there is $y \in A$ with $\forall x\in C (xRy)$. $\sI_R$ consists of all sets $A \subseteq Y$ that are not $R$-$\omega$-dominating. We say that $y$ is \emph{$R$-dominating over $M$}, if for each $x \in X^M$, $xRy$.
\end{definition}

\begin{remark}
$\sI_R$ is a $\sigma$-ideal, exactly when for each $C \in [X]^{\aleph_0}$ there is $y \in Y$, such that $\forall x\in C (xRy)$, or in other words, exactly when $Y$ is itself $R$-$\omega$-dominating.
\end{remark}

\begin{lemma}\label{lem:pseudoisdom}
    Let $R \in M$ be Borel and let $y \in Y$. Then $y$ is $\mathcal{I}_{R}$-pseudo-generic over $M$ iff $y$ is $R$-dominating over $M$.
\end{lemma}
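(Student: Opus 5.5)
We prove the two directions separately, using only the definitions and Mostowski absoluteness (Fact~\ref{fact:mostowskiabs}). Implicitly $M$ is a model of set theory containing $R$ (hence $X,Y$), and $\mathcal{I}_R$ is a $\sigma$-ideal, so that $(\mathcal{I}_R)^M$ makes sense.

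\emph{Pseudo-generic implies dominating.} Suppose $y$ is $\mathcal{I}_R$-pseudo-generic over $M$, and let $x \in X^M$. Consider the Borel set
\[
B_x = \{ y' \in Y : \neg (x R y') \},
\]
which is coded in $M$ since $R$ and $x$ are. We claim $M \models B_x \in \mathcal{I}_R$. Take the countable set $C = \{x\}$. Any $y' \in B_x$ fails $x R y'$, so $B_x$ is not $R$-$\omega$-dominating in $M$. Hence $B_x \in \mathcal{I}_R^M$. By pseudo-genericity $y \notin B_x$, that is, $x R y$. Since $x \in X^M$ was arbitrary, $y$ is $R$-dominating over $M$.

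\emph{Dominating implies pseudo-generic.} Suppose $y$ is $R$-dominating over $M$, and let $B \in M$ be Borel with $M \models B \in \mathcal{I}_R$. Inside $M$, $B$ is not $R$-$\omega$-dominating, so there is a countable $C \in M$, say $C = \{x_n : n\in\omega\}$ enumerated in $M$, such that
\[
M \models \forall y' \in B\, \exists n\, \neg(x_n R y').
\]
Equivalently, $M \models B \cap \bigcap_n R_{x_n} = \emptyset$, where $R_{x_n} = \{y' : x_n R y'\}$ is the vertical section. This set is Borel and coded in $M$ from the sequence $(x_n)_n$. Emptiness of a Borel set is a $\Pi^1_1$ statement about its code, so by Mostowski absoluteness the same Borel set is empty in $V$. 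Now $y$ is dominating, so $x_n R y$ for every $n$, i.e.\ $y \in \bigcap_n R_{x_n}$, and therefore $y \notin B$. Since $B$ was arbitrary, $y$ is pseudo-generic.

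The main point needing care is this last step: the witness $C$ of non-domination lives in $M$, and the conclusion must be transferred to $V$, where $y$ is. Phrasing the witness property as the emptiness of a single Borel set coded in $M$ makes Mostowski absoluteness directly applicable. The rest is unwinding definitions.
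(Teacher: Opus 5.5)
Your proof is correct and matches the paper's argument: the section $\{z : \neg(xRz)\}$ lies in the ideal in $M$, which gives the first direction, and the countable witness $C \in M$ covers $B$ by $\{z : \exists x \in C\, \neg(xRz)\}$, which gives the second. You also make explicit the Mostowski absoluteness step that transfers this inclusion (equivalently, the emptiness of $B \cap \bigcap_n R_{x_n}$) from $M$ to $V$, which the paper leaves implicit.
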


\begin{proof}
    Assume $y$ is pseudo-generic over $M$ and let $x \in M$. Then the Borel set $B = \{ z :  \neg (x R z) \}$ is in $(\sI_R)^M$. So $y \notin B$ and $x R y$. On the other hand, if $y$ is $R$-dominating over $M$ and $B \in (\sI_R)^M$, then there is some countable $C \in M$ so that $B \subseteq \{ z :  \exists x \in C \neg (x R z) \}$. By assumption, $y$ is not in the latter Borel set, so $y \notin B$.
\end{proof}

\begin{definition}
    We say that an ideal $\mathcal{I}$ on $X$ \emph{has a Borel base} if there is a Borel set $B \subseteq \omega^\omega \times X$ such that $\mathcal{I}=\{A \subseteq X : \exists x (A \subseteq B_x)\}$. 
\end{definition}

Note that for Borel $R$, $\mathcal{I}_R$ has a Borel base, and vice-versa, any $\sigma$-ideal defined by a Borel base is of this form. We will usually abbreviate $\mathbb{P}_{\sI_R}$ by just writing $\mathbb{P}_R$. 

\begin{exmp} Let $X, Y = \omega^\omega$.
    \begin{enumerate}
        \item $\mathbb{P}_{\neq}$ is equivalent to Sacks forcing,
        \item $\mathbb{P}_{\not>^*}$ is equivalent to Miller forcing,
        \item $\mathbb{P}_{\operatorname{split}}$, where $x \operatorname{split} y$ iff $\vert x \cap y \vert = \vert x \cap \omega \setminus y \vert = \omega$, is equivalent to splitting forcing, see \cite{Spinas2004},
        \item $\mathbb{P}_{<^*}$ is what we call \emph{dominating forcing}, and has been studied in \cite{BrendleHjorthSpinas},
        \item $\mathbb{P}_{\neq^*}$, where $x \neq^*y$ iff $\forall^\infty n (x(n) \neq y(n))$, will be called \emph{true eventually different forcing} (and is not proper, see Theorem~\ref{thm:edcoll}), 
        \item $\mathbb{P}_{\operatorname{ref}}$, where $x \operatorname{ref} y$ iff $y \subseteq^* x \vee y \subseteq^* \omega \setminus x$, will be called \emph{refining forcing}, (and is not proper, see Theorem~\ref{thm:refcoll}). It has been briefly studied in \cite{Spinas2008}.
    \end{enumerate}
\end{exmp}

(1), (2) and (3) are well-known to be proper. The properness of dominating forcing appears briefly in the literature, which wasn't known to us until recently. We include a new proof in Section~\ref{sec:dominatingforcing}, which is based on the technique we develop here. The properness of (5) and of (6) was an open question of Khomskii, see \cite[Example 2.5.6]{Khomskii}, which we answer negatively. 

\subsection{Definability of ideals}

\begin{definition}
    Let $\mathcal{I}$ be a $\sigma$-ideal on $X$ and $\mathbf\Gamma$ be a pointclass. Then $\mathcal{I}$ is said to be \emph{$\mathbf\Gamma$ on $\mathbf\Sigma^1_1$} if for each $\mathbf\Sigma^1_1$ set $A \subseteq \omega^\omega \times X$, the set $\{ z \in \omega^\omega : A_z \in \mathcal{I} \}$ is in $\mathbf\Gamma$.\footnote{Recall that $A_z$ denotes $A$'s ``vertical section'' at $z$, $A_z = \{ x \in X : (z,x) \in A \}$.}
\end{definition}

Recall that there is a universal lightface analytic set $U \subseteq \omega^\omega \times X$, which has the property that for any analytic $A \subseteq \omega^\omega \times X$, there is a continuous $f \colon \omega^\omega \to \omega^\omega$ with $A_z = U_{f(z)}$, for all $z$.\footnote{The usual definition of a universal analytic set, such as in \cite{Moschovakis}, is such that $(z,x) \in U$ if $x$ is in the analytic set coded by $z$ (a tree representation). For analytic $A \subseteq \omega^\omega \times X$, there is a fixed code and extracting a code for a particular section is a continuous operation.} Thus, if $\mathbf\Gamma$ is closed under continuous substitution, in the definition above, it suffices to consider a single such universal set. For the next definition, we fix such $U$.

\begin{definition}\label{def:provablydef}
For a projective pointclass $\mathbf{\Gamma}$, we say that \emph{$\mathcal{I}$ is provably $\mathbf{\Gamma}$ on $\mathbf\Sigma^1_1$} if there is a fixed $\mathbf\Gamma$-formula $\varphi$ such that for any countable model $M \ni \mathcal{I}$, $M$ contains the parameters of $\varphi$, and $$M \models \forall x \in \omega^\omega(\varphi(x) \leftrightarrow U_x \in \mathcal{I}).$$ We say that \emph{$\mathcal{I}$ is provably $\mathbf{\Delta}^1_n$ on $\mathbf\Sigma^1_1$}, if it is both provably $\mathbf{\Sigma}^1_n$ and provably $\mathbf{\Pi}^1_n$ on $\mathbf\Sigma^1_1$. More generally, we say that a statement holds \emph{provably}, if it holds in all countable models containing the relevant parameters. 
\end{definition}

Note that when something holds provably, then it holds in $V$ and all its forcing extensions since we can consider countable elementary submodels and their forcing extensions.

\begin{definition}
    Let $\mathcal{M}$ be a collection of models and $\mathcal{I}$ a $\sigma$-ideal on a Polish space. Then we say that 
    
    \begin{enumerate}
        \item $\mathcal{M}$ is \emph{$\Sigma^1_n$-correct} if $\Sigma^1_n$-formulas are absolute between $M \in \mathcal{M}$ and $V$,
        \item $\mathcal{M}$ is \emph{$\mathcal{I}$-correct} if for each $M \in \mathcal{M}$, with $\mathcal{I} \in M$, and each Borel $A \in M$, $A \in \mathcal{I}^M$ iff $A \in \mathcal{I}$,
        \item $\mathcal{M}$ is \emph{$\mathcal{I}$-upwards-correct} if $A \in \mathcal{I}^M$ implies $A \in \mathcal{I}$, for Borel $A$.
     \end{enumerate}   
\end{definition}

The notions above are defined to state the exact assumptions in some of the theorems. Note that when $R$ is defined by a $\mathbf\Sigma^1_1$ formula, then $\sI_R$ is provably $\mathbf\Sigma^1_2$ on $\mathbf\Sigma^1_1$, and in particular, all models are $\mathcal{I}_R$-upwards-correct by Mostowski absolutness. It is an empirical observation that in natural cases where $\mathbb{P}_R$ is proper, $\mathcal{I}_R$ is in fact provably $\mathbf\Delta^1_2$ on $\mathbf\Sigma^1_1$ so that all models are fully $\mathcal{I}_R$-absolute. It is unknown to us whether there is deeper fact behind this, but in most generality this is false, see Theorem~\ref{thm:univnotimplydef}. To know when an ideal is $\mathbf\Delta^1_2$ on $\mathbf\Sigma^1_1$ is one of the major problems of idealized forcing, at least in the setting where we do not want to assume large cardinals, and little seems to be known about it in general.

\begin{fact}[see e.g {\cite[Prop. 2.1.23]{Zapletal2008}}]
    If all models are $\mathcal{I}$-correct then $\mathcal{I}$ is $\mathbf\Delta^1_2$ on $\mathbf\Sigma^1_1$, and provably so, relative to a slightly stronger theory.
\end{fact}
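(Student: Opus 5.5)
The plan is to define two formulas, one $\mathbf\Sigma^1_2$ and one $\mathbf\Pi^1_2$, that both express ``$U_z \in \mathcal{I}$'' in terms of countable models. The statement already assumes the theory proves that all (countable) models are $\mathcal{I}$-correct. We then check that, under this assumption, each formula is equivalent to $U_z \in \mathcal{I}$ in $V$, and that the equivalence itself is a theorem of the slightly stronger theory.

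\emph{Step 1: reduce analytic sets to Borel sets.} The model-theoretic description only talks about Borel sets, so I first show that for analytic $A$, membership $A\in\mathcal{I}$ is witnessed by Borel sets. The key fact is that, inside a countable model $M$ containing $z$, $U_z\in\mathcal{I}$ should be equivalent to the statement: there is a Borel $B\in M$ with $B\in\mathcal{I}^M$ and $U_z\subseteq B$. The inclusion $U_z\subseteq B$ is $\mathbf\Pi^1_1$, hence absolute between $M$ and $V$ by Fact~\ref{fact:mostowskiabs}, and $\mathcal{I}$-correctness transfers $B\in\mathcal{I}^M$ to $B\in\mathcal{I}$. To get such a covering $B$ at all, I would use the standard idealized-forcing argument. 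If $U_z\in\mathcal{I}$ but no Borel $\mathcal{I}$-set covers it, work over an elementary submodel. The set of $\mathbb{P}_\mathcal{I}$-generics over $M$ inside $U_z$ then yields a positive Borel set, and correctness prevents this set from lying in $\mathcal{I}$.

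\emph{Step 2: write down the two formulas.} The $\mathbf\Sigma^1_2$ formula says: there is a countable model $M$ (coded by a real, with well-foundedness $\Pi^1_1$) with $z\in M$ and $M\models U_z\in\mathcal{I}$. The $\mathbf\Pi^1_2$ formula says: for every countable model $M$ with $z\in M$, $M\models U_z\in\mathcal{I}$. Both formulas reduce to $U_z\in\mathcal{I}$ by Step~1 applied inside $M$. For the direction $U_z\in\mathcal{I}\Rightarrow M\models U_z\in\mathcal{I}$, use that $M$, being a model of the theory, computes $U_z\in\mathcal{I}^M$ through its own Borel covers. If $M$ instead thought $U_z\notin\mathcal{I}$, then Step~1 run inside $M$ gives a Borel $\mathcal{I}^M$-positive $B\subseteq U_z$ in $M$. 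By correctness $B\notin\mathcal{I}$, which contradicts $B\subseteq U_z\in\mathcal{I}$.

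\emph{Step 3: provability.} Finally I note that the equivalence just proved uses only that the theory proves ``all models are $\mathcal{I}$-correct'' together with the dichotomy of Step~1. Hence it holds inside every countable model of that stronger theory, which is what ``provably'' requires.

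The main obstacle is Step~1, the dichotomy that every analytic set is either covered by a Borel set in $\mathcal{I}$ or contains a positive Borel set. Its proof requires some properness-like argument to show that the generics inside $U_z$ form a positive Borel set. I would first try to derive this directly from correctness, using Lemma~\ref{lem:genericsBorel}, and fall back on citing \cite[Prop.~2.1.23]{Zapletal2008} if that route fails.
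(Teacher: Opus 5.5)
Your Step 2 matches the paper's argument: $U_z\in\mathcal{I}$ iff some (equivalently, every) countable transitive model containing $z$ thinks so, which gives a $\mathbf\Sigma^1_2$ and a $\mathbf\Pi^1_2$ definition. The two steps around it, however, have genuine gaps.

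\emph{Step 1.} The paper has no Step 1. It applies the correctness hypothesis directly to the analytic sets $U_z$ (``ask whether any/all transitive models of $T$ say so''). The dichotomy you need, that every analytic set is covered by a small Borel set or contains a positive Borel set, is Theorem~\ref{thm:dichotomy}(1), and the paper derives it from the universality property. Your sketch does not establish it. $U_z$ is not a condition of $\mathbb{P}_\mathcal{I}$. Moreover, the claim that the points of $U_z$ generic over $M$ form an $\mathcal{I}$-positive Borel set is a properness statement; compare Theorem~\ref{thm:weakunivproper}, where, under correctness, properness is equivalent to the extra hypothesis of weak universality. The correct version of your argument takes $\mathbb{P},\dot x\in M$ adding a pseudo-generic point of $U_z$ and sets $B=\{\dot x^H : H \text{ generic over } M\}$. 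Even then, correctness yields $B\notin\mathcal{I}$ only once you have a forcing extension of $M$ that codes $B$ and over which some $\dot x^H$ is still pseudo-generic. Supplying that extension is exactly what universality does. So Step 1 is an unproved lemma that correctness alone gives you no way to prove. Drop it, and read correctness as applying to the analytic sets $U_z$ themselves.

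\emph{Step 3.} Step 3 misidentifies the ``slightly stronger theory.'' Correctness does not need to be assumed provable. If $M\in N$ are both correct with respect to $V$, then $M$ and $N$ agree on $\mathcal{I}$ for sets in $M$, so correctness transfers into every $N$ automatically. What is actually missing is \emph{existence}. Inside a countable model $N$ of the base theory $T$, a real $z$ need not belong to any countable transitive model of $T$. For such $z$, your $\mathbf\Pi^1_2$ formula holds vacuously and your $\mathbf\Sigma^1_2$ formula fails, so one of them misclassifies $U_z$ in $N$. The paper's stronger theory is precisely $T$ together with ``every real lies in a countable transitive model of $T$.'' In $V$ this holds via transitive collapses of countable elementary submodels of $H(\theta)$, and your non-provable version tacitly relies on the same fact. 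With that axiom, and with correctness read at the level of analytic sets, Steps 2 and 3 go through without Step 1.
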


The slightly stronger theory is a technical caveat.

\begin{proof}
    If $T$ is the defining theory of models of set theory that we set up at the beginning, then the slightly stronger fragment is $T$, together with the assertion that any real is contained in a countable transitive model of $T$. To know whether an analytic set is in $\mathcal{I}$, simply ask whether any/all transitive models of $T$ say so. This can be expressed in a $\mathbf\Delta^1_2$ fashion.
\end{proof}

The following notation will be useful very often: 

\begin{definition}
    Let $\mathcal{M}$ be a collection of models. Then we write $\mathcal{M}^+$ to denote the collection of all forcing extensions of models $M \in \mathcal{M}$.
\end{definition}

Recall that, by Shoenfield's absoluteness theorem, analytic sets can be reinterpreted in forcing extensions without the need to rely on a particular code. This idea ultimately culminates in the concept of a \emph{universally Baire set}. We omit the definition as it is only mentioned for context, but we refer to \cite{FengMagidorWoodin} for details.

\begin{fact}
    If every real has a sharp, then $\mathcal{M}^+$ is $\Sigma^1_2$-correct, where $\mathcal{M}$ is the collection of sufficiently elementary countable models.
\end{fact}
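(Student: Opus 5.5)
The idea is to combine the classical theorem that sharps give $\Sigma^1_3$-absoluteness for set forcing with Shoenfield absoluteness inside each model. Fix $M \in \mathcal{M}$, a sufficiently elementary countable model, and a forcing extension $M[G]$. We must show that every $\Sigma^1_2$ formula $\varphi(x)$ with $x \in \omega^\omega \cap M[G]$ satisfies $M[G] \models \varphi(x)$ iff $V \models \varphi(x)$.

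The upward direction is handled by Mostowski absoluteness (Fact~\ref{fact:mostowskiabs}). Suppose $M[G] \models \exists y\, \psi(x,y)$ with $\psi$ a $\Pi^1_1$ formula, and pick a witness $y \in M[G]$. Then $\Pi^1_1$ absoluteness for transitive models, applied between $M[G]$ (after collapse) and $V$, gives $V \models \psi(x,y)$. This direction needs no sharps at all.

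The downward direction is the real content. Assume $V \models \exists y\,\psi(x,y)$; we need a witness inside $M[G]$. The sharps hypothesis lets us argue as follows.
\begin{enumerate}
\item Since every real has a sharp, the statement ``every real has a sharp'' is reflected into $M$ by elementarity. Moreover $M$ correctly computes $x_0^\#$ for every real $x_0 \in M$: the sharp is a $\Pi^1_2$-singleton, and $M$ contains it by elementarity, so it is correct by the upward direction applied to its defining $\Pi^1_2$ property. In fact only iterability matters, which is $\Pi^1_2$ and is correct in $M$, so that $M$'s sharp is the true sharp.
\item We use the key fact that if $M \models$ ``every set has a sharp'', then every set-generic extension $M[G]$ also satisfies ``every real has a sharp''. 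This holds because sharps of ground-model sets survive small forcing. For the name $\dot x$ and $G$, let $a \in M$ code $\mathbb{P}$ and $\dot x$. Then $x^\#$ is computable from $a^\#$ and $G$, namely as $a^\#[G]$.
\item Now $x \in L[a][G]$, and $L[a][G]$ is computed correctly inside $M[G]$ up to height $\mathrm{Ord}^M$. Since $a^\#$ exists in $V$, the ordinal $\omega_1^V$ is inaccessible in $L[a]$ and $\mathbb{P}$ is small there. By Shoenfield absoluteness between $L[a][G]$ and $V$, valid since $\omega_1^{V} \subseteq L[a][G]$, we get $L[a][G] \models \exists y\,\psi(x,y)$. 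The remaining task is to move this down to $L_{\mathrm{Ord}^M}[a][G] \subseteq M[G]$.
\item We transfer the statement down using indiscernibles. The sentence ``$L[a][G] \models \exists y\,\psi(x,y)$'' is decided by $a^\#$ together with a forcing condition. Concretely, some $p \in G$ forces $\exists y\,\psi(\dot x,y)$ over $L[a]$, and this is witnessed below the first indiscernible of $a^\#$, because by Shoenfield the witness exists in $L_{\kappa}[a][G]$ for any $\kappa$ that is uncountable in $L[a]$ in the generic extension. Since $a^\# \in M$, the first $a$-indiscernible, which is uncountable in $L[a]$ (indeed $\geq \omega_1^{L[a]}$ in a collapse-free sense), lies in $M$. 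Thus $M[G]$ can run Shoenfield's tree argument on $L_{\kappa}[a][G]$ and find the witness.
\end{enumerate}

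The main obstacle is step 4, namely verifying that the transitive collapse of $M$ contains enough of $L[a]$ to be $\Sigma^1_2$-correct for the small forcing $\mathbb{P}$. The concern is that $\omega_1$ of the collapsed $M[G]$ is countable in $V$, so Shoenfield does not apply directly. The fix is to use the Shoenfield tree on $\omega \times \omega_1^{V}$ and note that $a^\#$ guarantees that its well-foundedness is reflected to countable ordinals via indiscernibles. More cleanly, the plan is to observe that ``$\mathbb{P}$ forces $\varphi(\dot x)$'' for $\Sigma^1_2$ $\varphi$ is, in the presence of $a^\#$, a $\Delta^1_3(a^\#)$-type statement absolute to any model containing $a^\#$. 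This is Martin–Solovay style: $\Sigma^1_3$ generic absoluteness from sharps. Combined with the elementarity of $M$, which lets $M$ agree with $V$ about what $\mathbb{P}$ forces, this yields the claim.
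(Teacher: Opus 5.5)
Your argument is correct in substance, but it takes a different route from the paper, and it rests on a stronger hypothesis than the one you reflect.

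\textbf{Comparison of routes.} The paper treats Feng--Magidor--Woodin as a black box. Closure under sharps makes every $\mathbf\Sigma^1_2$ set universally Baire, so by elementarity $M$ contains trees $T,U$ which in $M[G]$ still project to $\varphi$ and its complement. If $M[G]\models\neg\varphi(x)$, the branch of $U$ through $x$ found in $M[G]$ is a branch in $V$, so $V\models\neg\varphi(x)$.

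You instead prove the downward direction directly from indiscernibles and Shoenfield. Stated cleanly:
\begin{enumerate}
\item Let $\bar a,\bar{\mathbb P}$ be the collapses of a set $a\subseteq\lambda$ in $M$ coding $\mathbb P\subseteq\lambda$ and $\dot x$.
\item Since $\bar a^\#\in\bar M$, the least $\bar a$-indiscernible $\iota_0>\bar\lambda$ lies below $\mathrm{Ord}^{\bar M}$.
\item Every dense subset of $\bar{\mathbb P}$ in $L[\bar a]$ lies in $L_{\iota_0}[\bar a]\subseteq\bar M$, so $G$ is $L[\bar a]$-generic.
\item Hence $\omega_1^{L[x]}\le\omega_1^{L[\bar a][G]}<\iota_0$, because $\iota_0$ is inaccessible in $L[\bar a]$.
\item A Shoenfield witness for $\varphi(x)$ then lies in $L_{\omega_1^{L[x]}}[x]\subseteq\bar M[G]$, and $\Pi^1_1$-absoluteness finishes.
\end{enumerate}
Your route is more elementary and shows exactly what is used: sharps of the sets coding forcings in $M$. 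In effect it reproves the needed fragment of Feng--Magidor--Woodin. The paper's route is shorter, and it is the same tree template used for universally Baire sets in Theorem~\ref{thm:dichotomy}(3).

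\textbf{Points to fix.}
\begin{enumerate}
\item In step 2 you switch to ``$M\models$ every set has a sharp''. This does not follow from reflecting ``every real has a sharp'': $a$ is not a real once $\mathbb P$ is uncountable in $M$ (e.g.\ $\Coll(\omega,(2^{\mathfrak c})^M)$), and you genuinely need $a^\#$ to exist in $V$. The paper's appeal to Feng--Magidor--Woodin makes the same strengthening, since their ``closure under sharps'' means every set has a sharp. State it as your hypothesis rather than claiming to derive it.
\item Work with the transitive collapse throughout. Phrases such as ``$\mathbb P$ is small in $L[a]$'' and ``$L[a][G]$'' only make sense for $\bar a,\bar{\mathbb P}$. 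The $L[\bar a]$-genericity of $G$ that you use in step 4 (``some $p\in G$ forces $\exists y\,\psi(\dot x,y)$ over $L[a]$'') needs the dense-set observation in item 3 above.
\item Once this is in place, your ``main obstacle'' disappears. No Martin--Solovay tree is needed, and no ``$\Delta^1_3(a^\#)$'' statement, which is not even meaningful for non-real $a$.
\item Step 1's appeal to upward absoluteness of a $\Pi^1_2$ property is incorrect, since upward absoluteness covers $\Sigma^1_2$, not $\Pi^1_2$. It is harmless, because elementarity alone puts the true $x_0^\#$ in $M$. Also, the conclusion of step 2 that $M[G]$ has sharps for its reals is never used.
\end{enumerate}
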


\begin{proof}
    By \cite[Theorem 3.4]{FengMagidorWoodin}, closure under sharps is equivalent to every $\mathbf\Sigma^1_2$ set being universally Baire. So suppose $\varphi$ is $\Sigma^1_2$ and that $M$ is elementary. Then there are trees $T,U \in M$ witnessing the universal Baireness. In a forcing extension $M[G]$, $p[T]$ still corresponds to the $\Sigma^1_2$-formula $\varphi$ (see the proof of \cite[Theorem 3.4]{FengMagidorWoodin}). So if $M[G] \models \neg \varphi(x)$, there is a branch $(x,y) \in [U] \cap M[G]$, so also $V \models \neg \varphi(x)$.
\end{proof}

\begin{definition}[Universally Baire absoluteness]\label{def:uBabs}
    We say that \emph{universally Baire absoluteness} holds if for any sufficiently elementary countable model $M$, a universally Baire set $A \in M$ and a forcing extension $M[G]$ of $M$, $M[G] \models \varphi$ iff $V \models \varphi$, where $\varphi$ is a projective formula allowing quantifiers over $A$. 
\end{definition}

Universally Baire absoluteness follows from large cardinals, see also \cite[Fact 1.4.11]{Zapletal2008}.

\section{Universal domination}

In the following, we introduce the central notion of this article. 

\begin{definition}\label{def:univ}
    Let $\mathcal{I} \in M$ be a $\sigma$-ideal on a Polish space and let $x$ be $\mathcal{I}$-pseudo-generic over $M$. Then we say that $x$ is \emph{universally $\mathcal{I}$-pseudo-generic over $M$} if for every forcing notion $\mathbb{Q} \in M$, there is a $\mathbb{Q}$-generic $G$ over $M$ such that $x$ remains $\mathcal{I}$-pseudo-generic over $M[G]$. Similarly, if $\mathcal{I} = \mathcal{I}_R$, we say that $x$ is \emph{universally $R$-dominating} over $M$.
\end{definition}

Note that the definition makes most sense when generics over $M$ exist at all. Thus we will usually only consider countable models.

\begin{definition}\label{def:univprop}
    Let $\mathcal{M}$ be a collection of countable models. A $\sigma$-ideal $\mathcal{I}$ is said to have the
    
    \begin{enumerate}
        \item \emph{universality property} for $\mathcal{M}$, if for every $M \in \mathcal{M}$, with $\mathcal{I} \in M$, every generic $\sI$-pseudo-generic real $x$ over $M$ is universally $\mathcal{I}$-pseudo-generic over $M$,
        \item \emph{weak universality property} for $\mathcal{M}$, if for every such $M$, every $\mathbb{P}_\sI$-generic real $x$ over $M$ is universally $\mathcal{I}$-pseudo-generic over $M$.
    \end{enumerate}
     If $\mathcal{M}$ is the collection of all countable models, then we simply say that $\mathcal{I}$ has the universality property or weak universality property, respectively. In the case of $\mathcal{I} = \mathcal{I}_R$, we will also say that $R$ has the (weak) universality property.
\end{definition}

The following is a useful observation for checking universality.

\begin{lemma}\label{lem:singleBorel}
     Let $\mathcal{I} \in M$ be upwards-absolute between forcing extensions of $M$ (e.g. $\mathcal{I}$ is provably $\mathbf\Sigma^1_2$ on $\mathbf\Sigma^1_1$). Then the following are equivalent: 

     \begin{enumerate}
        \item $x$ is universally $\mathcal{I}$-pseudo-generic over $M$.
        \item For every $\mathbb{Q} \in M$, and any $\mathbb{Q}$-name $\dot B$ for a Borel set in $\mathcal{I}^{M^\mathbb{Q}}$, there is a $\mathbb{Q}$-generic $G$ over $M$ such that $x \notin \dot B^G$.
    \end{enumerate}
\end{lemma}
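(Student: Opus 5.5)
The direction (1)$\Rightarrow$(2) is immediate. Given $\mathbb{Q}\in M$ and a $\mathbb{Q}$-name $\dot B$ that is forced to be a Borel set in $\mathcal{I}^{M^{\mathbb{Q}}}$, apply (1) to get a $\mathbb{Q}$-generic $G$ over $M$ such that $x$ is $\mathcal{I}$-pseudo-generic over $M[G]$. Since $\dot B^G$ is a Borel set in $\mathcal{I}^{M[G]}$ with a code in $M[G]$, Definition~\ref{def:Ipseudogen} gives $x\notin\dot B^G$. No absoluteness is needed here.

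For (2)$\Rightarrow$(1), fix $\mathbb{Q}\in M$. The problem is that (2) handles one small Borel set at a time, whereas (1) needs a single generic $G$ that avoids all of them at once. I would combine them into one name using the $\sigma$-ideal property together with the countability of the relevant objects.

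\textbf{Step 1.} Pass to $\mathbb{Q}\times\Coll(\omega,\lambda)$, or simply $\mathbb{Q}*\dot{\Coll}(\omega,\mathcal{P}(\mathbb{Q}))$. In the extension $M[G][H]$ the set of Borel codes in $\mathcal{I}^{M[G]}$ is countable, so $M[G][H]$ has a countable sequence $\langle B_n:n\in\omega\rangle$ listing the Borel sets in $\mathcal{I}^{M[G]}$. By upwards absoluteness from $M[G]$ to $M[G][H]$, each $B_n$ lies in $\mathcal{I}^{M[G][H]}$. Because $\mathcal{I}$ is a $\sigma$-ideal in $M[G][H]$, the union $B=\bigcup_n B_n$ is a Borel set in $\mathcal{I}^{M[G][H]}$.

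\textbf{Step 2.} Let $\dot B$ be a $\mathbb{Q}*\dot{\Coll}$-name for this $B$. Applying (2) to the forcing $\mathbb{Q}*\dot{\Coll}$ yields a generic $G*H$ over $M$ with $x\notin\dot B^{G*H}$. Then $G$ is $\mathbb{Q}$-generic over $M$, and $x$ avoids every Borel set in $\mathcal{I}^{M[G]}$. So $x$ is $\mathcal{I}$-pseudo-generic over $M[G]$, which is (1).

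The main obstacle is making sure the relevant absoluteness is available. The hypothesis is stated as upwards absoluteness between forcing extensions of $M$, and $M[G][H]$ is a forcing extension of $M[G]$, which is itself a forcing extension of $M$. So the hypothesis applies, and in the provably $\mathbf\Sigma^1_2$-on-$\mathbf\Sigma^1_1$ case it follows from Shoenfield absoluteness, since both models have the same $\omega_1$ issue handled by the formula being $\Sigma^1_2$ and $M[G]\subseteq M[G][H]$ with the collapse. One must also check that the reinterpretation of each Borel code $B_n$ in the larger model is the "same" set, which holds by Fact~\ref{fact:mostowskiabs}. Finally, when $M$ is countable, the collapse is harmless, since generics for the two-step iteration over $M$ exist.
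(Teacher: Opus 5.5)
Your proposal is correct and takes essentially the same approach as the paper: force with $\mathbb{Q}$ followed by a collapse that makes the reals of the intermediate extension countable, take a single name for the union of all small Borel sets coded in $M[G]$ (small in $M[G][H]$ by upwards absoluteness and the $\sigma$-ideal property), and apply (2) to that one name. The only cosmetic difference is that the paper collapses $\mathfrak{c}$ of the $\mathbb{Q}$-extension, whereas you collapse $\mathcal{P}(\mathbb{Q})$; both serve the same purpose.
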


\begin{proof}
    Of course (1) implies (2). So assume (2). To check (1), let $\mathbb{Q} \in M$ be arbitrary. Consider the forcing notion $\mathbb{Q}' = \mathbb{Q} * \Coll(\omega, \mathfrak{c}) \in M$.\footnote{$\Coll(\omega,\kappa)$ is the forcing notion consisting of finite partial functions from $\omega$ to $\kappa$ -- the natural forcing for making $\kappa$ countable. $\mathfrak{c}$ is the cardinality of the continuum.} Let $\dot B$ be a $\mathbb{Q}'$-name for the union of all Borel sets in $\mathcal{I}$ that lie in the intermediate $\mathbb{Q}$-extension. This is a countable union, thus $\dot B$ is a name for a Borel set in $\mathcal{I}$. Applying (2), we find a generic $G * H$ such that $x \notin \dot B^{G*H}$. It follows from the upwards-absoluteness that $x$ is pseudo-generic over $M[G]$.
\end{proof}

\subsection{On genericity}

The next two lemma's give some useful equivalent formulations of universality that let us drop or add the assumption of genericity for the objects appearing in the definition. The first one will later often be used implicitly. The second one is conceptually nice, as it is a simplification, but (seemingly) only applies under strong enough assumptions.

\begin{lemma}\label{lem:weak*}
 Let $x$ be generic and universally $\mathcal{I}$-pseudo-generic over $M$ and $\mathbb{Q} \in M$. Then there is a $\mathbb{Q}$-generic $G$ over $M$ as above, so that additionally $G$ is generic over $M[x]$ (for some forcing notion, not necessarily $\mathbb{Q}$).
\end{lemma}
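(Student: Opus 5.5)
The plan is to use a collapse: it absorbs everything relevant, so genericity over $M$ can be upgraded to genericity over $M[x]$ by a quotient argument. Fix $\mathbb{P} \in M$ and a $\mathbb{P}$-generic $K$ over $M$ with $x = \tau^K$ for some $\mathbb{P}$-name $\tau \in M$, so that $M[x] \subseteq M[K]$. Let $\mathbb{Q} \in M$ be given. In $M$ choose $\lambda$ larger than $|\mathcal{P}(\mathbb{P} \times \mathbb{Q})|$ and consider $\mathbb{Q}' = \mathbb{Q} \times \Coll(\omega, \lambda) \in M$.

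Apply universal pseudo-genericity of $x$ to $\mathbb{Q}'$. This yields $G \times H$, generic over $M$, such that $x$ is $\mathcal{I}$-pseudo-generic over $M[G][H]$. Since $M[G] \subseteq M[G][H]$, $x$ is in particular pseudo-generic over $M[G]$, so $G$ witnesses universality for $\mathbb{Q}$. What remains is to arrange that $G$ is, in addition, generic over $M[x]$.

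For that I would not use $G$ directly. Instead I would exploit that $M[G][H]$ is a $\Coll(\omega,\lambda)$-extension of $M$ in which $\mathbb{P}$, $\mathbb{Q}$ and all their dense sets from $M$ are countable. By the standard absorption/factor lemma, the forcing $\mathbb{P} \times \mathbb{Q}$ completely embeds into $\Coll(\omega,\lambda)$ with collapse quotient. The aim is to find, inside an extension of $M[G][H]$ compatible with $x$, a $\mathbb{Q}$-generic $G'$ over $M$ with $M[G'] \subseteq M[G][H]$, so that $x$ stays pseudo-generic over $M[G']$. Moreover $G'$ should come from a $\mathbb{P}\times\mathbb{Q}$-generic $K' \times G'$ that agrees with $x$ in the sense that $\tau^{K'} = x$. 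By the product lemma $G'$ is then $\mathbb{Q}$-generic over $M[K'] \supseteq M[x]$, and hence generic over $M[x]$ for the quotient forcing $\mathbb{Q}$ viewed inside the Boolean completion of $\mathbb{P}\times\mathbb{Q}$ over $M[x]$.

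The main obstacle is matching the $\mathbb{P}$-coordinate with the given $x$: we need some $K'$ with $\tau^{K'} = x$ that is mutually generic with $G'$. My first attempt would be to run the universality hypothesis not over $M$ but relative to the complete subalgebra generated by $\tau$. Concretely, replace $\mathbb{P}$ by the complete subalgebra $\mathbb{B}$ of $\mathrm{RO}(\mathbb{P})$ generated by $\tau$, as in the proof of Lemma~\ref{lem:genericsBorel}, so that $x$ determines its generic filter and $M[x]$ is a $\mathbb{B}$-extension. Then apply universality to $\mathbb{Q}'$ while forcing below the canonical embedding of $\mathbb{B}$ into $\Coll(\omega,\lambda)$, so that the $\mathbb{B}$-part of the collapse generic can be chosen to be the filter determined by $x$. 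If that fails, the fallback is to carry $x$'s genericity along explicitly. In that case I would replace $\mathbb{Q}'$ by the quotient $\Coll(\omega,\lambda)/\mathbb{B}$, evaluated as a $\mathbb{B}$-name in $M$, and check that pseudo-genericity over the resulting model still only depends on a $\mathbb{Q}'$-generic over $M$, using the upwards-absoluteness hypothesis as in Lemma~\ref{lem:singleBorel}.
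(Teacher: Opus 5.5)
There is a genuine gap. The only nontrivial content of the lemma is that $G$ can be taken generic over $M[x]$, i.e.\ that $x$ and $G$ can be placed in a common forcing extension of $M$. You correctly name this as the main obstacle, but you do not resolve it, and neither proposed route works.

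In your first attempt, universality is a bare existence statement about generics over $M$. It gives you no control over the $\mathbb{B}$-coordinate of the collapse generic. Worse, if that coordinate were the filter determined by $x$, then $x \in M[G][H]$. Then $\{x\}$ would be a small Borel set of $M[G][H]$ containing $x$ (for any ideal containing singletons), which destroys exactly the pseudo-genericity you need.

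In your fallback, $\Coll(\omega,\lambda)/\mathbb{B}$ is a forcing in $M[x]$, not in $M$, so the universality hypothesis does not apply to it. Also, the upwards-absoluteness from Lemma~\ref{lem:singleBorel} is not a hypothesis of this lemma. The same unstated assumption is already used in your step from pseudo-genericity over $M[G][H]$ to pseudo-genericity over $M[G]$. That use is avoidable by applying universality to $\mathbb{Q}$ itself.

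The missing idea is to pull a witness down by absoluteness instead of constructing a matching $K'$. Let $M[x][H]$ be a forcing extension of $M[x]$ in which $\mathcal{P}(\mathbb{Q}\times\omega)^M$ is countable. There, the dense subsets of $\mathbb{Q}$ lying in $M$ are countable, and so are the $\mathbb{Q}$-names in $M$ for codes of Borel sets in the extension's version of $\mathcal{I}$.

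Consider the assertion that some filter $G \subseteq \mathbb{Q}$ meets all these dense sets and satisfies $x \notin \dot B^G$ for all these names $\dot B$. This is a $\mathbf\Sigma^1_1$ statement with parameters in $M[x][H]$. It holds in $V$ because $x$ is universally pseudo-generic, hence it holds in $M[x][H]$ by Mostowski absoluteness.

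The witness $G \in M[x][H]$ is as required. Since it lies in a forcing extension of $M[x]$, $M[x][G]$ is an intermediate model, so $G$ is generic over $M[x]$. This argument uses neither the name for $x$, nor absorption into a collapse, nor any absoluteness of $\mathcal{I}$.
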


In particular, this means that we can express universality of $x$ within $M[x]$, by asking whether an appropriate $G$ can be forced, rather than by quantifying over all generics over $M$.

\begin{proof}
Consider an extension $M[x][H]$ of $M[x]$ in which $\mathcal{P}(\mathbb{Q} \times \omega)^M$ is countable. Now the existence of $G$ as in the definition of universality can be expressed by a $\mathbf\Sigma^1_1$ sentence in $M[x][H]$. More specifically, it suffices to say that there is $G \subseteq \mathbb{Q}$ such that $G$ is $\mathbb{Q}$-generic over $M$ and for any $\mathbb{Q}$-name $\tau \in M$ of the form $\bigcup_{n \in\omega} A_n \times \{\check n\}$ for a real coding a Borel set in the extension's version of $\mathcal{I}$, $x$ is not in the Borel set coded by $\tau^G$. $\mathbb{Q}$, the set of dense subsets of $\mathbb{Q}$ in $M$, and the set of $\tau$ just described are all countable in $M[x][H]$, so it is routine to express this in a $\mathbf\Sigma^1_1$ way.

Since this $\mathbf\Sigma^1_1$ statement is true in $V$, by Fact~\ref{fact:mostowskiabs}, it also holds in $M[x][H]$, which is a forcing extension of $M[x]$.
\end{proof}

\begin{definition}
    Let us say that $\mathcal{I}$ has the \emph{total universality property} for $\mathcal{M}$ if every pseudo-generic over $M \in \mathcal{M}$ is universally pseudo-generic.
\end{definition}

\begin{lemma}
    Let $\mathcal{M}^+$ be $\Sigma^1_2$-correct. Then $\mathcal{I}$ has the total universality property for $\mathcal{M}$ iff it has the universality property for $\mathcal{M}$.
\end{lemma}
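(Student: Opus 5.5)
The forward direction is immediate, since the universality property only asks about generic pseudo-generic reals, which form a subclass of all pseudo-generic reals. For the converse, we fix $M \in \mathcal{M}$ with $\mathcal{I} \in M$, a real $x$ that is $\mathcal{I}$-pseudo-generic over $M$, and a forcing $\mathbb{Q} \in M$. The idea is to reflect the failure of universality of $x$ down to a real that is generic over $M$, using $\Sigma^1_2$-correctness of forcing extensions of $M$.

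Suppose $x$ is not universally pseudo-generic, witnessed by $\mathbb{Q}$. Choose a forcing extension $M[H]$, e.g.\ by $\Coll(\omega,\kappa)$ for $\kappa$ large enough that $\mathcal{P}(\mathbb{Q}\times\omega)^M$ and $\mathcal{P}(\mathcal{P}(\omega^\omega))^M$ become countable. In $M[H]$ the following can be expressed, as in the proof of Lemma~\ref{lem:weak*}, with real parameters coding (the transitive collapse of) $M$, $\mathbb{Q}$, the dense sets and the relevant names: ``there is a real $z$ such that $z$ is $\mathcal{I}$-pseudo-generic over $M$ and there is no $\mathbb{Q}$-generic $G$ over $M$ with $z$ pseudo-generic over $M[G]$''. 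The first clause is Borel, since there are only countably many Borel codes in $\mathcal{I}^M$. The second clause is $\neg\Sigma^1_1$, i.e.\ $\Pi^1_1$, once $\mathcal{I}^{M[G]}$ is uniformly given by the countably many $\mathbb{Q}$-names $\tau$ that $M$ forces to be codes of sets in $\mathcal{I}$. So the whole statement is $\Sigma^1_2$ in the parameters. It is true in $V$, witnessed by $x$, so by $\Sigma^1_2$-correctness of $M[H] \in \mathcal{M}^+$ it holds in $M[H]$. A witness $z \in M[H]$ is then a generic real over $M$ that is pseudo-generic over $M$.

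The next step is to transfer the negative clause back up: we need $z$ to fail universality in $V$ as well. The statement about $z$ is $\Pi^1_1$ with parameters in $M[H]$, so Mostowski absoluteness (Fact~\ref{fact:mostowskiabs}) moves it from $M[H]$ to $V$. Thus $z$ is a generic $\mathcal{I}$-pseudo-generic real over $M$ that is not universally pseudo-generic, contradicting the universality property for $\mathcal{M}$.

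The main obstacle is the coding step. We must verify that the parameters, namely the countable model $M$ together with $\mathbb{Q}$, its dense sets and the names for small Borel sets, all live in $M[H]$ as reals, and that ``$G$ is $\mathbb{Q}$-generic over $M$ and avoids all $\tau^G$'' is genuinely $\Sigma^1_1$ in them. This is the same routine coding as in Lemma~\ref{lem:weak*}, so we would simply cite that argument. One subtlety is that $\mathcal{I}^{M[G]}$ is evaluated inside $M[G]$ via the names in $M$ and not in $V$, which is exactly what makes the clause independent of any correctness assumption on $\mathcal{I}$.
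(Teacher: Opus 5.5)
Your proof is correct and follows the paper's argument. You reflect the true $\Sigma^1_2$ statement ``some real pseudo-generic over $M$ admits no $\mathbb{Q}$-generic $G$ keeping it pseudo-generic'' into a collapse extension $M[H]$ via $\Sigma^1_2$-correctness, take the witness there, and transfer the $\Pi^1_1$ clause back to $V$ by Mostowski absoluteness, a step the paper leaves implicit.

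One minor slip: $M$ itself cannot be coded by a real in $M[H]$, since it has the same ordinals as $M[H]$. You never need this, because the only parameters used are the Borel codes in $\mathcal{I}^M$, the dense subsets of $\mathbb{Q}$ in $M$, and the nice $\mathbb{Q}$-names, all of which lie in $M[H]$ and are countable there.
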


\begin{proof}
    Suppose there is $x$, pseudo-generic over $M$, but $\mathbb{Q} \in M$ witnesses that $x$ is not universal. Like above, let $M[H]$ be a forcing extension of $M$ in which $\mathcal{P}(\mathbb{Q} \times \omega)^M$ is countable. The statement that some pseudo-generic real $x$ over $M$ exists, such that for every $\mathbb{Q}$-generic $G$ over $M$, $x$ is not pseudo-generic over $M[G]$, is a true $\mathbf\Sigma^1_2$ statement with parameters in $M[H]$. Thus $M[H]$ satisfies it and contains such a real $x$. It follows that there is a generic pseudo-generic over $M$ that is not universal, so $M$ can't have the universality property.
\end{proof}

\begin{quest}
    Does the universality property of a Borel relation $R$ already imply the total universality property?
\end{quest}

\subsection{Simple examples}

$G_\delta$ relations correspond to $\sigma$-ideals that are $\sigma$-generated by closed sets. This is the most well-understood class of $\sigma$-ideals for idealized forcing. Among the most important consequences are properness, preservation of Baire category, every strict intermediate extension is a Cohen extension, and more, see \cite{Zapletal2008}.

\begin{lemma}
    Every $G_\delta$ relation has the total universality property.
\end{lemma}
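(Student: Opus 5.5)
The plan is to work directly with the characterization of Lemma~\ref{lem:pseudoisdom}. Fix a $G_\delta$ relation $R \subseteq X \times Y$ with $R \in M$, a countable model $M$, and a real $y$ that is $\mathcal{I}_R$-pseudo-generic over $M$, i.e.\ $xRy$ for all $x \in X \cap M$. Let $\mathbb{Q} \in M$ be arbitrary. By Lemma~\ref{lem:pseudoisdom} applied to $M[G]$ (note $R \in M[G]$), it suffices to find a $\mathbb{Q}$-generic $G$ over $M$ such that $\dot x^G R\, y$ for every $\mathbb{Q}$-name $\dot x \in M$ for an element of $X$. Genericity of $y$ is never used, which is why we should get the \emph{total} universality property.

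Write $R = \bigcap_n W_n$ with each $W_n$ open, coded in $M$. The vertical section $R^y = \{x : xRy\} = \bigcap_n W_n^y$ is then a $G_\delta$ subset of $X$, but it is coded from $y$, which is not in $M$. Fix a countable basis $\{O_k : k\in\omega\}$ of $X$ coded in $M$. Because $M$ is countable, there are only countably many tasks: the dense subsets of $\mathbb{Q}$ in $M$, and the pairs $(\dot x, n)$ with $\dot x \in M$ a name for an element of $X$ and $n \in \omega$. I will build a descending sequence $q_0 \geq q_1 \geq \dots$ in $\mathbb{Q}$ that handles these tasks one at a time. A dense set $D$ is handled trivially by extending into $D$. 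For a pair $(\dot x, n)$, I want to extend $q$ to some $q'$ that forces $\dot x \in O_k$ for some basic $O_k \subseteq W_n^y$. If this can always be done, the filter $G$ generated by the sequence is $M$-generic and $\dot x^G \in \bigcap_n W_n^y$ for every name, which is what we need.

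The main step is this extension claim for a given $q$, $\dot x$, $n$. In $M$, define
\[
C_q = \{ z \in X : \forall k\, (z \in O_k \rightarrow \exists q' \le q\; q' \Vdash \dot x \in O_k)\}.
\]
This is a closed set coded in $M$, and it is nonempty: every generic value of $\dot x$ through $q$ lies in it (I will argue this inside $M$, or by noting that the complement is a union of basic sets forced by $q$ to avoid $\dot x$). Since $C_q$ is a nonempty closed set coded in $M$, it contains a point $z_0 \in M$; this is a standard absoluteness fact for closed sets with a code, for instance by building $z_0$ in $M$ as a limit of shrinking basic neighbourhoods meeting $C_q$. By hypothesis $z_0 R y$, so $(z_0,y) \in W_n$. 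Since $W_n$ is open, there are basic neighbourhoods $O_k \ni z_0$ and $V \ni y$ with $O_k \times V \subseteq W_n$, and hence $O_k \subseteq W_n^y$. Since $z_0 \in C_q \cap O_k$, the definition of $C_q$ gives some $q' \le q$ with $q' \Vdash \dot x \in O_k$, as required.

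I expect the only delicate points to be the following: checking carefully that $C_q$ is nonempty and closed, and that it has an $M$-point, which uses only that $M$ is a model of enough set theory; and observing that the argument never needs any correctness of $\mathcal{I}_R$ between models, because Lemma~\ref{lem:pseudoisdom} reduces pseudo-genericity over $M[G]$ to the concrete condition $\dot x^G R\, y$.
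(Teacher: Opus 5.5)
Your proof is correct and is essentially the paper's argument. The paper likewise finds, below any $q$, a point of $X \cap M$ that $\dot x$ can approach, though it gets this point by interpreting $\dot x$ along a decreasing sequence in $M$ rather than as an $M$-point of your closed set $C_q$. It then uses domination and openness of $O_n$ to get a neighbourhood $U$ of that point with $U \times \{d\} \subseteq O_n$, and extends $q$ to force $\dot x \in U$. The only difference is cosmetic: instead of building $G$ by hand, the paper notes that this density argument works in any countable $M' \supseteq M \cup \{d\}$, so every $\mathbb{Q}$-generic over $M'$ works. That stronger form is what later gives preservation of Baire category.
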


\begin{proof}
    Let $R = \bigcap_{n \in \omega} O_n \subseteq X \times Y$, where each $O_n$ is open. Let $\mathbb{Q} \in M$ and let $d$ be $R$-dominating over $M$. We show that it suffices to let $G$ be $\mathbb{Q}$-generic over any countable model $M'$ extending $M$ and containing $d$. Namely let $\dot x \in M$ be any $\mathbb{Q}$-name for a real, $n \in \omega$, and $q \in \mathbb{Q}$ be arbitrary. Find a decreasing sequence $\langle q_i : i \in \omega \rangle \in M$ below $q$ interpreting $\dot x$ as some real $x \in X \cap M$.\footnote{For instance, for each $i$, $q_i$ decides $\dot x_n$ to be in a particular open set of diameter $< \frac{1}{i+1}$, in some complete metrization of $X$ given in $M$.} Since $d$ dominates $M$, $(x,d) \in O_{n}$ and there is a small enough neighborhood $U$ of $x$ so that $U \times \{d\} \subseteq O_{n}$. If $i$ is large enough such that $q_i \Vdash \dot x \in U$, then, in $M'$, $q_i \Vdash (\dot x, d) \in O_n$. Thus, by genericity over $M'$, $\dot x^G R d$.
\end{proof}

We remark that the argument above implicitly includes a proof that $\mathbb{P}_R$ preserves Baire category, which we will see in Corollary~\ref{cor:preserveBaire}.

\begin{lemma}
    There is a $G_{\delta\sigma}$ relation $R$ with the weak universality property but not the universality property.
\end{lemma}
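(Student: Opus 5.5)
The plan is to make $\mathbb{P}_R$ itself harmless, so that weak universality holds, while arranging a generic $R$-dominating real which is not a $\mathbb{P}_R$-generic and which \emph{every} generic extension by some fixed $\mathbb{Q} \in M$ can catch in a small Borel set. I have not verified that the concrete relation below works. The candidate is an ideal of ``small in two ways simultaneously'' type. Take $X = Y = 2^\omega$, fix a Borel coding $x \mapsto N_x$ of null $G_\delta$ sets and $x \mapsto C_x$ of meager $F_\sigma$ sets, and set
$$x \mathrel{R} y \iff y \notin N_{x} \cap C_{x} \ \text{ or } \ \text{(an auxiliary $F_\sigma$ clause in $(x,y)$)}.$$
The auxiliary clause is to be chosen so that $R$ is $G_{\delta\sigma}$ and so that $Y$ is $R$-$\omega$-dominating, making $\mathcal{I}_R$ a $\sigma$-ideal. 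By Lemma~\ref{lem:pseudoisdom}, $R$-dominating over $M$ means avoiding every Borel set in $(\mathcal{I}_R)^M$.

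The first step is weak universality. Here I would compute the positive Borel sets: every $\mathcal{I}_R$-positive Borel set should contain either a non-null or a non-meager Borel set. Then $\mathbb{P}_R$ is forcing-equivalent, below each condition, to random or Cohen forcing. A $\mathbb{P}_R$-generic $x$ is therefore Cohen or random over $M$. For any $\mathbb{Q} \in M$, take $G$ which is $\mathbb{Q}$-generic over $M[x]$. By product lemma reasoning $x$ stays Cohen or random over $M[G]$, hence it avoids all null-and-meager Borel sets of $M[G]$ and remains pseudo-generic. This step is routine once the positive sets are identified, and the identification is the usual Fubini/Kuratowski--Ulam computation.

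The second step is the failure of universality, which I expect to be the main obstacle. I need a generic real $y$, say added by a product or iteration of Cohen and random forcing over $M$, that avoids every null meager Borel set of $M$, together with a $\mathbb{Q} \in M$ such that every $\mathbb{Q}$-generic $G$ over $M$ puts $y$ into a null meager set coded in $M[G]$. I would first try $y = c \oplus r$, a suitable interleaving of a Cohen real $c$ and a random real $r$ over $M[c]$, with $\mathbb{Q}$ being Cohen forcing. The coding in the auxiliary clause is then designed so that any Cohen real $g$ over $M$ produces, absolutely, a null meager set capturing reals of this interleaved shape; the hope is to exploit that such a set can be defined from $g$ in a $\mathbf\Sigma^1_1$ way. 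Should this fail, the fallback is to take $\mathbb{Q} = \Coll(\omega,\omega_1)$ and use Lemma~\ref{lem:weak*}: if $y$ were universal, the witnessing $G$ could be found generic over $M[y]$. I would then derive a contradiction by showing that in $M[y][G]$ some real of $M[G]$ codes a set in $\mathcal{I}_R$ containing $y$. Finally I would check that $R$ is $G_{\delta\sigma}$. This part is routine: $y\notin N_x$ is $F_\sigma$ in $(x,y)$, $y \notin C_x$ is $G_\delta$, and a union of these with an $F_\sigma$ clause is $G_{\delta\sigma}$.
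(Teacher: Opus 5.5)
\textbf{There is a genuine gap: the structure of your proposal defeats itself, so no choice of the unspecified auxiliary clause can fix it.}

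Your first step aims to show that the non-null or non-meager Borel sets are dense in $\mathbb{P}_R$, so that $\mathbb{P}_R$ is a lottery of Cohen and random forcing. This is a ZFC computation, and your own argument ("a $\mathbb{P}_R$-generic is Cohen or random over $M$") uses it inside $M$. So $\mathbb{P}_R$ would be ccc in $M$.

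But when $\mathbb{P}_R$ is ccc in $M$, every $\mathcal{I}_R$-pseudo-generic $y$ over $M$ is already $\mathbb{P}_R$-generic over $M$. A maximal antichain $A\in M$ is countable in $M$, so $Y\setminus\bigcup A$ is a Borel set in $(\mathcal{I}_R)^M$, and hence $y$ meets $A$. Moreover, $\{p : y\in p\}$ is a filter, since $y\in p\cap q$ forces $p\cap q\notin(\mathcal{I}_R)^M$. So for such $R$, weak universality and total universality coincide; this is the remark and corollary following Theorem~\ref{thm:propertoweak}. Your second step therefore must fail.

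Concretely, every pseudo-generic real would then be Cohen or random over $M$, but $c\oplus r$ is neither. Take $c$ on the even and $r$ on the odd coordinates, and a null comeager $N\in M$. Then $c\oplus r$ lies in $\{z : z_{\mathrm{even}}\in N,\ z_{\mathrm{odd}}\notin N\}$, which is a null meager Borel set coded in $M$.

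Separately, the product-lemma step for random reals is false. A Cohen real $c$ over $M[x]$ codes a null set in $M[c]$ that contains every real of $M[x]$, so $x$ is not random over $M[c]$. Weak universality for random forcing has to go through properness and a quotient, as in Theorem~\ref{thm:propertoweak}.

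\textbf{The missing idea.} A separating example needs a non-ccc $\mathbb{P}_R$ and a pseudo-generic real that is not $\mathbb{P}_R$-generic, whose domination is destroyed once the reals of $M$ become countable. The paper takes $R\subseteq(2^\omega)^\omega\times(2^\omega)^\omega$ with $\bar x\mathrel{R}\bar y$ iff $\exists n\,\forall m\,(y_n\neq x_m)$, which is visibly $G_{\delta\sigma}$.

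\emph{Failure of universality.} An enumeration $\bar y$ of $2^\omega\cap M$ is added by a collapse, and it is $R$-dominating over $M$, because no $\bar x\in M$ lists all reals of $M$. However, for $\mathbb{Q}=\Coll(\omega,\mathfrak c)$, every $M[G]$ contains an enumeration $\bar x$ of $2^\omega\cap M$, and then $\neg(\bar x\mathrel{R}\bar y)$.

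\emph{Weak universality.} Every condition has a coordinate $n$ with uncountable projection. Uniformizing on that coordinate shows that $\mathbb{P}_R$ is Sacks forcing there, so a $\mathbb{P}_R$-generic has a Sacks coordinate $y_n$. Universality of the $G_\delta$ relation $\neq$ then gives $G$ with $y_n\notin M[G]$, which makes $\bar y$ dominating over $M[G]$.
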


\begin{proof}
    Consider $R \subseteq (2^\omega)^\omega \times (2^\omega)^\omega$, where $\bar x R \bar y$, if there is some $n$ so that $y_n \neq x_m$, for all $m$. $R$ does not have the universality property. Namely, for any countable $M$, let $\bar y$ enumerate the reals of $M$. Then $\bar y$ is $R$-dominating over $M$, but it can't be over any model that has a countable enumeration of $M$'s reals. 
    
    $R$ does have the weak universality property though. If $p \in \mathbb{P}_{R}$, then there must be some $n$, so that the projection $\pi_n p = \{a : \exists \bar x \in p (x_n = a) \}$ is uncountable. Find $q \leq p$, such that $\pi_n q \subseteq \pi_n p$ is uncountable and $q$ is uniformised on coordinate $n$, meaning that for each $a \in \pi_n q$, there is a unique $\bar x \in q$, with $x_n = a$.\footnote{Any Borel set can be uniformised on a perfect set.} Now note that a Borel subset $r \subseteq q$ is a condition if and only if it's projection $\pi_n r$ is still uncountable. It follows that $\mathbb{P}_{R}$ is equivalent to Sacks forcing and that if $\bar y$ is $\mathbb{P}_{R}$-generic with $q$ in it's generic filter, then $y_n$ is a Sacks real over $M$, i.e. $\mathbb{P}_{\neq}$-generic. The rest follows from $\neq$ having the universality property, as $\neq$ is $G_\delta$.
\end{proof}

We do not know whether the two universality notions can be separated with an $F_\sigma$ relation.

\begin{quest}\label{quest:Fsigmasepuniv}
    Is there an $F_\sigma$ relation with the weak universality property but not the universality property? What if the induced ideal is also provably $\mathbf{\Delta}^1_2$ on $\mathbf\Sigma^1_1$?
\end{quest}

\subsection{Properness}

\begin{thm}\label{thm:propertoweak}
    Let $\mathcal{I}$ be a $\sigma$-ideal on $X$ and $\mathcal{M}$ a collection of countable models with $\mathcal{M}^+$ being $\mathcal{I}$-upwards-correct (e.g. $\mathcal{I}$ is provably $\mathbf\Sigma^1_2$ on $\mathbf\Sigma^1_1$). If $\mathbb{P}_\sI$ is $\mathcal{M}$-proper, then $\sI$ has the weak universality property for $\mathcal{M}$.
\end{thm}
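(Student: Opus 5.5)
The plan is to prove this by a density argument in $(\mathbb{P}_\sI)^M$. Fix $M \in \mathcal{M}$ with $\mathcal{I} \in M$ and a forcing $\mathbb{Q} \in M$. Let $\varphi(\dot x)$ be the statement, formulated in $M$ about the $\mathbb{P}_\sI$-generic real $\dot x$, that some further forcing adds a $\mathbb{Q}$-generic $G$ over $M$ such that $\dot x$ is pseudo-generic over $M[G]$. I will show that the conditions forcing $\varphi(\dot x)$ are dense in $(\mathbb{P}_\sI)^M$. Then any $\mathbb{P}_\sI$-generic $x$ over $M$ satisfies $\varphi$ in $M[x]$. As in the proof of Lemma~\ref{lem:weak*}, passing to a collapse extension of $M[x]$ makes the existence of such a $G$ a $\mathbf\Sigma^1_1$ statement, and Mostowski absoluteness (Fact~\ref{fact:mostowskiabs}) transfers it to $V$, so $x$ is universally pseudo-generic over $M$.

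For density, fix $p \in (\mathbb{P}_\sI)^M$. First I record that $p$ is $\mathcal{I}$-positive in every model $N \in \mathcal{M}^+$ extending $M$. By Lemma~\ref{lem:genericsBorel}, the set $C$ of $\mathbb{P}_\sI$-generics over $M$ is Borel, and by $\mathcal{M}$-properness $C \cap p \notin \mathcal{I}$ in $V$. If $p \in \mathcal{I}^N$, then upwards-correctness of $\mathcal{M}^+$ would give $p \in \mathcal{I}$, a contradiction. The same argument shows that any Borel set in $N$ containing $C \cap p$ is positive; the code of $C$ exists in a collapse extension, which is harmless.

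Now let $G$ be $\mathbb{Q}$-generic over $M$, and let $H$ collapse enough of $M[G]$ that $C$ has a code in $N = M[G][H]$. In $N$, let $B$ be the union of all Borel sets in $\mathcal{I}^{M[G]}$. This is a countable union, so upwards-correctness between forcing extensions puts $B$ in $\mathcal{I}^N$, as in Lemma~\ref{lem:singleBorel}. The set $(C \cap p) \setminus B$ is then $\mathcal{I}^N$-positive: otherwise $C \cap p \subseteq B \cup ((C\cap p)\setminus B) \in \mathcal{I}^N$, contradicting the previous paragraph. Take $x'$ to be $(\mathbb{P}_\sI)^N$-generic over $N$ below $(C\cap p)\setminus B$. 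Then $x' \in p$, $x'$ is $\mathbb{P}_\sI$-generic over $M$ because $x' \in C$, and $x'$ is pseudo-generic over $M[G]$. Moreover, $G$ lies in a forcing extension of $M[x']$, since both lie in $N[x']$, which is generic over $M[x']$ by the product lemma applied to $M[x'] \subseteq M[G][H][x']$. Indeed $x'$ is generic over $M$ and $N[x']$ is a forcing extension of $M$ containing $x'$, and intermediate models of forcing extensions are forcing extensions, with quotient forcing from $M[x']$.

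So $M[x'] \models \varphi(x')$. By the forcing theorem there is a condition $p'$ in the $\mathbb{P}_\sI$-generic filter of $x'$ over $M$ forcing $\varphi(\dot x)$, and $p'$ is compatible with $p$ because $x' \in p$. Hence the set of conditions forcing $\varphi(\dot x)$ is dense, which finishes the proof.

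The main obstacle is the final step: verifying that $\varphi$ is really a statement of $M[x']$, with ``there is a forcing adding such a $G$'' correctly expressed there, and that the intermediate-model argument applies when $N$ is only a collapse extension. I would handle both exactly as in Lemma~\ref{lem:weak*}, by collapsing $\mathcal{P}(\mathbb{Q}\times\omega)^M$ and using the resulting $\mathbf\Sigma^1_1$ formulation.
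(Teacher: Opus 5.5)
Your proposal is correct and follows the paper's proof, only recast as a density argument instead of a contradiction. The paper fixes a condition $p \ni x$ forcing that no suitable $G$ can be added, and refutes it with exactly your construction: code $q = C\cap p$ in a collapse extension, add a $\mathbb{Q}$-generic $G$, pick $y \in q$ generic over the larger model, and note that $G$ lies in a quotient extension of $M[y]$.

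One small improvement: you do not need upward absoluteness between forcing extensions to get that $(C\cap p)\setminus B$ is $\mathcal{I}^N$-positive. Upwards-correctness relative to $V$ already puts $B$, and any $\mathcal{I}^N$-small Borel set, into $\mathcal{I}$, which would force $C\cap p \in \mathcal{I}$. Your explicit removal of $B$ thus makes rigorous a step the paper leaves implicit when it asserts that $y$ is pseudo-generic over $M[G]$.
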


\begin{proof}
    Let $M \in \mathcal{M}$, $x$ be $\mathbb{P}_\sI$-generic over $M$ and $\mathbb{Q} \in M$ be arbitrary. Suppose towards a contradiction that there is no $\mathbb{Q}$-generic $G$ over $M$ so that $x$ is pseudo-generic over $M[G]$. In particular, one can't add such an object $G$ by forcing over $M[x]$. This can be expressed in the forcing language of $\mathbb{P}_\sI$, so there is some condition $p \in (\mathbb{P}_\sI)^M$, $x \in p$, forcing this. By $\mathcal{M}$-properness, $$q := \{y \in X : y \in p \wedge \text{$y$ is $\mathbb{P}_\sI$-generic over $M$} \}$$ is $\sI$-positive. The Borel set $q$ has a code in some forcing extension $M[H]$ of $M$. Let $G$ now be $\mathbb{Q}$-generic over $M[H]$. In $M[H][G]$, $q$ is $\mathcal{I}$-positive by upwards-correctness, so $q$ is a condition in $(\mathbb{P}_\sI)^{M[H][G]}$. Let $y \in q$ be $\mathbb{P}_\sI$-generic over $M[H][G]$. In particular, $y \in p$ and $y$ is $\mathbb{P}_\sI$-generic over $M$. But $G$ is generic over $M[y]$ (via some quotient forcing, not necessarily $\mathbb{Q}$ itself) and $y$ is pseudo-generic over $M[G]$. This poses a contradiction to the choice of $p$.
\end{proof}

Whenever $\sI$ is a $\sigma$-ideal with $\mathbb{P}_\sI$ ccc in $M$, pseudo-genericity over $M$ agrees with $\mathbb{P}_\sI$-genericity, so weak and total universality are equivalent. Moreover if $(\sI)^M \subseteq \sI$, i.e. we have upwards-correctness, then the set of $\mathbb{P}_\sI$-generics is $\mathcal{I}$-conull. We immediately obtain the following:

\begin{cor}
    Let $\mathcal{M}^+$ be $\mathcal{I}$-upwards-correct where $\mathbb{P}_\sI$ is ccc in every $M \in \mathcal{M}$ containing $\mathcal{I}$. Then $\sI$ has the total universality property for $\mathcal{M}$.
\end{cor}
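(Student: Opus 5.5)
The plan is to verify the total universality property straight from the two observations made just before the statement, together with Theorem~\ref{thm:propertoweak}. Fix $M \in \mathcal{M}$ with $\mathcal{I} \in M$, and let $x$ be $\mathcal{I}$-pseudo-generic over $M$; $x$ need not be generic over $M$. Since $\mathbb{P}_\sI$ is ccc in $M$, every maximal antichain $A \in M$ of $(\mathbb{P}_\sI)^M$ is countable in $M$. Hence $B_A := X \setminus \bigcup A$ is a Borel set coded in $M$. It lies in $\mathcal{I}^M$, because otherwise it would be a condition incompatible with every member of $A$. Pseudo-genericity gives $x \notin B_A$, so $x$ meets $A$. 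Therefore $G_x = \{p \in (\mathbb{P}_\sI)^M : x \in p\}$ is a filter meeting every maximal antichain of $M$. The filter property also follows from pseudo-genericity: if $x \in p \cap q$, then $p\cap q$ cannot be in $\mathcal{I}^M$, so it is a common extension. Thus $x$ is $\mathbb{P}_\sI$-generic over $M$. This is the claim that pseudo-genericity agrees with genericity.

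It therefore suffices to show that every $\mathbb{P}_\sI$-generic real over $M$ is universally pseudo-generic, i.e.\ the weak universality property. By Theorem~\ref{thm:propertoweak}, applied under the hypothesis that $\mathcal{M}^+$ is $\mathcal{I}$-upwards-correct, this follows once $\mathbb{P}_\sI$ is $\mathcal{M}$-proper.

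To get $\mathcal{M}$-properness, fix $p \in (\mathbb{P}_\sI)^M$. By the first paragraph, the set of $\mathbb{P}_\sI$-generics over $M$ inside $p$ equals $p \setminus \bigcup\{B : B \in \mathcal{I}^M \text{ Borel}\}$. Since $M$ is countable, this removes a countable union of sets in $\mathcal{I}^M$. By upwards-correctness (applied to $M \in \mathcal{M} \subseteq \mathcal{M}^+$), each of these sets is in $\mathcal{I}$, so their union is in $\mathcal{I}$. The remaining task is to see that $p$ itself is $\mathcal{I}$-positive in $V$. This is the one step needing care, because upwards-correctness only transfers smallness, not positivity.

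For that step I would argue by contradiction. Suppose $p \in \mathcal{I}$ in $V$. Pass to an extension $M[H]$ in which $\mathcal{P}(\mathbb{P}_\sI)^M$ is countable, and use the ccc of $\mathbb{P}_\sI$ in $M[H]$ together with upwards-correctness there to compare. If this becomes awkward, I would fall back on the proof of Theorem~\ref{thm:propertoweak}, which only uses properness to produce, inside a forcing extension $M[H][G]$ with $G$ being $\mathbb{Q}$-generic, a real $y \in p$ that is $\mathbb{P}_\sI$-generic over both $M$ and $M[H][G]$. Under ccc, such a $y$ can be obtained directly: take $p$ as a condition in $M[G]$, where it is still positive because ccc forcings over $M$ preserve positivity of $M$-Borel sets via the $M$-version of the ccc of $\mathbb{P}_\sI \times \mathbb{Q}$. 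Then pick $y$ generic for $\mathbb{P}_\sI$ over $M[G]$ in the model $M[G]$. This runs the argument of Theorem~\ref{thm:propertoweak} with $\mathcal{I}^M$-positivity in place of $V$-positivity, and yields weak, hence total, universality.
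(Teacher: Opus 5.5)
Your first three paragraphs are exactly the argument behind the paper's one-line justification. Ccc turns pseudo-genericity into genericity, upwards-correctness makes the $M$-generics $\mathcal{I}$-conull, and Theorem~\ref{thm:propertoweak} turns $\mathcal{M}$-properness into weak universality. You also correctly isolate the point that the paper's ``immediately'' passes over: a condition $p\in(\mathbb{P}_{\mathcal{I}})^M$ has to stay $\mathcal{I}$-positive in $V$, or at least in the extensions $M[G]$. Your last paragraph does not establish this, and that is a genuine gap. The ccc is assumed only in $M$, not in $M[H]$. $\mathbb{Q}$ is arbitrary (e.g.\ a collapse), so $\mathbb{P}_{\mathcal{I}}\times\mathbb{Q}$ is not ccc in general. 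More fundamentally, no chain condition in $M$ controls how the reinterpreted ideal $\mathcal{I}^{M[G]}$ compares with $\mathcal{I}^M$. The fallback also tacitly needs $\mathcal{I}^M\subseteq\mathcal{I}^{M[G]}$ on $M$-coded Borel sets to conclude that $y$ is generic over $M$. Upwards-correctness, being a comparison with $V$ only, does not give this.

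In fact this step cannot be derived from the stated hypotheses. On $2^\omega$ let $N_0=\{x : x(0)=0\}$, and let $A\in\mathcal{I}$ iff $A$ is meager, or some real is not constructible and $A\setminus N_0$ is meager. Assume $V$ has a non-constructible real and let $\mathcal{M}$ be all countable models. Meagerness of Borel sets is absolute, so $\mathcal{M}^+$ is $\mathcal{I}$-upwards-correct. Also $\mathbb{P}_{\mathcal{I}}$ is ccc in every countable model, since it is Cohen forcing, possibly restricted to $2^\omega\setminus N_0$. Yet for countable $M=L_\alpha$ the ideal $\mathcal{I}^M$ is the meager ideal, while $N_0\in\mathcal{I}^{M[G]}$ for every Cohen-generic $G$ over $M$. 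So a Cohen real over $M$ lying in $N_0$ is pseudo-generic but not universally pseudo-generic over $M$.

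An extra hypothesis is therefore needed, and which one you add determines how to finish.
\begin{itemize}
\item If $\mathcal{M}$ consists of countable elementary submodels of $H(\theta)$, then $p\notin\mathcal{I}$ by elementarity, and your first three paragraphs complete the proof.
\item If $\mathcal{I}$ is provably $\mathbf\Sigma^1_2$ on $\mathbf\Sigma^1_1$, use Shoenfield absoluteness between $M$ and $M[G]$, which have the same ordinals. It gives both $p\notin\mathcal{I}^{M[G]}$ and $\mathcal{I}^M\subseteq\mathcal{I}^{M[G]}$. Your fallback argument then works verbatim with this justification in place of the product-ccc claim, with no reference to $V$.
\end{itemize}
In the second case the fallback is the right route, because positivity of $p$ in $V$ can genuinely fail there. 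To see this, replace ``some real is not constructible'' by the $\mathbf\Sigma^1_2$ sentence ``some countable $L_\beta$ satisfies the fixed fragment of set theory'', and let $M$ be the least such $L_\beta$.
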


\begin{thm}\label{thm:weakunivproper}
    Let $\mathcal{I}$ be a $\sigma$-ideal on $X$ and $\mathcal{M}^+$ be $\mathcal{I}$-correct (e.g. $\mathcal{I}$ is provably $\mathbf\Delta^1_2$ on $\mathbf\Sigma^1_1$). Then the following are equivalent:

     \begin{enumerate}
        \item $\mathbb{P}_\sI$ is $\mathcal{M}$-proper.
        \item $\sI$ has the weak universality property for $\mathcal{M}$.
        \item For each $\mathbb{P}_\mathcal{I}$-generic $x$ over $M \in \mathcal{M}$, there is a $\Coll(\omega, (2^{\mathfrak{c}})^M)$-generic $G$ over $M$, with $x$ pseudo-generic over $M[G]$.
    \end{enumerate}
\end{thm}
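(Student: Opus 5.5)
The plan is to prove the cycle (1)$\Rightarrow$(2)$\Rightarrow$(3)$\Rightarrow$(1).

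For (1)$\Rightarrow$(2), I would simply invoke Theorem~\ref{thm:propertoweak}. Its hypothesis is that $\mathcal{M}^+$ is $\mathcal{I}$-upwards-correct, and this is part of our assumption that $\mathcal{M}^+$ is $\mathcal{I}$-correct. The implication (2)$\Rightarrow$(3) is immediate. Weak universality gives, for every $\mathbb{Q}\in M$, a $\mathbb{Q}$-generic $G$ over $M$ with $x$ pseudo-generic over $M[G]$. We specialize to $\mathbb{Q}=\Coll(\omega,(2^{\mathfrak{c}})^M)$, which lies in $M$.

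The real content is (3)$\Rightarrow$(1). Fix $M\in\mathcal{M}$ containing $\mathcal{I}$ and a condition $p\in(\mathbb{P}_\mathcal{I})^M$. Let
$$B=\{y\in p : y \text{ is } \mathbb{P}_\mathcal{I}\text{-generic over } M\}.$$
Suppose towards a contradiction that $B\in\mathcal{I}$. Since $M$ is countable, we can pick $x\in p$ that is $\mathbb{P}_\mathcal{I}$-generic over $M$ with $p$ in its generic filter. In particular $x\in B$. Apply (3) to $x$ to get a $\Coll(\omega,(2^{\mathfrak{c}})^M)$-generic $G$ over $M$ such that $x$ is pseudo-generic over $M[G]$.

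We then argue inside $M[G]$. In $M$, $\mathbb{P}_\mathcal{I}$ has size at most $\mathfrak{c}$, so $\mathcal{P}(\mathbb{P}_\mathcal{I})^M$ has size at most $(2^{\mathfrak{c}})^M$. Hence $\mathcal{P}(\mathbb{P}_\mathcal{I})^M$ is countable in $M[G]$. Applying Lemma~\ref{lem:genericsBorel} inside $M[G]$, with $\mathbb{P}_\mathcal{I}^M$ and the canonical name for the generic real, the set of $\mathbb{P}_\mathcal{I}$-generics over $M$ is Borel with a code in $M[G]$. Intersecting with $p$ gives a code for $B$ in $M[G]$.

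Next I would check that this code defines the same set in $M[G]$ and in $V$. Membership of $y$ in the coded set says that $y$ lies in $p$ and that the filter $\{q\in\mathbb{P}_\mathcal{I}^M : y\in q\}$ meets each of the countably many dense sets of $M$, which are enumerated in $M[G]$. This is a Borel condition in parameters from $M[G]$, so it is absolute. Now $M[G]\in\mathcal{M}^+$ and $\mathcal{M}^+$ is $\mathcal{I}$-correct, so $B\in\mathcal{I}$ gives $B\in\mathcal{I}^{M[G]}$. Thus $B$ is an $\mathcal{I}$-small Borel set of $M[G]$ containing $x$, contradicting the pseudo-genericity of $x$ over $M[G]$. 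Therefore $B\notin\mathcal{I}$, which is $\mathcal{M}$-properness.

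The main obstacle is this bookkeeping step. One must ensure that $B$ is not only Borel but coded in every collapse extension, whichever $G$ (3) provides, and that the code is interpreted identically in $M[G]$ and in $V$. This is exactly why the collapse is taken to $(2^{\mathfrak{c}})^M$ rather than merely to $\mathfrak{c}$: the hypothesis of Lemma~\ref{lem:genericsBorel} needs all of $\mathcal{P}(\mathbb{P}_\mathcal{I})^M$ to be countable.
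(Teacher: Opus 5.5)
Your proposal is correct and is essentially the paper's proof. The paper also gets (1)$\Rightarrow$(2) from Theorem~\ref{thm:propertoweak}, and for the converse it likewise picks $x$ in the set of generics below $p$, uses the collapse of $(2^{\mathfrak{c}})^M$ to code that set in $M[G]$, and gets positivity from the pseudo-genericity of $x$ over $M[G]$ together with $\mathcal{I}$-correctness, noting that only this one collapse was needed. The differences are cosmetic: you argue by contradiction, and you spell out more carefully why the Borel code defines the same set in $M[G]$ and in $V$.
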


\begin{proof}
    Assume that $\sI$ has the weak universality property, let $M \in \mathcal{M}$ and let $p \in (\mathbb{P}_\sI)^M$ be arbitrary. We must show that $$q := \{ x \in X : x \in p \wedge \text{$x$ is $\mathbb{P}_\sI$-generic over $M$} \}$$ is $\sI$-positive. Let $x \in q$ be arbitrary. By the weak universality property, there is a generic extension $M[G]$ of $M$ in which $\mathcal{P}(\omega^\omega)^M$ is countable and over which $x$ is still pseudo-generic. In $M[G]$, the Borel set $q$ has a code. Since $x \in q$, $q \notin (\sI)^{M[G]}$. By absoluteness, $q \notin \mathcal{I}$. 
    For (3), note that to conclude properness, we only had to use weak universality applied to $\mathbb{Q}=\Coll(\omega, (2^{\mathfrak{c}})^M)$, so we get back full weak universality.
\end{proof}

\begin{cor}
   Let $\mathcal{I}$ be provably $\mathbf\Delta^1_2$ on $\mathbf\Sigma^1_1$ and have the universality property. Then $\mathbb{P}_{\mathcal{I}}$ is proper in every forcing extension.
\end{cor}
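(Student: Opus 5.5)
Apply Theorem~\ref{thm:weakunivproper} inside an arbitrary forcing extension $V[K]$, taking $\mathcal{M}$ to be the countable elementary submodels of a sufficiently large $H(\theta)^{V[K]}$. Two hypotheses have to be checked there: that $\mathcal{M}^+$ is $\mathcal{I}$-correct, and that $\mathcal{I}$ has the weak universality property for $\mathcal{M}$. The theorem then gives $\mathcal{M}$-properness of $\mathbb{P}_\mathcal{I}$ in $V[K]$, which is properness by the Fact characterizing properness through countable elementary submodels.

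For correctness, use that $\mathcal{I}$ is provably $\mathbf\Delta^1_2$ on $\mathbf\Sigma^1_1$. By definition this is a statement about all countable models containing the parameters, and as remarked after Definition~\ref{def:provablydef}, it holds in $V$ and all its forcing extensions. Fix a model $N\in\mathcal{M}^+$ and a Borel set $A\in N$, and write $A=U_z$ for some $z\in N$. Then $A\in\mathcal{I}^N$ iff $N\models\varphi(z)$, where $\varphi$ is the fixed $\mathbf\Sigma^1_2$ formula, and $A\notin\mathcal{I}^N$ iff $N\models\neg\psi(z)$, where $\psi$ is the fixed $\mathbf\Pi^1_2$ formula. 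Upward absoluteness of $\Sigma^1_2$ statements from $N$ to $V[K]$ holds by Shoenfield, using only that witnesses found in $N$ remain witnesses. Applying this to $\varphi$ and to $\neg\psi$ shows that $\mathcal{I}^N$ and $\mathcal{I}$ agree on Borel sets in $N$. So $\mathcal{M}^+$ is $\mathcal{I}$-correct.

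For weak universality, the universality property in $V$ quantifies over all countable models and all generic pseudo-generic reals, so the first task is to check that it transfers to $V[K]$. The main obstacle is that $V[K]$ may contain new countable models that $V$ does not see. I would handle this by a complexity computation. "For every countable $M\ni\mathcal{I}$ and every $x$ pseudo-generic and generic over $M$, and every $\mathbb{Q}\in M$, there is $G$ with $x$ pseudo-generic over $M[G]$" can be coded as a projective statement. By Lemma~\ref{lem:weak*}, the existential over $G$ can be replaced by forcing over $M[x]$. This makes the inner clause arithmetic in a code of a countable model $M[x][H]$ together with the (provably $\mathbf\Delta^1_2$) ideal, so the whole statement is $\mathbf\Pi^1_2$-like, or at least $\mathbf\Pi^1_3$. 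If that fails, I would instead read the universality property as a provable property, in line with the paper's convention about statements holding "provably". Either way, given a model $M\in V[K]$, I would take a countable elementary submodel and pull the witness situation back via absoluteness, or directly apply it to $M$ if the hypothesis is understood as ranging over all countable models in every extension. Finally, a $\mathbb{P}_\mathcal{I}$-generic real over $M$ is a generic pseudo-generic real by the Fact on generic filters. So universality yields the weak universality property, and Theorem~\ref{thm:weakunivproper} finishes the proof.
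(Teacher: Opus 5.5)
\textbf{Verdict: the plan is the paper's, but the one substantive step is left unproved.} You check in $V[K]$ the hypotheses of Theorem~\ref{thm:weakunivproper}, exactly as the paper does, and your correctness argument is fine. What is not carried out is the transfer of the universality property to $V[K]$.

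\textbf{Where the gap is.} You leave the complexity at ``$\mathbf{\Pi}^1_2$-like, or at least $\mathbf{\Pi}^1_3$'', and the second option would break the argument. Shoenfield makes true $\mathbf{\Pi}^1_2$ (and $\mathbf{\Sigma}^1_3$) sentences persist to forcing extensions, but not $\mathbf{\Pi}^1_3$ ones: $\forall x\,(x\in L)$ holds in $L$ and fails after adding a Cohen real. Your inner clause refers to the ``(provably $\mathbf{\Delta}^1_2$) ideal'', and that is precisely what would push the complexity to $\mathbf{\Pi}^1_3$. Lemma~\ref{lem:weak*} is not needed at all. Your fallbacks (``read the universality property as a provable property'', ``if the hypothesis is understood as ranging over all countable models in every extension'') assume what has to be shown.

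\textbf{The missing observation.} Pseudo-genericity over a countable $M$ involves only $\mathcal{I}^M$, never $\mathcal{I}$ itself. Write universality as follows. For all reals $c,h,x$: suppose $c$ codes a well-founded model $M$ of the fragment containing the parameters of $\mathcal{I}$. Suppose $h$ is an $M$-generic filter on some $\mathbb{P}\in M$ with $x=\dot x^h$ for a name $\dot x\in M$. Suppose $x$ avoids every Borel set coded in $M$ that $M$ thinks is in $\mathcal{I}$. Then for every $\mathbb{Q}\in M$ there is an $M$-generic $g\subseteq\mathbb{Q}$ with the following property: $x$ avoids the Borel set coded by $\tau^g$ whenever $\tau\in M$ is a $\mathbb{Q}$-name and some condition in $g$ forces $\tau$ to code a set in $\mathcal{I}$.

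Now count quantifiers. Well-foundedness of $c$ is $\Pi^1_1$ and sits in the antecedent. The other hypotheses are $\Delta^1_1$ in $(c,h,x)$, since satisfaction in the structure coded by $c$ is $\Delta^1_1(c)$. The conclusion is $\Sigma^1_1$, because $\Sigma^1_1$ is closed under the number quantifier over $\mathbb{Q}\in M$. So the statement is $\mathbf{\Pi}^1_2$ in the real parameters of $\mathcal{I}$, and Shoenfield gives it in $V[K]$. This is exactly the paper's proof, and the definability of $\mathcal{I}$ plays no role in this step.

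\textbf{An alternative repair.} The $\mathbf{\Sigma}^1_1$ clause ``there is $g$'' is Mostowski-absolute, so universality in $V$ holds in every countable transitive model, i.e.\ provably. Then the remark after Definition~\ref{def:provablydef} applies. Either way, this must be argued rather than postulated.
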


\begin{proof}
    Note that both the universality property and the statement that a particular pair of $\mathbf\Sigma^1_2$ and $\mathbf\Pi^1_2$ formulas renders $\mathcal{I}$ $\mathbf\Delta^1_2$ on $\mathbf\Sigma^1_1$ in any countable model can be expressed in a $\mathbf\Pi^1_2$ way. By Shoenfield-absoluteness, the same still holds in any forcing extension.
\end{proof}

In exactly the same vein: 

\begin{thm}\label{thm:univproper}
    Let $\mathcal{I}$ be a $\sigma$-ideal on $X$ and $\mathcal{M}^+$ be $\mathcal{I}$-correct. Then the following are equivalent:

     \begin{enumerate}
        \item $\sI$ has the universality property for $\mathcal{M}$.
        \item For any $M \in \mathcal{M}$ and $(\mathbb{P}, \dot x) \in M$, where $\Vdash_{\mathbb{P}} \text{``$\dot x$ is pseudo-generic''}$ in $M$, $\{ \dot x^H : \text{$H$ is $\mathbb{P}$-generic over $M$} \} \in \mathcal{I}^+$.
    \end{enumerate}
\end{thm}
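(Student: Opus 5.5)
The plan is to mirror the proofs of Theorem~\ref{thm:propertoweak} and Theorem~\ref{thm:weakunivproper}, replacing ``$\mathbb{P}_\sI$-generic real'' by ``$\dot x^H$ for an arbitrary pair $(\mathbb{P},\dot x)\in M$ forcing pseudo-genericity''. Throughout, write $A = \{ \dot x^H : H \text{ is } \mathbb{P}\text{-generic over } M\}$.

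\textbf{(1)$\Rightarrow$(2).} Fix $M \in \mathcal{M}$ and $(\mathbb{P},\dot x)\in M$ as in (2).
\begin{enumerate}
\item Since $\mathcal{P}(\mathbb{P})^M$ is countable, Lemma~\ref{lem:genericsBorel} shows that $A$ is Borel.
\item Pass to a forcing extension $M[K]$ of $M$ in which $\mathcal{P}(\mathbb{P})^M$ (and $\mathcal{P}(\omega^\omega)^M$) is countable, for instance by taking $K$ to be $\Coll(\omega,(2^{|\mathbb{P}|+\mathfrak{c}})^M)$-generic. Then $A$ has a Borel code in $M[K]$.
\item Pick any $H$ that is $\mathbb{P}$-generic over $M$, so $x=\dot x^H$ is a generic pseudo-generic real over $M$. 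By (1), $x$ is universally pseudo-generic, so there is a $\Coll$-generic $K$ over $M$ with $x$ pseudo-generic over $M[K]$.
\item In $M[K]$ we have $x \in A$, and $A$ is coded there. Pseudo-genericity of $x$ over $M[K]$ therefore gives $A\notin \sI^{M[K]}$.
\item By $\mathcal{I}$-correctness of $\mathcal{M}^+$, $A \notin \sI$.
\end{enumerate}
One point needs checking here: the code of $A$ computed in $M[K]$ must define the same set as in $V$. This holds by the Mostowski absoluteness argument underlying Lemma~\ref{lem:genericsBorel}.

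\textbf{(2)$\Rightarrow$(1).} Argue by contradiction, as in Theorem~\ref{thm:propertoweak}. Suppose $x=\dot x^H$ is a generic pseudo-generic over $M$ (via some $(\mathbb{P},\dot x)\in M$), and $\mathbb{Q}\in M$ admits no $\mathbb{Q}$-generic $G$ over $M$ with $x$ pseudo-generic over $M[G]$.
\begin{enumerate}
\item By Lemma~\ref{lem:weak*}, or rather its contrapositive form, this failure is expressible in $M[x]$: no forcing over $M[x]$ adds such a $G$.
\item Some condition $p\in H$ forces both this failure and that $\dot x$ is pseudo-generic. Replace $\mathbb{P}$ by $\mathbb{P}\restriction p$.
\item Apply (2) to get that $A_p=\{\dot x^{H'} : H' \ni p \text{ generic over } M\}$ is $\sI$-positive.
\item Code $A_p$ in an extension $M[K]$, and let $G$ be $\mathbb{Q}$-generic over $M[K]$. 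By correctness, $A_p$ is still positive in $M[K][G]$, so $A_p\in(\mathbb{P}_\sI)^{M[K][G]}$.
\item Take $y\in A_p$ that is $\mathbb{P}_\sI$-generic over $M[K][G]$. Then $y$ is pseudo-generic over $M[G]$, and $y=\dot x^{H'}$ for some $\mathbb{P}$-generic $H'\ni p$ over $M$.
\item Then $G$ is generic over $M[y]=M[\dot x^{H'}]$ via a quotient, which contradicts what $p$ forces.
\end{enumerate}

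The main obstacle is in step 6 of (2)$\Rightarrow$(1). The forced statement lives in $M[H']$, whereas we only know that $G$ is generic over $M[y]\subseteq M[H']$. To handle this, I would first arrange, as in the proof of Lemma~\ref{lem:genericsBorel}, that $\dot x$ determines the generic. This means passing to the complete subalgebra generated by $\dot x$, so that $M[H']=M[y]$. It also requires checking that $G$ is generic over $M[H']$ by a quotient-forcing argument, and that $H'$ can be recovered so that the failure statement is evaluated in the right model.
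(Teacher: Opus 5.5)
Your proposal is correct and follows the route the paper intends when it says the theorem holds ``in exactly the same vein''. Direction (1)$\Rightarrow$(2) is the proof of Theorem~\ref{thm:weakunivproper}, with $\Coll$ chosen to make $\mathcal{P}(\mathbb{P})^M$ countable, and (2)$\Rightarrow$(1) is the proof of Theorem~\ref{thm:propertoweak} applied to $A_p$. Your fix for step 6 is the right one and matches the proof of Theorem~\ref{thm:virtgen}: pass to the complete subalgebra generated by $\dot x$, so that $H'$ is recoverable from $y$ and $M[K][G][y]$ is a forcing extension of $M[H']=M[y]$. Step 1 needs only the trivial direction (generics over the countable model $M[x]$ already exist in $V$), not Lemma~\ref{lem:weak*}.
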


\subsection{Virtual genericity}

\begin{definition}
   Let $\mathcal{I} \in M$ be a $\sigma$-ideal on $X$. Then $x \in X$ is said to be \emph{virtually $\mathbb{P}_\sI$-generic} over $M$ if there is some forcing extension $M[G]$ of $M$ so that $x$ is $\mathbb{P}_\sI$-generic over $M[G]$. Say that $\mathcal{I}$ has the \emph{virtual genericity property} for $\mathcal{M}$ if every generic $\mathcal{I}$-pseudo-generic over $M \in \mathcal{M}$ is virtually $\mathbb{P}_\sI$-generic over $M$.
\end{definition}

\begin{thm}\label{thm:virtgen}
     Let $\mathcal{I}$ be $\sigma$-ideal on $X$ and $\mathcal{M}$ a collection of countable models. If $\mathcal{I}$ has the universality property for $\mathcal{M}$ then $\sI$ has the virtual genericity property for $\mathcal{M}$.
\end{thm}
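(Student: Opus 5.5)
The plan is to find a single collapse extension $M[G]$ of $M$ over which $x$ is still pseudo-generic. Then we use the fact that, over $M[G]$, $x$ is itself added generically, and convert the forcing that adds $x$ into a dense piece of $(\mathbb{P}_\sI)^{M[G]}$.

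\textbf{Step 1 (collapse).} Let $x$ be generic over $M$ via some $(\mathbb{P},\dot x)\in M$, with $x$ $\sI$-pseudo-generic over $M$. Strengthening the condition in the generic filter if necessary, we may assume that $\Vdash_{\mathbb{P}}$ ``$\dot x$ is pseudo-generic over $M$''. Let $\mathbb{Q}=\Coll(\omega,\kappa)$ with $\kappa$ large in $M$, say $\kappa\ge\vert\mathcal{P}(\mathbb{P})\vert^M$ and $\kappa \geq (2^{\mathfrak c})^M$. The universality property gives a $\mathbb{Q}$-generic $G$ over $M$ such that $x$ is pseudo-generic over $M[G]$. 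By Lemma~\ref{lem:weak*} we may also take $G$ generic over $M[x]$. Thus $M[x][G]=M[G][x]$, and $x$ is added over $M[G]$ by a quotient forcing $\mathbb{R}\in M[G]$.

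\textbf{Step 2 (from $\mathbb{R}$ to $\mathbb{P}_\sI$).} Because $\kappa$ was large, $\mathbb{R}$ is forcing-equivalent in $M[G]$ to Cohen forcing on a countable poset. For $r\in\mathbb{R}$ set $B_r=\{\dot x^K : K \text{ is } \mathbb{R}\text{-generic over } M[G],\ r\in K\}$. This set is Borel and coded in $M[G]$ by Lemma~\ref{lem:genericsBorel}. I will fix the name so that a condition of the generic forces $\dot x$ to be pseudo-generic over $M[G]$, which is possible since this holds for $x$. Then no such $B_r$ lies in $\sI^{M[G]}$. So $r\mapsto B_r$ is an order-preserving map from (a dense part of) $\mathbb{R}$ into $(\mathbb{P}_\sI)^{M[G]}$.

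\textbf{Step 3 (density).} To conclude that $\{p\in(\mathbb{P}_\sI)^{M[G]}: x\in p\}$ is generic, it suffices to show that $r\mapsto B_r$ is a dense embedding up to positivity. Precisely: for every $r$ and every positive Borel $d\subseteq B_r$ in $M[G]$, some $r'\le r$ has $B_{r'}\subseteq d$, or at least $B_{r'}\setminus d\in\sI$. Granting this, genericity of the filter of $x$ on $\mathbb{R}$ over $M[G]$ transfers to genericity of the induced filter on $(\mathbb{P}_\sI)^{M[G]}$.

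I expect this density step to be the main obstacle, since it is not automatic. The first thing I would try is to argue by contradiction using universality once more. If some $d\subseteq B_r$ is positive but $r$ forces $\dot x\notin d$, then one shows that $d\in\sI^{M[G]}$, because $d$ is covered by the image of the non-generic part of $\mathbb{R}$. To make this work, I would use universality not for $\Coll(\omega,\kappa)$ alone but for a two-step iteration $\mathbb{Q}=\Coll(\omega,\kappa)*\dot{\mathbb{S}}$ chosen in $M$. Here $\dot{\mathbb{S}}$ is a second collapse that makes $(\mathbb{P}_\sI)^{M[G]}$ countable, so that the second step supplies a model in which the relevant $d$'s are countable. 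Then $x$ remains pseudo-generic over the larger model, which excludes $x$ from the bad sets. Since the definition of virtual genericity allows any forcing extension, I can take the final model to be this larger one. If that route fails, I would rerun the argument of Theorem~\ref{thm:propertoweak} in reverse: positivity of $B_r\cap d$, witnessed by the pseudo-generic $x$, should give a condition $r'$ of $\mathbb{R}$ forcing $\dot x\in d$.
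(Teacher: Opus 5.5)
Your Step 3 is not just the hard part: it is false in general. So no argument can show that $x$ itself is $\mathbb{P}_\sI$-generic over the $M[G]$ that universality hands you.

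\textbf{Counterexample.} Take $\sI$ to be the ideal of countable subsets of $2^\omega$, so $\mathbb{P}_\sI$ is Sacks forcing and $\sI$ has the universality property. Let $x$ be Cohen over $M$, and let $G$ be $\Coll(\omega,\kappa)$-generic over $M[x]$. This $G$ is a legitimate output of your Step 1, since $x\notin M[G]$. By the product lemma, however, $x$ is still Cohen over $M[G]$, so it avoids every closed nowhere dense set coded in $M[G]$. Perfect nowhere dense sets are dense in $(\mathbb{P}_\sI)^{M[G]}$, so $x$ is not Sacks-generic over $M[G]$. Hence the density statement of Step 3 must fail for this $G$.

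\textbf{The fallbacks do not help.} A second collapse $\dot{\mathbb S}$ leaves $x$ Cohen over $M[G][H]$ (take $G*H$ generic over $M[x]$). Your contradiction arguments would need either that a positive $d$ is covered by something small, or that pseudo-genericity of $x$ gives $x\in d$; neither holds.

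\textbf{A separate error in Step 2.} The set of $\mathbb{R}$-generics over $M[G]$ has no code in $M[G]$. It would be a nonempty Borel set coded in $M[G]$ with no element in $M[G]$, contradicting Mostowski absoluteness. Lemma~\ref{lem:genericsBorel} only gives a code in models that see $\mathcal{P}(\mathbb{R})^{M[G]}$ as countable, and $M[G]$ does not.

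\textbf{The missing idea.} The witness for $x$ need not be, and in general is not, the collapse supplied by universality. Instead, argue by contradiction via the forcing theorem, using a different real.

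First pass to a complete subforcing so that $M[H]=M[x]$. The statement ``$x$ is virtually $\mathbb{P}_\sI$-generic over $M$'' is about $M[x]$, so if it fails, some $s\in H$ forces that it fails.

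Use universality only to get a collapse $M[G]$ in which $\mathcal{P}(\mathbb{P})^M$ is countable and over which $x$ remains pseudo-generic. Then
$p_s=\{\dot x^K: s\in K,\ K \text{ is } \mathbb{P}\text{-generic over } M\}$
consists of generics over $M$, not over $M[G]$. It is Borel with a code in $M[G]$, and $p_s\notin\sI^{M[G]}$ because $x\in p_s$.

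Now choose a different real $y\in p_s$ that is $\mathbb{P}_\sI$-generic over $M[G]$. Then $y=\dot x^K$ with $s\in K$. Also $G$ is generic over $M[y]$, because $M[G][y]$ is a forcing extension of $M$. So $y$ is virtually generic, contradicting what $s$ forces.

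In the Cohen example, this is exactly why the witness for $x$ must be some other $G'$ entangled with $x$. Your direct approach has no way to produce such a $G'$.
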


\begin{proof}
    Let $M \in \mathcal{M}$ and $x$ be $\mathbb{Q}$-generic and $\sI$-pseudo-generic over $M$, for some forcing notion $\mathbb{Q} \in M$. More specifically, there is a $\mathbb{Q}$-generic filter $H$ over $M$ and a $\mathbb{Q}$-name $\dot x \in M$ so that $x = \dot x^H$. Without loss of generality, by passing to a complete subforcing of $\mathbb{Q}$, we can assume that $\mathbb{Q}$ forces that the generic extension is generated by $\dot x$ (i.e. that $M[H] = M[x]$).
    Suppose towards a contradiction that $x$ is not virtually generic over $M$. In particular, one can't force an object $G$ over $M[x]$, so that $x$ is $\mathbb{P}_\sI$-generic over $M[G]$.\footnote{Note that in contrast to Lemma~\ref{lem:weak*}, in the definition of virtual genericity, $G$ has to be generic over $M[x]$ as $G$ and $x$ live in a common forcing extension of $M$.} This can be expressed in the forcing language of $\mathbb{Q}$, so there is some $s \in \mathbb{Q}$ forcing this over $M$.
    Applying universality, there is a forcing extension $M[G]$ of $M$ in which $\mathcal{P}(\mathbb{Q})^M$ is countable and over which $x$ is still pseudo-generic. Then the set $$p_s := \{ \dot x^K : s \in K \wedge \text{$K$ is $\mathbb{Q}$-generic over $M$}\}$$ is a Borel set with a code in $M[G]$. Moreover, $p_s \notin (\sI)^{M[G]}$ as $x \in p_s$. Let $y$ be $\mathbb{P}_\sI$-generic over $M[G]$, with $y \in p_s$. Since $M[G][y]$ is a forcing extension of $M$, $G$ is also generic over $M[y]$ (via some quotient forcing). But $y = \dot x^K$ for $K$ $\mathbb{Q}$-generic over $M$, $s \in K$. This poses a contradiction.
\end{proof}

\begin{thm}
     Let $\mathcal{M}$ be $\mathcal{I}$-correct, where $\mathcal{M}^+ \subseteq \mathcal{M}$ (e.g. $\mathcal{I}$ is provably $\mathbf\Delta^1_2$ on $\mathbf{\Sigma}^1_1$). Then $\mathcal{I}$ has the universality property for $\mathcal{M}$ if and only if the conjunction of the following holds:
     \begin{enumerate}
         \item $\mathbb{P}_\sI$ is $\mathcal{M}$-proper.
         \item $\sI$ has the virtual genericity property for $\mathcal{M}$.
     \end{enumerate}
\end{thm}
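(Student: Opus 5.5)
The forward direction combines two earlier results. Assume $\mathcal{I}$ has the universality property for $\mathcal{M}$. Then in particular it has the weak universality property, since every $\mathbb{P}_\sI$-generic real over $M$ is a generic pseudo-generic real. Because $\mathcal{M}^+\subseteq\mathcal{M}$ and $\mathcal{M}$ is $\mathcal{I}$-correct, $\mathcal{M}^+$ is $\mathcal{I}$-correct. Theorem~\ref{thm:weakunivproper} therefore gives $\mathcal{M}$-properness, which is (1). Theorem~\ref{thm:virtgen} gives the virtual genericity property, which is (2).

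For the converse, assume (1) and (2). Let $M\in\mathcal{M}$, let $x$ be a generic $\mathcal{I}$-pseudo-generic real over $M$, and let $\mathbb{Q}\in M$. By (2) there is a forcing extension $M[G_0]$ of $M$ such that $x$ is $\mathbb{P}_\sI$-generic over $M[G_0]$. Since $\mathcal{M}^+\subseteq\mathcal{M}$, we have $M[G_0]\in\mathcal{M}$.

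Next apply (1) to $M[G_0]$. By Theorem~\ref{thm:propertoweak} (upwards-correctness of $\mathcal{M}^+$ holds since $\mathcal{M}^+\subseteq\mathcal{M}$ is $\mathcal{I}$-correct), or directly by Theorem~\ref{thm:weakunivproper}, $\mathcal{I}$ has the weak universality property for $\mathcal{M}$. Hence $x$ is universally $\mathcal{I}$-pseudo-generic over $M[G_0]$. Now $\mathbb{Q}\in M\subseteq M[G_0]$, but a $\mathbb{Q}$-generic over $M[G_0]$ is also $\mathbb{Q}$-generic over $M$. So there is $G$, $\mathbb{Q}$-generic over $M[G_0]$ and hence over $M$, such that $x$ is pseudo-generic over $M[G_0][G]$.

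It remains to check that $x$ is pseudo-generic over $M[G]$. Every Borel set in $\mathcal{I}^{M[G]}$ lies in $\mathcal{I}$ by correctness, since $M[G]\in\mathcal{M}$. Its code lies in $M[G_0][G]$. Since $M[G_0][G]\in\mathcal{M}$ as well, correctness again places the set in $\mathcal{I}^{M[G_0][G]}$. As $x$ avoids every such set, $x$ is pseudo-generic over $M[G]$, so $x$ is universally pseudo-generic over $M$.

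The main point needing care is the bookkeeping of which models lie in $\mathcal{M}$. The hypothesis $\mathcal{M}^+\subseteq\mathcal{M}$ guarantees that $M[G_0]$, $M[G]$ and $M[G_0][G]$ all do. This is what allows properness to be applied over the intermediate model $M[G_0]$ rather than only over $M$, and what makes the transfer of smallness between $M[G]$ and $M[G_0][G]$ go through via $\mathcal{I}$ in $V$.
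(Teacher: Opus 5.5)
Your proof is correct and is essentially the paper's argument. For the converse you pass to the extension $M[G_0]$ over which $x$ is $\mathbb{P}_\sI$-generic, use $M[G_0]\in\mathcal{M}$ to get weak universality there from properness, and pull the $\mathbb{Q}$-generic back to $M$. The paper leaves the forward direction implicit (it is exactly Theorems~\ref{thm:weakunivproper} and \ref{thm:virtgen}) and does not spell out the transfer of pseudo-genericity from $M[G_0][G]$ down to $M[G]$, which you justify via $\mathcal{I}$-correctness.
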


\begin{proof}
    Let $x$ be generic pseudo-generic over $M \in \mathcal{M}$ and $\mathbb{Q} \in M$. By (2), $x$ is $\mathbb{P}_\sI$-generic over some $M[H]$. We have that $M[H] \in \mathcal{M}$, so according to (1), there is a $\mathbb{Q}$-generic $G$ over $M[H]$ so that $x$ is still pseudo-generic over $M[H][G]$. But of course $G$ is also $\mathbb{Q}$-generic over $M$ and $x$ is pseudo-generic over $M[G]$.
\end{proof}

\begin{exmp}
    Every new generic real over $M$ is a Sacks real over some forcing extension of $M$, every generic unbounded real over $M$ is a Miller real over a forcing extension of $M$, every generic splitting real is virtually generic for splitting forcing, every generic dominating real is virtually generic for dominating forcing, etc.
\end{exmp}

One can view this as an argument for the canonicity of the forcings $\mathbb{P}_R$ for adding an $R$-dominating real.

\subsection{Game characterizations}

\begin{definition}
    Let $\mathcal{I}$ be a $\sigma$-ideal on $X$, $\mathbb{P}$ and $\mathbb{Q}$ forcing notions, and $\dot x$ a $\mathbb{P}$-name for an element of $X$. The game $\Game_{u}(\mathcal{I}, \mathbb{Q}, (\mathbb{P}, \dot x))$ is defined as follows:
        \begin{center}
            \begin{tabular}{c|c c c c c c}
            I &  $p_0, \dot B_0, A_0$ & & $p_2 \leq p_1, \dot B_1, A_1$ & & \dots\\
            II & & $p_1 \leq p_0, q_0 \in A_0$ & & $p_3 \leq p_2, q_0 \geq q_1 \in A_1$ &\\
\end{tabular}
        \end{center}
        The $\dot B_n$ are $\mathbb{Q}$-names for Borel sets in the ideal, $A_n$ are open dense subsets of $\mathbb{Q}$ and $p_n$, $q_n$ are conditions in $\mathbb{P}$ and $\mathbb{Q}$ respectively. For a given run of the game, let $G$ be the filter on $\mathbb{Q}$ generated by $\{q_n : n \in \omega \}$ and $H$ generated by $\{p_n : n \in \omega \}$. Player II wins if $\dot x^H \notin \bigcup_{n \in \omega} \dot B_n^G$.

        We simply write $\Game_{u}(\mathcal{I}, \mathbb{Q})$ when $\mathbb{P} = \mathbb{P}_{\mathcal{I}}$ and $\dot x$ is the name for the $\mathbb{P}_{\mathcal{I}}$-generic real. 
\end{definition}

\begin{thm}\label{thm:gameexactchar}
    The game $\Game_{u}(\mathcal{I}, \mathbb{Q},(\mathbb{P}, \dot x))$ is determined. Moreover, the following are equivalent: 
    
    \begin{enumerate}
        \item Player II has a winning strategy in $\Game_{u}(\mathcal{I}, \mathbb{Q}, (\mathbb{P}, \dot x))$.
        \item For any sufficiently elementary countable model $M$ containing $\mathcal{I}$, $\mathbb{Q}$, $\mathbb{P}$ and $\dot x$, and for any $\mathbb{P}$-generic $H$ over $M$, there is a $\mathbb{Q}$-generic $G$ over $M$, generic over $M[H]$, with $\dot x^H$ $\sI$-pseudo-generic over $M[G]$.
    \end{enumerate}
\end{thm}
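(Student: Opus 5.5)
Determinacy comes first, via Borel determinacy. The winning condition is not literally Borel in the space of plays because moves range over sets such as $\mathbb{P}$, $\mathbb{Q}$, names and dense sets, so I would use a different route. Player I plays $\dot B_n$ and $A_n$ and Player II plays $q_n \in A_n$, so $G$ is determined by the $q_n$; similarly $H$. The outcome, $\dot x^H \notin \bigcup_n \dot B_n^G$, is determined by countably many pieces of information, each fixed at some finite stage. The standard route is therefore to show that the payoff is closed for one of the players after suitable bookkeeping, or at least Borel in the tree of finite plays with discrete move sets. Gale--Stewart or Martin's Borel determinacy for games on arbitrary sets then applies. The point to check is that "$\dot x^H \in \dot B_n^G$" depends on the run in a Borel way once names are replaced by countable data. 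To make this work, I would let I, when playing $\dot B_n$ and $A_n$, implicitly commit to nothing, and instead have I win exactly when some evaluation succeeds. Rather than fuss, I would assume the standard fact that games of length $\omega$ with Borel payoff (in the product topology of the discrete move sets) are determined, and verify that the payoff is Borel. If $\dot x^H$ and $\dot B_n^G$ are not decided by the play, with only filters generated, I may need to strengthen II's obligations (for instance, II also meets dense sets deciding $\dot x$), which can be absorbed by I's choice of $A_n$ and by II extending $p$'s.

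For (1)$\Rightarrow$(2), let $\sigma$ be a winning strategy for II and take $M$ elementary containing $\sigma$. Given $H$ generic over $M$, I would build a play in $M[H]$-style fashion. Enumerate, in some forcing extension $M[H][K]$ where the relevant parts of $M$ are countable, all $\mathbb{Q}$-names in $M$ for Borel sets in $\mathcal{I}$ and all open dense subsets of $\mathbb{Q}$ in $M$. Player I plays these, and plays $p_{2n}$ from $H$ meeting the dense sets of $\mathbb{P}$ in $M$, using the strategy's answers, which lie in $M$ by elementarity. The key step is that I can choose $p_{2n+2} \le p_{2n+1}$ in $H$. This follows by a density argument: the set of conditions which, if II answered via $\sigma$, lead into $H$ is handled by genericity of $H$ over $M$ applied to dense sets defined from $\sigma$ in $M$. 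The resulting $G$ meets every dense set of $M$, the $H$-play is $H$ itself, and the win gives pseudo-genericity over $M[G]$. The play lives in a forcing extension of $M[H]$, so $G$ is generic over $M[H]$, as in Lemma~\ref{lem:weak*}.

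For (2)$\Rightarrow$(1), by determinacy it suffices to refute a winning strategy $\tau$ for I. Take $M$ containing $\tau$ and a generic $H$ containing $\tau$'s first move and, by density, compatible with $\tau$'s subsequent demands. Apply (2) to get $G$ generic over $M[H]$. Then construct a play against $\tau$ where II plays $p$'s from $H$ and $q$'s from $G$ meeting $\tau$'s $A_n$, which are in $M$. All $\dot B_n$ are in $M$, so II wins, a contradiction. The genericity of $G$ over $M[H]$ is needed so that II can choose simultaneously compatible moves in a joint generic for the product-like iteration. The main obstacle I expect is the determinacy and payoff-definability bookkeeping in the first step.
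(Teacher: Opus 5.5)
The real gap is the determinacy step. Your argument for (2)$\Rightarrow$(1) runs through it (``it suffices to refute a winning strategy $\tau$ for I''), so that direction is not established either. Martin's theorem needs a payoff that is Borel in the product of discrete move sets, and this payoff need not be Borel.

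Each bit of $\dot x^H$ and of $\dot B_n^G$ is indeed fixed at a finite stage. However, the filters generated by the moves need not be generic over anything containing the names, so $\dot B_n^G$ need not be a Borel code, and membership in the set coded by an arbitrary real is not a Borel relation. For instance, take $\mathbb{Q}$ to be Cohen forcing. I may legally play a name $z(\dot c)$, with $z$ continuous, that is forced to code a fixed small set, while $\{c : z(c)\text{ is a Borel code}\}$ is a comeager, non-Borel coanalytic set. Then whether $\dot x^H\in\dot B_0^G$ can depend on the run in a non-Borel way.

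``Strengthening II's obligations'' does not rescue this. It changes the game, and proving the modified game equivalent to the original needs exactly the argument that is missing. Analytic determinacy for such games is not available in ZFC.

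The missing idea, which is how the paper proceeds, is to build both winning strategies directly, so that determinacy falls out of the dichotomy (2) versus $\neg$(2).
\begin{itemize}
\item If (2) holds, the forcing theorem and elementarity give, in every sufficiently elementary model, a $\mathbb{P}$-name $\dot{\mathbb{Q}}'$ and a $\mathbb{P}*\dot{\mathbb{Q}}'$-name $\dot G$ for the object in (2).
\item Along the play, II builds an increasing chain of countable elementary submodels $M_{2n+1}$ containing all of I's moves so far. A single fixed $M$ cannot work against an arbitrary opponent, whose moves lie in no fixed countable model.
\item Together with this chain, II builds a descending sequence $(p_{2n+1},\dot r_{2n+1})$ in $\mathbb{P}*\dot{\mathbb{Q}}'$. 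Each step extends $(p_{2n},\dot r_{2n-1})$ so as to force some $q\in A_n$ below $q_{n-1}$ into $\dot G$.
\item Bookkeeping ensures that the resulting filter $K$ is generic over $M_\omega=\bigcup_n M_{2n+1}$ and that the $q_n$ generate $\dot G^K$. Since $M_\omega$ is elementary and contains every $\dot B_n$, II wins.
\item If (2) fails, I wins by the symmetric construction, started from a condition forcing the failure.
\end{itemize}

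Your direct proof of (1)$\Rightarrow$(2) is correct and a legitimate alternative to the paper's contrapositive. In it, I chooses $p_{2n}\in H$ so that the $\mathbb{P}$-part of $\sigma$'s reply lands in $H$, using density of $\{\sigma(h^\frown(p',\dot B_n,A_n))_{\mathbb{P}} : p'\le p_{2n-1}\}\in M$.

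In your refutation of $\tau$, however, $H$ cannot be chosen in advance to be ``compatible with $\tau$'s subsequent demands''. Those demands depend on the $q_n$ taken from $G$, and $G$ is obtained from $H$ only afterwards. The same density trick, applied during the play (using $q_n\in M$), fixes this, and it needs no genericity of $G$ over $M[H]$. Even so, this only refutes strategies of I, which without determinacy does not give II a winning strategy.
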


\begin{proof}
   Assuming the statement in (2) holds, let's describe a winning strategy for II in $\Game_{u}$. Let $\theta$ be large enough and $M \preccurlyeq H(\theta)$ countable with $\mathcal{I},\mathbb{Q}, \mathbb{P}, \dot x \in M$. Then working in $M$ there is a $\mathbb{P}$-name $\dot{\mathbb{Q}}'$ for a forcing notion adding an object $G$ over $M^{\mathbb{P}}$ as claimed by (2). In other words, there is a $\mathbb{P} * \dot{\mathbb{Q}}'$-name $\dot G$ for such an object. By elementarity, the same holds in any larger elementary submodel $M' \supseteq M$. Along a run of the game, II constructs an auxiliary sequence $\langle M_{2n+1} : n \in \omega \rangle$ of increasing countable elementary submodels and a decreasing sequence $\langle (p_{2n+1}, \dot r_{2n+1}) : n \in \omega \rangle$ in $\mathbb{P} * \dot{\mathbb{Q}}'$. At each turn, $M_{2n+1}$ will contain all the previous models (and $M_1 \supseteq M$) and all the objects played by Player I so far. Player I having played $p_{2n}$ and $A_{2n}$, II finds $(p_{2n+1}, \dot r_{2n+1}) \leq (p_{2n}, \dot r_{2n-1})$ forcing ``$\check{q} \in \dot G$'' for a condition $q \in A_{2n}$ extending the previous condition $q_{n-1}$ and then plays $q_n \leq q$ and $p_{2n+1}$. Letting $M_\omega = \bigcup_{n \in \omega} M_{2n+1}$, II can clearly also ensure that $\{ (p_{2n+1}, \dot r_{2n+1}) : n \in \omega \}$ generates a $\mathbb{P} * \dot{\mathbb{Q}}'$-generic filter $K$ over $M_\omega$ and that $\{q_n : n \in \omega \}$ generates $\dot G^K$ (by possibly hitting more dense sets than required by Player I). Since $M_\omega$ is also elementary, it follows that II wins.

   Now suppose that (2) fails. This is witnessed by a model $M$ and there is a condition $p \in \mathbb{P}^M$ forcing that no object $G$ as claimed can be added over $M^{\mathbb{P}}$. We describe a winning strategy for I. Player I starts with $p_0 = p$ and plays fairly similar to II above, to which end there will be an elementary model $M_\omega$ and $\{p_n : n \in \omega\}$ generates a $\mathbb{P}$-generic filter over $M_\omega$. Moreover I can play the $A_n$ and $\dot B_n$ so that $\{q_n : n \in \omega\}$ becomes $\mathbb{Q}$-generic over $M_\omega$ and the $\dot B_n$ enumerate all $\mathbb{Q}$-names in $M_\omega$ for ideal Borel sets. If II wins, then $\dot x^H$ is pseudo-generic over $M_\omega[G]$. By the argument of Lemma~\ref{lem:weak*}, no loss of generality is lost assuming that $G$ is generic over $M[H]$. This yields a contradiction.
\end{proof}

\begin{thm}\label{thm:gamechar}
    Let $\mathcal{M}$ be the collection of countable elementary submodels of sufficiently large $H(\theta)$.
    \begin{enumerate}
        \item Player II has a winning strategy in all games $\Game_u(\mathcal{I}, \mathbb{Q}, (\mathbb{P}, \dot x))$, where $\dot x$ is a $\mathbb{P}$-name for an $\mathcal{I}$-pseudo-generic, iff $\mathcal{I}$ has the universality property for $\mathcal{M}$.
        \item Player II has a winning strategy in all games $\Game_u(\mathcal{I}, \mathbb{Q})$ iff $\mathcal{I}$ has the weak universality property for $\mathcal{I}$.
    \end{enumerate}
\end{thm}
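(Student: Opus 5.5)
The plan is to derive both parts from the equivalence in Theorem~\ref{thm:gameexactchar}. For a fixed triple $(\mathcal{I},\mathbb{Q},(\mathbb{P},\dot x))$, Player II has a winning strategy exactly when, for every sufficiently elementary countable $M$ containing the parameters and every $\mathbb{P}$-generic $H$ over $M$, some $\mathbb{Q}$-generic $G$ over $M$ (generic over $M[H]$) keeps $\dot x^H$ pseudo-generic over $M[G]$. Both directions of (1) and (2) then reduce to matching this condition against the definitions of (weak) universality for $\mathcal{M}$.

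For the forward direction of (1), fix $M\in\mathcal{M}$ with $\mathcal{I}\in M$, a generic pseudo-generic real $x$ over $M$, and $\mathbb{Q}\in M$.
\begin{enumerate}
\item Write $x=\dot x^H$ with $(\mathbb{P},\dot x)\in M$ and $H$ a $\mathbb{P}$-generic filter over $M$.
\item The statement that $\dot x^H$ is pseudo-generic over $M$ can be forced by some $p\in H$. Replace $\mathbb{P}$ by $\mathbb{P}\restriction p$, so that $\Vdash_{\mathbb{P}}$``$\dot x$ is pseudo-generic'' holds in $M$.
\item By elementarity, this forcing statement holds in $H(\theta)$. So $\dot x$ is a $\mathbb{P}$-name for an $\mathcal{I}$-pseudo-generic in the sense of the hypothesis, and Player II has a winning strategy in $\Game_u(\mathcal{I},\mathbb{Q},(\mathbb{P},\dot x))$.
\item Theorem~\ref{thm:gameexactchar}(2), applied to $M$ (sufficiently elementary since $M\in\mathcal{M}$), yields the required $G$.
\end{enumerate}

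For the backward direction of (1), let $\dot x$ be a $\mathbb{P}$-name for a pseudo-generic and suppose Player II has no winning strategy. By determinacy and Theorem~\ref{thm:gameexactchar}, there are a sufficiently elementary countable $M$ containing the parameters and a $\mathbb{P}$-generic $H$ over $M$ for which no suitable $G$ exists. Then $\dot x^H$ is pseudo-generic over $M$: this holds because $\Vdash_{\mathbb{P}}$``$\dot x$ is pseudo-generic'' reflects into $M$ by elementarity. It is also generic over $M$. Universality for $\mathcal{M}$ therefore gives some $\mathbb{Q}$-generic $G$ with $\dot x^H$ pseudo-generic over $M[G]$. Lemma~\ref{lem:weak*} lets us take this $G$ generic over $M[\dot x^H]$. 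To get genericity over $M[H]$, first pass to a complete subforcing so that $M[H]=M[\dot x^H]$, as in the proof of Theorem~\ref{thm:virtgen}. Alternatively, rerun the Mostowski-absoluteness argument of Lemma~\ref{lem:weak*} inside $M[H][K]$, where $\mathcal{P}(\mathbb{Q}\times\omega)^M$ is countable. Either way this contradicts the choice of $M$ and $H$.

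Part (2) runs the same way with $\mathbb{P}=\mathbb{P}_\mathcal{I}$ and $\dot x$ the generic real, where $\dot x$ is automatically forced to be pseudo-generic; the "for $\mathcal{I}$" in the statement should read "for $\mathcal{M}$". If $\mathcal{I}$ has the weak universality property, then for any $\mathbb{P}_\mathcal{I}$-generic $x$ over $M\in\mathcal{M}$ we get $G$ keeping $x$ pseudo-generic. Lemma~\ref{lem:weak*} makes $G$ generic over $M[x]=M[H]$, so Theorem~\ref{thm:gameexactchar}(2) holds and Player II wins. Conversely, a winning strategy for II gives (2) of Theorem~\ref{thm:gameexactchar}, which is exactly weak universality for sufficiently elementary $M$.

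The main obstacle is making "sufficiently elementary" in Theorem~\ref{thm:gameexactchar} line up with membership in $\mathcal{M}$. I would take $\theta$ large enough that the relevant $\mathbb{P}$, $\mathbb{Q}$ and names appear in $H(\theta)$, and read the theorem for all countable $M\prec H(\theta)$ containing the parameters. A secondary point is that universality for $\mathcal{M}$ quantifies over all generic reals, not just over names that force pseudo-genericity. The restriction to $\mathbb{P}\restriction p$ in the forward direction handles this, since $\mathbb{P}\restriction p\in M$.
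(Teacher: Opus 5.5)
Your proposal is correct and follows the route the paper intends. The paper gives no separate proof and treats this theorem as a direct consequence of Theorem~\ref{thm:gameexactchar}; your restriction to $\mathbb{P}\restriction p$ and your use of the Lemma~\ref{lem:weak*} absoluteness argument inside $M[H][K]$ are exactly the adjustments that derivation needs. One correction: your first alternative (passing to the complete subforcing generated by $\dot x$) does not by itself make $G$ generic over $M[H]$, because $H$ is fixed and $M[H]$ may be strictly larger than $M[\dot x^H]$. Rely on the second alternative, which is also what the paper does in the proof of Theorem~\ref{thm:gameexactchar}.
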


Let us remark a few variations on the game $\Game_u$ above. For one, if $\mathcal{I}$ is upwards-absolute between forcing extensions (which is the case for most natural ideals) for the characterisation in Theorem~\ref{thm:gamechar} to hold, it would be sufficient for Player I to play only a single name for a Borel set $\dot B$ with II's winning condition becoming ``$\dot x^H \notin \dot B^G$". This is the argument of Lemma~\ref{lem:singleBorel}. 

Next, if $\mathcal{I}$ has a Borel base $(B_x)_{x \in \omega^\omega}$, Player I might play names $\dot z_n$ for reals instead of Borel sets and II wins if $x^H \notin \bigcup_{n \in \omega} B_{\dot z_n^G}$. This is an equivalent game. Clearly it is an easier game for II to win, and whenever I would play $\dot B_n$ in the original game, they can instead reserve infinitely many later rounds $i \in I_n \subseteq \omega$ where they play names $\dot z_i$ so that ``$\dot B_n \subseteq \bigcup_{i \in I_n} B_{\dot z_{i}}$" is forced and play enough dense sets $\dot A$ so that $\dot B_n^G \subseteq \bigcup_{i \in I_n} B_{\dot z_{i}^G}$ holds.

Finally, consider $\Game_{u'}(\mathcal{I}, \mathbb{Q}, (\mathbb{P}, \dot x))$: 

 \begin{center}
    \begin{tabular}{c|c c c c c c}
            I &  $p_0, \dot B_0$ & & $p_2 \leq p_1, \dot B_1$ & & $q_2 \leq q_1, \dot B_2$ \\
            II & & $p_1 \leq p_0, q_0$ & & $p_3 \leq p_2, q_1 \leq q_0$ & &\dots
    \end{tabular}
\end{center}

Here note that Player I alternates between extending $p_n$ and $q_n$. The winning condition for II is as in $\Game_u$. Then this game is equivalent to $\Game_u(\mathcal{I}, \mathbb{Q}, (\mathbb{P}, \dot x))$. There are several ways to see this, but the easiest is probably to just rerun the argument from Theorem~\ref{thm:gameexactchar}. There is no added difficulty in showing that Player I wins when (2) of Theorem~\ref{thm:gameexactchar} fails. When (2) holds, argue as in Theorem~\ref{thm:gameexactchar} and recall that $\mathbb{P} * \dot{\mathbb{Q}}'$ adding a $\mathbb{Q}$-generic is equivalent to the existence of a projection map $\pi \colon \mathbb{P}_\sI*\dot{\mathbb{Q}}' \to \mathbb{B}$, where $\mathbb{B}$ is the complete Boolean of which $\mathbb{Q}$ is a dense subset. Here, being a projection means that 

    \begin{enumerate}[(a)]
    \item $\forall r \leq p \in \mathbb{P}_\sI*\dot{\mathbb{Q}}'( \pi(r) \leq \pi(p))$ and
    \item $\forall p \in \mathbb{P}_\sI*\dot{\mathbb{Q}}'$ and $b \leq \pi(p)$, there is $r \leq p$ such that $\pi(r) \leq b$.\footnote{See e.g. \cite[Remark 5.3]{Cummings2010}, keeping in mind though that the remark is sloppily formulated as it cannot be guaranteed in general that the projection maps to the trivial condition, as required by \cite[Definition 5.2]{Cummings2010}.}
    \end{enumerate}

II plays similarly to before, except that in turns after which I extends in $\mathbb{Q}$, II plays $q \leq \pi(p_{n}, \dot r_{n})$, to which end any further extension $q'\leq q$ played by I can be met with $(p_{n+1}, \dot r_{n+1})$ such that $\pi(p_{n+1}, \dot r_{n+1}) \leq q'$. In the next turn, Player I plays $p_{n+2} \leq p_{n+1}$, thus still $\pi(p_{n+2}, \dot r_{n+1}) \leq q'$, posing no problem for II to continue in this fashion. In this way, as before, if II ensures that $G$ and $K$ are generic over $M_{\omega}$, they obtain that $\pi[K] \cap \mathbb{Q} = \dot G^K = G$, and $G$ is as claimed by (2).

\section{Preservation theorems}\label{sec:preserv}

In this section, we observe that preservation properties of $\mathbb{P}_{\mathcal{I}}$ can also be viewed as universality-like statements.

\begin{thm}\label{thm:preseration}
    Let $\mathbb{P}_{\mathcal{I}}$ be $\mathcal{M}$-proper and $\mathcal{M}^+$ be $\mathcal{I}$-correct (e.g. $\mathcal{I}$ is provably $\mathbf\Delta^1_2$ on $\mathbf\Sigma^1_1$), where $\mathcal{M}$ contains all countable elementary submodels. Let $\mathcal{J}$ be have a provable Borel base and let $A \in \mathcal{J}^+$. Then the following are equivalent: 
    \begin{enumerate}
        \item $\mathbb{P}_{\mathcal{I}}$ preserves that $A\in \mathcal{J}^+$.
        \item For any $M \in \mathcal{M}$ with $\mathcal{I}, \mathcal{J} \in M$, and any $p \in (\mathbb{P}_{\mathcal{I}})^M$, there is a $\mathbb{P}_{\mathcal{I}}$-generic real $x \in p$ over $M$ and a $\mathcal{J}$-pseudo-generic $y\in A$ over $M[x]$, with $x$ still $\mathcal{I}$-pseudo-generic over some model $M' \in \mathcal{M}^+$, $M' \supseteq M \cup \{y\}$.\footnote{Of course, if $y$ is generic over $M$, then $M[y] \in \mathcal{M}^+$ is the minimal such model. For instance, this happens when $\mathbb{P}_{\mathcal{J}}$ is ccc in $M$, since pseudo-genericity agrees with forcing genericity.}
    \end{enumerate}
\end{thm}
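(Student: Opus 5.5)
Both directions will mimic the proofs of Theorem~\ref{thm:propertoweak} and Theorem~\ref{thm:weakunivproper}: a statement about the extension $V^{\mathbb{P}_\mathcal{I}}$ is pulled back to a countable elementary submodel, and properness together with $\mathcal{I}$-correctness transfers it between models. Throughout, ``$A \in \mathcal{J}^+$ is preserved'' means that in $V[x]$ the set $A$ (as a set of ground model reals) is not covered by any $B_z$, $z \in V[x]$, where $(B_z)$ is the provable Borel base of $\mathcal{J}$. Equivalently, for every $\mathbb{P}_\mathcal{I}$-name $\dot z$ and every condition, some extension and some $y \in A$ force $y \notin B_{\dot z}$.

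For (1)$\Rightarrow$(2), fix a countable $M \prec H(\theta)$ containing $A$ (as a parameter), $\mathcal{I}$, $\mathcal{J}$, and fix $p \in (\mathbb{P}_\mathcal{I})^M$. Since $\mathcal{J}$ has a Borel base and $M$ is countable, $M[x]$ has only countably many Borel $\mathcal{J}$-small sets. So I would choose $x$ and $y$ as follows.
\begin{enumerate}[(a)]
\item Take $x \in p$ that is $\mathbb{P}_\mathcal{I}$-generic over $M$ (this uses properness) and let $\{\dot z_n\}_{n}$ enumerate the names in $M$ for reals coding small sets. Then $\bigcup_n B_{\dot z_n^x}$ is, in $V[x]$, a countable union coded in $V[x]$. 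By (1) this union does not cover $A$, so some $y \in A$ avoids it. This gives $y$ $\mathcal{J}$-pseudo-generic over $M[x]$.
\item The subtlety is that $x$ must also be chosen so that it stays $\mathcal{I}$-pseudo-generic over some $M' \ni y$. To arrange this, I would instead argue inside $V[G]$ for $G$ generic through an $M$-generic condition. Using elementarity, find a name $\dot y$ in $M$ for an element of $\check A$ avoiding all $B_{\dot z_n}$. Since $M$ is elementary, $y = \dot y^x$ lies in $M[x]$, so $M' = M[x]$ works trivially.
\end{enumerate}
Hence (1)$\Rightarrow$(2) is essentially immediate once properness is used.

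For (2)$\Rightarrow$(1), argue by contradiction: suppose some $p$ forces that $\dot z$ yields $A \subseteq B_{\dot z}$. Take $M \prec H(\theta)$ containing everything, and apply (2) to get $x \in p$, generic over $M$, and $y \in A$ pseudo-generic for $\mathcal{J}$ over $M[x]$, with $x$ $\mathcal{I}$-pseudo-generic over $M' \supseteq M \cup\{y\}$. Inside $M$, $p$ forces $\forall a \in A\,(a \in B_{\dot z})$, but $A$ is not in $M$, so this cannot be applied directly to $y$. This is the heart of the matter, and I would handle it as follows.
\begin{enumerate}[(a)]
\item In $M'$, consider the Borel set $q = \{x' \in p : x' \text{ is } \mathbb{P}_\mathcal{I}\text{-generic over } M \text{ and } y \notin B_{\dot z^{x'}}\}$. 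It is coded in a further collapse extension of $M'$, and it contains $x$.
\item Since $x$ is $\mathcal{I}$-pseudo-generic over that extension (first use universality/virtual-genericity-style reasoning, collapsing while preserving pseudo-genericity as in Theorem~\ref{thm:weakunivproper}, using the weak universality which follows from properness by Theorem~\ref{thm:propertoweak}), $q$ is positive there, and by $\mathcal{I}$-correctness $q \in \mathcal{I}^+$ in $V$.
\item Using elementarity of $M$ in $H(\theta)$ and reflecting to $V$, obtain generics $x'$ in $V$-forcing below $p$ with $y \notin B_{\dot z^{x'}}$ for the ground-model real $y \in A$. This contradicts $p$ forcing $A \subseteq B_{\dot z}$.
\end{enumerate}

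The main obstacle is step (c): converting positivity of $q$ into a genuine $V$-generic extension. I expect to do this by viewing $q$ as a condition in $\mathbb{P}_\mathcal{I}$ of $V$ that forces $\check y \notin B_{\dot z}$. This requires checking that $q$'s defining property transfers to genericity over $V$, probably by replacing $M$ with an elementary submodel and using that $q \le p$ is $M$-master.
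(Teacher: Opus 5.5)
Both directions have genuine gaps. For (1)$\Rightarrow$(2), step (a) does not really use (1), and $x$ is only $M$-generic, so there is no $V[x]$ in which to apply (1). Since $M[x]$ is countable, $\bigcup_{z\in M[x]}B_z$ is already a $\mathcal{J}$-small set in $V$, so some $y\in A$ avoids it for every $x$ whatsoever. The real content of (2) is the extra clause that $x$ stays $\mathcal{I}$-pseudo-generic over some $M'\ni y$. Choosing $x$ before $y$ gives you no control over this.

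Step (b) cannot work. If $y=\dot y^x\in M[x]$, then $\{y\}$ is a Borel set of $M[x]$ lying in $\mathcal{J}^{M[x]}$ for any $\mathcal{J}$ containing singletons (meager, null, \dots). So $y$ is not $\mathcal{J}$-pseudo-generic over $M[x]$. Likewise $x\in M[x]$ is never $\mathcal{I}$-pseudo-generic over $M[x]$, so $M'=M[x]$ fails. Also, (2) must hold for every $M\in\mathcal{M}$ containing $\mathcal{I},\mathcal{J}$, not only for those containing $A$.

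The missing idea is to choose $y$ first, in $V$, by applying (1) below a master condition:
\begin{enumerate}
\item Let $p'\le p$ be the set of $\mathbb{P}_{\mathcal{I}}$-generics over $M$ lying in $p$; it is positive by $\mathcal{M}$-properness.
\item Let $\dot B$ name $\bigcup_{z\in M[\dot x]}B_z$, which is a small Borel set in the extension.
\item By (1), there are a ground-model $y\in A$ and $q\le p'$ with $q\Vdash\check y\notin\dot B$.
\item Only now take a countable elementary $M'\ni M,y,q$ and $x\in q$ generic over $M'$.
\end{enumerate}
This $x$ lies in $p$ and is generic over $M$, and it is pseudo-generic over $M'\supseteq M\cup\{y\}$. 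Finally, $y$ is $\mathcal{J}$-pseudo-generic over $M[x]$ by elementarity of $M'$.

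For (2)$\Rightarrow$(1), your step (c) is the routine part. A positive set of generics over an elementary submodel containing $\dot z$ forces $\dot z$ to be evaluated as over that submodel, and hence forces $\check y\notin B_{\dot z}$. The real gap is in (a)--(b). You apply (2) to the original $p$, so your $q$ refers to genericity over $M$ itself. But (2) only gives $M\subseteq M'$ as a set, so $q$ need not be coded in $M'$. Once you pass to a collapse $M'[H]$, you need $x$ to be $\mathcal{I}$-pseudo-generic over $M'[H]$, and (2) does not provide that. Theorem~\ref{thm:propertoweak} does not fill this gap. Weak universality concerns reals that are $\mathbb{P}_{\mathcal{I}}$-generic over the model in question, whereas $x$ is only pseudo-generic over $M'$; invoking it over $M$ instead loses $y$.

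The paper's fix is a second model, which avoids any collapse or universality:
\begin{enumerate}
\item Take a countable elementary $N$ containing $p_0,\dot B$ (your $p,\dot z$).
\item Let $p\le p_0$ be the set of $N$-generics in $p_0$, and take a countable elementary $M\ni N$, so that $p\in M$.
\item Apply (2) to this $M$ and this $p$.
\end{enumerate}
Then $q=\{x'\in p: y\notin\dot B^{x'}\}$ is coded in $M'$ itself, and it contains $x$ because $\dot B^x\in\mathcal{J}^{M[x]}$. Since $x$ is $\mathcal{I}$-pseudo-generic over $M'\in\mathcal{M}^+$, we get $q\notin\mathcal{I}^{M'}$, hence $q\notin\mathcal{I}$ by $\mathcal{I}$-correctness. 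Now $q\le p_0$ forces $\check y\notin\dot B$, which contradicts $p_0\Vdash\check A\subseteq\dot B$.
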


\begin{proof}
    (2) implies (1): Suppose $p_0 \Vdash \check{A} \subseteq \dot B$, where $\dot B$ is a $\mathbb{P}_{\mathcal{I}}$-name for a Borel set in $\mathcal{J}$. Let $N$ be a sufficiently elementary countable model containing $\mathcal{I}, \mathcal{J}, p_0, \dot B$. Let $p$ be the set of $\mathbb{P}_{\mathcal{I}}$-generics over $N$ contained in $p_0$ and let $M \ni N$ be a further countable elementary model. By (2), there is some $\mathbb{P}_{\mathcal{I}}$-generic $x \in p$ over $M$ and $y\in A$ pseudo-generic over $M[x]$ with $x$ still pseudo-generic over $M'\supseteq M \cup \{y\}$. Consider $q := \{ x' \in p : y \notin \dot B^{x'}\}$. We obtain a contradiction if we can show that $q \leq p$ is a condition in $\mathbb{P}_{\mathcal{I}}$, since then $q \Vdash \check{y} \notin \dot B$. But $q$ has a code in $M'$ and $x \in q$ is pseudo-generic over $M'$. The argument follows as usual: If $q \in \mathcal{I}$, then $M' \models q \in \mathcal{I}$, so $x$ can't be pseudo-generic.
    
    (1) implies (2): Suppose $M \in \mathcal{M}$ and $p \in (\mathbb{P}_{\mathcal{I}})^M$. Let $p' \leq p$ be the set of $\mathbb{P}_{\mathcal{I}}$-generics over $M$. Given a provable Borel base $\{ B_z : z \in \omega^\omega \}$ for $\mathcal{J}$, consider the name $\dot B$ for $\bigcup_{z \in M[\dot x]} B_z$, where $\dot x$ is name for the $\mathbb{P}_{\mathcal{I}}$-generic. By (1), there is some $y \in A$ and $q \leq p'$ such that $q \Vdash \check{y} \notin \dot B$. Let $M' \ni M \cup \{y, q\}$ be countable elementary and $x \in q$ be $\mathbb{P}_{\mathcal{I}}$-generic over $M'$. Then $y$ is $\mathcal{J}$-pseudo-generic over $M[x]$, since this is what $y \notin \dot B^x$ means. On the other hand, $x$ is $\mathcal{I}$-pseudo-generic over $M'$ as required.
\end{proof}

\begin{thm}
  Let $\mathcal{I}$, $\mathcal{J}$ and $\mathcal{M}$ be as before. Then the following are equivalent: 
    \begin{enumerate}
        \item For every $A \in \mathcal{J}^+$, $\mathbb{P}_{\mathcal{I}}$ forces ``$\check{A} \in \mathcal{J}^+$", i.e. \emph{$\mathbb{P}_{\mathcal{I}}$ preserves $\mathcal{J}$}.
        \item For any $M \in \mathcal{M}$ with $\mathcal{I}, \mathcal{J} \in M$, and any $p \in (\mathbb{P}_{\mathcal{I}})^M$, there is $B \in \mathcal{J}$ such that for all $y \notin B$, there is a $\mathbb{P}_{\mathcal{I}}$-generic $x \in p$ over $M$ with $y$ $\mathcal{J}$-pseudo-generic over $M[x]$, and $x$ $\mathcal{I}$-pseudo-generic over some $M' \in \mathcal{M}^+$, $M' \supseteq M \cup \{y\}$.
    \end{enumerate}
\end{thm}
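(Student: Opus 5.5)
The plan is to run the proof of Theorem~\ref{thm:preseration} uniformly in $A$. The only change is that the witness $y$ must now be found outside a single small set $B$ that does not depend on $A$.

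For (2) implies (1), suppose towards a contradiction that $A \in \mathcal{J}^+$ and $p_0 \Vdash \check A \subseteq \dot B$, where $\dot B$ names a Borel set in $\mathcal{J}$. As before, take a countable elementary $N$ containing $\mathcal{I},\mathcal{J},p_0,\dot B$, let $p \le p_0$ be the set of $\mathbb{P}_{\mathcal{I}}$-generics over $N$ inside $p_0$, and let $M \ni N, p$ be countable elementary. Apply (2) to $M$ and $p$ to obtain $B \in \mathcal{J}$. Since $A \notin \mathcal{J}$, we can pick $y \in A \setminus B$. By (2) there is then a $\mathbb{P}_{\mathcal{I}}$-generic $x \in p$ over $M$ such that $y$ is $\mathcal{J}$-pseudo-generic over $M[x]$ and $x$ is $\mathcal{I}$-pseudo-generic over some $M' \supseteq M \cup\{y\}$ in $\mathcal{M}^+$. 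Set $q := \{x' \in p : y \notin \dot B^{x'}\}$. Since $x$ is generic over $N \subseteq M$ and $\dot B^x$ is a Borel code in $M[x]$ for a set in $\mathcal{J}^{M[x]}$, pseudo-genericity of $y$ gives $x \in q$. The set $q$ is Borel with a code in $M'$, using that the set of $\mathbb{P}_{\mathcal{I}}$-generics over $N$ is Borel by Lemma~\ref{lem:genericsBorel}. Because $x$ is $\mathcal{I}$-pseudo-generic over $M'$, $q \notin \mathcal{I}^{M'}$, and $\mathcal{I}$-correctness of $\mathcal{M}^+$ gives $q \in \mathbb{P}_{\mathcal{I}}$. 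But $q \le p_0$ forces $\check y \notin \dot B$ while $y \in A$, which is a contradiction.

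For (1) implies (2), fix $M$ and $p$, let $p' \le p$ be the set of generics over $M$, and let $\dot B$ name $\bigcup_{z \in M[\dot x]} B_z$ for the provable Borel base of $\mathcal{J}$. Since $M[\dot x]$ is countable, $\dot B$ names a set in $\mathcal{J}$. Define
\[ C := \{ y : \exists q \le p' \ (q \Vdash \check y \notin \dot B)\}. \]
I claim that $B := X \setminus C$ is in $\mathcal{J}$. Otherwise $B \in \mathcal{J}^+$, so by (1) some $q \le p'$ forces that $\check B \not\subseteq \dot B$. Extending $q$ to decide a witness $y \in B$ with $q' \Vdash \check y \notin \dot B$ puts $y \in C$, a contradiction. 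This step uses that the witness can be decided in the ground model, which fails in general, since $\mathbb{P}_{\mathcal{I}}$ may add new reals to $B \setminus \dot B$. This is the main obstacle. I would first try to fix it by noting that by properness and continuous reading, witnesses are new reals only if $\dot B$ fails to cover $\check B$ in a way not captured by ground model elements. To get ground model witnesses, apply (1) to the positive set $B$ relativized below $q$: preservation of $\check B \in \mathcal{J}^+$ means $\check B \not\subseteq \dot B$ is forced, and one then needs a ground model $y$. I would handle this by running (1) instead for every $A \subseteq B$ simultaneously. If $B \in \mathcal{J}^+$, take the set of ground-model $y$ that some condition forces out of $\dot B$; its complement in $B$ is then forced into $\dot B$ by every condition, and this contradicts (1) applied to that complement if it is positive. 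So $B$ itself is forced into $\dot B$, again contradicting (1). In other words, $B = X\setminus C$ is exactly a set that $p'$ forces into $\dot B$, so (1) gives $B \in \mathcal{J}$ directly, and the decision problem disappears.

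Given $y \notin B$, take $q \le p'$ with $q \Vdash \check y \notin \dot B$, a countable elementary $M' \ni M, y, q$, and $x \in q$ that is $\mathbb{P}_{\mathcal{I}}$-generic over $M'$. Then $x \in p$ is generic over $M$, $y \notin \dot B^x$ means $y$ is $\mathcal{J}$-pseudo-generic over $M[x]$, and $x$ is pseudo-generic over $M'$, as required.
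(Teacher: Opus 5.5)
Your proposal is correct. It is essentially the paper's argument with Theorem~\ref{thm:preseration} inlined: the paper gets (2)$\Rightarrow$(1) by picking $y \in A \setminus B$, and (1)$\Rightarrow$(2) by applying that theorem to the $\mathcal{J}$-positive set of ``bad'' $y$'s, which is exactly what you do directly.

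The ``main obstacle'' you raise is not real. $\check B$ is the ground-model set, so any witness to $\check B \not\subseteq \dot B$ must be some $\check y$ with $y \in B$, and it can therefore be decided. Your first argument was already complete. Your ``fix'', passing from $p' \Vdash \check y \in \dot B$ for all $y \in B$ to $p' \Vdash \check B \subseteq \dot B$, is just its contrapositive and relies on this same fact.
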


\begin{proof}
    The direction from (2) to (1) follows easily from the previous theorem. Suppose (2) does not hold, witnessed by $M$ and $p$. Then the set $A$ of reals $y$, such that what is stated in (2) fails, is $\mathcal{J}$-positive. By the previous theorem, it is not the case that $\mathbb{P}_{\mathcal{I}}$ preserves $A$.
\end{proof}

We say that $\mathbb{P}$ preserves Baire category if it preserves every non-meager set. Similarly, $\mathbb{P}$ preserves Lebesgue measure if every non-null set is preserved.

\begin{cor}\label{cor:preserveBaire}
    Let $\mathcal{I}$ be as above and suppose that for any sufficiently elementary countable model $M$, any $\mathbb{P}_{\mathcal{I}}$-generic $x$ over $M$, and any Cohen real $c$ over $M[x]$, $x$ stays pseudo-generic over $M[c]$. Then $\mathbb{P}_{\mathcal{I}}$ preserves Baire category. Replacing ``Cohen" with ``random", we obtain that $\mathbb{P}_{\mathcal{I}}$ preserves Lebesgue measure.
\end{cor}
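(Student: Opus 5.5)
The plan is to apply the preceding theorem (the characterization of ``$\mathbb{P}_{\mathcal{I}}$ preserves $\mathcal{J}$'') with $\mathcal{J}$ the meager ideal, and then with the null ideal. Both have a provable Borel base: $F_\sigma$ meager sets, respectively $G_\delta$ null sets, coded by reals. Moreover $\mathbb{P}_{\mathcal{J}}$ is Cohen, respectively random, forcing, which is ccc in every model. So $\mathcal{J}$-pseudo-genericity over a model coincides with being Cohen (random) generic over it, and for a pseudo-generic $y$ over $M[x]$ the model $M'=M[x][y]$, or even $M[y]$, is a legitimate element of $\mathcal{M}^+$. It then suffices to verify clause (2) of the previous theorem for every $M \in \mathcal{M}$ and $p \in (\mathbb{P}_{\mathcal{I}})^M$.

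Given $M$ and $p$, I take $B$ to be the union of all meager Borel sets coded in $M$. This is a countable union, hence meager, i.e. $B \in \mathcal{J}$. Fix $y \notin B$, so $y$ is Cohen over $M$. I need to produce a $\mathbb{P}_{\mathcal{I}}$-generic $x \in p$ over $M$ such that $y$ is Cohen over $M[x]$ and $x$ remains pseudo-generic over $M[y]$. Here I reverse the order, using the product lemma. Work in $M[y]$. By $\mathcal{I}$-correctness, $p$ is still $\mathcal{I}$-positive there, so we may pick $x \in p$ that is $\mathbb{P}_{\mathcal{I}}$-generic over $M[y]$; since $M[y]$ is countable such $x$ exists. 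Then $x$ is $\mathbb{P}_{\mathcal{I}}$-generic over $M$, because $(\mathbb{P}_{\mathcal{I}})^M$-dense sets need not be dense in the larger poset. This is the first point to handle. The cleaner route is to let $x$ be $(\mathbb{P}_{\mathcal{I}})^M$-generic over $M[y]$, which makes $(x,y)$ generic for $(\mathbb{P}_{\mathcal{I}})^M \times \mathrm{Cohen}$ over $M$. By the product lemma $y$ is then Cohen over $M[x]$. However, this $x$ might not be $\mathcal{I}$-pseudo-generic over $M[y]$, and securing that is exactly where the hypothesis enters.

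So the order I will actually use is the following. Choose $x \in p$ that is $\mathbb{P}_{\mathcal{I}}$-generic over $M$, and then choose $c$ Cohen over $M[x]$. By hypothesis, $x$ is pseudo-generic over $M[c]$. This gives clause (2) only for $y$ of this special form, but clause (2) of the previous theorem demands it for all $y$ outside a meager set $B$. To bridge the gap I will use the previous theorem's version (2)$\Rightarrow$(1) directly on an arbitrary non-meager $A$, running its proof. That proof needs, for a given non-meager $A$ and a countable elementary $M$ containing a name $\dot B$, some $y \in A$ Cohen over $M[x]$ for a generic $x \in p$, with $x$ pseudo-generic over $M[y]$. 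Fix a generic $x \in p$ over $M$. Since $M[x]$ is countable and $A$ is non-meager, some $y \in A$ is Cohen over $M[x]$, and the hypothesis applied to $c=y$ gives that $x$ is pseudo-generic over $M[y]$. This is exactly clause (2) of Theorem~\ref{thm:preseration}, so $A$ is preserved.

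The random case is identical, with ``non-null'' in place of ``non-meager'': a non-null $A$ contains a random real over the countable model $M[x]$, because the null Borel sets coded there have null union. The main obstacle I expect is only bookkeeping. I have to check that $M[y] \in \mathcal{M}^+$ serves as the model $M'$, that the hypothesis is stated for sufficiently elementary $M$, which matches the $\mathcal{M}$ of Theorem~\ref{thm:preseration}, and that pseudo-genericity for the meager and null ideals coincides with Cohen and random genericity.
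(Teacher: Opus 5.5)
Your final argument is correct and is exactly the intended one. The corollary follows from Theorem~\ref{thm:preseration}, (2)$\Rightarrow$(1), applied to each non-meager (resp.\ non-null) $A$: take $x\in p$ generic over $M$, take $y\in A$ Cohen (resp.\ random) over the countable model $M[x]$, and use $M'=M[y]\in\mathcal{M}^+$ as in that theorem's footnote. The earlier detours can be dropped; the product-lemma route you discarded in fact also works, since there $y$ is Cohen over $M[x]$ and the hypothesis applies with $c=y$.
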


\section{The dichotomy theorems}\label{sec:dichotomy}

In \cite[Section 3.9]{Zapletal2008}, Zapletal considers the following three dichotomy style regularity properties of sets of reals.

\begin{definition}
    $\mathcal{I}$ satisfies the
    \begin{enumerate}
        \item \emph{first dichotomy} if every universally Baire set is either contained in a Borel set in $\mathcal{I}$ or contains an $\mathcal{I}$-positive Borel set,
        \item \emph{second dichotomy} if every universally Baire set is either in $\mathcal{I}$ or contains an $\mathcal{I}$-positive Borel set,
        \item \emph{third dichotomy} if every $\mathcal{I}$-positive analytic set contains an $\mathcal{I}$-positive Borel set. 
    \end{enumerate}
\end{definition}

\begin{thm}\label{thm:dichotomy}
    Let $\mathcal{M}$ be the collection of countable elementary submodels of sufficiently large $H(\theta)$ and suppose that $\mathcal{M}^+$ is $\mathcal{I}$-correct and that $\mathcal{I}$ has the universality property for $\mathcal{M}$. Then:
    
\begin{enumerate}
    \item Every analytic set is either contained in a small Borel set, or contains a positive Borel set, so $\mathcal{I}$ satisfies the third dichotomy.
    \item If $\mathcal{M}^+$ is $\Sigma^1_2$-correct (i.e. every real has a sharp), then every coanalytic set is either contained in a small Borel set, or contains a positive Borel set.
    \item If universally Baire absoluteness holds (see Definition~\ref{def:uBabs}), then $\mathcal{I}$ satisfies the first dichotomy.
\end{enumerate}
\end{thm}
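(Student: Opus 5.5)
The three parts share one argument. Let $A$ be analytic (later coanalytic or universally Baire) and fix a sufficiently elementary countable $M \in \mathcal{M}$ containing $\mathcal{I}$ and a code for $A$. Inside $M$, consider $\mathbb{Q} = \Coll(\omega, (2^{\mathfrak{c}})^M)$. Split according to whether $M$ thinks the following: \emph{some} condition in $\mathbb{Q}$ (or, more generally, in some forcing $\mathbb{P}\in M$) forces that there is an element of $A$ which is $\mathcal{I}$-pseudo-generic over the ground model $M$.

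\textbf{Case 1: no such forcing adds a pseudo-generic element of $A$.} I will show that $A \subseteq \bigcup (\mathcal{I}^M \cap \text{Borel})$. Since $\mathcal{P}(\omega^\omega)^M$ is countable, this is a countable union of small Borel sets. By $\mathcal{I}$-correctness it lies in $\mathcal{I}$, which gives the first alternative. Suppose instead that $y \in A$ avoids every small Borel set of $M$. In $V$ the statement ``$\exists y \in A$ pseudo-generic over $M$'' is $\mathbf{\Sigma}^1_1$ in parameters that are countable after collapsing $\mathcal{P}(\omega^\omega)^M$, so it holds in $M[H]$ for a collapse generic $H$. This uses Mostowski absoluteness for analytic $A$, $\Sigma^1_2$-correctness of $\mathcal{M}^+$ in part (2) (the statement is then $\Sigma^1_2$), and universally Baire absoluteness in part (3). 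Hence some condition of the collapse forces this, contradicting the case assumption. This is the same absoluteness trick as in Lemma~\ref{lem:weak*} and the total universality lemma.

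\textbf{Case 2: some $\mathbb{P} \in M$ and a $\mathbb{P}$-name $\dot x$ satisfy that $\Vdash_{\mathbb{P}}$ ``$\dot x \in \dot A$ and $\dot x$ is pseudo-generic over $\check M$''.} This can be expressed in $M$ by fixing $\mathbb{P}$ as a collapse followed by a name. Let $B = \{\dot x^H : H \text{ is } \mathbb{P}\text{-generic over } M\}$, which is Borel by Lemma~\ref{lem:genericsBorel}. Every element of $B$ is a generic pseudo-generic real over $M$, so by Theorem~\ref{thm:univproper} (direction (1)$\Rightarrow$(2), which uses $\mathcal{I}$-correctness of $\mathcal{M}^+$ and the universality property) $B \in \mathcal{I}^+$. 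It remains to check $B \subseteq A$, i.e.\ that for each $M$-generic $H$ the real $\dot x^H$, which $M[H]$ believes lies in $A$, really lies in $A$. For analytic $A$ this is Mostowski absoluteness applied to the interpretation of the code. For coanalytic $A$ it is $\Pi^1_1$ upward absoluteness from $M[H]$, which is automatic; in part (2) the $\Sigma^1_2$-correctness is needed instead in Case 1. For universally Baire $A$ it follows from universally Baire absoluteness, since $M[H]$'s version of $A$ is $A \cap M[H]$ via the trees in $M$.

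The main obstacle is making the case split clean, so that failure of Case 2 really yields Case 1. I would define the dichotomy in $M$ as: ``there exist a forcing and a name forced to be a pseudo-generic element of $A$''. Then in Case 1 the real $y$ found after collapsing is, by elementarity inside a further generic extension, added by some forcing, which contradicts the negation. One has to verify that the complexity of ``$y$ is pseudo-generic over $M$'' stays within the absoluteness available. It is a countable conjunction over the small Borel sets of $M$, hence Borel once $M$'s reals are enumerated, so the whole statement has the complexity of $A$ plus an existential real quantifier. That is exactly what each part's hypothesis covers. The third dichotomy in (1) then follows directly: an $\mathcal{I}$-positive analytic set cannot be contained in a small Borel set.
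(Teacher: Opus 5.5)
Your proposal is correct and follows essentially the paper's own argument. If possible, you cover $A$ by the countably many small Borel sets of $M$. Otherwise you use absoluteness in a collapse extension of $M$ to obtain a name $\dot x$ for a pseudo-generic element of $A$, and show that the Borel set $B$ of its generic evaluations lies inside $A$ and is $\mathcal{I}$-positive via universality plus $\mathcal{I}$-correctness. The only difference is that you package the positivity step as Theorem~\ref{thm:univproper}, whereas the paper redoes that argument inline. Your account of which absoluteness principle is needed where in parts (2) and (3) matches what the paper leaves to ``replacing the absoluteness argument accordingly.''
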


\begin{proof}
    Let $A$ be analytic and $M$ be a countable elementary submodel containing $A$ and $\mathcal{I}$. If $A$ is contained in the (countable) union of small Borel sets in $M$, then $A$ is contained in a small Borel set. Otherwise, there is a pseudo-generic $x \in A$ over $M$. The existence of such $x$ can be formulated as a $\mathbf\Sigma^1_1$ statement in a forcing extension of $M$ in which $(\omega^\omega)^M$ is countable, so such $x$ can be added generically over $M$. Let $\mathbb{P} \in M$, $\dot x \in M$ a $\mathbb{P}$-name, such that this is forced of $\dot x$. The set $$B = \{ \dot x^H : H \text{ is $\mathbb{P}$-generic over } M \}$$ is then a Borel set, see Lemma~\ref{lem:genericsBorel}. By Mostowski-absoluteness, $B \subseteq A$. $B$ is $\mathcal{I}$-positive: Let $H$ be $\mathbb{P}$-generic over $M$ and $\mathbb{Q}\in M$ a forcing notion such that $B$ has a code in an extension by $\mathbb{Q}$. By universality, we can find a $\mathbb{Q}$-generic $G$ with $\dot x^H$ pseudo-generic over $M[G]$. So $B \notin \mathcal{I}^{M[G]}$ and followingly, by the correctness of $M[G]$, $B \notin \mathcal{I}$.

    (2) and (3) follow in exactly the same way, replacing the absoluteness argument accordingly.
 \end{proof}

The correctness assumptions can be eliminated if the models in consideration contain all ordinals. Recall that $A$ is called \emph{Suslin}, if there is a tree $T$ on $\omega \times \delta$, for some ordinal $\delta$, such that $A = p[T] = \{ x \in \omega^\omega : \exists w \in \delta^\omega \forall n \in \omega ((x\restriction n, w \restriction n) \in T)  \}$. We say that a sequence $\langle A_\alpha : \alpha < \gamma \rangle$ is \emph{uniformly Suslin} if there is $\langle T_\alpha : \alpha < \gamma \rangle$ such that $A_\alpha = p[T_\alpha]$, for each $\alpha<\gamma$.

\begin{thm}\label{thm:Suslin}
    Let $\mathcal{I}$ be provably $\mathbf\Sigma^1_2$ on $\mathbf\Sigma^1_1$ (e.g. defined by a Borel base) and have the universality property. Suppose that $\omega_1$ does not inject into the reals. Then
    
    \begin{enumerate}
        \item Every Suslin set is either contained in a small Borel set or contains a large Borel set. In particular, the first dichotomy holds.
        \item Any Suslin set can be uniformized by a Borel function on an $\mathcal{I}$-positive set. In other words, if $A \subseteq \omega^\omega \times \omega^\omega$ is Suslin with $\proj(A) \in \mathcal{I}^+$, then there is a Borel function $f \colon B \to \omega^\omega$, where $B \subseteq \proj(A)$, $B \in \mathcal{I}^+$, and for all $x \in B$, $(x,f(x)) \in A$.
        \item $\mathcal{I}$ is closed under well-ordered unions of uniformly Suslin sets.
    \end{enumerate}
\end{thm}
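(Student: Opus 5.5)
The plan is to rerun the argument of Theorem~\ref{thm:dichotomy}, but over models of the form $M = L[T,\ldots]$ (or $\mathrm{HOD}$-like inner models containing all ordinals) instead of countable elementary submodels. These models contain the tree $T$ witnessing the Suslin representation, and absoluteness of $p[T]$ between such models and their forcing extensions replaces the correctness hypotheses. The hypothesis that $\omega_1$ does not inject into the reals guarantees that $\omega_1$ is inaccessible in $L[T,z]$ for every real $z$: otherwise $\omega_1 = (\kappa^+)^{L[T,z]}$ for some $\kappa$ countable in $V$, and $L[T,z]$ would provide a well-ordered sequence of $\omega_1$ reals. Consequently, for any such $N = L[T,z]$ and any forcing $\mathbb{Q}\in N$ of size $<\omega_1$, there are $\mathbb{Q}$-generics over $N$ in $V$. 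Moreover, $\mathcal{P}(\mathbb{Q})^N$ is countable, so the set of generics over $N$ is Borel by Lemma~\ref{lem:genericsBorel}.

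For (1), let $A = p[T]$ and take a countable elementary submodel $M \prec L_\lambda[T]$ (or $\prec H(\theta)$ collapsed) containing $T$ and the parameters of $\mathcal{I}$. If $A$ is covered by the countably many small Borel sets coded in $M$, we are done. Otherwise fix $x\in A$ pseudo-generic over $M$, with witness $w$ such that $(x,w)\in[T]$. Collapsing $\omega^\omega\cap M$ and $T\cap M$ to be countable, a tree search shows that $M$ can force the existence of such an $x$: the statement ``there is $(x,w)\in[T\cap M]$ avoiding all small Borel sets of $M$'' is absolute for well-founded trees on countable sets. Hence there are $\mathbb{P},\dot x\in M$ such that $\mathbb{P}$ forces $\dot x\in p[T]$ and $\dot x$ pseudo-generic. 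Set $B=\{\dot x^H\}$, which is Borel. Each $\dot x^H$ has a witness branch in $M[H]$ through the image of $T$, so $B\subseteq p[T]$; this is where Suslin representations upward-absolutize. To see $B\in\mathcal{I}^+$, use universality: some $\mathbb{Q}$-extension $M[G]$ coding $B$ keeps $\dot x^H$ pseudo-generic, so $B\notin\mathcal{I}^{M[G]}$. By upward-correctness, which is automatic from the provable $\mathbf\Sigma^1_2$ definition via Shoenfield, $B\notin\mathcal{I}$. The only delicate point is the direction ``$\mathcal{I}^{M[G]}$ small $\Rightarrow$ small'', which is exactly upwards absoluteness, so the $\mathbf\Sigma^1_2$ assumption suffices. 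Where the plan uses countable $M$, the no-$\omega_1$-injection hypothesis is used to pass from $T$ on an arbitrary ordinal $\delta$ to a countable elementary hull. The first dichotomy then follows, since universally Baire sets are Suslin.

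For (2), apply the same construction to $A\subseteq\omega^\omega\times\omega^\omega$ Suslin with $\proj(A)\in\mathcal{I}^+$. Then $\proj(A)=p[T']$ is Suslin and not contained in a small Borel set, so a pseudo-generic $x\in\proj(A)$ exists over $M$. Force over $M$ a triple $(\dot x,\dot y,\dot w)$ with $(\dot x,\dot y,\dot w)\in[T]$ and $\dot x$ pseudo-generic. Shrinking $\mathbb{P}$ to a complete subalgebra generated by $\dot x$ together with the names, as in Lemma~\ref{lem:genericsBorel}, makes the map $\dot x^H\mapsto H\mapsto\dot y^H$ Borel on the Borel set $B$ of generics, which is positive as in (1). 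To make $\dot x$ determine $H$, I would use the quotient trick: take $B'$ as the projection of the Borel set of $(H$-generic$)$ evaluations, which is injective on generic filters once we work in $\mathrm{RO}$ of the full $\mathbb{P}$. Then take a Borel uniformization of this countable-to-one map, using Lusin--Novikov, after noting that each $x$ has only countably many $H$ with $\dot x^H=x$ because $\mathcal{P}(\mathbb{P})^M$ is countable.

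For (3), let $\langle A_\alpha:\alpha<\gamma\rangle$ be given by $\langle T_\alpha\rangle$, with each $A_\alpha\in\mathcal{I}$, and suppose the union $A$ is positive. The union is itself Suslin via the tree $T$ on $\omega\times(\gamma\times\delta)$ obtained by joining the $T_\alpha$. Apply (2) to $A'=\{(x,\alpha):x\in A_\alpha\}$, coding $\alpha$ via the Suslin witness. This yields a positive Borel $B$ and a Borel assignment; by the construction the witnesses come from the countable model, so only countably many $\alpha$'s (those in $M$) occur. Thus $B\subseteq\bigcup_{\alpha\in M}A_\alpha$, a countable union of sets in $\mathcal{I}$, which is a contradiction. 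The main obstacle I expect is (2)/(3), namely getting an honest Borel uniformizing function and ensuring that only countably many indices appear. I would handle the latter by choosing $M$ countable elementary, so that $\dot\alpha^H\in M\cap\gamma$.
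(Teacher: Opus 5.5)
Your opening paragraph points at the right models, but the execution of (1) switches to a countable $M$, and two steps then fail.

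The decisive failure is the proof that $B\in\mathcal{I}^+$. Universality gives $B\notin\mathcal{I}^{M[G]}$. To conclude $B\notin\mathcal{I}$ you need ``$B\in\mathcal{I}\Rightarrow B\in\mathcal{I}^{M[G]}$'', i.e.\ \emph{downward} absoluteness. The direction you invoke (``small in $M[G]$ $\Rightarrow$ small'') is the converse, and it is all a provably $\mathbf\Sigma^1_2$ definition yields for countable $M$, so it gives nothing here. What you need is exactly the $\mathcal{I}$-correctness hypothesis of Theorem~\ref{thm:dichotomy}, which is not available now. The paper obtains the needed direction from Shoenfield over the class model $L[r,T]$ ($r$ the parameter of $\mathcal{I}$), which contains $\omega_1$. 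Since $\omega_1$ does not inject into the reals, $\omega^\omega\cap L[r,T]$, $\mathcal{P}(\mathbb{P})\cap L[r,T]$ and $\mathcal{P}(\mathbb{Q})\cap L[r,T]$ are countable. So if $B\in\mathcal{I}$, then every $\mathbb{Q}$-extension of $L[r,T]$ coding $B$ satisfies $B\in\mathcal{I}$, i.e.\ $\mathbb{Q}$ forces $B$ small. Only then does one pass to a countable $M$, elementary in a level of $L[r,T]$ and containing $\mathcal{P}(\mathbb{P})\cup\mathcal{P}(\mathbb{Q})\cup\omega^\omega$ of $L[r,T]$. Then generics over $M$ and over $L[r,T]$ coincide, $M$ still knows that $\mathbb{Q}$ forces $B$ small, and universality applied to $M$ gives a contradiction.

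The second failure is the construction of $(\mathbb{P},\dot x)$. From $(x,w)\in[T]$ with $x$ pseudo-generic over a countable $M$ you cannot get a branch through $T\cap M$ (or its collapse), because $w$ may use ordinals outside $M$. The paper does the search in $L[r,T]$ instead. The union $I$ of the small Borel sets coded there is a countable union, hence small, so $p[T]\setminus I\neq\emptyset$. After collapsing $\omega^\omega\cap L[r,T]$, the tree for $p[T]\setminus I$ is ill-founded in the extension by absoluteness of well-foundedness. This is where the hypothesis on $\omega_1$ enters, not in passing to countable hulls.

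In (2), the claim that each $x$ has only countably many $H$ with $\dot x^H=x$ is false whenever the quotient is nontrivial (take $\dot x$ the first coordinate of a Cohen$\times$Cohen generic), so Lusin--Novikov does not apply. Instead, the section tree $T_x$ lies in the model generated by $x$ and is ill-founded, so $\dot y$ can be chosen in the subalgebra generated by $\dot x$, giving a function with Borel graph. The paper does this with $\mathbb{P}_{\mathcal{I}}$ below a positive Borel $p\subseteq\proj(A)$ from (1), over $L[r,T,u]$.

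Your counting idea for (3) survives once (1) is repaired. A name for the index $\alpha$ has at most $|\mathbb{P}|^{L[r,\bar T]}$, hence countably many, possible values, so $B$ is covered by countably many $A_\alpha$. This is a valid alternative to the paper's route, which picks a single pseudo-generic $x$ over $L[r,u,\bar T]$ and applies (1) to the $A_\alpha$ containing it.
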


\begin{proof}
For (1), let $A = p[T]$ be Suslin and let $r$ be the real parameter defining $\mathcal{I}$. Suppose that $A$ is not contained in the union of all small Borel sets in $L[r,T]$. This is a countable union, thus a small Borel set $I$, as $\omega^\omega \cap L[r,T]$ is countable and $\mathcal{I}$ is upwards-absolute between $L[r,T]$ and $V$. Moreover, $\mathcal{P}(\omega^\omega) \cap L[r,T]$ is countable, since if $e$ enumerates the reals of $L[r,T]$, elements of $\mathcal{P}(\omega^\omega) \cap L[r,T]$ can be viewed as reals in $L[r,T,e]$, of which there are only countably many, since this is a model of $\ZFC$. Thus there is a generic $G$ for $\Coll(\omega, \omega^\omega \cap L[r,T])$ over $L[r,T]$. $I$ has a code in $L[r,T,G]$ and moreover there is a tree $S \in L[r,T,G]$ so that $p[S] = p[T] \setminus I$ in any extension.\footnote{$\omega^\omega \setminus I = p[T']$ for a tree $T'$ on $\omega$. Simply let $S$ consist of $(s,t,u)$, where $(s,t) \in T$ and $(s,u) \in T'$.} By assumption, $p[S]$ is non-empty and by a well-foundedness argument $S$ contains a branch in $L[r,T,G]$. This is all to say that there is a forcing notion $\mathbb{P}$ in $L[r,T]$, and a $\mathbb{P}$-name $\dot x$ for a real, such that 

\begin{enumerate}[(a)]
    \item $\mathcal{P}(\mathbb{P}) \cap L[r,T]$ is countable in $V$,
    \item $L[r,T]$ satisfies $\Vdash \text{``$\dot x$ is pseudo-generic"} \wedge \dot x \in p[T]$.
\end{enumerate}
The set
$$B = \{ \dot x^H : H \text{ is $\mathbb{P}$-generic over } L[r,T] \}$$ is then a Borel set and $B \subseteq p[T] = A$. Arguing as before, there is a forcing notion $\mathbb{Q} \in L[r,T]$, $\mathcal{P}(\mathbb{Q}) \cap L[r,T]$ countable, adding a code for $B$. If $B \in \mathcal{I}$, then by Shoenfield-absoluteness, $L[r,T,H] \models B \in \mathcal{I}$, for $\mathbb{Q}$-generic $H$. So suppose that $\mathbb{Q}$ forces that $B$ is small. In $L[r,T]$, let $M$ be a sufficiently elementary submodel with $\mathcal{P}(\mathbb{Q}) \cup \mathcal{P}(\mathbb{P}) \cup \omega^\omega \subseteq M$, reflecting that $\mathbb{Q}$ forces that $B$ is small. $M$ can be assumed to be countable in $V$. As before, we can apply universality to $M$ and obtain a contradiction.

For (2), let the $T$ be a Suslin representation of $A$ and, using (1), let $u$ be a Borel code for a large set $p \subseteq \proj(A)$. Whenever $x \in p$ is $\mathbb{P}_{\mathcal{I}}$-generic over $L[r,T,u]$, there is $y \in L[r,T,u][x]$ with $(x,y) \in A$, by another Suslin reflection argument. So let $\dot y \in L[r,T,u]$ be a $\mathbb{P}_{\mathcal{I}}$-name such that $p \Vdash (\dot x, \dot y) \in p[T]$, where $\dot x$ is a name for the generic real. It is immediate that $$f := \{ (\dot x^G, \dot y^G) : G \ni p \text{ is $\mathbb{P}_{\mathcal{I}}$-generic over } L[r,T,u] \}$$ is also Borel (see the proof of Lemma~\ref{lem:genericsBorel}). By universality (or simply properness for the model $L[r,T,u]$), the domain of $f$ is $\mathcal{I}$-positive.

(3) is as in \cite[Proposition 3.9.11]{Zapletal2008}. Suppose $\bar T = \langle T_\alpha : \alpha < \delta \rangle$ is given. Then $A = \bigcup_{\alpha < \delta} p[T_\alpha]$ is also Suslin. Suppose $A$ is large. By (1), there is a large Borel set $B \subseteq A$, say with code $u$. Then there is a pseudo-generic $x \in B$ over $L[r,u, \bar T]$ and for some $\alpha < \delta$, $x \in A_\alpha$. Arguing as above, $A_\alpha$ contains a large Borel set. \end{proof}

\begin{thm}\label{thm:AD+}
    Assume $\AD^+$ and let $\mathcal{I}$ be provably $\mathbf\Sigma^1_2$ on $\mathbf\Sigma^1_1$ (e.g. defined via a Borel base) and have the universality property. 
    
    \begin{enumerate}
        \item Every set of reals is either contained in a small Borel set, or contains a large Borel set.
        \item Every set of reals can be uniformized by a Borel function on an $\mathcal{I}$-positive set.
        \item $\mathcal{I}$ is closed under well-ordered unions.
    \end{enumerate}
\end{thm}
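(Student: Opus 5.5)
The plan is to reduce to Theorem~\ref{thm:Suslin}. Under $\AD^+$ every set of reals is $\infty$-Borel and, more to the point, $\AD^+$ implies that every set of reals is Suslin in a suitable inner model or after a reflection. It also implies that $\omega_1$ does not inject into the reals, which is the remaining hypothesis of that theorem. The difficulty is that under $\AD$ not every set is Suslin; only the sets in the Suslin-co-Suslin part are. So I would not apply Theorem~\ref{thm:Suslin} verbatim. Instead I would rerun its proof with $L[r,T]$ replaced by a model of the form $L(S,r)$ or $\mathrm{HOD}_{r,S}$, where $S$ is a set of ordinals serving as an $\infty$-Borel code for the given set $A$. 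By $\AD^+$, every $A\subseteq\omega^\omega$ has such a code $S$, and for every forcing extension the reinterpretation of the code is unambiguous in the relevant generic extensions. I would use the key $\AD^+$ fact that for any set of ordinals $S$, $\mathcal{P}(\omega^\omega)\cap L[S,r]$ is countable. This follows from $\omega_1$ not injecting into the reals together with the fact that $L[S,r]\models\ZFC$; it is exactly the argument already used in Theorem~\ref{thm:Suslin}.

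For (1), fix $A$ with $\infty$-Borel code $S$ and let $N=L[r,S]$. The union of all small Borel sets coded in $N$ is a countable union, hence a small Borel set $I$ by upwards-absoluteness, since $\mathcal{I}$ is provably $\mathbf\Sigma^1_2$ on $\mathbf\Sigma^1_1$. If $A\subseteq I$ we are done. Otherwise pick $x\in A\setminus I$. Such an $x$ is pseudo-generic over $N$. The crucial step is Vopěnka's theorem applied in $N$, via the $\infty$-Borel code, to the Vopěnka algebra for sets decided by $S$. It shows that $x$ is generic over $N$ for a forcing $\mathbb{P}\in N$ with $\mathcal{P}(\mathbb{P})^N$ countable, and that $A\cap N[x]$ is computed by $S$ in $N[x]$. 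Then there is a condition and a $\mathbb{P}$-name $\dot x$ forcing over $N$ both that ``$\dot x$ is pseudo-generic'' and that ``$\dot x$ lies in the set coded by $S$''. By Lemma~\ref{lem:genericsBorel}, the set $B$ of evaluations $\dot x^H$ is Borel, and $B\subseteq A$ because each $\dot x^H$ lies in the interpretation of $S$ in $N[H]$, which agrees with $A$. Positivity of $B$ follows exactly as in Theorem~\ref{thm:Suslin}(1). Pass to a countable elementary $M\prec N$ (countable in $V$) containing $\mathcal{P}(\mathbb{P})^N$ and a forcing $\mathbb{Q}$ adding a code of $B$. Then apply the universality property to get a $\mathbb{Q}$-generic $G$ with $x$ pseudo-generic over $M[G]$. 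Finally, use Shoenfield absoluteness between $N[G]$ and $V$ to conclude that $B\notin\mathcal{I}$.

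For (2), given $A\subseteq\omega^\omega\times\omega^\omega$ with $\proj(A)\in\mathcal{I}^+$, apply (1) to $\proj(A)$ to get a positive Borel $p\subseteq\proj(A)$ with code $u$. Working over $N=L[r,S,u]$, with $S$ now a code for $A$, for $\mathbb{P}_{\mathcal{I}}$-generic $x\in p$ over $N$ I need some $y\in N[x]$ with $(x,y)\in A$. Here I would use $\AD^+$ uniformization in the form that $A_x\neq\emptyset$ reflects to $N[x]$ via the $\infty$-Borel code; this is the Solovay-style reflection available under $\AD^+$, which says that $\infty$-Borel sets have members in the model whenever they are nonempty. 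Then proceed as in Theorem~\ref{thm:Suslin}(2), using universality or properness over $N$ to make the domain of the resulting Borel function positive. For (3), given $\langle A_\alpha:\alpha<\delta\rangle$ with large union, argue as in Theorem~\ref{thm:Suslin}(3). Under $\AD^+$ any well-ordered family of sets of reals has uniformly chosen $\infty$-Borel codes, by choosing codes in a definable way, e.g.\ via $\Theta$-many sets all in $L(\mathcal{P}(\mathbb{R}))$ coded through a single set of ordinals. With such codes, the pseudo-generic $x$ over $L[r,u,\bar S]$ lands in some $A_\alpha$, and then (1) gives that $A_\alpha\notin\mathcal{I}$.

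The main obstacle is the reflection step: showing that membership of the generically added $\dot x^H$ in $A$ is correctly computed from the $\infty$-Borel code inside $N[H]$. This is needed both for $B\subseteq A$ in (1) and for witnesses in (2). I would try to handle it with the standard $\AD^+$ fact that $\infty$-Borel codes are absolute to all generic extensions of $L[S,r]$ by posets that are countable in $V$. If that fails, I would instead use $\AD^+$'s consequence that every set is $\lambda$-Suslin in a Wadge-initial segment, applying Theorem~\ref{thm:Suslin} inside a suitable $L(\mathcal{P}(\mathbb{R}))$-level.
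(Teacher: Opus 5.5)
There is a genuine gap at the step you call crucial. Vopěnka's theorem makes every real generic over $\mathrm{HOD}_{r,S}$, whose Vopěnka algebra is not countable in $V$. It says nothing about genericity over $N=L[r,S]$ by a poset $\mathbb{P}$ with $\mathcal{P}(\mathbb{P})^N$ countable. In fact that claim is false, and so is the dichotomy you derive from it.

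Take $\mathcal{I}$ to be the ideal of countable sets, i.e.\ $\mathcal{I}_{\neq}$, which has a Borel base and the universality property. Take $A=\{0^\#\}$, which exists under $\AD$. Since $\{0^\#\}$ is a $\Pi^1_2$ singleton, Shoenfield absoluteness gives $x=0^\#$ iff $L[x]\models\psi(x)$ for a suitable $\psi$. So $A$ has the $\infty$-Borel code $S=\emptyset$, and with $r=0$ your $N$ is $L$.

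Every countable Borel set coded in $L$ is a subset of $L$. Hence $0^\#\in A\setminus I$ is pseudo-generic over $L$. Yet $0^\#$ is not set-generic over $L$, and $A$ contains no positive Borel set. So ``either $A\subseteq I$ or $A$ contains a positive Borel set'' fails for $N=L[r,S]$.

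The same example refutes the reflection you invoke in (2): a nonempty $\infty$-Borel set need not have a member in $L[S]$ or in any set-generic extension of it. What Theorem~\ref{thm:Suslin} really uses is that a Suslin representation yields a tree for $p[T]\setminus I$, whose ill-foundedness is absolute to $L[r,T][G]$. $\infty$-Borel codes have no such absoluteness.

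Conversely, the obstacle you flag is harmless. For codes of the form $\{x : L[S,x]\models\varphi(S,x)\}$, membership of $\dot x^H$ in $A$ is computed the same way in $N[H]$ and in $V$.

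The missing idea is $\mathbf\Sigma^2_1$ reflection, which is how the paper argues. Under $\AD^+$, every true $\mathbf\Sigma^2_1$ statement has a Suslin witness. ``There is a counterexample to (1)'' (respectively (2)) is $\mathbf\Sigma^2_1$, because $\mathcal{I}$ is $\mathbf\Sigma^1_2$ on $\mathbf\Sigma^1_1$. So a Suslin counterexample would exist, contradicting Theorem~\ref{thm:Suslin}. Your fallback gestures at this but misstates it: not every set is Suslin under $\AD^+$; it is the counterexample itself that gets reflected.

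For (3), ``uniformly chosen $\infty$-Borel codes'' for an arbitrary well-ordered family is unjustified, since choosing codes needs choice. The paper avoids codes entirely:
\begin{enumerate}
\item Reduce to $\delta$ regular uncountable.
\item Take Jackson's prewellordering of length $\delta$, whose restriction to every analytic set has countable length.
\item Apply (2) to the set of pairs $(x,y)$ with $x\in A_\alpha$, where $\alpha$ is the rank of $y$. This gives a Borel $f$ on a positive Borel $B\subseteq A$.
\item Since $f[B]$ is analytic, it meets only countably many ranks. So $B$ is covered by countably many $A_\alpha$, and one of them is positive.
\end{enumerate}
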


\begin{proof}
    
    $\AD^+$ implies that every true $\mathbf\Sigma^2_1$ statement has a Suslin witness, see e.g. \cite[Theorem 3.16]{Larson2025}. The existence of a counterexample to (1) can easily be expressed in a $\mathbf\Sigma^2_1$ way, so a Suslin counterexample must exist, which contradicts the previous theorem. Similarly then for (2). Closure under well-ordered unions follows essentially as in \cite[Proposition 3.9.18]{Zapletal2008}. Suppose $A = \bigcup_{\alpha < \delta} A_\alpha \in \mathcal{I}^+$. Without loss of generality, $\delta$ is regular uncountable (the singular case follows by induction). By \cite[Theorem 2.25]{Jackson2010}, there is a pre-well-ordering $\leq$ on $\omega^\omega$ of length $\delta$, such that the restriction of $\leq$ to any analytic set has at most countable length. Consider the set $P$ of pairs $(x,y)$, such that $x \in A_\alpha$ if and only if $y$ has position $\alpha$ in the pre-well-ordering $\leq$. By (2), there is a Borel function $f \colon B \to \omega^\omega$ such that $B \subseteq A$ is large and $(x,f(x)) \in P$, for all $x \in B$. $f[B]$ is analytic, whereby there are at most countably many positions of elements of $f[B]$ according to $\leq$. It follows that $B$ is contained in a countable union of $A_\alpha$'s, thus $A_\alpha \in \mathcal{I}^+$ for some $\alpha < \delta$.
\end{proof}

 Let us briefly recall the construction of a \emph{Solovay model}. Starting with a model $V$ of $\ZFC$ and a strong limit cardinal $\kappa \in V$, let $G$ be generic for the Lévy-collapse $\Coll(\omega, <\kappa)$.\footnote{The classical Solovay model uses an innaccessible cardinal, but our presentation makes the result also applicable to derived models of $\AD + V=L(\mathbb{R})$.} We then define $\mathbb{R}^*$ to be set of reals appearing in an intermediate extension $V[G \cap \Coll(\omega, \alpha)]$, $\alpha < \kappa$. To allow for the strongest statement possible, we say that the Solovay model is $V(\mathbb{R}^*)$, but any inner model between $L(\mathbb{R}^*)$ and $V(\mathbb{R}^*)$ works. The relevant part is that every set of reals in $V(\mathbb{R}^*)$ is $\infty$-Borel, and every set of ordinals appears in $V[x]$ for a real $x$. These are standard facts. Recall that $A$ is \emph{$\infty$-Borel} if there is a set of ordinals $S$, such that $A = \{ x : L[S,x] \models \varphi(S,x) \}$, for some formula $\varphi$ in the language of set theory.

\begin{thm}\label{thm:solovay}
    In the Solovay model, let $\mathcal{I}$ be provably $\mathbf\Sigma^1_2$ on $\mathbf\Sigma^1_1$ and have the universality property.
    
    \begin{enumerate}
        \item Every set of reals is either contained in a small Borel set or contains an $\mathcal{I}$-positive Borel set.
        \item Every set of reals can be uniformized by a Borel function on an $\mathcal{I}$-positive set.
        \item $\mathcal{I}$ is closed under well-ordered unions.
    \end{enumerate} 
\end{thm}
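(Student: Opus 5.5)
The plan is to mimic the proof of Theorem~\ref{thm:Suslin}, with the inner models $L[r,T]$ replaced by models of the form $L[S,r]$ or $V[x]$ available in the Solovay model. Write $V(\mathbb{R}^*)$ for the Solovay model, $r$ for the real parameter defining $\mathcal{I}$, and let $A \subseteq \omega^\omega$ be arbitrary in $V(\mathbb{R}^*)$. By the standard facts just recalled, $A$ is $\infty$-Borel, say $A = \{x : L[S,x] \models \varphi(S,x)\}$. The set $S$ lies in $V[y]$ for some real $y \in \mathbb{R}^*$, and $y$ is itself in some $V[G \cap \Coll(\omega,\alpha)]$ with $\alpha<\kappa$. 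I will work over the ground $W = V[G\cap\Coll(\omega,\alpha)]$, or more economically over $L[S,r]$. Over such a ground, $V(\mathbb{R}^*)$ is again a Solovay-type extension, since $\kappa$ remains strong limit and the rest of the collapse is $\Coll(\omega,<\kappa)$ over $W$. The crucial consequence is the usual homogeneity/factor lemma: every real $x \in \mathbb{R}^*$ is generic over $W$ for a poset of size $<\kappa$, and membership $x \in A$ is decided by a condition in that small poset over $W$.

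For (1), first suppose $A$ is not contained in the union $I$ of all small Borel sets coded in $W$. Since $(2^{\mathfrak c})^W < \kappa$, $I$ is a countable union in $V(\mathbb{R}^*)$, and $I \in \mathcal{I}$ by upwards-absoluteness, which uses provable $\mathbf\Sigma^1_2$-ness and Mostowski/Shoenfield. Pick $x \in A \setminus I$. Then $x$ is $\mathcal{I}$-pseudo-generic over $W$, and $x = \dot x^H$ for some poset $\mathbb{P} \in W$ of size $<\kappa$ and some $H$. Strengthening to a condition $s\in H$, I can assume $W \models s \Vdash$ ``$\dot x$ is pseudo-generic and $\varphi(S,\dot x)$ holds in $L[S,\dot x]$''. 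Next I take $M \prec (H(\theta))^W$ countable in $V(\mathbb{R}^*)$ containing $\mathbb{P}, \dot x, s, S, r$ and all the relevant power sets. Collapsing and using that $\mathcal{P}(\mathbb{P})^W$ is countable in $V(\mathbb{R}^*)$, the set
$$B=\{\dot x^K : s\in K,\ K \text{ is } \mathbb{P}\text{-generic over } W\}$$
is Borel by Lemma~\ref{lem:genericsBorel}, and $B \subseteq A$ by the forcing theorem over $W$. It remains to show $B \in \mathcal{I}^+$. Here I argue exactly as in Theorem~\ref{thm:dichotomy}: a code for $B$ appears in a $\Coll$-extension $W[G']$, and universality, applied to the countable model $M$ (with $\dot x^H$ generic pseudo-generic over $M$), gives a generic over which $x$ stays pseudo-generic. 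Hence $B \notin \mathcal{I}^{W[G']}$, and by Shoenfield absoluteness between $W[G']$ and $V(\mathbb{R}^*)$, together with provable $\mathbf\Sigma^1_2$, $B\notin\mathcal I$. One subtlety is that universality has to be applied to the collapsed countable $M$ rather than to $W$. Reflection inside $W$ handles this as in Theorem~\ref{thm:Suslin}: if $\mathbb{Q}$ forced $B$ small over $W$, then $M$ sees this too, contradicting universality.

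For (2), given $P \subseteq \omega^\omega\times\omega^\omega$ with $\proj(P)\in\mathcal{I}^+$, I use (1) to get a positive Borel $p \subseteq \proj(P)$, enlarge $W$ to contain a code for $p$, and take $x\in p$ that is $\mathbb{P}_\mathcal{I}$-generic over $W$ (properness from universality). Some $y$ with $(x,y)\in P$ lies in $W[x][\text{small collapse}]$. So there is a $\mathbb{P}_\mathcal{I}*\dot{\mathbb{C}}$-name $\dot y$ and a condition forcing $(\dot x,\dot y)\in P$ via the $\infty$-Borel code. The collapse part is the annoyance. Since $\dot{\mathbb{C}}$ is a collapse, the relevant generics can be chosen in a Borel way: choose the $\dot{\mathbb C}$-generic over the countable $W$-power set as a Borel function of $x$, for example the lexicographically least one relative to an enumeration. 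Then $f = \{(\dot x^G,\dot y^{G*K(G)})\}$ is Borel, and its domain is positive by properness as before. This step I expect to be the main obstacle, namely making the choice of the auxiliary generic Borel.

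For (3), I follow Theorem~\ref{thm:AD+}: reduce to $\delta$ regular uncountable, and use that in the Solovay model every well-ordered sequence $\langle A_\alpha\rangle$ is definable from a set of ordinals in some $W$. Alternatively, code positions by a prewellordering, and apply (2) to get a Borel $f$ on a positive $B$. Its analytic range meets only countably many levels, because in the Solovay model an analytic set cannot be mapped onto $\omega_1$-many classes definably from ordinals, by the perfect set property for the induced $\omega_1$-sequence. So $B$ is covered by countably many $A_\alpha$ and one of them is positive. A simpler option, as in Theorem~\ref{thm:Suslin}(3), is to take a pseudo-generic $x\in B$ over a ground $W$ containing the sequence; $x\in A_\alpha$ for some $\alpha$, and the argument of (1) run inside $A_\alpha$ over $W$ gives a positive Borel subset of $A_\alpha$. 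I would take this route.
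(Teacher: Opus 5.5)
Your proposal is correct and follows the paper's proof. The paper likewise works over the inner model $V[r,S]$ carrying the $\infty$-Borel code, finds a poset of size $<\kappa$ forcing a pseudo-generic element of $A$, and takes $B$ to be the Borel set of its evaluations, with positivity as in Theorem~\ref{thm:Suslin} and with (2) and (3) declared ``analogous''. Your extra care in (2) correctly fills in a detail that ``analogous'' passes over: a witness $y$ need only appear after a small collapse rather than in $W[x]$ itself, so you choose the $\Coll$-generic over $W[x]$ as a Borel function of $x$. In (3), ``a ground $W$ containing the sequence'' should be read as a ground over which the sequence is uniformly $\infty$-Borel, as your first sentence there indicates.
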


\begin{proof}
    (1) is similar to before. Let $r$ be as above and suppose that $A = \{ x : V[S,x] \models \varphi(S,x) \}$. If $A$ is not contained in the union of the small Borel sets in $V[r,S]$, then there is a forcing notion $\mathbb{P} \in V[r,S]$ of size $< \kappa$ and a name $\dot x$ so that over $V[r,S]$ it is forced that $\dot x$ is pseudo-generic and that $V[r,S,\dot x] \models \varphi(S,\dot x)$. In $V[G]$, $\mathcal{P}(\mathbb{P}) \cap V[r,S]$ is countable. If $B$ is the set of reals of the form $\dot x^H$, where $H$ is $\mathbb{P}$-generic over $V[r,S]$, then $B$ is as required. (2) and (3) are analoguous.
\end{proof}

Let us note that in the argument of Theorem~\ref{thm:Suslin} it suffices to use an $\infty$-Borel on $\mathbf{\Sigma}^1_1$ definition for $\mathcal{I}$ in order for the ideal to be sufficiently absolute between the various inner models. Both $\AD^+$ and the Solovay model imply that every set of reals is $\infty$-Borel so the assumption that $\mathcal{I}$ is provably $\mathbf\Sigma^1_2$ on $\mathbf\Sigma^1_1$, or that $\mathcal{I}$ is defined using a real paramater, is somewhat moot. One needs to be careful though about the meaning of universality, as it is in general dependent on the particular definition for $\mathcal{I}$ we choose and the models we consider should be able to carry the paramaters involved in the definition. It is certainly possible to state a more general result, restricting universality to models of the form $L[S]$, for $S$ a set of ordinals, and fixing a particular $\infty$-Borel definition. However, this deviates a bit from the exposition we give in this paper, in which we consider primarily countable models.

\section{Proper forcing notions}\label{sec:examples}

We now turn to examples, specifically considering the classes of ideals considered in \cite{Zapletal2008}. At the end, we will also consider a few concrete examples that are related to Cichoń's Diagram.

\subsection{Porosity ideals}

Porosity ideals were inspired by the notion of metric porosity and include ideals $\sigma$-generated by closed sets.

\begin{definition}
    Let $U$ be a countable collection of Borel subsets of a Polish space $X$. A \emph{porosity} is a monotone map $\pi \colon \mathcal{P}(U) \to \mathbf\Delta^1_1(X)$, i.e. $x \subseteq y \rightarrow \pi(x) \subseteq \pi(y)$. The associated \emph{porosity ideal} $\mathcal{I}_\pi$ is $\mathcal{I}_R$, where $R \subseteq \mathcal{P}(U) \times X$ is such that $x R y \leftrightarrow y \notin \pi(x) \setminus \bigcup x$.
\end{definition}

The porosity $\pi$ is said to be coanalytic, if $\{ (x,y) : y \in \pi(x) \}$ is a coanalytic subset of $\mathcal{P}(U) \times X$.

\begin{fact}[see {\cite[Theorem 3.8.7]{Zapletal2008}}]
    Let $\pi$ be a coanalytic porosity. Then $\mathcal{I}_\pi$ is provably $\mathbf\Pi^1_1$ on $\mathbf\Sigma^1_1$.
\end{fact}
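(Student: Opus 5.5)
The plan is to give an explicit, absolute characterization of when an analytic set lies in $\mathcal{I}_\pi$, using a transfinite derivative in the style of Kechris--Louveau--Woodin. Then compute the complexity of that characterization with the boundedness theorem for $\mathbf\Pi^1_1$ (or $\mathbf\Sigma^1_1$) norms, and check that the whole argument goes through inside any countable model. This gives the \emph{provable} version of the definability.

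\textbf{The derivative.} For $A \subseteq X$ put $a_A = \{u \in U : u \cap A = \emptyset\}$, and define
$$A' = A \setminus \pi(a_A).$$
Points of $A \cap \pi(a_A)$ are not in $\bigcup a_A$, so $A \cap \pi(a_A) \subseteq \pi(a_A)\setminus\bigcup a_A$. Hence $A \setminus A'$ is a basic $\mathcal{I}_\pi$-small set. Iterate, taking intersections at limit stages, to get $A_\alpha$. Since $U$ is countable, $a_{A_\alpha}$ can change only countably often along the decreasing sequence, so the process stabilizes at a countable ordinal with kernel $K$.

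If $K=\emptyset$, then $A$ is covered by countably many sets of the form $\pi(a)\setminus\bigcup a$, so $A \in \mathcal{I}_\pi$.

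Conversely, suppose $K \neq \emptyset$, so $K \cap \pi(a_K) = \emptyset$. I would show $K \notin \mathcal{I}_\pi$. Assume $K \subseteq \bigcup_n (\pi(b_n)\setminus \bigcup b_n)$. I would then try to build a point $y \in K$ avoiding every piece, by a fusion or Baire-category style construction: shrink $K$ through sets of the form $K \cap u$ with $u \in U$, with the relevant sets nonempty. The point is to reduce to the case $b_n \subseteq a_{K^*}$ for a suitable nonempty relatively open piece $K^*$. Monotonicity of $\pi$ then gives $\pi(b_n) \subseteq \pi(a_{K^*})$, which is disjoint from $K^*$ by the kernel property. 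Making this precise may require first replacing $U$ by its closure under finite intersections and working with a Polish topology refining the one generated by $U$. I expect this step, proving that a nonempty kernel is positive, to be the main obstacle. If the naive derivative does not support the fusion, I would fall back on Zapletal's argument from \cite[Theorem 3.8.7]{Zapletal2008}, recast as this kernel argument.

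\textbf{Definability.} For analytic $A = U_z$, the relation ``$u \cap A_\alpha = \emptyset$'' is computed along the derivative. Coanalyticity of $\pi$ makes ``$y \in A \setminus \pi(a)$'' analytic, uniformly in $a$. Coding the stages by countable ordinals, I would show that $A\in\mathcal{I}_\pi$ iff there is a countable well-founded sequence of subsets $a_\alpha$ of $U$ witnessing that the derivative terminates. Then I would argue, via a boundedness or $\Pi^1_1$-on-$\Sigma^1_1$ computation, that $z \mapsto$ ``$U_z \in \mathcal{I}_\pi$'' is $\mathbf\Pi^1_1$. Equivalently, non-membership would be witnessed $\mathbf\Sigma^1_1$-ly by a nonempty analytic set $K\subseteq A$ with $K \cap \pi(a_K)=\emptyset$; the second half of the previous paragraph is exactly what makes this witness correct.

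Finally, every step above is an absolute argument in $\mathsf{ZF}+\mathsf{DC}$. So the same $\mathbf\Pi^1_1$ formula defines $\mathcal{I}_\pi$ on $\mathbf\Sigma^1_1$ in every countable model containing $\pi$, which is the provable version claimed.
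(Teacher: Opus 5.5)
\textbf{The converse step is false.} You flag ``a nonempty kernel $K$ is $\mathcal{I}_\pi$-positive'' as the main obstacle, but it cannot be proved, so no fusion argument will rescue it. Take $X = 2^\omega$, let $U$ be the basic clopen sets, and put $\pi(a) = \overline{\bigcup a}$. This is a Borel (hence coanalytic) porosity. The sets $\pi(a) \setminus \bigcup a$ are exactly the closed nowhere dense sets, so $\mathcal{I}_\pi$ is the meager ideal. Let $A$ be the countable dense set of eventually zero sequences. Then $a_A = \emptyset$ and $\pi(\emptyset) = \emptyset$, so $A' = A$. Thus $A$ is its own nonempty kernel, although $A \in \mathcal{I}_\pi$.

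Your fusion sketch glosses over two problems. First, the kernel property $K \cap \pi(a_K) = \emptyset$ is not inherited by pieces $K^* = K \cap u$: $a_{K^*}$ may be strictly larger than $a_K$, and then $\pi(a_{K^*})$ can meet $K^*$. Second, a decreasing sequence of analytic sets need not have nonempty intersection. A boldface derivative of this kind cannot detect small but dense sets. Separately, your proposed witness for non-membership (``there is a nonempty analytic $K \subseteq A$ with $K \cap \pi(a_K) = \emptyset$'') quantifies over codes of analytic sets and asserts an inclusion between analytic sets, so it is not $\mathbf{\Sigma}^1_1$.

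\textbf{The missing idea is to effectivize.} Fix a real $z$ coding $A$, the countable family $U$, and the parameter of $\pi$.
By monotonicity, an analytic $C$ is porous iff $C \subseteq \pi(a)$ for every $a \supseteq a_C$, and this is $\Pi^1_1$ on $\Sigma^1_1$. Similarly, $C \setminus \pi(a_C)$ is $\Sigma^1_1(z)$ whenever $C$ is.
Let $W_z$ be the union of all porous $\Sigma^1_1(z)$ sets. By the first reflection theorem it is also the union of the porous $\Delta^1_1(z)$ sets, hence $\Pi^1_1(z)$ uniformly in $z$.
The characterization the paper's sketch relies on is that $A \in \mathcal{I}_\pi$ iff $A \subseteq W_z$, which is a uniform $\Pi^1_1$ condition.

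For the nontrivial direction, $B = A \setminus W_z$ is $\Sigma^1_1(z)$ and has no nonempty porous $\Sigma^1_1(z)$ subset. This property is hereditary, so your fusion idea now works inside Gandy--Harrington forcing. Suppose $B$ is covered by sets $\pi(b_n) \setminus \bigcup b_n$, and let $C \subseteq B$ be the current condition at stage $n$.
\begin{itemize}
\item If some $u \in b_n$ meets $C$, pass to $C \cap u$. This is still $\Sigma^1_1(z)$ because $z$ codes $U$, and it lies inside $\bigcup b_n$.
\item Otherwise $b_n \subseteq a_C$, and you pass to $C \setminus \pi(a_C)$. It is nonempty because $C$ is not porous, and it avoids $\pi(b_n) \subseteq \pi(a_C)$.
\end{itemize}
Interleaving these steps with the dense sets of a countable model yields a point of $B$ outside the cover.

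In the example above, each point of $A$ is a $\Delta^1_1(z)$ singleton, and singletons are porous. So $A \subseteq W_z$, as it must be.
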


The proof goes by first noting that the collection of \emph{porous sets}, i.e. those contained in a set $\pi(x) \setminus \bigcup x$, is $\mathbf\Pi^1_1$ on $\mathbf\Sigma^1_1$: A $\Sigma^1_1(x)$ set $A$ is porous if and only if it is contained in $\pi(x)$, for any $x$ such that $\forall u \in x (u \cap A = \emptyset)$. It is then shown that a $\Sigma^1_1(x)$ set is in the ideal if and only if it is contained in the union of all $\Sigma^1_1(x)$ porous sets. This gives a uniform $\Pi^1_1(x)$ definition. 

Note that for a coanalytic porosity $\pi$, the relation $R$ is only analytic and not Borel. The notions of being $R$-dominating and $\mathcal{I}_\pi$-pseudo-genericity still coincide though, since in any model, each porous analytic set is contained in a porous Borel set.\footnote{Of course, $\pi(x) \setminus \bigcup x$ is Borel in $V$ by definition, but this is not necessarily absolute to $M$, given the coanalytic definition of the porosity.} This follows from the First Reflection Theorem, see \cite[Theorem 35.10]{Kechris1995}. 

\begin{thm}
    Let $\pi$ be a coanalytic porosity. Then $\mathcal{I}_\pi$ has the total universality property and $\mathbb{P}_{\mathcal{I}_\pi}$ preserves Baire category.
\end{thm}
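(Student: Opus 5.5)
The plan is to imitate the proof for $G_\delta$ relations and show directly that the generic $G$ can be taken to be generic over any countable model containing $y$.

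\textbf{Setup.} Fix a countable $M \ni \pi$, a real $y$ which is $\mathcal{I}_\pi$-pseudo-generic over $M$, and $\mathbb{Q} \in M$. As remarked after the Fact above, by the First Reflection Theorem pseudo-genericity over any model $N$ is the same as being $R$-dominating over $N$. Here this means: $y \notin \pi(a) \setminus \bigcup a$ for every $a \subseteq U$ in $N$. Let $M' \supseteq M$ be any countable model containing $y$, and let $G$ be $\mathbb{Q}$-generic over $M'$. I claim $y$ is $R$-dominating over $M[G]$. No genericity of $y$ is used anywhere, which is why we get the \emph{total} universality property.

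\textbf{Key density argument.} Let $\dot a \in M$ be a $\mathbb{Q}$-name for a subset of $U$. For $r \in \mathbb{Q}$ put
\[ a_r := \{ u \in U : \exists r' \leq r \ (r' \Vdash \check u \in \dot a)\} \in M. \]
Consider
\[ D := \{ r : \exists u \in U\, (y \in u \wedge r \Vdash \check u \in \dot a)\} \cup \{ r : y \notin \textstyle\bigcup a_r\} \in M'. \]
The set $D$ is dense. Given $q$, either some $u \in a_q$ contains $y$, and then a witness $r' \le q$ lies in the first part; or $y \notin \bigcup a_q$, and then $q$ itself lies in the second part. (Membership $y \in u$ makes sense since each $u$ is Borel with code in $M$.) Pick $r \in G \cap D$.

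\begin{itemize}
\item In the first case, $y \in \bigcup \dot a^G$, so $y \notin \pi(\dot a^G) \setminus \bigcup \dot a^G$.
\item In the second case, $y \notin \bigcup a_r$ and $a_r \in M$. By domination over $M$, $y \notin \pi(a_r)$. Since $r \in G$, the forcing theorem in $M$ gives $\dot a^G \subseteq a_r$. By monotonicity of $\pi$, evaluated in $V$, we get $y \notin \pi(\dot a^G)$.
\end{itemize}

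Either way $\dot a^G\,R\,y$. The pleasant point is that monotonicity is only ever applied in $V$. So there is no absoluteness issue about the coanalytic definition of $\pi$ inside countable models, beyond the reflection fact already quoted that porous analytic sets are covered by porous Borel sets in any model.

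\textbf{Baire category.} To get preservation of Baire category, I would apply Corollary~\ref{cor:preserveBaire}. Since $\mathcal{I}_\pi$ is provably $\mathbf\Pi^1_1$ on $\mathbf\Sigma^1_1$, hence provably $\mathbf\Delta^1_2$ on $\mathbf\Sigma^1_1$, all models are $\mathcal{I}_\pi$-correct. Properness then follows from Theorem~\ref{thm:weakunivproper} together with what was just shown. Now let $x$ be $\mathbb{P}_{\mathcal{I}_\pi}$-generic over $M$ and $c$ a Cohen real over $M[x]$. Run the argument above with $\mathbb{Q}$ equal to Cohen forcing, $y = x$ and $M' = M[x]$. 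This shows $x$ stays pseudo-generic over $M[c]$, which is exactly the hypothesis of the Corollary.

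\textbf{Main obstacle.} I expect the delicate step to be the identification of $\mathcal{I}_\pi$-pseudo-genericity over $M[G]$ with domination over all $a \in M[G]$. This is where the coanalytic definition could cause trouble, but it is handled by the First Reflection remark above.
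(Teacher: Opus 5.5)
Your proof is correct and is essentially the paper's argument. Your $a_r$ is the paper's $x'$, you use the same two-case split (either some extension forces a $u \ni y$ into $\dot a$, or $y \notin \bigcup a_r$ and domination over $M$ gives $y \notin \pi(a_r)$), and genericity over a countable $M' \ni y$ finishes; Baire category preservation then follows from Corollary~\ref{cor:preserveBaire} with $\mathbb{Q}$ Cohen forcing, exactly as in the paper. The only difference is cosmetic: you apply monotonicity of $\pi$ in $V$ to $\dot a^G \subseteq a_r$, whereas the paper phrases this as $p \Vdash \pi(\dot x) \subseteq \pi(x')$.
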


\begin{proof}
     Let $\mathbb{Q} \in M$ and $y$ be $R$-dominating over $M$. Let $G$ be $\mathbb{Q}$-generic over a countable model $M'$ extending $M$ and containing $y$. Let $p \in \mathbb{Q}$ and $\dot x \in M$, a $\mathbb{Q}$-name for an element of $\mathcal{P}(U)$, be arbitrary. Define $$x' := \{ u \in U : \exists q \leq p (q \Vdash u \in \dot x) \} \in M.$$ If $y \in \bigcup x'$, then there is $q \leq p$ be such that $q \Vdash u \in \dot x$, for some $u \in U$ with $y \in u$. In particular, $q \Vdash y \notin \pi(\dot x) \setminus \bigcup \dot x$. If $y \notin \bigcup x'$, then already $p \Vdash y \notin \pi(\dot x) \setminus \bigcup \dot x$, since $y \notin \pi(x') \setminus \bigcup x'$ and $p \Vdash \pi(\dot x) \subseteq \pi(x')$. By genericity, $y$ is $R$-dominating over $M[G]$. Preservation of Baire category follows from Corollary~\ref{cor:preserveBaire} and using Cohen forcing for $\mathbb{Q}$.
\end{proof}

\subsection{Farah-Zapletal ideals}

Many proofs of properness presented in \cite{Zapletal2008} are by means of a type of game originating in \cite{FarahZapletal2006} which we call the Farah-Zapletal game (specifically, see Sections 4.3-4.6).

\begin{definition}
    Let $\mathcal{I}$ be a $\sigma$-ideal on $X$ with a Borel base $B \subseteq \omega^\omega \times X$ and let $\dot x$ be a $\mathbb{P}$-name for an element of $X$, for some forcing notion $\mathbb{P}$. The \emph{Farah-Zapletal game} $\Game_{FZ}(B, (\mathbb{P}, \dot x))$ is defined as follows: 
    
     \begin{center}
    \begin{tabular}{c|c c c c c c}
            I &  $p_{0}, i_0, D_0$ & & $i_1, D_1$ & &\dots\\
            II & & $p_1 \leq p_{0}$ & & $p_2\leq p_1$ &
    \end{tabular}
\end{center}
$p_n$ ranges over $\mathbb{P}$, $D_n$ are dense open subsets of $\mathbb{P}$ and $i_n \in \omega^{<\omega}$. Given a run, let $z := (i_0,i_1,\dots)$. Player II wins if \begin{enumerate}
    \item for every $n$, there is $m$ with $p_m \in D_n$, and
    \item $\bigcap \{ O \subseteq X \text{ basic open} : \exists n (p_n \Vdash \dot x \in O) \} = \{ x \}$, where $x \notin B_z$.
\end{enumerate}

We call $\mathcal{I}$ a \emph{Farah-Zapletal ideal} if there is a provable Borel base $B$ for $\mathcal{I}$ such that for any pair $(\mathbb{P}, \dot x)$, where $\Vdash_{\mathbb{P}} \dot x \text{ is $\mathcal{I}$-pseudo-generic}$, Player II has a winning strategy in $\Game_{FZ}(B, (\mathbb{P}, \dot x))$.
\end{definition}

It is standard to note that the game $\Game_{FZ}(B, (\mathbb{P}, \dot x))$ is Borel in a suitable sense and thus determined. The Borel base witnessing being Farah-Zapletal isn't always the most natural one resulting from the definition of $\mathcal{I}$. Typically one defines $B$ so that I is forced to reveal extra information about the set they produce.

\begin{thm}\label{thm:FZ1}
    Let $\mathcal{I}$ be Farah-Zapletal as witnessed by a Borel base $B$. Then 
    \begin{enumerate}
        \item $\mathcal{I}$ has the universality property for the class of countable models satisfying that $\mathcal{I}$ is Farah-Zapletal as witnessed by $B$, e.g. elementary submodels of sufficiently large $H(\theta)$,
        \item $\mathcal{I}$ is provably $\mathbf\Delta^1_2$ on $\mathbf\Sigma^1_1$.
    \end{enumerate}
\end{thm}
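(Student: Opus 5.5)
For (1), fix a countable model $M$ in which $\mathcal{I}$ is Farah-Zapletal as witnessed by $B$. Let $x$ be a generic $\mathcal{I}$-pseudo-generic real over $M$, say $x = \dot x^H$ for some $\mathbb{P} \in M$ and $H$ $\mathbb{P}$-generic over $M$. As in the proof of Theorem~\ref{thm:virtgen}, we may pass below a condition and arrange that $M$ satisfies ``$\Vdash_{\mathbb{P}} \dot x$ is $\mathcal{I}$-pseudo-generic''. Since $M$ believes $\mathcal{I}$ is Farah-Zapletal via $B$, there is $\sigma \in M$, a winning strategy for II in $\Game_{FZ}(B,(\mathbb{P},\dot x))$.

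Now fix $\mathbb{Q} \in M$. The goal is a $\mathbb{Q}$-generic $G$ over $M$ such that $x$ is pseudo-generic over $M[G]$, that is, $x \notin B_z$ for every $z \in M[G]$. I will work in the two-step extension by $\mathbb{Q} * \dot{\mathbb{P}}$, reading $\mathbb{P}$ and $\sigma$ in the extension via the ground model, and use the strategy $\sigma$ against an adversary who plays a $\mathbb{Q}$-name $\dot z$ digit by digit. By Lemma~\ref{lem:singleBorel}, it suffices to defeat a single name $\dot z$. The cleaner route is probably a direct argument in $M$. Enumerate the dense sets of $\mathbb{P}$ in $M$, and the dense sets of $\mathbb{Q}$ in $M$ below a given condition. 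Player I then plays against $\sigma$ while simultaneously building a decreasing sequence $q_n$ in $\mathbb{Q}$ deciding longer initial segments $i_n$ of $\dot z$. The issue is that $\sigma$'s responses $p_n$ must converge to the given $H$, not an arbitrary generic.

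To handle this, I would instead use the theorem's game characterization. By Theorem~\ref{thm:gameexactchar} and Theorem~\ref{thm:gamechar}, it suffices to produce a winning strategy for II in $\Game_{u}(\mathcal{I},\mathbb{Q},(\mathbb{P},\dot x))$ with single-name and Borel-base simplifications. Player I plays $p_{n}$, names $\dot z_n$, and dense sets $A_n$. Player II interleaves: he uses dense sets of $\mathbb{Q}$ to decide finite pieces of the $\dot z_n$ (bookkept via a bijection $\omega \to \omega\times\omega$), and feeds those pieces as $i_k$ into $\sigma$. He also feeds I's extensions $p_{2n}$ as the dense-set moves $D_k := \{p : p \le p_{2n} \vee p \perp p_{2n}\}$, so that $\sigma$'s answers lie below I's moves. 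The main obstacle is that a single FZ run only defeats one real $z$, while II must defeat countably many $\dot z_n^G$. I would handle this by coding the countably many $z$'s into one $z$ via the provable Borel base: $\bigcup_n B_{z_n} \subseteq B_{z}$ for a suitable Borel-computable $z$, using closure of the ideal under countable unions expressed by the base. Arranging that this coded $z$ is revealed digit by digit, as each finite piece is decided in $\mathbb{Q}$, is the delicate point. If this fails, I would run $\omega$ many FZ plays in parallel, which is problematic since the $p$'s must cohere. So the coding approach is the one to pursue first.

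For (2), I would follow Theorem~\ref{thm:weakunivproper} and Theorem~\ref{thm:dichotomy}. An analytic set $A=U_y$ is positive iff some poset $\mathbb{P}$ with name $\dot x$ forces $\dot x\in A$ pseudo-generic, and II has a winning strategy in the Borel game $\Game_{FZ}$. This is $\mathbf\Sigma^1_2$, since the game on a countable model is coded by a real, and winning strategies for Borel games are $\Sigma^1_2$-expressible via countable models. Conversely, smallness is $\mathbf\Sigma^1_2$ via the Borel base, namely $\exists z\,(A\subseteq B_z)$. Provability follows since the Farah-Zapletal property is assumed provable.
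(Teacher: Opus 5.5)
Your argument for (1) has a genuine gap. You set up the right direct argument: I opens $\Game_{FZ}$, plays all dense sets of $\mathbb{P}$ in $M$, builds a $\mathbb{Q}$-generic $G$ over $M$, and feeds the digits of $\dot z^G$ as the $i_n$. You also name the right obstacle: the $\mathbb{P}$-generic $H'$ produced by $\sigma$ need not be the given $H$. But you then abandon this route, and the missing idea is that the forcing theorem removes the obstacle.

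Suppose universality fails at $x=\dot x^H$ for some $\mathbb{Q}$ and, by Lemma~\ref{lem:singleBorel}, a single name $\dot z$.
\begin{enumerate}
\item Then $M[H]$ satisfies ``no further forcing adds a $\mathbb{Q}$-generic $G$ over $M$ with $\dot x^H\notin B_{\dot z^G}$''. Otherwise take a generic over the countable model $M[H]$ inside $V$ and obtain such a $G$ after all.
\item So some $p\in H$ forces this statement over $M$.
\item Run $\Game_{FZ}$ against $\sigma$ with $p_0=p$, entirely inside a collapse extension $M[K]$ of $M$. This yields an $M$-generic $H'\ni p$ and a $G$ with $\dot x^{H'}\notin B_{\dot z^G}$.
\item $M[K]$ is a forcing extension of $M[H']$, which contradicts what $p$ forces.
\end{enumerate}
The one extra ingredient is that ``$\sigma$ is winning'' is upward absolute from $M$ to $M[K]$, by a well-foundedness argument. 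This is needed because $M$ is an arbitrary countable model believing $\mathcal{I}$ is Farah--Zapletal via $B$.

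The detour through $\Game_u$ does not work as described. In $\Game_u$, Player I extends the $\mathbb{P}$-condition every round, while a Farah--Zapletal strategy only lets I choose $p_0$. Feeding $D_k=\{p : p\le p_{2n}\vee p\perp p_{2n}\}$ to $\sigma$ only guarantees that the FZ run meets $D_k$ eventually, and it may do so by becoming incompatible with $p_{2n}$. Either way II has no legal answer $p_{2n+1}\le p_{2n}$ taken from $\sigma$. Restarting $\sigma$ at each $p_{2n}$ never completes a run. The coding of countably many $\dot z_n$ that you single out is not the real difficulty, since one name suffices.

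There is also a scope problem. Theorems~\ref{thm:gameexactchar} and \ref{thm:gamechar} only concern sufficiently elementary countable models, whereas (1) is claimed for all countable models in which $\mathcal{I}$ is Farah--Zapletal via $B$. Moreover, clause (2) of Theorem~\ref{thm:gameexactchar} is just (1) for such models, so the detour gains nothing.

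Your plan for (2) is a legitimate alternative to the paper's integer game $\Game_g(B,C)$ plus Borel determinacy. Smallness is $\Sigma^1_2$ as $\exists z\,(A\subseteq B_z)$. Positivity is $\Sigma^1_2$ once you quantify over a countable transitive $N$ containing $(\mathbb{P},\dot x)$ such that $N$ satisfies $\Vdash_{\mathbb{P}}\dot x\in A$, together with a winning strategy for II in the FZ game over $N$. Make the converse explicit: if $A\subseteq B_z$, let I play $z$ and all dense sets of $N$; the outcome $\dot x^H$ then lies in $A\setminus B_z$ by Mostowski absoluteness, a contradiction.
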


\begin{proof}
    (1) is fairly similar to the argument with the games $\Game_u$. Let $\mathbb{P} \in M$ and $\dot x \in M$ a $\mathbb{P}$-name for a pseudo-generic real. $M$ contains a winning strategy for II in the game $\Game_{FZ}(B, (\mathbb{P}, \dot x))^M$. Moreover, this game being Borel, it is standard to note that being a winning strategy is upwards absolute by a ``well-foundedness of a Shoenfield-tree" argument.\footnote{The game can be viewed as a Gale-Steward game on a set $A \in M$, $A \subseteq M$, with payoff set a Borel subset of $A^\omega$. A strategy being winning is equivalent to saying that a particular tree is reverse ill-founded.} Let $\mathbb{Q} \in M$. By Lemma~\ref{lem:singleBorel} it suffices to consider a single $\mathbb{Q}$-name $\dot z \in M$. Suppose that $p \in \mathbb{P}$ forces that no $(M,\mathbb{Q})$-generic $G$ can be further added such that $x \notin B_{\dot z^G}$. Then let $\{p_n : n \in \omega\}$ be II's answer to a run where I starts with $p_0 = p$, plays all dense open sets in $M$ and step-by-step constructs a $\mathbb{Q}$-generic over $M$ deciding and playing out the initial segments of $\dot z$. We obtain a $\mathbb{P}$-generic $H \ni p$ over $M$ and a $\mathbb{Q}$-generic $G$ such that $\dot x^H \notin B_{\dot z^G}$. All of this can be done in a single forcing extension of $M$, contradicting the assumption on $p$.

    For (2) we give a game characterisation of analytic sets in the ideal. Let $C \subseteq X \times \omega^\omega$ be a closed set and consider the following game $\Game_g(B,C)$: 
    \begin{center}
    \begin{tabular}{c|c c c c c c}
            I &  $i_0$ & & $i_1$ & & \dots \\
            II & & $O_0, t_0$ & & $O_1 \subseteq O_0, t_1$ &
    \end{tabular}
\end{center}
$i_n \in \omega$, $t_n \in \omega^{<\omega}$, and $O_n$ are basic open subsets of $X$. We let $z := (i_0, i_1, \dots)$ and $w = {t_0}^\frown {t_1}^\frown\dots$. Player II wins if $\bigcap O_n = \{ x\}$ for some $x$ such that $(x,w) \in C$ and $x \notin B_z$.

\begin{claim}\label{claim:simplescram}
    II has a winning strategy iff the projection of $C$ to the first coordinate, $\operatorname{proj}(C)$, is in $\mathcal{I}^+$.
\end{claim}

\begin{proof}
    Clearly, if $\operatorname{proj}(C) \in \mathcal{I}$ then Player I has a simple winning strategy by playing out $z$ such that $\operatorname{proj}(C) \subseteq B_z$. On the other hand, if the projection of $C$ is large, then there is a pair $(\mathbb{P}, \dot x)$ such that $\Vdash \dot x \in \operatorname{proj}(C) \wedge \dot x \text{ is pseudo-generic}$: 
    
    Consider an elementary submodel containing $C$ and $\mathcal{I}$. The statement that there is $x \in \operatorname{proj}(C)$ that is pseudo-generic over $M$ is a true $\mathbf{\Sigma}^1_1$-statement that can be expressed in a forcing extension of $M$. Thus such a pair exists in $M$ and by elementarity also in $V$.

    Now for II to win, simply follow a strategy for $\Game_{FZ}(B, (\mathbb{P}, \dot x))$ adding to I's moves dense sets $D_n$ consisting of conditions deciding more and more of $\dot x$ and a witness for $\dot x \in \operatorname{proj}(C)$. In $\Game_g(B,C)$, II plays out the information that is eventually being decided.
\end{proof}

For a given closed set $C$, the statement that II has a winning strategy is $\Delta^1_2$, uniformily in a code for $C$ and $B$, by Borel determinacy. The universal $\Sigma^1_1$ set $A \subseteq \omega^\omega \times X$ is the projection of a $\Pi^0_1$ set $C \subseteq (\omega^\omega \times X) \times \omega^\omega$, so $\{ x \in \omega^\omega : A_x \in \mathcal{I}\}$ is $\Delta^1_2(c)$, for $c$ a code of $B$.\end{proof}

The game in (2) is strongly related to the notion of a \emph{game ideal} that has been developed in \cite{KanoveiSabokZapletal} and was inspired from the many Farah-Zapletal-style proofs in \cite{Zapletal2008}.

\begin{definition}[see {\cite{KanoveiSabokZapletal}}]
    $\mathcal{I}$ is a \emph{game ideal} if there is a provable Borel base $B$ and a Borel function $f \colon \omega^\omega \to X \times \omega^\omega$ (called a \emph{scrambling tool}) such that for any closed set $C \subseteq X \times \omega^\omega$, Player II has a winning strategy in the integer game where I produces $z \in \omega^\omega$, II produces $y \in \omega^\omega$, and II wins if $f(y) = (x,w) \in C$ and $x \notin B_z$.
\end{definition}

There is technically no assumption on $B$ being a Borel base \emph{provably} in \cite{KanoveiSabokZapletal}, as this is irrelevant to any consequence about $\mathcal{I}$ that is purely about the ideal and not its definition. But this is a necessary condition in the context of the next theorem.

The proof of Theorem~\ref{thm:FZ1} shows that Farah-Zapletal ideals are game ideals, for a particularly simple scrambling tool. The choice of the Borel base is important though. Let us refer to the situation in Claim~\ref{claim:simplescram} above with ``\emph{$\mathcal{I}$ is a game ideal with simple scrambling tool}". We note the following equivalence:

\begin{thm}
    The following are equivalent: 
    \begin{enumerate}
        \item $\mathcal{I}$ is Farah-Zapletal.
        \item $\mathcal{I}$ is a game-ideal with simple scrambling tool and $\mathcal{I}$ has the universality property for countable elementary submodels.
    \end{enumerate}
\end{thm}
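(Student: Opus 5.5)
The forward direction (1)$\Rightarrow$(2) is already contained in earlier results. If $\mathcal{I}$ is Farah-Zapletal, witnessed by a provable Borel base $B$, then Theorem~\ref{thm:FZ1}(1) gives the universality property. The point is that sufficiently elementary countable submodels of $H(\theta)$ reflect that II has winning strategies in all games $\Game_{FZ}(B,(\mathbb{P},\dot x))$. Claim~\ref{claim:simplescram} gives the game ideal property with simple scrambling tool: for every closed $C$ with $\operatorname{proj}(C)\in\mathcal{I}^+$, II wins $\Game_g(B,C)$. So the substance of the theorem is the converse.

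For (2)$\Rightarrow$(1), fix a pair $(\mathbb{P},\dot x)$ with $\Vdash_{\mathbb{P}}$ ``$\dot x$ is $\mathcal{I}$-pseudo-generic''. We must show II wins $\Game_{FZ}(B,(\mathbb{P},\dot x))$ for the same $B$. By Borel determinacy of $\Game_{FZ}$, it suffices to rule out a winning strategy $\sigma$ for I. Take $M\prec H(\theta)$ countable containing $\sigma,\mathbb{P},\dot x,\mathcal{I},B$. Let $p_0$ be I's first move according to $\sigma$. The plan is to build a closed set $C$ and use $\Game_g(B,C)$ to beat $\sigma$.

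First we show that a suitable set of generics is positive. Let
$$D=\{\dot x^H: H\ni p_0 \text{ is $\mathbb{P}$-generic over } M\},$$
which is Borel by Lemma~\ref{lem:genericsBorel}. By the universality property together with Theorem~\ref{thm:univproper}, or more directly by the argument in the proof of Theorem~\ref{thm:dichotomy}, $D\in\mathcal{I}^+$. This step needs $\mathcal{I}$-correctness of forcing extensions of $M$; I would try to obtain it from the game ideal property, since that makes membership of analytic sets in $\mathcal{I}$ a $\mathbf\Delta^1_2$ statement about the existence of winning strategies in Borel games, uniformly in codes, hence absolute. Next, code $D$ as $\operatorname{proj}(C)$ for a closed $C\subseteq X\times\omega^\omega$, where the witness $w$ encodes an enumeration of a filter $H\ni p_0$ generic over $M$ with $\dot x^H=x$. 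Concretely, $w$ lists conditions of $M$, forming a decreasing sequence that meets the countably many dense sets of $M$ in a fixed order, with the basic neighbourhoods of $x$ decided accordingly. Arranged this way, $C$ is closed.

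Then we use $\Game_g$ to defeat $\sigma$. By (2), II has a winning strategy $\tau$ in $\Game_g(B,C)$. We play $\sigma$ against a strategy in $\Game_{FZ}$ that simulates $\tau$. When I plays $i_n,D_n$ under $\sigma$, we feed $i_n$ to $\tau$, which returns $O_n,t_n$. The $t_n$ extend the coded generic sequence, so II answers in $\Game_{FZ}$ with the newest condition coded in $w$. This condition lies in $M$, and we need it to extend the previous one and eventually enter $D_n$. Since $\sigma\in M$ and the run stays in $M$ at every finite stage, each $D_n\in M$ is among the dense sets that $w$ is required to meet. The outcome of $\tau$ is $x\notin B_z$ with $x=\dot x^H$ and $H$ generated by the played conditions. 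Hence II wins this run of $\Game_{FZ}$, contradicting that $\sigma$ is winning.

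The main obstacle is the synchronisation in this last step. II in $\Game_g$ reveals $w$ at its own pace, possibly in long chunks $t_n$, while $\Game_{FZ}$ requires a single condition per move. Also, the dense sets $D_n$ chosen by I depend on earlier moves, so $C$ cannot be set up to list them in advance. I would resolve this by having $C$ require $w$ to meet every dense set in $M$, meaning all dense sets definable from $\sigma$ and finite positions. Chunks are buffered, and one coded condition is played per round, which still yields a generic filter over $M$ in the limit.
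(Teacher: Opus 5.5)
Your proposal is correct and follows the paper's proof essentially step for step. For (1)$\Rightarrow$(2) you use Theorem~\ref{thm:FZ1} and Claim~\ref{claim:simplescram}, as the paper does; for the converse, the paper likewise takes a countable elementary $M$ containing I's winning strategy, gets positivity of the set of $M$-generic evaluations of $\dot x$ through $p_0$ from Theorem~\ref{thm:univproper}, codes that set as the projection of a closed $C$ whose witnesses enumerate all dense sets of $M$, and lets II simulate a winning strategy in $\Game_g(B,C)$, repeating conditions until the revealed witness enters the dense sets I has played. Your remark that Theorem~\ref{thm:univproper} needs $\mathcal{I}$-correctness of forcing extensions of $M$ is a point the paper passes over silently; it tacitly relies, as you suggest, on the game characterization (which must then hold in those extensions too) to make $\mathcal{I}$ provably $\mathbf\Delta^1_2$ on $\mathbf\Sigma^1_1$.
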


We do not know if there is truly a difference between a game ideal with simple scrambling tool and a usual game ideal. Remarks made in \cite[5.1]{KanoveiSabokZapletal} express that masking II's point ``so that essentially nothing can be inferred about it" is crucial. On the other hand, it seems that in all examples of game ideals given the book, the Borel base is kept as the one naturally arising from the definition of the ideal or its choice is simply irrelevant. In the case of the games above, I is not disadvantaged by II's plays being masked, but rather by having to reveal specific information about their play by choice of a particular base.

\begin{proof}
We have shown that (1) implies (2). For the other direction, let $B$ witness that $\mathcal{I}$ is a game ideal with simple scrambling tool. Suppose Player I has a winning strategy in $\Game_{FZ}(B,(\mathbb{P}, \dot x))$, $\Vdash_{\mathbb{P}}\text{``$\dot x$ is pseudo-generic"}$, and let $p_0$ be the first play of I according to this strategy. Let $M$ be a countable elementary submodel containg all relevant information. Recall, by Theorem~\ref{thm:univproper}, that $A = \{ \dot x^H : \text{$H \ni p_0$ is $\mathbb{P}$-generic over $M$}\}$ is positive. In fact $A$ is the projection of a closed set $C$ described as follows: 

Let $\langle p_n, D_n : n \in \omega \rangle$ enumerate all conditions and dense open subsets of $\mathbb{P}$ in $M$. Then $(x,w) \in C$ iff $\langle p_{w(n)} : n \in \omega\rangle$ is decreasing in $\mathbb{P}$, $p_{w(n)} \in D_n$ and $p_{w(n)} \Vdash \dot x \in O$, for some basic open set $O \ni x$ of diameter $< \frac{1}{n+1}$ in some compatible metric. 

By assumption, II has a winning strategy in $\Game_g(B,C)$. II can apply this strategy to I's plays in $\Game_{FZ}(B,(\mathbb{P}, \dot x))$ ignoring the dense sets played by I. In $\Game_{FZ}(B,(\mathbb{P}, \dot x))$, at any stage of the game, Player II can repeat the same condition $p \in \mathbb{P} \cap M$ for as long as they want. Eventually though, for any dense set $D$ played by I, II's winning strategy in $\Game_g(B,C)$ will have to reveal $t_n \in \omega^{<\omega}$ such that $p_{t_n(i)} \in D$ and $p_{t_n(i)}$ extends the previous condition played by II, for some $i$. At that point II can switch to this condition. It is clear that this can be used to define a winning run of II against I's strategy.
\end{proof}

Since we have mentioned game ideals, we would like to point out a strong negative answer to Question 5.14 in \cite{KanoveiSabokZapletal} that has been missed by the authors.

\begin{thm}\label{thm:univnotimplydef}
    There is an $F_\sigma$ relation $R$ with the total universality property, and $\mathbb{P}_R$ proper, such that $\mathcal{I}_{R}$ is not $\mathbf\Delta^1_2$ on $\mathbf\Sigma^1_1$. Also, there is such a relation where $\mathcal{I}_R$ is $\mathbf\Delta^1_2$ on $\mathbf\Sigma^1_1$ (thus, $\mathbb{P}_R$ is proper) but not a game ideal. 
\end{thm}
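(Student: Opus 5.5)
The plan is to build both examples by \emph{coding}: we take a well-understood relation whose idealized forcing is harmless (Sacks-like or Cohen-like) and attach to it a side coordinate. The side coordinate changes which analytic sets are small in a complexity-raising way, but it does not interfere with the combinatorics of domination. Concretely, I would take $Y = 2^\omega \times 2^\omega$ and $X = 2^\omega \times \omega^\omega$, fix a $\mathbf\Pi^1_1$-complete (or, more ambitiously, a non-$\mathbf\Delta^1_2$-in-the-codes) set $P \subseteq 2^\omega$ presented via trees $T_a$ (so $a \in P$ iff $T_a$ is well-founded), and define $(w,f) \mathrel{R} (a,b)$ by
\[
b \neq w \;\vee\; \exists n\,\big(f\restriction n \notin T_a\big).
\]
The first disjunct is open and the second is open, so this is not yet $F_\sigma$ in the required sense. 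I would adjust it so that the clause involving $T_a$ becomes $F_\sigma$: ``$f$ is eventually off $T_a$ in a bounded way,'' indexing by a further integer coordinate, so that $R$ is a countable union of closed sets. The intended effect is the following. On a fiber $\{a\} \times 2^\omega$, the ideal behaves like the countable ideal (Sacks) when $a \in P$, and the fiber is small when $a \notin P$, witnessed by a branch $f$ through $T_a$. Hence $\{a\}\times 2^\omega \in \mathcal{I}_R$ iff $a \notin P$, and more generally, for analytic $A$, smallness of $A$ is equivalent to a statement about which $T_a$ have branches. I would choose the coding so that the resulting set $\{z : U_z \in \mathcal{I}_R\}$ is $\mathbf\Sigma^1_2$-complete; in particular it is not $\mathbf\Delta^1_2$ on $\mathbf\Sigma^1_1$.

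Next I would verify total universality directly. Suppose $(a,b)$ is $R$-dominating over $M$ and $\mathbb{Q} \in M$. Since $b$ avoids all ground model reals, $b \notin M$. For a $\mathbb{Q}$-name $(\dot w,\dot f)$, I would run the argument of the $G_\delta$ lemma separately on each disjunct, as follows. Either some $q$ forces that $\dot f$ leaves $T_a$ at a bounded stage, which is a closed/open condition decidable densely in $M[a]$. Or we may build a fusion sequence in $M$ interpreting $\dot w$ as a ground-model real, which $b$ differs from on an open neighbourhood. Genericity over a countable $M' \supseteq M \cup \{(a,b)\}$ then gives domination over $M[G]$, exactly as in the proof for $G_\delta$ relations. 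Properness of $\mathbb{P}_R$ I would get from Theorem~\ref{thm:propertoweak}/\ref{thm:weakunivproper} only in the correct setting. Since correctness fails here, I would instead prove properness by hand: every condition contains a condition of the form ``perfect set inside a fiber $\{a\}\times 2^\omega$ with $a \in P$'', below which $\mathbb{P}_R$ is Sacks forcing, as in the $G_{\delta\sigma}$ example earlier.

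For the second part I would modify the coding so that membership becomes $\mathbf\Delta^1_2$ (e.g.\ use $\mathbf\Pi^1_1$ codes so that smallness is both $\mathbf\Sigma^1_2$ and $\mathbf\Pi^1_2$). Then properness follows from Theorem~\ref{thm:weakunivproper} plus universality as above. To refute being a game ideal I would use a known consequence of game ideals from \cite{KanoveiSabokZapletal}, namely that the ideal is $\mathbf\Pi^1_1$ on $\mathbf\Sigma^1_1$ in a uniform way via Borel determinacy, or that the forcing has a strong regularity property. I would then arrange the coding so that $\{z : U_z \in \mathcal{I}_R\}$ is properly $\mathbf\Sigma^1_2 \cap \mathbf\Pi^1_2$ and not $\mathbf\Pi^1_1$. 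The main obstacle I expect is the first step: choosing the coding so that $R$ is genuinely $F_\sigma$, the ideal is still a $\sigma$-ideal (so that $Y$ is $R$-$\omega$-dominating), and the complexity computation of analytic smallness actually reaches the intended pointclass, rather than collapsing back to $\mathbf\Pi^1_1$ as it does for ideals $\sigma$-generated by closed sets.
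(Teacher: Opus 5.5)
\textbf{There is a genuine gap exactly at the step you flag as the main obstacle, and the relation you propose cannot produce either counterexample.}

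\textbf{First part.} Whether $R$ is $F_\sigma$ is not the issue, since open sets are already $F_\sigma$. The issue is that your $R$ is open. Hence $\mathcal{I}_R$ is $\sigma$-generated by the closed sets $\{(a,b): b=w,\ f\in[T_a]\}$ and collapses to $\mathbf\Pi^1_1$ on $\mathbf\Sigma^1_1$, just as you feared.

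The coordinates are also coupled the wrong way. Because the Sacks clause compares $b$ with $w$, every fiber $\{a\}\times 2^\omega$ is $\omega$-dominating: given countable $C$, choose $b$ off the countably many $w$'s. So the dichotomy ``$\{a\}\times 2^\omega\in\mathcal{I}_R$ iff $a\notin P$'' is false, and ``$b\notin M$'' does not follow from domination.

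More fundamentally, even with the coupling fixed ($a\neq w$), your coding clause never mentions the dominating coordinate. Positivity of a fiber then reduces to $\forall f\,(f\notin[T_a])$, a $\mathbf\Pi^1_1$ condition. No choice of $P$ or extra integer index helps, because nothing supplies an existential real quantifier.

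The missing idea is to let the dominating real supply that quantifier. Take a $\Pi^1_2$ set $U=\{z:\forall x\,\exists y\,\varphi(x,y,z)\}$ that is not $\mathbf\Sigma^1_2$, with $\varphi$ a $\Pi^0_1$ formula. Set $(z_0,x,r)\mathrel{R}(z_1,d)$ iff $z_0\neq z_1\vee(r<^*d\wedge\exists y<^*d\,\varphi(x,y,z_0))$.
\begin{itemize}
\item Compactness of $\{y: y<_n d\}$ makes the second disjunct $F_\sigma$.
\item $\{z\}\times\omega^\omega\in\mathcal{I}_R^+$ iff $z\in U$: the countable set $C$ supplies $\forall x$, and the bound $<^*d$ supplies $\exists y$. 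So $\mathcal{I}_R$ cannot be $\mathbf\Pi^1_2$ on $\mathbf\Sigma^1_1$.
\item $R$-domination over $M$ means either $z\notin M$, which is universal by the $G_\delta$ lemma for $\neq$, or $z\in M$ and $d$ is $<^*$-dominating. The latter case is universal by Theorem~\ref{thm:domuniv}, with Shoenfield absoluteness of $z\in U$ from $M$ to $M[G]$ providing witnesses $y\in M[G]$.
\item $\mathbb{P}_R$ is a lottery sum of Sacks and dominating forcing, hence proper without any correctness hypothesis.
\end{itemize}

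\textbf{Second part.} The obstruction you propose is not available. Nothing makes game ideals $\mathbf\Pi^1_1$ on $\mathbf\Sigma^1_1$; Borel determinacy only yields $\mathbf\Delta^1_2$. So arranging a $\mathbf\Delta^1_2$ but non-$\mathbf\Pi^1_1$ complexity would not refute being a game ideal.

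The right invariant is Borel reducibility to the set $W$ of Borel codes of integer games won by II, which is $\mathbf\Delta^1_2$. Since there is no complete $\mathbf\Delta^1_2$ set, some $\mathbf\Delta^1_2$ set $U$ is not of the form $g^{-1}(W)$ for any Borel $g$. Run the same construction with this $U$, written in $\Pi^1_2$ form. A scrambling tool for $\mathcal{I}_R$ would yield a Borel $g$ with $z\in U$ iff II wins the game coded by $g(z)$, which is a contradiction.

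On the other hand, $\mathcal{I}_R$ is $\mathbf\Delta^1_2$ on $\mathbf\Sigma^1_1$. This holds because $A\in\mathcal{I}_R^+$ iff $\proj(A)$ is uncountable or some $z\in\proj(A)\cap U$ has $A_z\notin\mathcal{I}_{<^*}$, and both the countable ideal and $\mathcal{I}_{<^*}$ are $\mathbf\Delta^1_2$ on $\mathbf\Sigma^1_1$.
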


\begin{proof}
    For the first statement, let $U = \{ z \in \omega^\omega :  \forall x\in \omega^\omega \exists y\in \omega^\omega \varphi(x,y,z)\}$ be a $\Pi^1_2$ set that is not $\mathbf{\Sigma}^1_2$, where $\varphi(x,y,z)$ is $\Pi^0_1$. Define the relation $(z_0,x, r) R (z_1, d)$ between $(\omega^\omega)^3$ and $(\omega^\omega)^2$ as $$z_0 \neq z_1 \vee ( r <^* d \wedge \exists y <^* d \varphi(x,y,z_0)).$$ 
    This is an $F_\sigma$ relation. Specifically note that, by compactness of $\{ y \in \omega^\omega : y <_n d \}$, ``$\exists y <^* d \varphi(x,y,z_0)$" can be expressed by saying $$\exists n \in \omega \forall k \in \omega \exists s \in \omega^k (s <_n d \wedge (x \restriction k, s, z_0 \restriction k) \in T ),$$ where $T$ is a tree representation of $\varphi$.\footnote{We write $s <_n d$ to mean $\forall m \in \dom(s), m\geq n(s(m) < d(m))$.}
    Note that $z \in U$ iff $\{z\} \times \omega^\omega \in (\mathcal{I}_R)^+$. Thus if $\mathcal{I}_R$ is $\mathbf\Pi^1_2$ on $\mathbf\Sigma^1_1$, then $U$ is $\mathbf{\Sigma}^1_2$. If $(z,d)$ is $R$-dominating over $M$, then this is either because $z \notin M$, or because $z \in M \cap U$ and $d$ is $<^*$-dominating over $M$. Both of these are universal properties (see Theorem~\ref{thm:domuniv} below for $<^*$). The resulting forcing notion is a lottery sum of Sacks and dominating forcing, so proper.

    For the second statement, first consider the set $W$ consisting of Borel codes $x \in \omega^\omega$ such that Player II has a winning strategy in the integer game with payoff set coded by $x$. This is a $\Delta^1_2$ set by Borel determinacy. Since there is no complete $\mathbf\Delta^1_2$ set, there must be another $\mathbf\Delta^1_2$ set $U \subseteq \omega^\omega$ not of the form $g^{-1}(W)$ for any Borel function $g \colon \omega^\omega \to \omega^\omega$.\footnote{We would like to thank Gabe Goldberg for pointing this out to us.}
    
    Just as before, we can write $U$ as $\{z \in \omega^\omega :  \forall x\in \omega^\omega \exists y\in \omega^\omega \varphi(x,y,z)\}$, where $\varphi$ is $\mathbf\Pi^0_1$, and define the relation $R$ as above. If $\mathcal{I}_R$ is a game ideal, it is easy to construct a Borel function $g$, arising from the scrambling tool, such that Player II has a winning strategy in the game with payoff set coded by $g(z)$ if and only if $\{z\} \times \omega^\omega \in (\mathcal{I}_R)^+$ -- that is, if and only if $z \in U$. This is impossible by assumption on $U$.
    
    To see that the ideal is $\mathbf\Delta^1_2$ on $\mathbf{\Sigma}^1_1$ we note again that $A \in \mathcal{I}^+$ iff $\proj(A)$ is uncountable or there is $z \in \proj(A)$ such that $z \in U$ and $A_z$ is a $<^*$-dominating family. Since both the ideal of countable sets and the ideal of non-dominating sets are $\mathbf\Delta^1_2$ on $\mathbf{\Sigma}^1_1$ (see more below), the result easily follows.
\end{proof}

We may still ask whether each ideal with Borel base is $\mathbf\Delta^1_2$ on $\mathbf\Sigma^1_1$ somewhere, and likewise about being a game ideal.

\begin{quest}\label{quest:univdef}
   For a $\sigma$-ideal with a Borel base, is its restriction to some positive set $\mathbf\Delta^1_2$ on $\mathbf\Sigma^1_1$? What if we assume the universality property or that it results in a proper forcing? What about $F_\sigma$ relations?
\end{quest}

A positive answer would be extraordinary as most of the paper would go through without the correctness assumptions.

\subsection{Analytic \texorpdfstring{$P$}{P}-cover ideals}

Analytic $P$-cover ideals were introduced in \cite{FarahZapletal2006}, were it is shown, implicitly, that they are Farah-Zapletal.\footnote{At least, if they are defined of the form $\operatorname{Exh}(\mu)$, see more below.}

\begin{definition}
    An ideal on $\omega$ is called a \emph{$P$-ideal} if for all $\{ x_n : n \in \omega \} \subseteq K$, there is $x \in K$ with $x_n \subseteq^* x$, for all $n$. We then define the associated \emph{$P$-cover ideal} $\mathcal{I}_{K}$ as $\mathcal{I}_R$, where $R \subseteq K \times \mathcal{P}(\omega)$ is defined as $x R y \leftrightarrow \vert x \cap y \vert < \omega$.
\end{definition}

$\mathbb{P}_{\mathcal{I}_K}$ adds a generic real $y \subseteq \omega$ that is almost disjoint from every ground-model element of the ideal $K$ (and infinite, if forcing below the condition $p = [\omega]^\omega$). The definition given in \cite{FarahZapletal2006} is slightly different, by using $x R y \leftrightarrow x \subseteq^* y$, therefore adding a real covering every ground-model ideal set, whence the name ``$P$-cover ideal". These are clearly equivalent presentations but ours is a bit more natural in light of the examples we will give below. Note that since $K$ is a $P$-ideal, sets of the form $\{ y : \vert y \cap x \vert = \omega \}$, for $x \in K$, form a base for $\mathcal{I}_K$, 

Recall the following well-known fact due to Solecki (see \cite{Solecki1999}):

\begin{fact}\label{fact:Solecki}
    $K$ is of the form $\operatorname{Exh}(\mu) = \{ x \subseteq \omega : \lim_{n \to \infty} \mu(x \setminus n) = 0 \}$, for a \emph{lower semicontinuous submeasure} $\mu$; that is a map $\mu \colon \mathcal{P}(\omega) \to [0, \infty]$ such that \begin{enumerate}
        \item $\mu(\emptyset) = 0$, $\mu(a) < \infty$ for $\vert a\vert <\omega$, $\mu(x) \leq \mu(y)$ for $x \subseteq y$, and $\mu(x \cup y) \leq \mu(x) + \mu(y)$,
        \item $\mu(x) = \lim_{n \to \infty} \mu(x \cap n)$, for every $x$.
    \end{enumerate}
\end{fact}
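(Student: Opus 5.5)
The plan is to follow Solecki's route through Polishability. Here $K$ is an analytic $P$-ideal on $\omega$, viewed as a subgroup of $(\mathcal{P}(\omega), \triangle)$. First I will show that $K$ carries a Polish group topology $\tau$ that refines the subspace topology from $2^\omega$, and in which $x \cap n \to x$ for every $x \in K$. From a compatible left-invariant metric $d$ for $\tau$ I will then read off the submeasure:
$$\mu(a) := \sup\{ d(\emptyset, b) : b \subseteq a,\ b \text{ finite} \}.$$
Once $\tau$ is available, everything else is routine, so the weight of the proof sits in the first step.

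\textbf{Step 1 (Polishability), the main obstacle.} I would first use the $P$-property together with analyticity to run a Baire category argument. Write $K$ as a continuous image of $\omega^\omega$. For $x \in K$, the sets $K_x = \{ y : y \subseteq^* x \}$ exhaust $K$ in a directed way. Given countably many analytic pieces covering $K$, the $P$-property produces a single $x$ such that $K_x$ is nonmeager (in a suitable analytic topology) modulo finite changes. This yields a countable family of ``neighbourhoods of $\emptyset$'' of the form
$$U_{x,n} = \{ y \in K : y \setminus n \subseteq \ldots \},$$
which are closed under the group operations in the sense required by the Birkhoff--Kakutani construction. I then check that these generate a separable, completely metrizable group topology. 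I expect this step to require the most care, specifically verifying completeness. My first attempt would mimic Solecki's argument: define the topology via the family $\{ \overline{\{ y : y \subseteq x \}} \}$ and use the $P$-property to show that Cauchy sequences have limits inside $K$.

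\textbf{Step 2 ($\mu$ is an lsc submeasure).} Monotonicity of $\mu$ is built into the definition, and $\mu(\emptyset) = 0$ is immediate. Finiteness on finite sets holds because there are only finitely many subsets to take the supremum over. For subadditivity, take a finite $b \subseteq a \cup a'$ and split it as $b_1 = b \cap a$ and $b_2 = b \setminus a \subseteq a'$. Left-invariance gives
$$d(\emptyset, b) \leq d(\emptyset, b_1) + d(b_1, b_1 \triangle b_2) = d(\emptyset, b_1) + d(\emptyset, b_2),$$
so $\mu(a \cup a') \leq \mu(a) + \mu(a')$. Lower semicontinuity holds because every finite $b \subseteq a$ already lies in some $a \cap n$.

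\textbf{Step 3 ($K = \operatorname{Exh}(\mu)$).} Suppose $x \in K$. Since $x \setminus n \to \emptyset$ in $\tau$, and all finite subsets of $x \setminus n$ lie in small $\tau$-neighbourhoods (using that these neighbourhoods are hereditary), we get $\mu(x \setminus n) \to 0$. Conversely, suppose $\mu(x \setminus n) \to 0$. Then
$$d(x \cap m, x \cap n) \leq \mu(x \setminus n) \quad \text{for } m > n,$$
so $(x \cap n)_n$ is $d$-Cauchy. By completeness it converges in $\tau$ to some $y \in K$. Since $\tau$ refines the Cantor topology, we must have $y = x$, and hence $x \in K$. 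Here it is convenient to use a complete left-invariant metric, which exists in the abelian case.
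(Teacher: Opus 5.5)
The paper does not prove this fact; it cites Solecki. Your Steps 2 and 3 are a sound reduction. Suppose $K$ carries a Polish group topology that refines the Cantor topology and has a neighbourhood base at $\emptyset$ consisting of hereditary sets. Then any compatible invariant metric is automatically complete (abelian case), your $\mu$ is an lsc submeasure, and $K=\operatorname{Exh}(\mu)$ follows as you describe. There are two small caveats. First, you tacitly need $\mathrm{Fin}\subseteq K$, both for $d(\emptyset,b)$ to be defined and because $\operatorname{Exh}(\mu)$ always contains $\mathrm{Fin}$. Second, the hereditary neighbourhood base you use for $K\subseteq\operatorname{Exh}(\mu)$ is not a feature of an arbitrary compatible metric. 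It must come out of Step 1, or be derived separately, e.g.\ from automatic continuity of $y\mapsto y\cap A$ plus a Baire category argument.

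The genuine gap is Step 1, which is the entire content of Solecki's theorem. What you write there is not an argument, and the ingredients you name cannot work.

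There is no ``suitable analytic topology'' on $K$ before $\tau$ exists. In the Cantor topology, a proper analytic ideal $K\supseteq\mathrm{Fin}$, and every $K_x$, is meager.

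The neighbourhoods you propose are also too large. If countably many translates of some $K_x=\{y:y\subseteq^*x\}$ covered $K$, the $P$-property would give $x'\in K$ with $K=K_{x'}$. So, apart from that trivial case, every $K_x$ with $x\in K$ has uncountable index in $K$. The same holds for every $\mathcal{P}(x)=\overline{\{y:y\subseteq x\}}$, where the closure is redundant. Declaring $\mathcal{P}(x)$ or $\{y\in K:y\setminus n\subseteq x\}$ to be a neighbourhood of $\emptyset$ makes $K_x$ an open subgroup of uncountable index, so the topology is not separable.

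What actually has to be produced is a sequence $C_0\supseteq C_1\supseteq\cdots$ of Cantor-closed hereditary sets such that:
\begin{itemize}
\item $a\cup b\in C_k$ whenever $a,b\in C_{k+1}$;
\item $K=\{x:\forall k\,\exists m\; x\setminus m\in C_k\}$.
\end{itemize}
This is equivalent to the theorem (take $C_k=\{x:\mu(x)\le 2^{-k}\}$), and it is exactly the input your Birkhoff--Kakutani step would need.

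The absorbing half is easy. Write $K=f[\omega^\omega]$ and let $H_s$ be the Cantor closure of the hereditary hull of $f[N_s]$, where $N_s=\{z\in\omega^\omega:s\subseteq z\}$. For each node $s$ where it fails, choose an element of $K$ with no tail in $H_s$. Amalgamate these countably many sets into a single $f(z)\in K$ via the $P$-property. This shows that every $H_{z\restriction k}$ contains a tail of every element of $K$.

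But $C_k=H_{z\restriction k}$ need not be closed under unions. Moreover, the reverse inclusion (which is precisely your ``completeness'') can fail outright. For a parametrization with $f[N_{z\restriction k}]=\{y\in K:y\cap k=\emptyset\}$, every $H_{z\restriction k}$ contains a tail of every subset of $\omega$. Securing both properties at once is the real combinatorial core of Solecki's proof, and nothing in your proposal supplies it.
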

We note that $\mu(\bigcup x_n) \leq \sum_{n} \mu(x_n)$, which is used implicitly below. $P$-cover ideals arising from analytic $P$-ideals on $\omega$ satisfy the definition of being Farah-Zapletal witnessed by a Borel base that is $\Delta^1_1(\mu)$, for $\mu$ as above, see \cite[4.6.1]{Zapletal2008}. In most concrete cases, the submeasure is $\mu$ is fairly easy to describe from the definition of the ideal $K$ and the Borel base one obtains is a base \emph{provably}. On the other hand, a careful analysis of Solecki's proof might show that a fixed $\mu$ can be constructed in any model containing the analytic definition of $K$. In either of these cases, $\mathcal{I}_K$ is provably $\mathbf{\Delta}^1_2$ on $\mathbf{\Sigma}^1_1$ and a game ideal with simple scrambling tool.\footnote{In any case, we can't think of any meaningful application in which the distinction between $K$ defined analytically or as $\operatorname{Exh}(\mu)$ is substantial. It still follows that $\mathcal{I}_K$ is proper, satisfies the dichotomy type results, etc.}

\begin{thm}\label{thm:pcoveruniv}
    Let $K$ be an analytic $P$-ideal. Then $\mathcal{I}_K$ has the total universality property.
\end{thm}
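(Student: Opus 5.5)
Write $K=\operatorname{Exh}(\mu)$ for a lower semicontinuous submeasure $\mu$ (Fact~\ref{fact:Solecki}), and let $y\subseteq\omega$ be $\mathcal{I}_K$-pseudo-generic over $M$, with $\mathcal{I}_K$ and $\mu$ in $M$. By Lemma~\ref{lem:pseudoisdom}, this means $|x\cap y|<\omega$ for every $x\in K\cap M$. Fix $\mathbb{Q}\in M$. Following the $G_\delta$ and porosity proofs, let $M'\supseteq M$ be a countable model containing $y$ and let $G$ be $\mathbb{Q}$-generic over $M'$. I claim that $y$ is then almost disjoint from every $x\in K\cap M[G]$. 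By Lemma~\ref{lem:singleBorel}, or directly, it suffices to fix a $\mathbb{Q}$-name $\dot x\in M$ and a condition $p$ with $p\Vdash \dot x\in K$, and to show that the set of $q$ forcing $|\dot x\cap y|<\omega$ is dense below $p$ in $M'$.

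The key step uses the $P$-ideal property, via the submeasure, to replace the name $\dot x$ by a single ground-model set in $K$. Extending $p$, I may assume that $p$ decides a modulus for membership in $\operatorname{Exh}(\mu)$ along a sequence of conditions, roughly as in the porosity proof where $x'=\{u:\exists q\le p\,(q\Vdash u\in\dot x)\}$ was taken. A naive union of possible values of $\dot x$ is not in $K$, so instead I would build in $M$ a decreasing sequence $p=q_0\ge q_1\ge\cdots$ and integers $n_0<n_1<\cdots$ such that $q_{k+1}$ decides $\dot x\cap n_{k+1}$ and $q_{k+1}\Vdash \mu(\dot x\setminus n_{k+1})<2^{-k}$. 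I would then work with the finite decided pieces $a_k=\dot x\cap[n_k,n_{k+1})$ rather than with the whole name. This alone only handles one branch, so the construction must be robust under genericity over $M'$.

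The approach I would actually follow is the Farah–Zapletal fusion. For each $k$ and each condition $r$ below $p$, the set $D_k$ of conditions $s\le r$ deciding some $m$ with $\mu(\dot x\setminus m)<2^{-k}$ is dense. Inside $M$, using countability of $M$'s version of the relevant antichains is unavailable, so the argument has to go differently. Given $r\le p$ in $M'$, suppose that no $s\le r$ forces $|\dot x\cap y|<\omega$. Then every $s\le r$ has extensions forcing $\dot x\cap y\not\subseteq m$ for every $m$. In $M$ I would define
\[
x^*=\{\,j : \exists s\le r\ \bigl(s\Vdash j\in\dot x \wedge \mu(\dot x\setminus j)\le \mu\text{-bound}\bigr)\,\}.
\]
Arranged with shrinking bounds $2^{-k}$ on blocks, the total submeasure of $x^*$ beyond each $n_k$ is at most $\sum_{i\ge k}2^{-i}$, so $x^*\in K\cap M$. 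Since $y$ is pseudo-generic over $M$, $y\cap x^*$ is finite, say contained in $m$. Then some $s\le r$ deciding the tail beyond $m$ small forces $\dot x\cap y\subseteq m\cup(\text{elements outside }x^*)$, and I expect these to be empty by construction, which gives the contradiction.

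The main obstacle is exactly this middle step: making the set $x^*$ gathered from all possible extensions have small tail submeasure. Submeasures of unions over incompatible conditions need not be controlled. I would try first to restrict to conditions below a fixed $s_k$ deciding a block bound and to take the union only over $j\ge n_k$ with the corresponding $s_k$, using the inequality $\mu(\bigcup x_n)\le\sum\mu(x_n)$ noted after Fact~\ref{fact:Solecki} with a diagonal choice. If that fails, I would fall back on the known fact that $\mathcal{I}_K$ is Farah–Zapletal (\cite[4.6.1]{Zapletal2008}) together with Theorem~\ref{thm:FZ1} for the universality property. Theorem~\ref{thm:univproper} and the $\Sigma^1_2$-correctness lemma would then upgrade this to total universality.
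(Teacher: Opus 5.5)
\textbf{There is a genuine gap.} Your preliminary reductions are fine and agree with the paper: you represent $K=\operatorname{Exh}(\mu)$ inside $M$, and you use the $P$-ideal property together with Lemma~\ref{lem:singleBorel} to reduce to a single name $\dot x\in M$ for an element of $K$.

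\textbf{The central claim is false.} Everything after the reduction rests on the claim that an arbitrary $\mathbb{Q}$-generic $G$ over a countable $M'\ni y$ keeps $y$ pseudo-generic, equivalently that the conditions forcing $|\dot x\cap\check y|<\omega$ are dense. This fails, and that set need not even be nonempty. For a counterexample, let $K$ be the analytic $P$-ideal on $\omega\times\omega$ of sets with all vertical sections finite, and let $y$ be the graph of a function $d$ dominating $M$. A graph is $\mathcal{I}_K$-pseudo-generic over $M$ exactly when its function dominates $M$. Let $\mathbb{Q}$ be Cohen forcing and $\dot x$ a name for $\{(n,m): m\le \dot c(n)\}\in K$. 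A Cohen real over $M'\ni d$ has $c(n)>d(n)$ for infinitely many $n$. Hence $|\dot x^G\cap y|=\omega$ for every $G$ generic over $M'$.

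So, unlike the $G_\delta$ and porosity cases (this is exactly why those forcings preserve Baire category and dominating forcing does not), the required filter cannot in general be generic over any model containing $y$. No density or contradiction argument in $M'$ can succeed. Your trouble bounding $\mu$ of a union over all extensions of $r$ is a symptom of this, not a technicality.

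\textbf{The missing idea} is to build $G$ by hand, outside $M$, by a recursion that consults $y$ at every step. At each stage you take unions only over countably many \emph{chosen} conditions, never over all extensions. The construction runs as follows.
\begin{enumerate}
\item In $M$, for each $p$ fix a fusion sequence $p\ge p_0\ge p_1\ge\cdots$, where $p_n$ decides $\dot x\cap n$ and forces $\mu(\dot x\setminus m_n)\le 2^{-n}$. Let $x_p\in K$ be the resulting interpretation of $\dot x$; lower semicontinuity gives $\mu(x_p\setminus m_n)\le 2^{-n}$.
\item For dense open $D$ and $p$, pick $q_i\in D$ below deeper and deeper stages $p_{m_i}$ of that sequence. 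Choose the cut-offs $m_i$ to grow fast enough that both $\mu(x_{q_i}\setminus m_i)$ and $\sum_{j<i}\mu(x_{q_j}\setminus m_i)$ are at most $2^{-i}$. Then $x_{D,p}:=\bigcup_i(x_{q_i}\setminus m_i)$ lies in $K\cap M$ by countable subadditivity, and $q_i\Vdash \dot x\cap m_i=x_p\cap m_i$. This is where your ``diagonal choice'' intuition belongs.
\item Outside $M$, enumerate the dense sets $D_i$ of $M$. Given $q_i$, take one of the cut-offs $n_{i+1}$ for the pair $(D_i,q_i)$ with $y\cap x_{D_i,q_i}\subseteq n_{i+1}$. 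This is possible because $y$ is pseudo-generic over $M$. Let $q_{i+1}\in D_i$ be the corresponding chosen condition.
\item Then $\dot x^G\cap[n_i,n_{i+1})=x_{q_i}\cap[n_i,n_{i+1})\subseteq x_{D_{i-1},q_{i-1}}$, which misses $y$. Hence $\dot x^G\cap y\subseteq n_1$.
\end{enumerate}

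\textbf{The fallback does not give the stated theorem.} Grant that $\mathcal{I}_K$ is Farah--Zapletal with a \emph{provable} Borel base, which the paper flags as a delicate point. Theorem~\ref{thm:FZ1} still only yields the universality property for countable models satisfying that $\mathcal{I}_K$ is Farah--Zapletal. It also only concerns \emph{generic} pseudo-generic reals, since the game is played with a pair $(\mathbb{P},\dot x)$ in $M$. The lemma upgrading universality to total universality needs $\mathcal{M}^+$ to be $\Sigma^1_2$-correct. That is an extra hypothesis, obtained from sharps for elementary submodels and unavailable for arbitrary countable models. Theorem~\ref{thm:univproper} plays no role in this step. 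The statement to be proved is about every pseudo-generic $y$ over every countable $M$, with no such hypothesis, which is why the direct construction above is needed.
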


\begin{proof}
     Let $M$ be a countable model, let $y \in [\omega]^\omega$ be $R$-dominating over $M$ and $\mathbb{Q} \in M$. Since $K$ is a $P$-ideal (and this is absolute) and by the argument of Lemma~\ref{lem:singleBorel}, note that it suffices to consider a single $\mathbb{Q}$-name $\dot x \in M$ for an element of $K$ and produce a $\mathbb{Q}$-generic $G$ with $\vert \dot x^G \cap y \vert < \omega$. Also, applying Fact~\ref{fact:Solecki} in $M$, let $\mu \in M$ with $M \models K = \operatorname{Exh}(\mu)$, and note that this is absolute to forcing extensions of $M$, being expressible as a $\mathbf\Pi^1_2$ statement.
    
   Work in $M$. For any $p \in \mathbb{Q}$ find a decreasing sequence $\langle p_n : n \in \omega \rangle$ below $p$ such that for each $n \in \omega$, $p_n$ decides $\dot x \cap n$ and a value $m_n > n$ such that \begin{equation}\label{eq:measbound} \mu(\dot x \setminus m_n) \leq 2^{-n}.\end{equation} Then let $x_p$ be the interpretation of $\dot x$ according to $\langle p_n : n \in \omega \rangle$ and note that $\mu(x_p \setminus m_n) \leq 2^{-n}$, for each $n$: If $\mu(x_p \setminus m_n) > 2^{-n}$, then there is a finite $a \subseteq x_p \setminus m_n$ with $\mu(a) > 2^{-n}$, so for some $k \geq n$, $p_k \Vdash \check{a} \subseteq \dot x \setminus m_n$ contradicting the bound forced by $p_n$. In particular, $x_p \in K$.

    \begin{claim}
        Let $D \subseteq \mathbb{Q}$ be dense open and $p \in \mathbb{Q}$. Then there is $x_{D,p} \in K$ so that for any $n \in \omega$, there is $m \geq n$ and $q \in D$, $q \leq p$, such that $x_{q} \setminus m \subseteq x_{D,p}$ and  $q \Vdash \dot x \cap m = x_{p} \cap m$.
    \end{claim}

    \begin{proof}
    Let $\langle p_n : n \in \omega \rangle$ be the decreasing sequence used to define $x_{p}$. We recursively construct a sequence $\langle q_{i}, n_i, m_i : i \in \omega \rangle$ as follows. Suppose $q_j, n_j, m_j$ have been defined for all $j < i$. Then let $n_i > i$, larger than any previous $n_j, m_j$, be large enough so that $$\sum_{j < i} \mu(x_{q_j} \setminus n_i) < 2^{-i}.$$
    
    Let $m_{i} = m_n$ as in (\ref{eq:measbound}) above, for $n = n_i$. Let $q_i \leq p_{m_{i}}$ in $D$ be arbitrary. We then have that 

    $$\mu(x_{q_i} \setminus m_i) \leq 2^{-n_i},$$ as $q_i \leq p_{m_i} \leq p_{n_i}$ and by the same argument as made directly before the claim. This finishes the construction of $\langle q_{i}, n_i, m_i : i \in \omega \rangle$. Finally, let $x_{D,p} := \bigcup_{i \in \omega} x_{q_i} \setminus m_i$. Then $x_{D,p} \in K$ as $$\mu(x_{D,p} \setminus m_i) \leq \sum_{j < i} \mu(x_{q_j} \setminus m_i) + \sum_{j \geq i} \mu(x_{q_j} \setminus m_j) \leq 2^{-i} + 2^{-i+1} \to 0.$$
    For any $i$ we have that $q_i \Vdash \dot x \cap m_i = x_{p} \cap m_i$ and $x_{q_i} \setminus m_i \subseteq x_{D,p}$. Since there are arbitrarily large $m_i$, this proves the claim.
\end{proof}
Outside of $M$, enumerate all dense sets in $M$ as $\langle D_i : i \in \omega \rangle$. We define a decreasing sequence $\langle q_i : i \in \omega \rangle$ in $\mathbb{Q}$ and $\langle n_i : i \in \omega \rangle$. Start with $q_0$ arbitrary and $n_0 = 0$. Suppose we have constructed $q_i$ and $n_i$. Then there is $n_{i+1} \geq n_i$ so that $y \cap x_{D_{i}, q_i} \subseteq n_{i+1}$, as $y$ is $R$-dominating over $M$. Moreover, making $n_{i+1}$ larger if necessary, according to the claim, we can find $q_{i+1} \leq q_i$ in $D_{i}$ so that $q_{i+1} \Vdash \dot x \cap n_{i+1} = x_{q_i} \cap n_{i+1}$ and $x_{q_{i+1}} \setminus {n_{i+1}} \subseteq x_{D_i,q_i}$. 

We find that $y \cap \dot x^G \subseteq n_1$, where $G$ is generated by $\{q_n : n \in \omega \}$. Namely, if $i \geq 1$, then $\dot x^G \cap [n_i, n_{i+1}) = x_{q_i}\cap [n_i, n_{i+1}) \subseteq x_{D_{i-1}, q_{i-1}}$. On the other hand, $y \cap x_{D_{i-1}, q_{i-1}} \subseteq n_i$, so $\dot x^G \cap [n_i, n_{i+1}) \cap y = \emptyset$.
\end{proof}

While this follows from Theorem~\ref{thm:FZ1} and the results of \cite{Zapletal2008}, let us include a direct proof that analytic $P$-cover ideals are $\mathbf{\Delta}^1_2$ on $\mathbf{\Sigma}^1_1$ for completeness. The argument is surprisingly similar to the one for universality which makes it natural to ask how closely connected universality is with the definability of the ideal (see Question~\ref{quest:univdef}).

\begin{thm}
    Let $K$ be defined as $\operatorname{Exh}(\mu)$ for a lower semicontinuous submeasure $\mu$. Then $\mathcal{I}_K$ is a game ideal with simple scrambling tool; in particular, it is provably $\mathbf{\Delta}^1_2$ on $\mathbf{\Sigma}^1_1$.
\end{thm}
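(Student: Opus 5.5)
We need to show: for a suitable Borel base $B$ of $\mathcal{I}_K$ and any closed $C \subseteq X\times\omega^\omega$ (here $X = \mathcal{P}(\omega)$), Player II has a winning strategy in $\Game_g(B,C)$ iff $\proj(C)\in\mathcal{I}_K^+$. The provable $\mathbf\Delta^1_2$ on $\mathbf\Sigma^1_1$ part then follows exactly as at the end of the proof of Theorem~\ref{thm:FZ1}. Winning strategies in the Borel game $\Game_g(B,C)$ exist by Borel determinacy, and this is uniform in the code of $C$ and $\mu$, which makes the definition work in every countable model containing $\mu$.

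\textbf{Choice of base.} The natural base $\{y : |y\cap x| = \omega\}$, indexed by $x\in K$, is unsuitable, because I would have to commit to $x$ by playing finite information. Instead I would index the base by sequences $z = (a_0, m_0, a_1, m_1, \dots)$, where the $a_k$ are finite sets with $a_k \subseteq [m_{k-1}, m_k)$ and $\mu(a_k)\le 2^{-k}$. Set $x_z = \bigcup_k a_k$ and $B_z = \{y : |y\cap x_z| = \omega\}$; for malformed $z$ put $B_z = \emptyset$. Since $\mu(x_z\setminus m_k)\le \sum_{j>k}2^{-j}$, every $x_z$ lies in $K$. Conversely, every $x\in K$ is covered by some $x_z$: choose $m_k$ with $\mu(x\setminus m_k)\le 2^{-k}$ and set $a_k = x\cap[m_{k-1},m_k)$. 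So $B$ is a Borel base, and provably so since the argument is arithmetic in $\mu$. This base forces I to reveal $x$ block by block with controlled measure.

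\textbf{Easy direction.} If $\proj(C)\in\mathcal{I}_K$, then $\proj(C)\subseteq B_z$ for some $z$. I plays $z$ and wins.

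\textbf{Hard direction.} Suppose $\proj(C)\in\mathcal{I}_K^+$. As in Claim~\ref{claim:simplescram}, there is a countable elementary $M$ and a pair $(\mathbb{P},\dot y)\in M$ forcing that $\dot y$ is $\mathcal{I}_K$-pseudo-generic (i.e. $R$-dominating) and that $\dot y \in \proj(C)$ together with a name $\dot w$ for a witness. II must build, against I's blocks, a descending sequence $\langle p_i\rangle$ in $\mathbb{P}\cap M$ meeting all dense sets of $M$. It reports the decided parts of $\dot y$ and $\dot w$ as $O_n$ and $t_n$, while ensuring that the decided $y$ meets $x_z$ only finitely. The method mirrors the Claim in the proof of Theorem~\ref{thm:pcoveruniv}, with the roles swapped: there the real $y$ was fixed and the name $\dot x$ was fused; here $x_z$ arrives piecewise and $\dot y$ is fused.

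The key lemma, proved in $M$, is the following. Given a condition $p$, a dense set $D$ and $n$, there are $m\ge n$ and a set $x_{p,D}\in K$ such that for every finite $a\subseteq[m,\infty)$ with $a\cap x_{p,D}=\emptyset$, some $q\le p$ in $D$ decides $\dot y\cap \max(a)$ to be disjoint from $a$ beyond $m$. Its proof goes by contradiction using pseudo-genericity: if the lemma fails, the blocks that cannot be avoided can be collected into an element of $K$, built as a union of small-measure blocks as in that Claim, which $p$ forces $\dot y$ to meet infinitely often. This contradicts $\dot y$ being $R$-dominating.

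\textbf{Strategy for II.} II waits, repeating its current condition and postponing its moves, until I's blocks pass $m$. By then I has committed to $a_k$ with small measure. II then extends inside $D$, avoiding these blocks. I expect the main obstacle to be exactly this delay and compactness bookkeeping. I must not be able to place mass of $x_z$ where $\dot y$ is forced, and the block structure caps the total measure I can place beyond $m_k$ by $2^{-k}$. Since $x_{p,D}$ has a tail of small measure, the forbidden region is controlled. I would apply the fusion argument of Theorem~\ref{thm:pcoveruniv} to finitely many candidate blocks at each stage.
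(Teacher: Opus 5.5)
\textbf{There is a genuine gap, in the hard direction.} Your base (I reveals $x_z$ block by block with measure control), the easy direction and the passage to provable $\mathbf\Delta^1_2$ via Borel determinacy are fine and essentially match the paper. (For covering, take $a_0=\emptyset$ and $m_k$ with $\mu(x\setminus m_k)\le 2^{-k-1}$; covering $x$ modulo finite suffices.)

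Your key lemma is false as stated. Take $K$ the summable ideal, with $\mu(a)=\sum_{n\in a}\frac{1}{n+1}$, and $C$ with $\proj(C)=Q=\{y:\mu(\omega\setminus y)\le 1\}$. This set is closed and $\mathcal{I}_K$-positive. Let $(\mathbb{P},\dot y)$ force $\dot y\in Q$, and let $x\in K$ and $m$ be arbitrary. Then $[m,\infty)\setminus x$ has infinite measure, so it contains a finite $a$ with $\mu(a\setminus\{\max a\})>1$. Every condition then forces $\dot y\cap a\cap\max(a)\neq\emptyset$.

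Your justification breaks exactly here. The blocks witnessing failure of the lemma carry no measure bound, so their union need not lie in $K$, and pseudo-genericity gives no contradiction. The lemma also targets the wrong constraint. I is limited by the measure of its blocks, not by avoiding $x_{p,D}$, and may put its blocks inside $x_{p,D}$. There $\dot y$ is only almost disjoint, with no bound fixed by $p$. Even a corrected version for small-measure blocks gives no strategy directly. The threshold below which blocks are avoidable depends on the condition, and once II commits to $q$, I's remaining budget can exceed $q$'s threshold. A direct construction of II's strategy amounts to proving that $\mathcal{I}_K$ is Farah--Zapletal, and the ``delay and compactness bookkeeping'' you flag is the entire content of that.

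\textbf{The missing idea: use determinacy instead of building II's strategy.} By determinacy it suffices to defeat an arbitrary strategy $\sigma$ for I.
\begin{enumerate}[(i)]
\item Put $\sigma$ into a countable elementary $M$. Fix $(y,w)\in C$ with $y$ almost disjoint from every element of $K\cap M$; this exists since $\proj(C)$ is positive.
\item II plays out $y$ and $w$ bit by bit, inserting $\eta(i)$ idle rounds before the $i$-th bit, for some $\eta\in\omega^{\le\omega}$ to be chosen.
\item For finite $\eta$, I's answers $x(\eta^\frown n)$, $n\in\omega$, use only finitely much of $(y,w)$, so this sequence lies in $M$.
\item The fusion from Theorem~\ref{thm:pcoveruniv} then gives $\tilde x(\eta)\in K\cap M$ such that for every $n$ there are $n',m\ge n$ with $x(\eta^\frown n')\setminus m\subseteq\tilde x(\eta)$ and $x(\eta^\frown n')\cap m=x(\eta)\cap m$.
\item Choose $m_i$ beyond $y\cap\tilde x(\eta_i)$ and the delay $n_i$ accordingly. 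This yields $x(\eta)\cap[m_i,m_{i+1})\subseteq\tilde x(\eta_i)$, hence $y\cap x(\eta)\subseteq m_0$, so $\sigma$ loses.
\end{enumerate}
The idle rounds play the role of your conditions. The real stays fixed and I's set is fused, so the roles are exactly as in Theorem~\ref{thm:pcoveruniv}, not swapped. No forcing name and no adaptive avoidance lemma are needed.
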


\begin{proof}

Consider $B \subseteq ([\omega]^{<\omega} \times \omega)^\omega \times \mathcal{P}(\omega)$, where $(z,y) \in B$ iff, for $x := \bigcup_{n \in \omega} z(n)_0$, $\vert x \cap y \vert = \omega$ and for all $n \in \omega$,

\begin{enumerate}
    \item $z(n)_1 > n$,
    \item $z(n)_0 = x \cap z(n)_1$,
    \item $\mu( x \setminus  z(n)_1) \leq 2^{-n}$.\footnote{For each $n$, $z(n)$ is a pair and we write $z(n)_0$, $z(n)_1$, to refer to the first and second entry of $z(n)$.}
\end{enumerate}

This is a (provable) Borel base for $\mathcal{I}_K$. For a closed set $C \subseteq \omega^\omega \times \mathcal{P}(\omega)$, the game $\Game_{g}(B,C)$ translates to: 

\begin{center}
    \begin{tabular}{c|c c c c c c}
       I &  $a_0 \in [\omega]^{<\omega}, m_0\in\omega$ & & $a_1, m_1$ & & \dots\\
       II & & $s_0 \in 2^{<\omega},t_0 \in \omega^{<\omega}$ & & $s_1,t_1$ &\\
\end{tabular}
\end{center}

Player II wins if $m_n \leq n$, $a_n \neq a_{n+1} \cap m_n$, or $\mu( x \setminus m_n) > 2^{-n}$, for some $n$, or otherwise if $(y,w) \in C$ and $\vert y \cap x \vert < \omega$, where $x = \bigcup_{n \in \omega} a_n$, $y ={s_0}^\frown {s_1}^\frown \dots$ and $w = {t_0}^\frown {t_1}^\frown \dots$.

\begin{claim}
    Player II has a winning strategy iff $\operatorname{proj}(C) \notin \mathcal{I}_{K}$.
\end{claim}

\begin{proof}
    Clearly, if $\operatorname{proj}(C) \in \mathcal{I}_{K}$, Player I can win. So suppose $\operatorname{proj}(C) \notin \mathcal{I}_{K}$ but Player I still has a winning strategy. Let $M \preccurlyeq H(\theta)$, for large $\theta$, be a countable elementary submodel containing a winning strategy $\sigma$ for Player I. Since $\operatorname{proj}(C)$ is positive, there is $(y,w) \in C$ such that $\vert y \cap x \vert < \omega$ for all $x \in K \cap M$. We show that there is a winning run for II against $\sigma$, where they play out $y$ and $w$. In fact, the run will have the following particular form. 
    
    Given $\eta \in \omega^{\leq\omega}$, let $s(\eta) = \langle s_i(\eta) : i \in \omega \rangle$ and $t(\eta) = \langle t_i(\eta) : i \in \omega \rangle$ be a run of Player II, where first $s_i, t_i = \emptyset$ is played for $\eta(0)$-many rounds and then the first bit of $y$ and of $w$ is revealed, next $s_i, t_i = \emptyset$ is played for $\eta(1)$-many rounds and then the next bit of $y$ and of $w$ is revealed, and so on. In other words: $$s(\eta) = {\langle \emptyset \rangle^{\eta(0)\frown}} y(0)^\frown {\langle \emptyset \rangle^{\eta(1)\frown}} y(1)^\frown \dots,$$
     $$t(\eta) = {\langle \emptyset \rangle^{\eta(0)\frown}} w(0)^\frown {\langle \emptyset \rangle^{\eta(1)\frown}} w(1)^\frown \dots.$$

     If $\eta$ is infinite, then this defines a full run of the game. If $\eta$ is finite, then at the end of this process (i.e. from round $\vert \eta \vert + \sum_{i < \vert \eta \vert} \eta(i)$ onwards), $s_i, t_i = \emptyset$ is played again indefinitely. 
     
     Define $x(\eta)$ and $\bar m(\eta) = \langle m(\eta)_i : i \in \omega \rangle$ to be I's answer according to $\sigma$ to the run $s(\eta), t(\eta)$. For $\sigma$ to be winning, it must be the case that $\mu(x(\eta) \setminus m(\eta)_n) \leq 2^{-n}$, for every $n \in \omega$. Also note that $\langle x(\eta^\frown n) : n \in \omega \rangle \in M$, for every $\eta \in \omega^{<\omega}$, since we only need a finite initial segment of $y$ and $w$ to compute this from $\sigma$.

    \begin{claim}
        Work in $M$. Let $\eta \in \omega^{<\omega}$. Then there is $\tilde x(\eta) \in K$ such that for any $n \in \omega$, there are $n', m \geq n$ with $x(\eta^\frown n') \setminus m \subseteq \tilde x(\eta)$ and $x(\eta^\frown n') \cap m = x(\eta) \cap m$. 
    \end{claim}

    \begin{proof}
        Recursively define a sequence $\langle n_i, m_i : i \in \omega \rangle$ as follows. Given $n_j, m_j$ for $j < i$, we let $n_i > i$, greater than all previous $n_j, m_j$ and large enough so that $$\sum_{j < i} \mu(x(\eta^\frown n_j) \setminus n_i) < 2^{-i}.$$ Next, we define $m_i := m(\eta^\frown n_i)_{n_i}$. Let $\tilde x(\eta) = \bigcup_{i \in \omega} x(\eta^\frown n_i) \setminus m_i$ and as before, compute that $\tilde x(\eta) \in K$. By definition $x(\eta^\frown n_i) \setminus m_i \subseteq \tilde x(\eta)$. $x(\eta^\frown n_i) \setminus m_i = x(\eta) \cap m_i$ simply follows because for $\sigma$ to be winning, Player I must have revealed $x(\eta) \cap m_i$ at round $i$ already, which is before the point at which the runs induced by $\eta$ and $\eta^\frown n_i$ disagree, as $n_i > i$.
    \end{proof}

    We recursively define a sequence $\langle n_i,m_i : i \in \omega \rangle$, writing $\eta_i$ for $\langle n_j : j < i \rangle$, as follows. At step $i$, let $m_i$ and $n_i$ be larger than $n_j, m_j$, for all $j < i$, and such that \begin{enumerate}
        \item $y \cap \tilde x(\eta_i) \subseteq m_i$, 
        \item $x({\eta_i}^\frown n_i) \setminus m_i \subseteq \tilde x(\eta_i)$, 
        \item $x({\eta_i}^\frown n_i) \cap m_i = x(\eta_i) \cap m_i$.
    \end{enumerate}
    
    Let $\eta = \langle n_i : i \in \omega \rangle$. We claim that $y \cap x(\eta) \subseteq m_0$ and thus, the run $s(\eta), t(\eta)$ is winning for II. Namely, let $i \in \omega$ be arbitrary. Then $$x(\eta) \cap [m_i, m_{i+1}) = x(\eta_{i}^\frown n_i) \cap [m_i, m_{i+1}) \subseteq \tilde x(\eta_i) \cap [m_i, m_{i+1}).$$

    Since $y \cap \tilde x(\eta_i) \subseteq m_i$, we have that $y \cap x(\eta) \cap [m_i, m_{i+1}) = \emptyset$.
\end{proof}\end{proof}

In the following, we give some examples of forcing notions related to Cichoń's diagram arising as $\mathbb{P}_{K} \restriction p$ for an analytic $P$-ideal $K$ and a condition $p \in \mathbb{P}_{K}$. In the case of dominating forcing, we repeat the proof of universality because it is considerably shorter and conceptually easier than that Theorem~\ref{thm:pcoveruniv} and is what led to it in the first place. We also want to just highlight the nice combinatorial nature of these proofs.

\subsubsection{Dominating forcing}\label{sec:dominatingforcing}

The ideal $\mathcal{I}_{<^*}$ of non-eventually-dominating sets can be presented as a $P$-cover ideal arising from the analytic P-ideal $K$ on $\omega \times \omega$ consisting of $x$ such that $\{ m : (n,m) \in x\}$ is finite for every $n \in \omega$. Let $p$ consist of those $y \subseteq \omega \times \omega$ which are graphs of a total function from $\omega$ to $\omega$. Then there is a clear correspondence between $\mathcal{I}_{<^*}$ and $\mathcal{I}_K \restriction p$. Of course, dominating forcing provides a natural way to increase the bounding number $\mathfrak{b}$.

\begin{thm}\label{thm:domuniv}
    $<^*$ has the total universality property.
\end{thm}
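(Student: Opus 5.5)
Given a countable model $M$, a real $d$ that is $<^*$-dominating over $M$, and $\mathbb{Q} \in M$, I will build by hand a $\mathbb{Q}$-generic filter $G$ over $M$ such that $d$ still dominates $M[G]$. By the argument of Lemma~\ref{lem:singleBorel} (the ideal $\mathcal{I}_{<^*}$ has a Borel base and is upwards absolute), together with the remark after Theorem~\ref{thm:gamechar} about playing names for reals instead of Borel sets, it suffices to treat a single $\mathbb{Q}$-name $\dot f \in M$ for an element of $\omega^\omega$. Any countable family of names can be amalgamated into one by taking the pointwise maximum of the first $n$ functions at coordinate $n$. So the goal is a generic $G$ with $\dot f^G <^* d$.

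The key combinatorial step happens inside $M$, mirroring the claim in Theorem~\ref{thm:pcoveruniv}. For a condition $q \in \mathbb{Q}$ and a dense open $D \in M$, I define $h_{D,q} \in \omega^\omega \cap M$ as follows: fix an enumeration of $q$-extensions, and for each $n$ choose $r_n \leq q$ in $D$ deciding $\dot f \restriction (n+1)$, say as $s_n$. Then put
\[
h_{D,q}(k) = \max\{ s_n(k) : n \leq k \},
\]
or more robustly $h_{D,q}(k) = \max_{n\le k} g_{r_n}(k)$, where $g_r \in M$ is some fixed ``interpretation'' of $\dot f$ below $r$ obtained by a decreasing sequence deciding $\dot f$ everywhere. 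In the simplest version I will instead use the following: for each $k$, only finitely many possibilities are needed. Since $d$ dominates $M$, pick $N$ with $h_{D,q}(k) < d(k)$ for all $k \geq N$.

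The construction is then a fusion outside $M$. I enumerate the dense sets of $M$ as $\langle D_i : i\in\omega\rangle$ and build $q_0 \geq q_1 \geq \dots$ together with increasing $N_i$ such that $q_{i+1} \in D_i$, $q_{i+1}$ decides $\dot f \restriction N_{i+1}$, and the decided values on $[N_i, N_{i+1})$ are below $d$. To get this, at stage $i$ I want, inside $M$, a function $h_i = h_{D_i,q_i}$ with the following property: for every $n$ there is $r \leq q_i$ in $D_i$ such that $r$ forces $\dot f \restriction [n, m) \le h_i$ for some larger $m$ at which $r$ has decided $\dot f\restriction m$, while $r$ agrees with values already committed below $n$. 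The honest way to get this is the diagonal trick from the claim in Theorem~\ref{thm:pcoveruniv}. Fix in $M$ a decreasing sequence $\langle p_n\rangle$ below $q_i$ with $p_n$ deciding $\dot f\restriction n$ as $x_{q_i}\restriction n$. For each $j$, pick $r_j \leq p_{j}$ in $D_i$ together with an interpretation $x_{r_j}$, and set $h_i(k)=\max_{j\le k} x_{r_j}(k) \vee x_{q_i}(k)$. Because $x_{r_j}$ agrees with $x_{q_i}$ below $j$, the bound $h_i$ covers each $x_{r_j}$ from $j$ onwards. Then, using domination, take $N$ with $h_i <d$ beyond $N$, choose $j \ge N$, and let $q_{i+1}$ be an extension of $r_j$ deciding enough of $\dot f$ along $x_{r_j}$. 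Values below $j$ are those of $x_{q_i}$, which were arranged at earlier stages to be below $d$ beyond $N_i$. Values from $j$ on follow $x_{r_j}\le h_i<d$.

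The main obstacle is the bookkeeping. Each $q_i$ comes with a canonical interpretation $x_{q_i}$, and I must maintain the invariant that $x_{q_i}(k) < d(k)$ for all $k \geq N_0$. The successor step has to preserve this invariant, which means $x_{q_{i+1}}$ must be chosen to extend $x_{r_j}$ consistently, and that requires choosing interpretations coherently (``$x_{r}$ extends the decided part of $x_{q}$''). Once the invariant holds, the generic $G$ generated by the $q_i$ hits every $D_i$, and $\dot f^G = \lim x_{q_i}$ stays below $d$ beyond $N_0$. Totality comes for free, since the argument only ever used that $d$ dominates $M$ and never that $d$ is generic.
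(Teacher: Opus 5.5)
Your proposal is correct and is essentially the paper's proof. Your ``honest'' $h_i$ is the paper's $f_{D,p}$: a diagonal maximum of the interpretations of conditions $r_j\in D$ that agree with the interpretation of $q_i$ below $j$. The outer fusion, using domination to choose $j$, is the same as the paper's.

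The coherence issue you flag as the main obstacle goes away if you take $q_{i+1}=r_j$ with its fixed interpretation $x_{r_j}$, as the paper does. Since $r_j\le p_j$ already decides $\dot f\restriction j$ and $j$ can be taken arbitrarily large, no further extension is needed. Any interpretation below a condition that decides $\dot f\restriction j$ automatically agrees with that decision below $j$.
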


\begin{proof}
    Let $M$ be a countable model, let $d$ be $<^*$-dominating over $M$ and $\mathbb{Q} \in M$. Recall, by Lemma~\ref{lem:singleBorel}, that it suffices to consider a single $\mathbb{Q}$-name $\dot x \in M$ for an element of $\omega^\omega$ and produce a $\mathbb{Q}$-generic $G$ with $\dot x^G <^* d$. This simplifies the proof slightly. 
    
    Work in $M$. For any $p \in \mathbb{Q}$, fix an interpreting function $g_{p}$ for $\dot x$ below $p$, i.e. a function $g \in \omega^\omega$ so that for every $n \in \omega$, there is $q \leq p$ so that $q \Vdash g \restriction n \subseteq \dot x$. 

    \begin{claim}
        Let $D \subseteq \mathbb{Q}$ be dense open and $p \in \mathbb{Q}$. Then there is $f_{D,p} \in \omega^\omega$ so that for any $n \in \omega$, there is $q \in D, q \leq p$ such that $g_{q} <_n f_{D,p}$  and $q \Vdash \dot x \restriction n = g_{p} \restriction n$.
    \end{claim}
    \begin{proof}
      For each $n$, pick $p_n \leq p$, with $p_n \Vdash \dot x \restriction n = g_{p} \restriction n$ and then let $q_n \leq p_n$ with $q_n \in D$. Simply define $f_{D,p}(n) = \max\{ g_{q_i}(n) : i \leq n\} +1$.
    \end{proof}

Outside of $M$, let $\langle D_i : i \in \omega \rangle$ enumerate all open dense subsets of $\mathbb{Q}$ in $M$. Construct sequences $\langle p_i : i \in \omega \rangle$ and $\langle n_i : i \in \omega \rangle$ as follows. We let $p_0 \in \mathbb{Q}$ be arbitrary and $n_0 = 0$. Next suppose $p_i$ and $n_i$ have been defined. Since $d$ is dominating over $M$, there is some $n_{i+1} > n_{i}$ so that $f_{D_i, p_i} <_{n_{i+1}} d$. We then let $p_{i+1} := q$ for $q$ as given by the claim for $p = p_i$, $D = D_i$ and $n = n_{i+1}$. Specifically, we have that $g_{p_{i+1}} <_{n_{i+1}} f_{D_i,p_i}$ and $p_{i+1} \Vdash \dot x \restriction n_{i+1} = g_{p_i} \restriction n_{i+1}$. 

The filter $G$ generated by $\langle p_i : i \in \omega \rangle$ is generic over $M$ since it hits every dense set. Moreover, $\dot x^G <_{n_{1}} d$. Namely if $m\geq n_1$ is arbitrary, say $m \in [n_i, n_{i+1})$ for $i \geq 1$, then $$ \dot x^G \restriction n_{i+1} = g_{p_{i}} \restriction n_{i+1} \text{ and } g_{p_{i}} <_{n_{i}} f_{D_{i-1}, p_{i-1}} <_{n_{i}} d.$$ All in all, $\dot x^G(m) = g_{p_i}(m) < d(m)$.
\end{proof}

It has been already shown in \cite{BrendleHjorthSpinas} that the ideal $\mathcal{I}_{<^*}$ is provably $\mathbf\Delta^1_2$ on $\mathbf\Sigma^1_1$. Specifically, it is shown that an analytic set is non-dominating if and only if it contains the branches of a tree-like structure on $\omega$ satisfying certain arithmetic conditions.

\begin{cor}
    $\mathbb{P}_{<^*}$ is proper, every generic dominating real is virtually generic for $\mathbb{P}_{<^*}$, and in the Solovay model or under $\AD^+$ every set of reals is either non-dominating or contains a Borel dominating family, and further, a closed dominating family by the results of \cite{BrendleHjorthSpinas}.
\end{cor}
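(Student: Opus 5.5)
The corollary collects consequences of Theorem~\ref{thm:domuniv} together with the definability result of \cite{BrendleHjorthSpinas}. The plan is to verify the hypotheses of the general theorems once and then invoke each in turn. The key input is that $\mathcal{I}_{<^*}$ is provably $\mathbf\Delta^1_2$ on $\mathbf\Sigma^1_1$. By the remark after Definition~\ref{def:provablydef}, this gives that $\mathcal{M}^+$ is $\mathcal{I}_{<^*}$-correct for every collection $\mathcal{M}$ of countable models. Theorem~\ref{thm:domuniv} gives total universality, hence in particular both the universality and the weak universality property, for all countable models.

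First, properness. Weak universality and correctness of $\mathcal{M}^+$ give $\mathcal{M}$-properness by Theorem~\ref{thm:weakunivproper}, where $\mathcal{M}$ is the collection of countable elementary submodels of large $H(\theta)$. This is properness in the usual sense by the fact recalled in the preliminaries. (The preceding corollary even gives properness in every forcing extension.)

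Second, virtual genericity. By Lemma~\ref{lem:pseudoisdom}, a generic dominating real over $M$ is the same as a generic $\mathcal{I}_{<^*}$-pseudo-generic over $M$. Theorem~\ref{thm:virtgen} then yields that it is $\mathbb{P}_{<^*}$-generic over some forcing extension of $M$; this needs no correctness assumption.

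Third, the regularity statement. Since $<^*$ is Borel, $\mathcal{I}_{<^*}$ has a Borel base and is therefore provably $\mathbf\Sigma^1_2$ on $\mathbf\Sigma^1_1$. Theorems~\ref{thm:AD+} and~\ref{thm:solovay} then apply: every set of reals is either contained in a small Borel set, hence non-dominating, or contains an $\mathcal{I}_{<^*}$-positive Borel set. A positive Borel set is $<^*$-$\omega$-dominating, hence in particular a dominating family. To upgrade to a closed dominating family, I would cite the result of \cite{BrendleHjorthSpinas}, applied in $V$ (or the relevant inner model, using absoluteness of the Borel code): every analytic dominating family contains a closed (indeed superperfect-like) dominating subfamily. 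Apply it to the Borel set just found.

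The only step requiring care is this last one: the closed-subset result must be usable inside the Solovay model or under $\AD^+$. I expect this to be harmless, because it concerns a single Borel set coded by a real and the ZF+DC proof from \cite{BrendleHjorthSpinas} goes through there. Alternatively, one can argue in $V[x]$ for the code $x$ and transfer via $\mathbf\Sigma^1_2$-absoluteness of the statement ``the closed set coded by $c$ is a dominating family contained in $B$''.
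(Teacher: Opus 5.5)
Your proposal is correct and follows the route the paper intends: total universality from Theorem~\ref{thm:domuniv}, combined with the provable $\mathbf\Delta^1_2$ on $\mathbf\Sigma^1_1$ definability of $\mathcal{I}_{<^*}$ from \cite{BrendleHjorthSpinas}, is fed into Theorems~\ref{thm:weakunivproper}, \ref{thm:virtgen}, \ref{thm:AD+} and~\ref{thm:solovay}, and the closed-subset theorem of \cite{BrendleHjorthSpinas} is then applied to the resulting Borel set. Your Shoenfield transfer for that last step is sound, with one small correction: under $\AD^+$ there is no ZFC ground model $V$, so you should argue in $L[x]$ rather than $V[x]$.
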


Properness of dominating forcing has been implicitly mentioned in \cite[Example 8.2]{FarahZapletal2006}, see also \cite[4.6.6]{Zapletal2008}. It is claimed there, that \cite{BrendleHjorthSpinas} shows that the forcing is equivalent to Laver forcing, but \cite{BrendleHjorthSpinas} only contains an argument below the condition of strictly increasing functions.\footnote{This does of course not mean that the generic real of $\mathbb{P}_{<^*}$ is necessarily a Laver real.} We do not see an argument that this must hold everywhere, nor do we know how homogeneous or inhomogeneous $\mathbb{P}_{<^*}$ is.

\subsubsection{Slalom-based eventually different forcing}

\begin{definition}
    A slalom is a function $S \colon \omega \to [\omega]^{<\omega}$ so that $$\sum_{n = 1}^{\infty} \frac{\vert S(n) \vert}{n^2} < \infty.$$ For any slalom $S$ and $x \in \omega^\omega$, we write $x \notin^* S$ for $\forall^\infty n (x(n) \notin S(n))$.
\end{definition}

Consider the ideal $K$ on $\omega\times \omega$ consisting of sets of the form $\bigcup_{n \in \omega} \{n \} \times S(n)$, for a slalom $S$. It is easily checked that this is an analytic P-ideal. Again let $p$ consist of the graphs of functions. Then $\mathbb{P}_{\not\ni^*}$ corresponds to $\mathbb{P}_K \restriction p$.
\begin{thm}
   $\not\ni^*$ has the total universality property and $\mathbb{P}_{\not\ni^*}$ is proper, etc.
\end{thm}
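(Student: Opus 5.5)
The plan is to reduce to Theorem~\ref{thm:pcoveruniv}, and otherwise to rerun its argument directly in the style of Theorem~\ref{thm:domuniv}. Let $K$ be the slalom ideal on $\omega\times\omega$. Write $\mu(x)=\sum_{n\ge1}\vert x_n\vert/n^2$, where $x_n=\{m:(n,m)\in x\}$ (and let column $0$ contribute $\vert x_0\vert$). This $\mu$ is a lower semicontinuous submeasure. Since a convergent series has vanishing tails, $K=\operatorname{Exh}(\mu)\cap\{x : \text{every column of } x \text{ is finite}\}$. Every element of $K$ has finite columns, so it is cleaner to use $\mu'(x)=\mu(x)$ when all columns are finite and $\infty$ otherwise. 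Then $\mu'$ is still lower semicontinuous on the relevant sets, or at least the estimates of Theorem~\ref{thm:pcoveruniv} only ever involve $\mu$ together with finiteness of columns, which is absolute. This exhibits $K$ as an analytic (indeed $F_{\sigma\delta}$) $P$-ideal, and the $P$-ideal property is checked by summing tails.

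Universality then comes from restricting to the condition $p$ of graphs of functions. For $y\in p$ the graph of $d\in\omega^\omega$, the statement ``$y$ is $R$-dominating over $M$'' for the $P$-cover relation, i.e.\ $\vert x\cap y\vert<\omega$ for all $x\in K\cap M$, is literally ``$d\notin^* S$ for every slalom $S\in M$''. So a $\not\ni^*$-dominating real over $M$ corresponds to an $\mathcal{I}_K$-pseudo-generic over $M$ lying in $p$. Theorem~\ref{thm:pcoveruniv} then yields, for each $\mathbb{Q}\in M$, a generic $G$ over which it stays pseudo-generic, which translates back. One must also check that the ideals correspond, $\mathcal{I}_{\not\ni^*}\cong\mathcal{I}_K\restriction p$ provably, using the Borel base $\{y: \vert y\cap x\vert=\omega\}$, $x\in K$. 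This is immediate from the definitions, and it also makes Lemma~\ref{lem:singleBorel} applicable with single names $\dot S$ for slaloms.

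For properness and the rest (``etc.''), I need $\mathcal{I}_{\not\ni^*}$ provably $\mathbf\Delta^1_2$ on $\mathbf\Sigma^1_1$. I get this from the preceding theorem on game ideals, because $K=\operatorname{Exh}(\mu')$ with an explicit $\mu'$ that is clearly definable in every model. Restricting the game to $C$ whose projection lies inside $p$ handles the restriction to $p$. Then Theorem~\ref{thm:weakunivproper}, Theorem~\ref{thm:virtgen}, Theorem~\ref{thm:dichotomy} and Theorems~\ref{thm:AD+}, \ref{thm:solovay} apply.

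I expect the main obstacle to be the mild mismatch with Fact~\ref{fact:Solecki}: $K$ is not exactly $\operatorname{Exh}(\mu)$ for the naive $\mu$, because infinite columns can have $\mu$-small tails. I would handle this as above with $\mu'$, checking that the claim inside Theorem~\ref{thm:pcoveruniv} still gives $x_{D,p}$ with finite columns. It does: each column of $\bigcup_i x_{q_i}\setminus m_i$ meets only finitely many pieces once $m_i$ exceeds the coordinate, taking $m_i$ as pairs' codes past column $n$. If this gets messy, the fallback is a direct proof copying Theorem~\ref{thm:domuniv}. In that version, interpreting slaloms $S_q$ replace $g_q$, and the claim builds $S_{D,p}$ as a union of tails $S_{q_i}\restriction[m_i,\infty)$ with summable weights.
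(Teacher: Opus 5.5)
Your proposal is correct and follows the paper's own (implicit) argument: present $\mathcal{I}_{\not\ni^*}$ as the restriction of the $P$-cover ideal $\mathcal{I}_K$ to the graphs of functions, get total universality from Theorem~\ref{thm:pcoveruniv}, and get provable $\mathbf\Delta^1_2$ on $\mathbf\Sigma^1_1$ definability from the $\operatorname{Exh}(\mu)$ game-ideal theorem, after which the general results give properness and the rest. The obstacle you anticipate does not exist: a set with an infinite column already has $\mu$-value $\infty$, and so do all its tails, so your $\mu'$ is just $\mu$ and $K=\operatorname{Exh}(\mu)$ exactly, with no patching of the claim needed.
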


$\mathbb{P}_{\not\ni^*}$ provides a natural way to increase $\non(\mathcal{M})$ (see \cite[Lemma 2.4.7]{BartoszynskiJudah1995} for the fact that $\non(\mathcal{M})= \mathfrak{b}(\not\ni^*)$). ``True'' eventually different forcing $\mathbb{P}_{\neq^*}$ would be another option, but, as we shall see in the next section, it is not proper. In \cite{BartoszynskiJudah1995}, the proper forcing notion $\mathbb{PT}_{f,g}$ is presented to optimally increase $\non(\mathcal{M})$. It would be interesting to know how it relates to $\mathbb{P}_{\not\ni^*}$.

\begin{quest}
    Is there a tree dichotomy for the analytic $\not\ni^*$-dominating families?
\end{quest}

\subsubsection{Slalom based localization forcing}

\begin{definition}
    Let $S,S'$ be slaloms as above. Then we write $S' \subseteq_{\operatorname{sl}}^* S$ to say that $\forall^\infty n (S'(n) \subseteq S(n))$.
\end{definition}

Consider $K$ as before and let $p$ consist of the complements of elements of $K$. Then $\mathbb{P}_{\subseteq_{\operatorname{sl}}^*}$ is equivalent to $\mathbb{P}_K \restriction p$ and adds a $\subseteq_{\operatorname{sl}}^*$-dominating slalom. This is a natural way to increase $\add(\mathcal{N})$, see \cite[3.2.A]{BartoszynskiJudah1995}. It is conceivable that ``true localisation forcing'', i.e. $\mathbb{P}_{\in^*}$, where $x \in^* S \leftrightarrow \forall^\infty n(x(n) \in S(n))$, is not proper either by a similar argument as for eventual difference, but we haven't studied this. 

\begin{thm}\label{thm:domforce}
   $\subseteq_{\operatorname{sl}}^*$ has the total universality property and $\mathbb{P}_{\subseteq_{\operatorname{sl}}^*}$ is proper, etc.
\end{thm}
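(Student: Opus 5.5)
The plan is to realize $\subseteq_{\operatorname{sl}}^*$ as the restriction of a $P$-cover ideal and then invoke Theorem~\ref{thm:pcoveruniv}. Let $K$ be the analytic $P$-ideal on $\omega\times\omega$ of sets $\bigcup_n \{n\}\times S(n)$ for slaloms $S$, written as $\operatorname{Exh}(\mu)$ with
\[
\mu(x)=\sum_{n\ge 1}\frac{\vert x\cap(\{n\}\times\omega)\vert}{n^2},
\]
which is visibly a lower semicontinuous submeasure. Identify a slalom $S$ with $\hat S=\bigcup_n\{n\}\times S(n)$. Then $S' \subseteq_{\operatorname{sl}}^* S$ holds iff $\hat S'\setminus \hat S$ is finite, up to finitely many coordinates; finiteness of each $S'(n)$ makes these agree. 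Equivalently, writing $y=(\omega\times\omega)\setminus \hat S$, the condition says $\vert \hat S'\cap y\vert<\omega$, i.e.\ $\hat S' \mathrel{R} y$ for the $P$-cover relation $R$.

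The second step is to check that this correspondence $S\mapsto y$ is a Borel bijection between slaloms and the set $p$ of complements of elements of $K$. It carries $S$ being $\subseteq_{\operatorname{sl}}^*$-dominating over $M$ exactly to $y$ being $R$-dominating over $M$, provided $p\in M$. The last condition holds since the definition of $p$ only uses $\mu$.

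Given a $\subseteq_{\operatorname{sl}}^*$-dominating $S$ over countable $M$ and $\mathbb{Q}\in M$, the corresponding $y$ is $R$-dominating over $M$. By the proof of Theorem~\ref{thm:pcoveruniv} there is a $\mathbb{Q}$-generic $G$ over $M$ such that $y$ is $R$-dominating over $M[G]$. Translating back, $S$ is $\subseteq_{\operatorname{sl}}^*$-dominating over $M[G]$, and by Lemma~\ref{lem:pseudoisdom} this is pseudo-genericity. This gives the total universality property.

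For properness I will use Theorem~\ref{thm:weakunivproper}, which requires that the ideal be provably $\mathbf\Delta^1_2$ on $\mathbf\Sigma^1_1$. Since $\mathcal{I}_K$ is a game ideal with simple scrambling tool by the preceding theorem, it is provably $\mathbf\Delta^1_2$ on $\mathbf\Sigma^1_1$. The restriction to the Borel set $p$, transported along a Borel bijection, keeps this property: a $\mathbf\Sigma^1_1$ set $A$ of slaloms is small iff its image, an analytic subset of $p$, lies in $\mathcal{I}_K$.

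The main obstacle I expect is verifying that the ideal $\mathcal{I}_{\subseteq_{\operatorname{sl}}^*}$, defined via countable families of slaloms, coincides under the bijection with $\mathcal{I}_K\restriction p$. That is, a set of slaloms is $\subseteq_{\operatorname{sl}}^*$-$\omega$-dominating iff its image is $R$-$\omega$-dominating, where $R$ ranges over all of $K$. This reduces to every $x\in K$ being, modulo finite, of the form $\hat S'$, which is immediate from the definition of $K$. The remaining care is absoluteness of the identification to forcing extensions of $M$, which holds since everything involved is Borel. The rest ("etc.") follows from Theorems~\ref{thm:virtgen}, \ref{thm:dichotomy} and \ref{thm:AD+}.
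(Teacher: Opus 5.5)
Your proposal is correct and is essentially the paper's argument: identify $\mathbb{P}_{\subseteq_{\operatorname{sl}}^*}$ with $\mathbb{P}_K\restriction p$ for the slalom $P$-ideal $K$, transfer total universality from Theorem~\ref{thm:pcoveruniv}, and obtain provable $\mathbf\Delta^1_2$-on-$\mathbf\Sigma^1_1$ definability from the game-ideal theorem for $\operatorname{Exh}(\mu)$. One small slip: your $\mu$ gives column $0$ weight zero, so $\{0\}\times\omega\in\operatorname{Exh}(\mu)\setminus K$; add a term such as $\vert x\cap(\{0\}\times\omega)\vert$ so that $\operatorname{Exh}(\mu)=K$ exactly, which the definability step needs.
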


\begin{quest}
    Is there a tree dichotomy for the analytic $\subseteq_{\operatorname{sl}}^*$-dominating families?
\end{quest}

\section{Two improper forcing notions}\label{sec:nonexample}

\subsection{True eventually different forcing}

\begin{definition}
    For $x,y \in \omega^\omega$, we write $x \neq^* y$ if $\forall^\infty n (x(n) \neq y(n))$ and say that $x$ and $y$ are eventually different. 
\end{definition}

\begin{thm}
    $\neq^*$ does not have the universality property. In fact, for any countable $M$, there is an eventually different real $d$ over $M$, so that for any real $c \notin M$, $d$ is not eventually different over $M[c]$. 
\end{thm}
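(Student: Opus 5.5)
The plan is to build $d$ so that its values code finite binary strings, densely enough that every new real is guessed infinitely often by a function computed from it, yet sparse enough to avoid every real of $M$. First reduce to $c\in 2^\omega\setminus M$. If $c\in\omega^\omega\setminus M$ is arbitrary, then $M[c]$ contains a real in $2^\omega\setminus M$ coding $c$, so it suffices to treat binary $c$.

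Work in a forcing extension of $M$ where $\omega^\omega\cap M$ is countable, and fix an enumeration $\langle x_k:k\in\omega\rangle$ of $\omega^\omega\cap M$. The real $d$ will be defined from this enumeration by a concrete recursion. Hence $d$ lies in a generic extension of $M$, which is what is needed to refute the universality property in the sense of Definition~\ref{def:univprop}.

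Fix in $M$ a partition of $\omega$ into consecutive blocks $B_\ell$ with $|B_\ell|$ very large compared to $2^\ell$, say $|B_\ell|=2^\ell\cdot \ell^3$. Fix also a recursive coding $\sigma\mapsto\ulcorner\sigma\urcorner$ of $2^{<\omega}$ into $\omega$. For $c\in 2^\omega$ put
\[ f_c(n)=\ulcorner c\restriction \ell\urcorner \quad\text{for } n\in B_\ell. \]
Then $f_c\in M[c]$, so it suffices to arrange two things:
\begin{enumerate}
\item[(a)] $d(n)\neq x_k(n)$ for all $k<j(\ell)$ and $n\in B_\ell$, where $j$ is unbounded but grows very slowly (e.g.\ $j(\ell)\le\ell$);
\item[(b)] for every $c\in 2^\omega\setminus M$ there are infinitely many $\ell$ and $n\in B_\ell$ with $d(n)=\ulcorner c\restriction\ell\urcorner$.
\end{enumerate}
Condition (a) makes $d$ eventually different from every $x_k$. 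Condition (b) makes $d$ fail to be eventually different from $f_c$, hence $d$ is not eventually different over $M[c]$. Since the new $c$ can be taken to be generic, this also yields the failure of universality.

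On block $B_\ell$ I will try to distribute the $2^\ell$ strings of length $\ell$ over the positions $n\in B_\ell$, giving $d(n)=\ulcorner\sigma\urcorner$ at position $n$, while avoiding the $j(\ell)$ forbidden values at each $n$. Call a string $\sigma$ \emph{heavy for $x_k$ at level $\ell$} if $x_k(n)=\ulcorner\sigma\urcorner$ for more than $|B_\ell|/(2^\ell j(\ell))$ positions $n\in B_\ell$. A counting and Hall-type matching argument on $B_\ell$ should show that every string not heavy for any $x_k$ with $k<j(\ell)$ can be assigned to some position where it is not forbidden. The remaining positions are filled with any non-forbidden value, e.g.\ codes of strings of a different length.

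The set $H^\ell_k$ of strings heavy for $x_k$ at level $\ell$ has size bounded independently of the enumeration, roughly $2^\ell j(\ell)$ divided by that ratio. More importantly, the map $\ell\mapsto H^\ell_k$ lies in $M$ for each fixed $k$.

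The main obstacle is (b) for those $c$ that are heavy at almost every level, possibly for a varying index $k<j(\ell)$. Since the enumeration is not in $M$, the tree of heavy nodes is not in $M$. For a single $k$, however, the heavy tree $T_k=\{\sigma:\sigma\in H^{|\sigma|}_k\}\in M$ has width bounded by a constant. By choosing the threshold so that each $x_k$ has at most one heavy string per level, $T_k$ has at most one branch, which is in $M$. A new $c$ therefore leaves each $T_k$.

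To handle the varying index, I would make the thresholds depend on $k$ and interleave priorities. Lower-indexed $x_k$ get protected only on a sparse subsequence of levels, and the set of levels where $c$ must be caught is chosen so that for infinitely many $\ell$, $c\restriction\ell$ is heavy for no $k<j(\ell)$. If this diagonal bookkeeping resists, the fallback is to choose $\ell\mapsto j(\ell)$ growing so slowly that a new $c$, which leaves each single-branch tree $T_k$ at a fixed level, must escape all of $T_0,\dots,T_{j(\ell)-1}$ at infinitely many levels $\ell$.
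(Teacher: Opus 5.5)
\textbf{There is a genuine gap: the proposal never proves (b).} Your block architecture ($f_c$ constant $\ulcorner c\restriction\ell\urcorner$ on $B_\ell$, with requirements (a) and (b)) can be made to work, but the heaviness machinery meant to secure (b) is inconsistent.

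\emph{The threshold cannot serve both purposes.}
\begin{itemize}
\item With your threshold $|B_\ell|/(2^\ell j(\ell))$, the greedy/Hall placement of non-heavy strings is fine. But a single $x_k$ can then be heavy for every $\sigma\in 2^\ell$: let $x_k$ take each of the $2^\ell$ codes on $\ell^3$ positions of $B_\ell$. So $H^\ell_k$ has no bounded width, and your construction places nothing, although all these strings are easily placeable.
\item If you raise the threshold above $|B_\ell|/2$ to get one heavy string per function, the matching breaks. If $x_0$ and $x_1$ take the value $\ulcorner\sigma\urcorner$ on the two halves of $B_\ell$, they forbid $\sigma$ at every position, yet $\sigma$ is heavy for neither.
\end{itemize}
The notion you need is joint. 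Call $\sigma\in 2^\ell$ bad for the tuple $x_0,\dots,x_{K-1}$ if fewer than $2^\ell$ positions $n\in B_\ell$ have $\ulcorner\sigma\urcorner\notin\{x_k(n):k<K\}$. Counting incidences gives at most $2K$ bad strings per level $\ell\ge 2$. Good strings can be placed greedily. Badness is monotone in $K$ and is computed in $M$ from the finite tuple.

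Also, $\{\sigma:\sigma\in H^{|\sigma|}_k\}$ is not closed under initial segments. So ``$T_k$ has at most one branch'' and ``$c$ leaves $T_k$ at a fixed level'' do not make sense as stated. What is true is the following. For fixed $K$, the reals bad for $x_0,\dots,x_{K-1}$ at all levels $\ge N$ form a closed set coded in $M$ whose tree has width $\le 2K$. Hence the reals bad at almost every level form a finite set $G_K\subseteq M$, being an increasing union of sets of size $\le 2K$.

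\emph{The varying index, which is the heart of the theorem, is left open.} The fallback's justification is invalid. A new $c$ escaping each single $H_k$ at infinitely many levels does not give a level where it escapes $H_0,\dots,H_{j(\ell)-1}$ simultaneously. Moreover, $j$ must be fixed in advance, uniformly for uncountably many new $c$.

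Within your framework the gap can be closed roughly as follows.
\begin{enumerate}
\item Let $j(\ell)=K$ on $[N_K,N_{K+1})$.
\item Pick $L_K\ge N_K$, nondecreasing in $K$, such that the members of $G_{K+1}\supseteq G_K$ are pairwise distinct below $L_K$.
\item By K\"onig's lemma, pick $N_{K+1}>L_K$ such that every string bad for $x_0,\dots,x_{K-1}$ at all levels of $[N_K,N_{K+1})$ agrees below $L_K$ with some member of $G_K$.
\end{enumerate}
Now take a real unplaced at almost every level. For all large $K$ it agrees below $L_K$ with some $y^K\in G_K$. Separation of $G_{K+1}$ below $L_K$ forces $y^{K+1}=y^K$, so the real equals some $y\in M$. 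Without an argument of this kind the proof is incomplete.

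For comparison, the paper avoids this bookkeeping by indexing positions by nodes rather than by levels.
\begin{itemize}
\item $f(c)$ takes the value $k(s)$ on an infinite set $a_s$ for every $s\subseteq c$.
\item The $a_s$ are disjoint along chains and have infinite intersections along finite antichains.
\item Hence the bad nodes for $x_0,\dots,x_{n-1}$ contain no antichain of size $n+1$, and by Dilworth they are covered by $n$ branches lying in $M$.
\item Every new $c$ therefore meets the front $A_{\bar x\restriction n}$.
\item A diagonal enumeration of all front nodes, with $l_j\to\infty$, absorbs the varying index automatically.
\end{itemize}
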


\begin{proof}
  Let $M$ be a countable transitive model.  Working in $M$, find for each $s \in \omega^{<\omega}$, a set $a_s \in [\omega]^\omega$, so that the following conditions are satisfied:

    \begin{enumerate}
        \item For all $s \subsetneq t \in \omega^{<\omega}$, $a_s \cap a_t = \emptyset$,
        \item for any finite antichain $S \subseteq \omega^{<\omega}$, $\bigcap_{s \in S} a_s$ is infinite,
        \item for any $x \in \omega^\omega$, $\bigcup_{n \in \omega} a_{x \restriction n} = \omega$.
    \end{enumerate}

    This is fairly straightforward: Let $\langle S_n : n \in \omega \rangle$ enumerate all finite antichains of $\omega^{<\omega}$, say each one appearing infinitely often. Simply let $$a_s := \begin{cases}
        \{ n \geq \vert s\vert : s \in S_n \} \cup \{ \vert s \vert \} &\text{ if } \forall t \subseteq s (t \notin S_{\vert s\vert}) \\
        \{ n \geq \vert s\vert : s \in S_n \} &\text{ otherwise.}
    \end{cases}$$
    It is fairly easy to check that (1), (2) and (3) are satisfied.
    
    Next, let $k \colon \omega^{<\omega} \to \omega$ be some injection. For each $s \in \omega^{<\omega}$, we define $p_s$ to be the function with domain $a_s$ and constant value $k(s)$. For each $x \in \omega^\omega$, let $f(x) := \bigcup_{n \in \omega} p_{x \restriction n}$. Then $f\colon \omega^\omega \to \omega^\omega$ is a continuous function coded in $M$. 

    \begin{claim}\label{claim:evdiff1}
        Let $x_0, \dots, x_{n-1} \in \omega^\omega$ and $S \subseteq \omega^{<\omega}$ be an antichain. Then for all but $n$-many $s \in S$, there are infinitely many $m \in a_s$, so that $p_s(m) \neq x_i(m)$, for all $i < n$.
    \end{claim}

    \begin{proof}
        Otherwise, there are $n+1$-many $s_0, \dots, s_n \in S$, so that for each $j \leq n$, there is $k_j \in \omega$, where for all $m \in a_{s_j} \setminus k_j$. $p_{s_j}(m) = x_i(m)$, for some $i < n$. Let $m \in \bigcap_{j \leq n} {a_{s_j}}$, $m > \max_{j \leq n} k_j$, be arbitrary. Then the values $p_{s_{j}}(m)$, $j \leq n$, are pairwise distinct; so there are $n+1$-many such values. But there at most $n$-many values $x_i(m)$, $i < n$. So for some $j \leq n$, $p_{s_j}(m)$ is different to all those -- contradiction.
    \end{proof}

    \begin{claim}\label{claim:evdiff2}
         Let $x_0, \dots, x_{n-1} \in \omega^\omega$. Then there are $y_0, \dots, y_{n-1} \in \omega^{\omega}$ so that for any $s \in \omega^{<\omega}$ which is not an initial segment of any $y_i$, $i < n$, there are infinitely many $m \in a_s$, so that $p_s(m) \neq x_i(m)$, for all $i < n$.
    \end{claim}

    \begin{proof}
    The set $B = \{s \in \omega^{<\omega} : \forall^\infty m \in a_s \exists i < n (p_s(m) = x_i(m))\}$ contains antichains of size at most $n$, according to the previous claim. It follows from Dilworth's theorem, that $B$ can be covered by $n$-many chains (see \cite{Dilworth}).\footnote{Dilworth's theorem is usually stated for finite posets: If $P$ is finite and has no antichain of size $n+1$, then $P$ can be covered by $n$-many chains. A simple compactness argument shows of course that the same holds for infinite posets and fixed $n \in \omega$.}
    \end{proof}

\begin{definition}\label{def:Sy}
    Let $\bar y = \langle y_i : i < n \rangle \subseteq \omega^\omega$. Then we define $S_{\bar y} \subseteq \omega^{<\omega}$ to be the set of $\subseteq$-minimal $s \in \omega^{<\omega}$ of length $\geq n$ not extended by any of the $y_i$, $i < n$.
\end{definition}

$S_{\bar y}$ is a \emph{front} for $\omega^\omega \setminus \{ y_0, \dots, y_{n-1} \}$, that is, $S_{\bar y}$ is an antichain and $ \bigcup_{s \in S_{\bar y}} [s] = \omega^\omega \setminus \{ y_0, \dots, y_{n-1} \}$. For any $\bar x = \langle x_0, \dots, x_{n-1}\rangle \in M$, let $\bar y_{\bar x} = \langle y_0, \dots, y_{n-1} \rangle \in M$ be as in the previous claim. For simplicity, we write $A_{\bar x} = S_{\bar y_{\bar x}}$.
    
   We will now construct an eventually different real $d$ over $M$ so that for any real $c\notin M$, $f(c) =^\infty d$. Enumerate $M \cap \omega^\omega$ as $\bar x = \langle x_n : n \in \omega \rangle$. Enumerate each $A_{\bar x \restriction n}$ as $\langle s_{n, i} : i \in \omega \rangle$. Further, let $\langle (n_j, i_j) : j \in \omega \rangle$ enumerate all pairs $(n,i) \in \omega^2$ and let $t_j := s_{n_j, i_j}$. 
   
   For every $j \in \omega$, there is a largest number $l_j \leq j$, so that for infinitely many $m \in a_{t_j}$, we have $p_{t_j}(m) = k(t_j) \neq x_l(m)$, for all $l < l_j$. Note that the sequence $\langle l_j : j \in \omega \rangle$ must diverge to infinity. Namely, fixing any $l \in \omega$, we show that $l_j \geq l$ eventually. First, $l_j \geq l$ whenever $t_j \in A_{\bar x \restriction n}$ for some $n \geq l$. Secondly, according to Claim~\ref{claim:evdiff1}, for any fixed $A_{\bar x \restriction n}$, there can be at most finitely many $t_j \in A_{\bar x \restriction n}$, for which $l_j < l$.
   
   Now define an increasing sequence $\langle m_j : j \in \omega \rangle$, so that for each $j$, $m_j \in a_{t_j}$ and $p_{t_j}(m_j)$ is distinct from all $x_l(m_j)$, $l < l_j$. Letting $d_0 = \{ (m_j, p_{t_j}(m_j)) : j \in \omega \}$, we then have that $d_0$ is eventually different over $M$ and we may extend $d_0$, if necessary, to a total eventually different $d \in \omega^\omega$ over $M$.

   This works. If $c \in \omega^\omega \setminus M$ is arbitrary, then for each $n \in \omega$, there is $s \in A_{\bar x \restriction n}$, $s \subseteq c$. Then $s = t_j$ for some $j$, and $f(c)(m_j) = p_{t_j}(m_j) = d(m_j)$. Picking larger and larger $n$, we find infinitely many $m$, with $f(c)(m) = d(m)$. Thus $d$ is not eventually different over $M[c]$.
   \end{proof}

We will now turn this argument into a non-properness proof.

\begin{thm}\label{thm:edcoll}
    $\mathbb{P}_{\neq^*}$ collapses the continuum to $\omega$ below some condition.
\end{thm}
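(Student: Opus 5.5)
We will convert the combinatorial construction of the preceding theorem into a condition. Fix in $V$ the system $\langle a_s, p_s : s \in \omega^{<\omega}\rangle$, the injection $k$, and the continuous map $f\colon \omega^\omega \to \omega^\omega$ exactly as there; these are definable from a real, so they lie in every relevant model. The candidate condition is built from the range of $f$: let
\[ p := \{ d \in \omega^\omega : \forall c \in \omega^\omega\, \exists^\infty m\, (f(c)(m) = d(m)) \text{ fails only for } c \text{ in a countable set} \}, \]
or more concretely a Borel set of $d$ that are ``built along fronts'' as in the proof. Two things must be checked: that $p$ is $\mathcal{I}_{\neq^*}$-positive, and that below $p$ the generic $d$ codes, for every ground model real $c$, a witness that makes $\mathfrak{c}^V$ countable.

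\emph{Positivity.} The previous theorem shows that for every countable model $M$ there is $d$ eventually different over $M$ with $f(c) =^\infty d$ for all $c \notin M$. Given countably many $x_n$, apply the construction to the enumeration $\langle x_n\rangle$ (instead of all of $M\cap\omega^\omega$); the resulting $d$ is eventually different from all $x_n$ and agrees infinitely often with $f(c)$ for all $c$ outside a countable set $C$. The set $p$ should therefore be: $d$ such that there is a countable $C$ with $f(c) =^\infty d$ for all $c\notin C$. This is not obviously Borel; I would instead fix the combinatorial ``front'' data as part of the condition: $p$ consists of $d$ for which the set $Z_d := \{ c : \neg (f(c)=^\infty d)\}$ is countable. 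Using the tree structure ($f(c)(m)=k(c\restriction n)$ for $m \in a_{c\restriction n}$), $Z_d$ is the set of branches of a tree $T_d = \{s : \exists^{<\infty}$ agreement below$\}$-type object, and ``$Z_d$ countable'' becomes ``$T_d$ has no perfect subtree'', which is $\Pi^1_1$; I would replace $p$ by a Borel subset still $\omega$-dominating by the first reflection theorem, or by restricting to $d$ whose perfect-kernel rank is bounded by a fixed countable ordinal, which suffices since the construction gives finite-splitting behaviour (rank at most $\omega$).

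\emph{Collapse.} Let $G$ be generic with $p \in G$ and $d$ the generic real. In $V[G]$, $Z_d$ is countable (the statement ``$T_d$ has no perfect subtree'' is absolute). For each ground real $c$, $f(c)=^\infty d$ unless $c \in Z_d$. Now use genericity: the set of ground-model reals should be shown to be contained in $Z_d$, i.e. for each $c \in V$, $f(c) \neq^* d$ must hold because $f(c)\in V$ and $d$ is $\neq^*$-dominating over $V$ (Lemma~\ref{lem:pseudoisdom}). Hence $\omega^\omega\cap V \subseteq Z_d$, a countable set in $V[G]$, so $\mathfrak{c}^V$ is collapsed to $\omega$.

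The main obstacle is the definability of the condition: making ``$Z_d$ is countable'' into a genuine Borel $\mathcal{I}_{\neq^*}$-positive set whose members still have countable $Z_d$ in the extension. I would first try to read off from the construction that the $d$'s produced have $T_d$ with Cantor–Bendixson rank at most $\omega\cdot 2$ (the $y_i$ from Claim~\ref{claim:evdiff2} being the only exceptional branches at each stage), so that bounded-rank gives a Borel set, and then verify positivity by the modified construction over arbitrary countable $C\subseteq\omega^\omega$.
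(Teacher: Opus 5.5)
Your overall strategy is the paper's. You want a condition forcing the exceptional set $Z_d=\{c : f(c)\neq^* d\}$ to be countable, together with the observation that every ground-model $c$ lies in $Z_d$, because $f(c)\in V$ and the generic $d$ is eventually different over $V$. That collapse step is fine. The gap is the step you yourself flag as ``the main obstacle'': exhibiting a \emph{Borel} $\mathcal{I}_{\neq^*}$-positive set whose members have countable $Z_d$ in every extension. This is where all the new content of the theorem lies, and your proposal leaves it open.

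Neither suggested route closes it as stated. The first reflection theorem only reflects properties that are $\Pi^1_1$ on $\Sigma^1_1$. ``The complement of $X$ is $\omega$-eventually different'' is not such a property, and finding a positive Borel subset of a positive coanalytic set is exactly a dichotomy-type statement that reflection does not provide.

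The rank route rests on a wrong description of $Z_d$. Since $f(c)(m)=k(c\restriction n)$ for $m\in a_{c\restriction n}$, we have $Z_d=\bigcup_K Z^K_d$, where $Z^K_d$ is the closed set of $c$ with at most $K$ agreements with $d$. So $Z_d$ is $F_\sigma$, not the body of a single tree. The tree of nodes with finitely many agreements below them has a body that is generally much larger than $Z_d$: it contains every $c$ having only finitely many agreements at each individual node. Hence ``$T_d$ has no perfect subtree'' is the wrong criterion, and the rank bounds $\omega$ and $\omega\cdot 2$ are not derived from anything.

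The missing idea is a uniform finitary bound read off from the fronts in the construction. The $d$ built there has, for every $n$, an agreement $d(m)=p_s(m)$ with $m\in a_s$ at some node $s$ of length $\ge n$ below every $c$ outside the $n$ reals $\bar y_{\bar x\restriction n}$. The paper takes exactly this as the condition: $d\in p$ iff for every $n$ there are $y^n_0,\dots,y^n_{n-1}$ such that each $t\in S_{\bar y^n}$ has such an $s\subseteq t$ with $|s|\ge n$.
\begin{itemize}
\item By Dilworth's theorem this is equivalent to ``every antichain of size $n+1$ in $\omega^{\ge n}$ contains such a $t$'', so $p$ is $G_\delta$.
\item Positivity is the previous construction.
\item In $V[d]$, any ground real $y$ outside the countable set $\{y^n_i\}$ extends agreement nodes of unbounded length. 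This gives $f(y)=^\infty d$, contradicting genericity.
\end{itemize}

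Your rank idea can be rescued from the same front property, but only with this argument. The property gives $Z_d\subseteq\bigcup_L F_L$ with $F_L=\bigcap_{n>L}\{y^n_i : i<n\}$, so $|F_L|\le L+1$. From this, any limit point of $Z^K_d$ has strictly fewer agreements than the points of $Z^K_d$ converging to it. Hence $(Z^K_d)^{(K+1)}=\emptyset$ for all $K$, which is a Borel condition on $d$. Your proposal supplies neither this combinatorial analysis nor the paper's version of it, only general absoluteness remarks.
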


\begin{proof}
    Let $\langle a_s, p_s : s \in \omega^{<\omega} \rangle$ and $f$ be as before. We consider the set $p \subseteq \omega^\omega$ consisting of those $d \in \omega^\omega$, so that for each $n \in \omega$, there are $y^n_0, \dots, y^n_{n-1} \in \omega^\omega$, such that for every $t \in S_{\bar y^n}$, there is $s \subseteq t$, $\vert s \vert \geq n$, with $d(m) = p_s(m)$ for some $m \in a_s$.
    
    Given any $\langle x_n : n \in \omega \rangle \subseteq \omega^\omega$ let $M$ be a countable transitive model containing these reals. The eventually different real $d$ over $M$ produced in the previous proof is an element of $p$. This shows that $p$ is an $\omega$-eventually different family. 
    
    $p$ is also Borel, in fact $G_{\delta}$. Namely, check that $d \in p$ if and only if for each $n \in \omega$ and each antichain $S \subseteq \omega^{\geq n}$ of size $n+1$, there is some $t \in S$ and $s \subseteq t$, $\vert s \vert \geq n$, such that $p_s(m) = d(m)$, for some $m \in a_s$. The direction from left to right is immediate. For the other direction, given $n \in \omega$, let $$B = \{ t \in \omega^{\geq n} : \neg \exists s \subseteq t, m \in a_s (\vert s \vert \geq n \wedge p_s(m) = d(m))\}$$ and apply Dilworth's theorem again.

Now suppose $d \in p$ is $\mathbb{P}_{\neq^*}$ generic over $V$. Working in $V[d]$, for each $n \in \omega$, we find $y^n_0, \dots, y^n_{n-1} \in \omega^\omega$ witnessing that $d \in p$. Simply note that $V \cap \omega^\omega \subseteq \{ y^n_i : n \in \omega, i \leq n \}$ must be the case. Otherwise, if $y \in V \cap \omega^\omega \setminus \{ y^n_i : n \in \omega, i \leq n \}$, then for each $n \in \omega$, there is $s \subseteq y$, $\vert s \vert \geq n$, and some $m \in a_s$, with $p_s(m) = d(m)$. Followingly $f(y)(m) = d(m)$, for infinitely many $m$. But $f(y) \in V$ and $d$ is eventually different over $V$ -- contradiction.\end{proof}

\begin{quest}
    Is true eventually different forcing proper somewhere? Is there some condition $B$, such that each eventually different real $d \in B$ over a model $M \ni B$ is universally eventually different over $M$?
\end{quest}

\subsection{Refining forcing}

\begin{definition}
    For $x, y \in [\omega]^\omega$, let us say that $y$ \emph{refines} $x$ if either $y \subseteq^* x$ or $y \subseteq^* \omega \setminus x$. 
\end{definition}

This is more often called \emph{unsplitting} or \emph{reaping}, but ``refining" sounds neither too artificial nor has an aggressive connotation to it, which makes it for a good middle ground. The term is borrowed from Spinas, see \cite{Spinas2008}. Adding a refining real increases the splitting number $\mathfrak{s}$.

\begin{thm}\label{thm:reapnotuniv}
    Refining does not have the universality property. In fact, for any countable transitive $M$, there is a reaping real $r$ over $M$, so that for any $c \notin M$, $r$ is not refining over $M[c]$. 
\end{thm}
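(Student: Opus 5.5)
Mirroring the eventually different case, I would code each real $c\in\omega^\omega$ by a ground-model continuous map $f\colon\omega^\omega\to[\omega]^\omega$ in $M$. Then I would build $r$ refining every $x\in M$ such that for every $c\notin M$ the set $f(c)\in M[c]$ splits $r$. This gives the full claim, and in particular failure of universality: take $\mathbb{Q}$ Cohen forcing, so every $\mathbb{Q}$-generic adds some $c\notin M$, and no generic keeps $r$ refining.

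\textbf{Step 1 (construction in $M$).} Fix pairwise disjoint infinite sets $a_s\subseteq\omega$, $s\in\omega^{<\omega}$; full disjointness is used in Step 2. Within each $a_s$, fix a partition into two infinite halves $a_s^0,a_s^1$. Choose these so that, for every finite antichain $S$, the intervals can be interleaved.

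Let $f(c)=\bigcup_n a^{c(n)\bmod 2}_{c\restriction n}$, or a variant in which the branch picks halves. The goal is that $f(c)$ splits any set $r$ meeting infinitely many $a_{c\restriction n}$ in both halves. A simpler target is $f(c)=\bigcup_n a_{c\restriction n}^0$: then it suffices that $r$ meets $a^0_s$ and $a^1_s$ for infinitely many $s\subseteq c$. Indeed, $r\cap f(c)$ is infinite, and $r\setminus f(c)\supseteq r\cap\bigcup_n a^1_{c\restriction n}$ is infinite since the $a_s$ are disjoint.

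\textbf{Step 2 (analogue of Claims~\ref{claim:evdiff1}, \ref{claim:evdiff2}).} Given $x_0,\dots,x_{n-1}\in M$, consider the $2^n$ Boolean atoms $e$ they generate. For an antichain $S$, at most $2^n$-many $s\in S$ should fail the property that some atom $e$ has both $e\cap a^0_s$ and $e\cap a^1_s$ infinite. For a single $s$, this fails only if each atom meets at most one half infinitely often. Making this bounded requires the $a_s$ to be chosen generically in $M$, for instance Cohen-generically over a countable submodel. Arbitrary $x_i$ can be adapted to any fixed $a_s$, so I expect a counting bound is not automatic. This is the step I expect to be the main obstacle.

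A cleaner alternative is the following. For each tuple $\bar x$ and each $s$, choose inside $M$ an atom $e$ with $e\cap a_s$ infinite; this exists whenever the atoms cover $a_s$. Then refine by picking two points of $e\cap a_s$, one from each half. Hence I would define the halves only after the fact, during the diagonal construction. That is, $f(c)$ is defined from the points chosen, and the construction is not done with $f$ fixed in advance. Since $f$ must lie in $M$, I would instead let $f(c)$ encode parity: $f(c)=\bigcup_n\{m\in a_{c\restriction n}: m \text{ even}\}$. I would then pick, inside an infinite atom-piece $e\cap a_s$, both an even and an odd element. This fails only if $e\cap a_s$ is all even or all odd, which the $x_i$ can force. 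So the parity choice must instead be a sufficiently generic partition of each $a_s$, generic over the countable model generated by earlier data. This yields the bound with at most $2^n$ exceptions, and Dilworth's theorem then gives finitely many exceptional branches $y_i$ and the front $A_{\bar x}$ as in Definition~\ref{def:Sy}.

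\textbf{Step 3 (diagonalization outside $M$).} Enumerate $M\cap[\omega]^\omega$ as $\langle x_n\rangle$ and enumerate the pairs $(n,t)$ with $t\in A_{\bar x\restriction n}$ as $t_j$. Build $r$ by a fusion maintaining a decreasing sequence of atoms, $e_k$ determined by $x_0,\dots,x_{k-1}$, with $r\subseteq^* e_k$. At stage $j$, add two new points of $e_{l_j}\cap a_{t_j}$, one in each half, where $l_j\to\infty$ as in the eventually different proof. Then $r$ refines every $x_n$, and for $c\notin M$ each front $A_{\bar x\restriction n}$ contains an initial segment of $c$. So infinitely many $a_{c\restriction n}$ contribute points to both $r\cap f(c)$ and $r\setminus f(c)$, and $f(c)$ splits $r$.

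The nontrivial bookkeeping is choosing the atoms $e_k$ coherently, meaning each $e_{k+1}\subseteq e_k$ has infinite intersection with the $a_t$ still needed. The clean way is to take the $a_s$, or their halves, Cohen-generic enough over $M$ so that every infinite Boolean combination from $M$ meets every $a_s$ in both halves.
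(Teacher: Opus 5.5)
There is a genuine gap at your Step~2. In your setting it is not merely hard but false, and the genericity repair is not available. Since $f$ has to lie in $M$ (that is what puts $f(c)$ into $M[c]$), the sets $a_s$ and their halves lie in $M$. Hence $X=\bigcup_s a^0_s$ lies in $M$ as well (in the parity variant, take the set of even numbers), and $X$ is one of the $x_n$. Because the $a_s$ are pairwise disjoint, each of the two atoms $X$ and $\omega\setminus X$ meets every $a_s$ in only one half. So every node fails your property, and already the first move of your fusion (two points of $e\cap a_{t_j}$, one in each half) is impossible.

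Taking the partition Cohen-generic over $M$ does not help. It puts $f$ outside $M$, so you would only learn that $r$ is not refining over some $M[g][c]$, not over $M[c]$. Genericity over a smaller model does not help either, because the $x_n$ range over all of $M\cap[\omega]^\omega$.

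In fact full disjointness is fatal regardless of the halves. Let $T_0$ be the set of nonempty nodes with even last entry, and let $Y=\bigcup_{s\in T_0}a_s\in M$. If $r\subseteq^* Y$, then for any $c\notin M$ with only odd entries, $r\cap\bigcup_{n\geq1}a_{c\restriction n}\subseteq r\setminus Y$ is finite. If $r\cap Y$ is finite, the same happens for any $c\notin M$ with only even entries. So no $r$ refining over $M$ can meet $a_{c\restriction n}$ for infinitely many $n$ for every new $c$.

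The missing idea is to let the sets attached to incomparable nodes overlap independently. Lemma~\ref{lem:partialfunctions} provides, inside $M$, partial functions $p_s$ from $\omega$ to $2$ whose domains are disjoint only along chains, while $\{p_s:s\in S\}$ is an independent family for every finite antichain $S$. Disjointness along chains is all your Step~1 needs, since $f(c)=\bigcup_n p_{c\restriction n}$, extended by $0$, is then well defined.

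Independence yields the counting bound in the form actually needed: one atom working uniformly, not ``some atom for each $s$''. Given $x_0,\dots,x_{n-1}$, there is a single $\eta\in{}^n2$ such that in any antichain, all but $2^n-1$ nodes $s$ have both $p_s^{-1}(0)\cap\bigcap_{i<n}x_i^{\eta(i)}$ and $p_s^{-1}(1)\cap\bigcap_{i<n}x_i^{\eta(i)}$ infinite. Suppose not. The Combinatorial Antichain Lemma then gives bad nodes $s_\eta$, $\eta\in{}^n2$, forming an antichain. Each $z_\eta=\bigcap_{i<n}x_i^{\eta(i)}$ is almost disjoint from some $p_{s_\eta}^{-1}(h(\eta))$. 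Since the $z_\eta$ cover $\omega$, $\bigcap_\eta p_{s_\eta}^{-1}(h(\eta))$ is finite, contradicting independence.

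Dilworth's theorem then gives the fronts. The coherence of atoms that you flag in Step~3 comes from a compactness argument, possibly outside $M$: choose one $\eta\in 2^\omega$ such that each $\eta\restriction k$ is an initial segment of $\eta_n$ for infinitely many $n$. From there your diagonalization goes through, with $p_{t_j}^{-1}(0)$ and $p_{t_j}^{-1}(1)$ in place of the two halves.
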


\begin{definition}
    Let $P$ be a set of partial functions $\omega \to 2$. Then we say that $P$ is an \emph{independent family} if for any $p_0, \dots, p_{n-1} \in P$ and any $h \colon n \to 2$, $$\bigcap_{i < n} p_i^{-1}(h(i))$$ is infinite.
\end{definition}

\begin{lemma}\label{lem:partialfunctions}
    There are partial functions $\langle p_s : s \in \omega^{<\omega} \rangle$ from $\omega$ to $2$ so that the following are satisfied:

\begin{enumerate}
    \item For all $s \subsetneq t \in \omega^{<\omega}$, $\dom p_s \cap \dom p_t = \emptyset$. 
    \item For any finite antichain $S \subseteq \omega^{<\omega}$, $\{ p_s : s \in S \}$ is an independent family. 
\end{enumerate}
\end{lemma}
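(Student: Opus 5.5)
We will mimic the construction of the sets $a_s$ from the proof for $\neq^*$, with domains serving as ``rows'' and independence replacing pairwise distinctness of values. Enumerate all pairs $(S_n, h_n)$, where $S_n \subseteq \omega^{<\omega}$ is a finite antichain and $h_n \colon S_n \to 2$, so that each pair appears infinitely often. It is convenient to index so that $S_n \subseteq \omega^{<n}$, which is harmless by repeating pairs. At stage $n$ we reserve the single point $n \in \omega$. We put $n$ into $\dom p_s$ for every $s \in S_n$ and set $p_s(n) := h_n(s)$. Points are never assigned to $s \notin S_n$ at stage $n$, so each $p_s$ is a partial function with $\dom p_s = \{ n : s \in S_n\}$.

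\textbf{Disjointness.} If $s \subsetneq t$, then $s$ and $t$ are comparable and so never lie in a common antichain $S_n$. Hence no $n$ belongs to both $\dom p_s$ and $\dom p_t$, which gives (1).

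\textbf{Independence.} Let $S = \{s_0,\dots,s_{k-1}\}$ be a finite antichain and $h \colon k \to 2$. The pair $(S, h')$, with $h'(s_i) = h(i)$, equals $(S_n,h_n)$ for infinitely many $n$. Each such $n$ lies in $\bigcap_{i<k} p_{s_i}^{-1}(h(i))$, so this set is infinite, which gives (2). Distinct $s_i$ are assumed, since $\{p_s : s\in S\}$ is indexed by $S$.

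\textbf{Where care is needed.} The only point to watch is that the assignments at each stage are consistent. They are, because each point $n$ is used at exactly one stage, and within that stage each $s \in S_n$ receives the single value $h_n(s)$.

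Presumably the later argument also needs an analogue of property (3) for $\neq^*$ (covering along branches), and this can be added as in the definition of the $a_s$. If $n$ is not yet in $\dom p_t$ for any $t \subseteq s$, we additionally put $|s|$-type points into $\dom p_s$ with arbitrary value. Since the lemma as stated only asks for (1) and (2), I would first give the bare construction and remark on this optional strengthening. This is straightforward, but it requires re-checking (1) after the extra points are added, and I expect that re-check to be the only mildly delicate step.
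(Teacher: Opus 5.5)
Your proof is correct. It shares the paper's skeleton: one fresh point $n$ per stage and $\dom p_s=\{n : s\in S_n\}$. Then (1) holds because comparable nodes never lie in a common antichain, and (2) follows from each requirement being met at infinitely many stages.

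Where you differ is in how the value $p_s(n)$ is chosen.
\begin{itemize}
\item \emph{Your route.} You enumerate pairs $(S_n,h_n)$, each pair infinitely often, and set $p_s(n)=h_n(s)$. So every target pattern is hit exactly, infinitely often, and independence is immediate.
\item \emph{The paper's route.} It enumerates only the antichains (working in $Q^{<\omega}$) and reads the value off $S_n$ itself: $p_s(n)$ is the parity of the position of $s$ in the lexicographic order of $S_n$. To realize an arbitrary $h\colon S\to 2$, it must find infinitely many antichains $S_n\supseteq S$ in which each $s\in S$ sits at a position of parity $h(s)$.
\end{itemize}
This is why the paper uses $Q^{<\omega}$: density of the rationals lets one insert new incomparable nodes between lexicographically consecutive elements of $S$, or before all of them, to adjust parities. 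In $\omega^{<\omega}$ this fails; for instance, nothing incomparable lies lexicographically between $\langle 0\rangle$ and $\langle 1\rangle$. A bijection $\omega\cong Q$ then transfers the result back.

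The paper's version has the mild feature that only antichains need to be enumerated and the coloring depends on $S_n$ alone. Yours avoids both the detour through $Q^{<\omega}$ and the insertion argument, so it is the more economical proof of the lemma as stated.

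Your closing remark about adding an analogue of the covering property (3) is unnecessary. In the proof of Theorem~\ref{thm:reapnotuniv} the map is $f(x)=p(x)\cup\{(n,0):n\notin\dom p(x)\}$, so undefined values are simply padded with $0$. Only (1), which makes $p(x)$ a function, and (2) are used there. For the same reason, your normalization $S_n\subseteq\omega^{<n}$ is superfluous for the lemma itself.
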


\begin{proof}
We consider $Q^{<\omega}$ instead of $\omega^{<\omega}$, where $Q$ is the set of rational numbers. Let $\langle S_n : n \in \omega \rangle$ enumerate all finite antichains of $Q^{<\omega}$. For $s \in Q^{<\omega}$, we define $p_s(n) = j$ if $s \in S_n$ and in the lexicographical order on $S_n$ induced from the linear order on $Q$, $s$ is at a position with parity $j$. Clearly (1) is satisfied as $s \subsetneq t$ implies that $s$ and $t$ are in no common antichain. Now if $S \subseteq Q^{<\omega}$ is a finite antichain and $h \colon S \to 2$ is arbitrary, we find infinitely many finite antichains $S_n$, extending $S$, in which each $s \in S$ is in a position with parity $h(s)$ -- in other words $p_s(n) = h(s)$, for each $s \in S$. This shows that $\bigcap_{s \in S} p_s^{-1}(h(s))$ is infinite, as required.
\end{proof}

\begin{lemma}[Combinatorial Antichain Lemma]
    For any $n \in \omega$, whenever $A_i \subseteq \omega^{<\omega}$, $i < n$, are antichains of size at least $n$, there are distinct $s_0 \in A_0, \dots, s_{n-1} \in A_{n-1}$, so that $\{s_i : i < n \}$ forms an antichain.
\end{lemma}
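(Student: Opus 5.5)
We argue by induction on $n$, choosing at each step a $\subseteq$-maximal node so that it blocks as few elements of the remaining antichains as possible. First we reduce to the finite case: shrinking each $A_i$ to a subset of size exactly $n$ preserves the hypotheses, and any solution for the shrunk antichains is a solution for the originals. So assume every $A_i$ is finite with $|A_i| = n$.

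For $n = 0$ there is nothing to prove. For $n = 1$ we pick any $s_0 \in A_0$. Now let $n \geq 1$ and assume the statement holds for $n-1$. Put $U = \bigcup_{i<n} A_i$, a finite nonempty subset of $\omega^{<\omega}$. Pick $s \in U$ that is $\subseteq$-maximal in $U$, i.e.\ no $t \in U$ satisfies $s \subsetneq t$. Fix an index $i_0$ with $s \in A_{i_0}$, and set $s_{i_0} := s$.

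For each $j \neq i_0$ let $A_j' := \{ t \in A_j : t \text{ is incomparable with } s \}$. If $t \in A_j$ is comparable with $s$, then by maximality of $s$ in $U$ we get $t \subseteq s$; this covers $t = s$ as well. The initial segments of $s$ form a chain, and $A_j$ is an antichain, so at most one element of $A_j$ lies below $s$. Hence $|A_j'| \geq n-1$. Each $A_j'$ is still an antichain, and no element of any $A_j'$ equals $s$.

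We apply the induction hypothesis to the $n-1$ antichains $A_j'$, $j \neq i_0$ (reindexed by $n-1$). This gives distinct $s_j \in A_j'$ forming an antichain. Every $s_j$ is incomparable with $s = s_{i_0}$ and in particular distinct from it. Therefore $\{ s_i : i < n \}$ consists of $n$ distinct elements with $s_i \in A_i$, and it forms an antichain, which completes the induction.

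The only step needing care is the choice of $s$. Choosing $s$ maximal (rather than arbitrary) is exactly what ensures that each other antichain loses at most one element, namely the unique member lying below $s$ on its chain of initial segments. Everything else is bookkeeping.
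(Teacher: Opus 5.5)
Your proof is correct and follows the paper's route: induct on $n$, find a node $s$ comparable with at most one element of each of the other antichains, discard those elements, and apply the induction hypothesis. The only difference is how $s$ is found: the paper argues by contradiction, building an infinite strictly $\subseteq$-increasing sequence inside the finite set $\bigcup_i A_i$, whereas you take a $\subseteq$-maximal element of that set directly, which is a cleaner version of the same idea.
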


\begin{proof}
    The statement is vacuous for $n = 0$ and trivial for $n = 1$. So assume now that the statement is true for some $n \geq 1$. Let $A_i \subseteq \omega^{<\omega}$, $i \leq n$, be antichains, each of size $n+1$.
    
    Once we find any $i \leq n$ and $s \in A_i$, where $s$ is incompatible with $n$-many $t \in A_j$, for each $j \in (n+1) \setminus \{ i \}$, we are done: Simply find distinct $s_j \in A_j' = \{ t \in A_j  : s \perp t \}$, where $\{s_j: j \in (n+1) \setminus \{ i \} \}$ forms an antichain. Then so does $\{s_j : j \in (n+1) \setminus \{ i \} \} \cup \{ s \}$. 
    
    So suppose this is not possible. Let $s_0 \in \bigcup_{i \leq n} A_i$ be completely arbitrary. Then there is some $j \leq n$ with at least two nodes $s_1, t_1 \in A_j$, both compatible with $s_0$. The only way is if $s_0$ is a strict initial segment of both $s_1$ and $t_1$. Otherwise $s_1$ and $t_1$ are compatible, but $A_j$ is an antichain. In precisely the same way, we find a strict extension $s_2 \in \bigcup_{i \leq n} A_i$ of $s_1$, and so on. This process never ends, but $\bigcup_{i \leq n} A_i$ is finite -- contradiction.\end{proof} 

\begin{lemma}
 Let $x_0, \dots, x_{n-1} \in [\omega]^\omega$. Then there is $\eta \in {}^n2$ so that for any antichain $S \subseteq \omega^{<\omega}$, for all but $(2^n-1)$-many $s \in S$,
 \begin{itemize}
        \item $p_s^{-1}(0) \cap \bigcap_{i < n} x_i^{\eta(i)}$ is infinite, 
        \item and $p_s^{-1}(1) \cap \bigcap_{i < n} x_i^{\eta(i)}$ is infinite.
 \end{itemize}
\end{lemma}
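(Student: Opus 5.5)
We choose $\eta$ so that the Boolean cell $c_\eta := \bigcap_{i<n} x_i^{\eta(i)}$ is infinite, where $x^0 = x$ and $x^1 = \omega\setminus x$. Such an $\eta$ exists because the $2^n$ cells partition $\omega$, so at least one of them is infinite. The claim is that the right $\eta$ works; in fact we expect to pick $\eta$ by a maximality argument over the finitely many infinite cells. We argue by contradiction, in the spirit of Claim~\ref{claim:evdiff1}.

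Fix an antichain $S$ and suppose there are $2^n$ distinct nodes $s_0,\dots,s_{2^n-1}\in S$ such that, for each $j$, one of the two sets $p_{s_j}^{-1}(0)\cap c_\eta$ or $p_{s_j}^{-1}(1)\cap c_\eta$ is finite. Then for each $j$ there is $e_j\in 2$ with $c_\eta \cap p_{s_j}^{-1}(e_j) =^* \emptyset$. We now use independence from Lemma~\ref{lem:partialfunctions}(2): the set $D := \bigcap_{j<2^n} p_{s_j}^{-1}(e_j)$ is infinite. By the choice of $e_j$, $D\cap c_\eta$ is finite, so $D$ is almost covered by the other $2^n-1$ cells $c_{\eta'}$, $\eta'\neq\eta$.

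This is only a contradiction if $\eta$ was chosen so that this cannot happen. We therefore reorganize the choice of $\eta$ and argue by pigeonhole over cells. Suppose that for every $\eta$ the statement fails. Then for each of the $2^n$ values $\eta$ there is an antichain $S_\eta$ containing $2^n$ bad nodes for $\eta$. Using the Combinatorial Antichain Lemma (applied with $2^n$ antichains, each of size at least $2^n$), we select distinct $s_\eta\in S_\eta$ forming a single antichain $\{s_\eta:\eta\in 2^n\}$. Each $s_\eta$ comes with $e_\eta$ such that $p_{s_\eta}^{-1}(e_\eta)\cap c_\eta$ is finite. By independence, $\bigcap_\eta p_{s_\eta}^{-1}(e_\eta)$ is infinite. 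However, this set meets each cell $c_\eta$ only finitely, and the cells cover $\omega$, which is a contradiction. Hence some $\eta$ works for every antichain with at most $2^n-1$ exceptions. A subtlety is that $\eta$ must be chosen uniformly before $S$; this is exactly what the contrapositive over all $\eta$ gives.

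The main obstacle is the bookkeeping in that last step. The bad witnesses for different $\eta$ may live in different antichains, so the Combinatorial Antichain Lemma is needed to extract a common antichain of distinct nodes, one per cell. The bound $2^n-1$ on exceptions is precisely what guarantees that each $S_\eta$ contributes at least $2^n$ candidates, which is the size the lemma requires. One should also check that the case where $c_\eta$ is finite causes no problem: then every node of $S$ is bad, so such an $\eta$ is automatically among those that fail, and the argument above still applies.
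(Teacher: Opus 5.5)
Your final argument is correct and is essentially the paper's proof: assume every $\eta$ fails, use the Combinatorial Antichain Lemma to pick one bad node $s_\eta$ per cell so that the $s_\eta$ form an antichain of distinct nodes, then play the independence of $\{p_{s_\eta}\}$ against the fact that the $2^n$ cells cover $\omega$. The opening attempt with a single infinite cell can be dropped, and you should apply the Combinatorial Antichain Lemma to the sub-antichains of $S_\eta$ consisting only of the $2^n$ bad nodes, so that each selected $s_\eta$ actually comes with an $e_\eta$.
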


\begin{proof}
    Suppose towards a contradiction that for each $\eta \in {}^n2$, there is an antichain $A_\eta \subseteq \omega^{<\omega}$ of size $2^n$, so that for every $s \in A_\eta$, the above fails. Pick $s_\eta \in A_\eta$, $\eta \in {}^n2$, distinct so that $\{s_\eta : \eta \in {}^n2 \}$ forms an antichain. 

    For each $\eta \in {}^n2$, letting $z_\eta = \bigcap_{i < n} x_i^{\eta(i)}$, we thus have that $$z_\eta \subseteq^* \omega \setminus p_{s_\eta}^{-1}(0) \text{ or } z_\eta \subseteq^* \omega \setminus p_{s_\eta}^{-1}(1).$$ Let $h(\eta) \in \{0,1\}$ be least so that $z_\eta \subseteq^* \omega \setminus p_{s_\eta}^{-1}(h(\eta))$. Then we have that $$\omega = \bigcup_{\eta \in {}^n2} z_\eta \subseteq^* \bigcup_{\eta \in {}^n2} \omega \setminus p^{-1}(h(\eta)) =^* \omega.$$

    So $$ \emptyset =^* \omega \setminus \left(\bigcup_{\eta \in {}^n2} \omega \setminus p^{-1}(h(\eta))\right) = \bigcap_{\eta \in {}^n2} p^{-1}(h(\eta)).$$
    
    But the independence of $\{p_{s_\eta} : \eta \in {}^n2\}$ implies that $\bigcap_{\eta \in {}^n2} p^{-1}(h(\eta))$ is infinite.
\end{proof}

\begin{lemma}
 Let $x_0, \dots, x_{n-1} \in [\omega]^\omega$. Then there is $\eta \in {}^n2$ and there are $y_i \in \omega^\omega$, $i < 2^n-1$, so that for any $s \in \omega^{<\omega}$ that is not an initial segment of any of the $y_i$, 
 \begin{itemize}
        \item $p_s^{-1}(0) \cap \bigcap_{i < n} x_i^{\eta(i)}$ is infinite, 
        \item and $p_s^{-1}(1) \cap \bigcap_{i < n} x_i^{\eta(i)}$ is infinite.
 \end{itemize}
\end{lemma}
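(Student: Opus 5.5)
This is the analogue of Claim~\ref{claim:evdiff2}, with the previous lemma playing the role of Claim~\ref{claim:evdiff1}. Dilworth's theorem then converts a bound on antichain size into a covering by finitely many chains.

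Fix $\eta \in {}^n2$ as given by the previous lemma, and write $z = \bigcap_{i<n} x_i^{\eta(i)}$. Consider the set
$$B = \{ s \in \omega^{<\omega} : p_s^{-1}(0) \cap z \text{ is finite or } p_s^{-1}(1)\cap z \text{ is finite}\}$$
of nodes that fail the required conclusion. By the previous lemma, every antichain $S \subseteq \omega^{<\omega}$ contains at most $2^n-1$ elements of $B$. Hence $B$, viewed as a poset under $\subseteq$, has no antichain of size $2^n$.

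Next I apply Dilworth's theorem in its infinite form for a fixed finite width, via the compactness argument mentioned in the footnote to Claim~\ref{claim:evdiff2}. This covers $B$ by at most $2^n-1$ chains $C_0, \dots, C_{k-1}$ with $k \le 2^n-1$.

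Each chain $C_i$ in $\omega^{<\omega}$ is linearly ordered by $\subseteq$. It is therefore either finite, with a maximal element $t_i$, or its union is an element of $\omega^\omega$. In either case there is some $y_i \in \omega^\omega$ extending every member of $C_i$: in the finite case take any $y_i \supseteq t_i$, and in the infinite case take $y_i = \bigcup C_i$. If fewer than $2^n-1$ chains are needed, pad the list with arbitrary reals. Now every $s \in B$ lies in some $C_i$ and is thus an initial segment of $y_i$. Contrapositively, any $s$ that is an initial segment of none of the $y_i$ lies outside $B$, which gives exactly both bullet points.

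The main obstacle is the infinite version of Dilworth's theorem. If it needs justification, I would argue as follows. For each finite $F \subseteq B$, the finite theorem gives a partition into $\le 2^n-1$ chains, coded as a colouring $F \to 2^n-1$ in which same-coloured nodes are comparable. These colourings form a finitely branching tree of partial solutions on an enumeration of the countable set $B$, so König's lemma yields a total colouring of $B$ whose colour classes are chains. Everything else is routine.
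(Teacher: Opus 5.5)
Your proposal is correct and is exactly the paper's argument; the paper's proof is just ``Using Dilworth's theorem, exactly as before,'' meaning that by the previous lemma the set of failing nodes has no antichain of size $2^n$, so it is covered by at most $2^n-1$ chains, each of which lies along a branch $y_i$. Your K\"onig's lemma justification of the infinite, fixed-width form of Dilworth's theorem correctly fills in the compactness step that the paper only mentions in a footnote.
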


\begin{proof}
    Using Dilworth's theorem, exactly as before.
\end{proof}

\begin{proof}[Proof of Theorem~\ref{thm:reapnotuniv}]
    For each $x \in \omega^\omega$, let $p(x) = \bigcup_{n \in \omega} p_{x \restriction n}$ and $f(x) = p(x) \cup \{ (n, 0) : n \notin \dom p(x) \}$. We construct a reaping real $r$ over $M$, so that for any $c \in \omega^\omega \setminus M$, $f(c)$ splits $r$ (viewed as a subset of $\omega$). 

    Enumerate $M \cap \omega^\omega$ as $\bar x = \langle x_n : n \in \omega \rangle$. For each $n$, there is $\eta_n \in {}^n2$ and a finite sequence $\bar y^n = \langle y^n_i : i < 2^n - 1 \rangle \in M$, in accordance to the previous lemma. By compactness -- possibly outside of $M$ -- we find a single $\eta \colon \omega \to 2$, so that for every $n$, there is $n' \geq n$ with $\eta_{n'} \restriction n \subseteq \eta$. Define $A_n := S_{\bar y^{n'}}$ (see Definition~\ref{def:Sy}). Then we have that for each $n \in \omega$ and for each $s \in A_n$, 

 \begin{itemize}
        \item $p_s^{-1}(0) \cap \bigcap_{i < n} x_i^{\eta(i)}$ is infinite, 
        \item and $p_s^{-1}(1) \cap \bigcap_{i < n} x_i^{\eta(i)}$ is infinite.
 \end{itemize}

The rest is as in Theorem~\dots. Each $A_n$ is enumerated as $\langle s_{n,i} : i \in \omega \rangle$. Let $\langle (n_j, i_j) : j \in \omega \rangle$ enumerate all pairs $(n, i) \in \omega^2$ and let $t_j := s_{n_j, i_j}$. For each $j$, there is a maximal $l_j \leq j$, so that 

 \begin{itemize}
        \item $p_{t_j}^{-1}(0)\cap\bigcap_{i < l_j} x_i^{\eta(i)}$ is infinite, 
        \item and $p_{t_j}^{-1}(1)\cap\bigcap_{i < l_j} x_i^{\eta(i)}$ is infinite.
 \end{itemize}

 Again, we find that $\langle l_j : j \in \omega \rangle$ diverges to infinity: $l_j \geq l$ whenever $n_j \geq l$, and there are at most finitely many $j$ where $n_j < l$ and $\bigcap_{i < l} x_i^{\eta(i)}$ is almost contained in $p_{t_j}^{-1}(0)$ or $p_{t_j}^{-1}(1)$.

Define an increasing sequence $\langle m_{2j}, m_{2j +1} : j \in \omega \rangle$, where $$m_{2j} \in p_{t_j}^{-1}(0) \cap \bigcap_{i < l_j} x_i^{\eta(i)} \text{ and } m_{2j+1} \in p_{t_j}^{-1}(1) \cap \bigcap_{i < l_j} x_i^{\eta(i)}.$$

We let $r = \{ m_j : j \in \omega \}$. Then $r$ is a reaping real over $M$. On the other hand, whenever $c \in \omega^\omega \setminus M$, then for each $n\in \omega$, there is $s \in A_n$ with $s \subseteq c$. For some $j$, $t_j = s$, so $f(c)(m_{2j}) = p_s(m_{2j}) = 0$ and $f(c)(m_{2j +1}) =p_s(m_{2j +1}) = 1$. Consequently, $f(c)$ splits $r$.\end{proof}

\begin{thm}\label{thm:refcoll}
    Refining forcing, i.e. the forcing consiting of Borel $\omega$-refining families, collapses the continuum to $\omega$ below some condition.
\end{thm}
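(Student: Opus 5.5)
We follow the proof of Theorem~\ref{thm:edcoll}, using the construction from the proof of Theorem~\ref{thm:reapnotuniv}. Fix the partial functions $\langle p_s : s \in \omega^{<\omega}\rangle$ from Lemma~\ref{lem:partialfunctions}. The condition will consist of those $r \in [\omega]^\omega$ that, for every $n$, admit witnesses $y^n_0,\dots,y^n_{2^n-2} \in \omega^\omega$ such that for every $t \in S_{\bar y^n}$ there are $s \subseteq t$ with $\vert s\vert \geq n$ and $m_0,m_1 \in r$ satisfying $p_s(m_0)=0$ and $p_s(m_1)=1$. Call this set $q$. In words, $r$ meets both sides of some $p_s$ along every branch of $\omega^\omega$ except finitely many exceptional ones, and this must hold at every level $n$.

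\textbf{$q$ is $\omega$-refining.} Given countably many $x_n \in [\omega]^\omega$, take a countable transitive $M$ containing them. The real $r$ built in the proof of Theorem~\ref{thm:reapnotuniv} is refining over $M$, so it refines each $x_n$. We check $r\in q$ with witnesses $\bar y^{n'}$ for level $n$. For each $t \in S_{\bar y^{n'}}=A_n$ we have $t = t_j$ for some $j$, and $m_{2j}, m_{2j+1} \in r$ land in $p_{t_j}^{-1}(0)$ and $p_{t_j}^{-1}(1)$. We also need $\vert t\vert\ge n$; this should hold because $n'\ge n$ and $S_{\bar y}$ consists of nodes of length at least the number of $y$'s, and $2^{n'}-1 \geq n$. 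The indexing of witnesses may need mild adjustment, for instance allowing $k_n$ witnesses at level $n$ for some fixed finite $k_n$.

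\textbf{$q$ is Borel.} We use the same Dilworth trick as in Theorem~\ref{thm:edcoll}. We have $r\in q$ iff for every $n$ and every antichain $S\subseteq\omega^{\ge n}$ of size $2^n$, some $t\in S$ has an $s\subseteq t$ with $\vert s\vert\ge n$ such that $r$ meets both $p_s^{-1}(0)$ and $p_s^{-1}(1)$. For the reverse direction, the set $B$ of bad $t$ has no antichain of size $2^n$. By Dilworth it is covered by $2^n-1$ chains, which yield the branches $y^n_i$. This condition is $G_\delta$-like, in any case Borel. One must also check that a node outside all branches extends into $S_{\bar y^n}$ appropriately, which is the same as in the eventually different case.

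\textbf{Collapse.} Let $r\in q$ be $\mathbb{P}_{\operatorname{ref}}$-generic over $V$. Then $r$ refines every ground-model real. In $V[r]$ pick witnesses $y^n_i$. We claim $V\cap\omega^\omega\subseteq\{y^n_i\}$, which is countable in $V[r]$, so $\mathfrak{c}^V$ is collapsed to $\omega$. Suppose $y\in V$ avoids all $y^n_i$. Then for each $n$ some initial segment $t\subseteq y$ lies in $S_{\bar y^n}$, so there is $s\subseteq y$ with $\vert s\vert\ge n$ and $r$ meeting both $p_s^{-1}(0)$ and $p_s^{-1}(1)$. The domains of the $p_s$ for $s \subseteq y$ are pairwise disjoint, so letting $n\to\infty$ we obtain infinitely many points of $r$ where $f(y)$ (from the proof of Theorem~\ref{thm:reapnotuniv}) equals $0$ and infinitely many where it equals $1$. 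Hence $f(y)\in V$ splits $r$, contradicting that $r$ is refining over $V$. The disjointness gives infinitely many distinct points because different levels $n$ produce distinct $s$, and each chosen point of $r$ lies in $\dom p_s$, which we may take beyond any finite bound.

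\textbf{Main obstacle.} The main difficulty is matching the witness counts and lengths so that both the membership of the constructed $r$ in $q$ and the Dilworth-based Borelness go through, and in particular confirming that $q$ is a genuine condition, i.e.\ $\omega$-refining. We would first set the number of witnesses at level $n$ to $2^n-1$ and adjust as needed.
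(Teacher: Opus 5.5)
Your overall route is the paper's. The condition is modelled on Theorem~\ref{thm:edcoll}. It is $\omega$-refining because it contains the real built in the proof of Theorem~\ref{thm:reapnotuniv}, and it is Borel via Dilworth. The collapse follows because a ground-model $y$ avoiding all witnesses has infinitely many initial segments $s$, with pairwise disjoint domains, such that $r$ meets both $p_s^{-1}(0)$ and $p_s^{-1}(1)$; so $f(y)\in V$ splits $r$. That collapse step is correct as written.

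The gap is the point you flag and leave open. You require, at \emph{every} level $n$, exactly $2^n-1$ witnesses. The construction in Theorem~\ref{thm:reapnotuniv} only supplies, for level $n$, the front $A_n=S_{\bar y^{n'}}$ with $2^{n'}-1$ branches. Here $n'\geq n$ is an index with $\eta_{n'}\restriction n\subseteq\eta$, coming from the compactness choice of $\eta$. The only bound you have on bad antichains for the coarse cell $\bigcap_{i<n}x_i^{\eta(i)}$ is the one inherited from the finer cell $\bigcap_{i<n'}x_i^{\eta_{n'}(i)}$, namely $2^{n'}-1$. Since $n'$ depends on the countable family, nothing in the construction lets you take $n'=n$ or bound $n'$ by a function $k_n$ fixed in advance. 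So it is not established that the constructed $r$ lies in your $q$, i.e.\ that $q$ is $\omega$-refining. Your suggested repair with a fixed $k_n$ runs into the same problem.

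The missing idea is simply not to bound the number of witnesses. As in the paper, require for each $n$ a finite $\bar y$ of \emph{some} length $k\geq n$.
\begin{itemize}
\item Then $\bar y^{n'}$ works at level $n$: its front consists of nodes of length $\geq 2^{n'}-1\geq n$, each equal to some $t_j$.
\item Borelness survives, since $q=\bigcap_n\bigcup_{k\geq n}Q_{n,k}$, where $Q_{n,k}$ (existence of $k$ witnesses for level $n$) is Borel by Dilworth.
\item The collapse argument is unchanged, since it only uses that the set of all witnesses is countable.
\end{itemize}
Alternatively, keep $2^n-1$ witnesses but demand the clause only for infinitely many $n$; the construction supplies the levels $n'$.

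A smaller point concerns your Dilworth reformulation. The antichains must range over $\omega^{\geq k}$, with $k$ the number of witnesses, not over $\omega^{\geq n}$. When $k>n$, the left-to-right direction can fail for antichains of nodes of length exactly $n$, which $S_{\bar y}$ never sees.
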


\begin{proof}
    The condition $p$ consists of all $r \in [\omega]^\omega$, where for each $n \in \omega$, there is a finite sequence $\bar y \subseteq \omega^\omega$ of length at least $n$, so that $p_s^{-1}(0) \cap r \neq \emptyset$ and $p_s^{-1}(1) \cap r\neq \emptyset$, for each $s \in S_{\bar y}$. This is a Borel set (see the argument in Theorem~\ref{thm:edcoll}) and the previous proof shows that it is an $\omega$-reaping family. 
    
    Whenever $r \in p$ is reaping over $V$, let $\bar y^n = \langle y^n_i : i < N_n \rangle$ be as stipulated above, for each $n$. We claim that $\omega^\omega \cap V \subseteq \{ y^n_{i} : n \in \omega, i < N_n \}$, so $\omega^\omega \cap V$ is countable in any model extending $V$ and containing $r$ : Let $y \in \omega^\omega \cap V$ be arbitrary. Then $r$ reaps $f(y) \in V$, i.e. $r \subseteq^* f(y)^{-1}(0)$ or $r \subseteq^* f(y)^{-1}(1)$. In particular, there is some $n \in \omega$ so that for all $n' \geq n$, $r \cap p_{y \restriction n'}^{-1}(1) = \emptyset$ or $r \cap p_{y \restriction n'}^{-1}(0) = \emptyset$. It follows that $y \in \{ y^n_i : i < N_n \}$, since otherwise, for $n' \geq n$ least with $y \restriction n'$ not contained in any $y^n_i$, $y \restriction n' \in S_{\bar y^n}$, contradicting the choice of $\bar y^n$.
\end{proof}

There are a few more things of interest that we can say or ask about refining forcing. It seems to be a difficult problem to characterise $\omega$-refining Borel sets and we do not know if the ideal is $\mathbf\Delta^1_2$ on $\mathbf\Sigma^1_1$. Since refining forcing is not proper, it is in particular not the same as Mathias forcing, i.e. forcing with Ellentuck-open sets, as one might maybe naively suspect.\footnote{On this matter, the ideal associated to Mathias forcing is not $\mathbf\Delta^1_2$ on $\mathbf{\Sigma}^1_1$ by \cite{Sabok2012}.} It is not hard to come up with a counterexample either. First recall that Ellentuck-open sets, or ``Ellentuck cubes", are of the form $$[s, a]^\omega := \{ x \in [\omega]^\omega : x \cap \max(s) +1 = s \text{ and } x \setminus s \subseteq a \},$$ for $s \in [\omega]^{<\omega}$ and $a \in [\omega]^\omega$. 

Consider $$B = \{ x \in [\omega]^\omega : \forall n \in \omega ( x \cap [2^n, 2^{n+1}] \neq \emptyset \rightarrow \vert x \cap [2^n, 2^{n+1}] \vert = 2) \}.$$ This is a closed $\omega$-refining family. We leave the details to the reader. But clearly $B$ cannot contain an Ellentuck-cube. Something that comes close to this can be said nevertheless:

\begin{thm}
    Let $B$ be a Borel refining family. Then there is a finite-to-one function $f \colon \omega \to \omega$ such that $[s,a]^\omega \subseteq \{ f[x] : x \in B \}$ for some $s$ and $a$.
\end{thm}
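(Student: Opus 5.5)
We plan to combine a Mathias-generic argument with the continuous-reading structure of Mathias forcing. Fix a Borel refining family $B$, meaning $B$ is $\omega$-refining, hence in particular refining: for every $x\in[\omega]^\omega$ some $y\in B$ refines $x$. Take a countable elementary submodel $M$ containing a code for $B$ and let $m$ be Mathias generic over $M$. We first show that some $y\in B\cap M[m]$ is \emph{eventually contained in a set naturally read off from $m$}. Since $m$ is unsplit by ground model sets but $B$ is only refining, we cannot hope for $y\subseteq^* m$. So we aim for the weaker statement that $m$ determines $y$ up to a finite-to-one collapse.

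The key steps are as follows.

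\textbf{Step 1.} By the Mathias property together with continuous reading of names (Mathias forcing is a $\sigma$-closed$*$ccc-like forcing with pure decision), find a condition $[s,a]^\omega$ and a continuous map $g$ defined on $[s,a]^\omega$ such that $g(m')\in B$ for every $m'\in[s,a]^\omega$ that is generic over $M$. We then use Borelness of $B$ and the fact that the set of generics in $[s,a]^\omega$ is comeager in the Ellentuck sense, together with Galvin–Prikry/Ellentuck Ramsey property, to shrink $a$ so that $g(m')\in B$ for all $m'\in[s,a]^\omega$.

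\textbf{Step 2.} Canonize $g$. Using Ellentuck's theorem repeatedly, with fusion, we shrink $a$ so that the continuous map $g$ has a canonical form: membership of $k$ in $g(m')$ depends only on $m'\cap\varphi(k)$ for a finite window $\varphi(k)$, in a Pudlák–Rödl-type canonical way. For maps $[\omega]^\omega\to[\omega]^\omega$ that are continuous on a cube, the canonization should yield a finite-to-one $f$ with $f[g(m')]$ recoverable from $m'$, namely $f[g(m')]=m'\setminus\max s$ up to finitely many points. To arrange this, we would define $f(k)$ to be the element of $a$ that "witnesses" $k$. Here we use that $g(m')$ must refine sets chosen in $M$ to force it to be spread out along $m'$.

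\textbf{Step 3.} Conclude. The canonization gives $\{f[x]:x\in B\}\supseteq\{f[g(m')]:m'\in[s,a]^\omega\}\supseteq[s',a']^\omega$ for a suitable $s'$ and $a'$, after we adjust the finite part.

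The main obstacle is Step 2. We need canonical forms for continuous maps on an Ellentuck cube, and we need refinement to rule out degenerate canonical forms, for example $g$ constant or $g(m')$ depending on too little of $m'$. To rule these out, we would first try using that $B$ is $\omega$-refining. If $g$ depended only on a thin part of $m'$, then the countably many sets $x\in M$ could be chosen to split every such output, which would be a contradiction. Making this diagonalization interact correctly with the fusion is where the real work lies.
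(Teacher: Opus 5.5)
Your Step 2 carries the entire content of the theorem, and as proposed it does not go through: the finite-to-one $f$ is never actually constructed. The canonization results you allude to (Pudl\'ak--R\"odl, Pr\"omel--Voigt) canonize the equivalence relation $g(m')=g(m'')$, i.e.\ which part of $m'$ the value depends on. They say nothing about where the set $g(m')$ sits relative to $m'$, so they cannot produce an $f$ with $f[g(m')]=^*m'$.

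Refinement does not supply this automatically either. You never say which name $\dot y$ you read continuously, and that choice is decisive. For instance, let $B$ be the closed refining family of infinite sets meeting every block $[2^n,2^{n+1}]$ in $0$ or $2$ points, and put $g(m')=\{2^n+1,2^n+2 : n\ge 2,\ m'\cap[2^n,2^{n+1}]=\emptyset\}$. This $g$ is continuous, and $g(m')\cap m'=\emptyset$, so $g(m')$ refines $m'$. Also $g(m')\in B$ whenever $m'$ misses infinitely many blocks, e.g.\ throughout any cube $[s,a]^\omega$ with $a$ sufficiently sparse. But $g$ is antitone: if $m_1'\subseteq m_2'$ lie in the cube with $m_2'\setminus m_1'$ infinite, then $f[g(m_2')]\subseteq f[g(m_1')]$. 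Hence no function $f$ whatsoever satisfies $f[g(m')]=^*m'$ on any subcube. Your proposed diagonalization does not exclude this, because $g(m')$ only has to lie in $B$, and a set $x$ can always be refined from the side $\omega\setminus x$.

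The missing idea is to build $f$ in $V$ directly from $B$, with no forcing at all. Let $F=\{y:\exists u\in B\,(u\subseteq^* y)\}$ and $A_0=\{y\in F:\omega\setminus y\notin F\}$. Using only that $B$ is refining, any two members $y_0,y_1$ of $A_0$ meet infinitely. Indeed, an $x\in B$ refining $y_0$ must satisfy $x\subseteq^* y_0$, and it cannot satisfy $x\subseteq^*\omega\setminus y_1$.

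From this linkedness one gets an interval partition $\langle i_n:n\in\omega\rangle$ such that every member of $A_0$ meets almost every $[i_n,i_{n+1})$. Let $f$ send $[i_n,i_{n+1})$ to $n$. If $y$ is infinite and co-infinite, then $Z=\bigcup_{n\notin y}[i_n,i_{n+1})$ misses infinitely many intervals, so $Z\notin A_0$. Refinement then gives $x\in B$ with $x\subseteq^*\omega\setminus Z$, i.e.\ $f[x]\subseteq^* y$. Thus the analytic set $\{f[x]:x\in B\}$ is dense in $([\omega]^\omega,\subseteq^*)$.

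The Ellentuck/Galvin--Prikry Ramsey property of analytic sets, with a routine diagonalization over finite stems to absorb the finite errors, then yields the cube $[s,a]^\omega$. Your Step 3 would need this same Ramsey argument anyway, since agreement up to finitely many points does not by itself give a full cube. Once $f$ is in hand, the Mathias-generic and continuous-reading machinery of Step 1 is unnecessary. Without $f$, Step 2 has nothing to canonize toward.
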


Note that one can easily require $s = \{0\}$ and $a = \omega \setminus \{0\}$ by modifying $f$. 

\begin{proof}
    Consider the sets $$A_{0,1} = \{ y \in [\omega]^\omega : \exists u,v \in B (u \subseteq^* y \wedge v \subseteq^* \omega \setminus y) \}$$ and $$A_0 = \{ y \in [\omega]^\omega : y \notin A_{0,1} \wedge \exists u \in B (u \subseteq^* y) \}.$$

    \begin{claim}\label{claim:bndfilter}
        $A_0$ is contained in a $\sigma$-compact subset of $[\omega]^\omega$. In particular, there is a strictly increasing sequence $\langle i_n : n \in \omega \rangle$ such that for all $y \in A_0$, $y \cap [i_n, i_{n+1}) \neq \emptyset$ for all but finitely many $n$.
    \end{claim}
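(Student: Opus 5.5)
The plan is to reduce the claim to the classical Talagrand--Jalali-Naini characterization of meager upward-closed families, and to prove meagerness by a $0$-$1$ law argument using that $B$ is refining.

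First set $U = \{ y \in [\omega]^\omega : \exists u \in B\, (u \subseteq^* y)\}$ and $U^* = \{ \omega \setminus y : y \in U\}$, where we only keep the infinite complements. Since $B$ is Borel, $U$ is analytic, so both $U$ and $U^*$ have the Baire property. Both sets are invariant under finite modifications. Since $B$ is refining, in particular every single $y$ is refined by some $u \in B$, so $U \cup U^* \supseteq [\omega]^\omega$, which is a comeager subset of $\mathcal{P}(\omega)$. By definition, $A_0 = U \setminus U^*$.

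Next I will show that $U \setminus U^*$ is meager. This set is tail-invariant and has the Baire property, so by the topological $0$-$1$ law it is either meager or comeager. The complementation map $y \mapsto \omega \setminus y$ is a homeomorphism of $2^\omega$, and it maps $U \setminus U^*$ onto $U^* \setminus U$. So if $U\setminus U^*$ were comeager, then $U^* \setminus U$ would also be comeager. These two sets are disjoint, which is a contradiction. I expect this to be the conceptual core of the argument, and it is short.

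Now I pass to the upward closure $V = \{ y \in [\omega]^\omega : \exists y' \in A_0\,(y' \subseteq^* y)\}$ and check that $V \subseteq U \setminus U^*$. If $y \supseteq^* y' \supseteq^* u$ with $u \in B$, then $y \in U$. If moreover $v \subseteq^* \omega \setminus y$ for some $v \in B$, then $v \subseteq^* \omega \setminus y'$, which contradicts $y' \notin A_{0,1}$. Hence $V$ is a meager family that is closed upwards modulo finite. Talagrand's theorem (equivalently Jalali-Naini's) then gives a strictly increasing $\langle i_n : n \in \omega\rangle$ such that every $y \in V$, and so every $y \in A_0$, meets $[i_n, i_{n+1})$ for all but finitely many $n$. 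If I do not want to quote it, I would reprove it by the standard argument: write $V \subseteq \bigcup_k F_k$ with $F_k$ closed nowhere dense and increasing, and choose $i_{n+1}$ so large that for every $s \subseteq i_n$ the set $s \cup (\omega \setminus i_{n+1})$-type extensions leave $F_n$ on the interval. Upward closure then forces every member of $V$ to hit almost all intervals. This is the step requiring the most care, but it is standard.

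Finally, for each $k$ the set $K_k = \{ y \in [\omega]^\omega : \forall n \geq k\, (y \cap [i_n,i_{n+1}) \neq \emptyset)\}$ is closed in $2^\omega$ and consists only of infinite sets. Hence $K_k$ is a compact subset of $[\omega]^\omega$, and $A_0 \subseteq \bigcup_k K_k$ is the required $\sigma$-compact set.
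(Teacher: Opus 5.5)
Your proof is correct, but it takes a different route from the paper's.

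\textbf{The paper's route.} The paper first shows that any two members of $A_0$ have infinite intersection: if $u \in B$ and $u \subseteq^* y_0$, then $u \not\subseteq^* \omega \setminus y_1$ because $y_1 \notin A_{0,1}$. It then applies the superperfect-set dichotomy: a set not contained in a $\sigma$-compact set contains a superperfect set, and any superperfect set contains two almost disjoint reals. The interval partition is then read off from $\sigma$-compactness.

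\textbf{Your route.} You get meagerness of $A_0 = U \setminus U^*$ from the $0$-$1$ law and the complementation symmetry. You observe that $A_0$ is closed under almost-supersets (your $V$ is just $A_0$). You obtain the partition from the Talagrand--Jalali-Naini characterization, and deduce $\sigma$-compactness last, so the two halves of the claim come in the opposite order.

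\textbf{What each approach buys.} Your argument needs nothing about $A_0$ beyond the Baire property, which it has as a difference of two analytic sets. The superperfect-set dichotomy the paper invokes is a theorem about analytic sets, and $A_0$, being $U$ minus the analytic set $U^*$, is a priori not known to be analytic. Your argument is also exactly the Baire-measurability branch that the paper itself uses (via Talagrand) in the later theorem on collapsing the continuum to $\mathfrak{d}$; the paper's proof here is the $K_\sigma$-regularity branch of that theorem. The paper's argument, in turn, is more hands-on and yields the extra combinatorial fact that $A_0$ is pairwise infinitely intersecting.

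\textbf{Small points.}
\begin{itemize}
\item The refining hypothesis, and with it your observation $U \cup U^* \supseteq [\omega]^\omega$, is not needed for this claim.
\item Complementation maps $U \setminus U^*$ onto $U^* \setminus U$ together with the finite sets, since the cofinite sets lie in $A_0$. This is harmless for category.
\item Since $A_0$ need not be a filter, you need Talagrand's theorem in its version for upward-closed families.
\end{itemize}

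Your sketch of that last theorem is garbled. The clean version: given increasing closed nowhere dense $F_k$ covering $A_0$, choose $i_{n+1}$ and a single $t_n \subseteq [i_n, i_{n+1})$ such that no $y$ with $y \cap [i_n, i_{n+1}) = t_n$ lies in $F_n$. To do this, treat the finitely many $s \subseteq i_n$ one after another, each time extending the current pattern using nowhere density. Now suppose some $y \in A_0$ missed infinitely many intervals. Inserting $t_n$ into those intervals gives a superset of $y$, hence a member of $A_0$, that avoids $F_n$ for infinitely many $n$. Since the $F_k$ are increasing, it avoids every $F_k$, a contradiction.
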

    \begin{proof}
        We note that for any $y_0, y_1 \in A_0$, $y_0 \cap y_1$ is infinite. Namely, let $x \in B$ refine $y_0$. By definition of $A_0$, the only possibility is if $x \subseteq^* y_0$. Since $y_1 \in A_0$, it cannot be that $x \subseteq^* \omega \setminus y_1$, so $x$ intersects $y_1$ infinitely and so does $y_0$. Now if $A_0$ is not contained in a $\sigma$-compact set then it contains a superperfect set, see e.g. \cite[21.24]{Kechris1995} for more details. But it is easy to find two reals with finite intersection in any superperfect set.
    \end{proof}

 Let $\langle i_n : n \in \omega \rangle$ be as claimed, with $i_0 = 0$, and let $f : \omega \to \omega$ be the finite-to-one function mapping each interval $[i_n,i_{n+1})$ to $n$. Now if $y \in [\omega]^\omega$ is co-infinite, there is $x \in B$, with $x \subseteq^*  \bigcup_{n \in y} [i_n,i_{n+1})$, because $\bigcup_{n \notin y} [i_n,i_{n+1}) \notin A_0$ and $B$ is reaping. Thus there is $x \in B$ with $f[x] \subseteq^* y$. As $y$ was arbitrary, this means that $$X = \{f[x] : x \in B \}$$ is dense in $([\omega]^\omega, \subseteq^*)$. It follows from the Galvin-Prikry Theorem, see \cite[19.C]{Kechris1995}, that $[s, a]^{\omega} \subseteq X$ for some $s$ and $a$.
\end{proof}

\begin{lemma}
    Let $B$ be an $\omega$-refining family and $f \in \omega^\omega$. For any $a \subseteq \omega$, let $B_{f,a} = \{ x \in B : f[x] \subseteq^* a \}$. Then $\mathcal{I}_{f}(B) = \{ a : B_{f,a} \text{ is not $\omega$-reaping} \}$ is an ideal on $\omega$.
\end{lemma}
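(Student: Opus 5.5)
The plan is to check the ideal axioms directly from the definition of $\omega$-refining (i.e.\ $\omega$-reaping) families. I will show three things: $\mathcal{I}_f(B)$ is closed under subsets, it is closed under finite unions, and it is proper because $\omega \notin \mathcal{I}_f(B)$. Throughout I use the fact that a subfamily of a non-$\omega$-refining family is again non-$\omega$-refining. This holds because $\mathcal{I}_{\operatorname{ref}}$ is downward closed: a single countable witness $C$ works for both families.

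\emph{Properness and downward closure.} Since $f[x] \subseteq^* \omega$ for every $x$, we have $B_{f,\omega} = B$, which is $\omega$-refining by assumption; hence $\omega \notin \mathcal{I}_f(B)$. If $a \subseteq b$, then $f[x] \subseteq^* a$ implies $f[x] \subseteq^* b$, so $B_{f,a} \subseteq B_{f,b}$. Thus if $B_{f,b}$ is not $\omega$-refining, neither is $B_{f,a}$, i.e.\ $b \in \mathcal{I}_f(B)$ implies $a \in \mathcal{I}_f(B)$. (If $f$ is finite-to-one, finite sets are in the ideal trivially, since then $f[x]$ is infinite for infinite $x$ and so $B_{f,a} = \emptyset$ for finite $a$. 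I would only remark on this if the intended notion of ideal requires it.)

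\emph{Closure under finite unions.} This is the main step. Suppose $a, b \in \mathcal{I}_f(B)$. Fix countable $C_a, C_b \subseteq [\omega]^\omega$ such that no $x \in B_{f,a}$ refines every element of $C_a$, and no $x \in B_{f,b}$ refines every element of $C_b$. Set
\[
C := C_a \cup C_b \cup \{ f^{-1}[a] \},
\]
which is countable (if $f^{-1}[a]$ is finite or cofinite, refining it is automatic and it can simply be dropped). I claim no $x \in B_{f,a\cup b}$ refines all of $C$. Suppose $x \in B_{f,a \cup b}$ does, and split according to how $x$ refines $f^{-1}[a]$.
\begin{enumerate}
\item If $x \subseteq^* f^{-1}[a]$, then $f[x] \subseteq^* a$, so $x \in B_{f,a}$. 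But $x$ also refines all of $C_a$, contradicting the choice of $C_a$.
\item If $x \subseteq^* \omega \setminus f^{-1}[a]$, then $f[x] \cap a$ is finite. Together with $f[x] \subseteq^* a \cup b$, this gives $f[x] \subseteq^* b$, so $x \in B_{f,b}$. But $x$ refines all of $C_b$, again a contradiction.
\end{enumerate}
Hence $B_{f,a\cup b}$ is not $\omega$-refining, and $a \cup b \in \mathcal{I}_f(B)$.

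The only point needing care is this union step, specifically the choice of the extra test set $f^{-1}[a]$. It forces any refining $x$ to commit to landing almost inside $a$ or almost outside $a$ under $f$, which reduces the union case to the two individual cases. The rest is routine bookkeeping with $\subseteq^*$.
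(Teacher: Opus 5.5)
Your proof is correct, and its union step takes a different and cleaner route than the paper's. The paper treats $b=a_0\cup\dots\cup a_n$ all at once and puts every $f^{-1}(a_i)$ into the test family. It then pulls $f[x]\subseteq^* b$ back to $x\subseteq^* f^{-1}(b)$, which it secures by arguing that $f$ is finite-to-one on $x$, and a pigeonhole over the $f^{-1}(a_i)$ places $x$ in some $B_{f,a_i}$. Its justification of finite-to-one-ness says that an infinite $x'\subseteq x$ on which $f$ is constant would lie in $B_{f,a_0}$. That tacitly assumes $x'\in B$, which an arbitrary refining family need not satisfy.

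You never pull back. You use only the always-valid implications $x\subseteq^* f^{-1}[a]\Rightarrow f[x]\subseteq^* a$ and $|x\cap f^{-1}[a]|<\omega\Rightarrow|f[x]\cap a|<\omega$, and finish on the image side. So you need no control over the fibres of $f$, a single extra test set suffices, and finite unions follow by induction.

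One caveat you should not leave optional. Your parenthetical about finite-to-one $f$ is exactly what gives $\emptyset\in\mathcal{I}_f(B)$, and without some such hypothesis the statement fails. For constant $f$ one has $B_{f,a}=B$ for every $a$, so $\mathcal{I}_f(B)=\emptyset$. The paper's proof has the same omission; it is harmless in its later use, where $f$ is finite-to-one.
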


\begin{proof}
    Clearly $\mathcal{I}_f(B)$ is closed under subsets and does not contain $\omega$. Suppose we have $a_0\cup \dots \cup a_n = b$, $B_{f,b}$ is $\omega$-refining, but $B_{f,a_i}$ is not, for all $i \leq n$. Let $C_i \subseteq [\omega]^\omega$ countable witness that $B_{f, a_i}$ is not $\omega$-refining. On the other hand, let $x \in B_{f,b}$ refine all sets $f^{-1}(a_i)$, $i \leq n$, and all elements of $\bigcup_{i \leq n} C_i$ simultaneously.
    
    Note that $f$ must be finite-to-one on $x$. Otherwise, there is an infinite $x' \subseteq x$ with $f$ constant on $x'$. But then for example $x' \in B_{f,a_0}$ and $x'$ also refines $C_0$.
    
    We obtain that $x \subseteq^* f^{-1}(b) = f^{-1}(a_0) \cup \dots \cup f^{-1}(a_n)$. As $x$ refines each set $f^{-1}(a_i)$, there must be at least one $i \leq n$ such that $x \subseteq^* f^{-1}(a_i)$. But then again $x \in B_{f,a_i}$, and $x$ refines $C_i$ -- contradiction.
\end{proof}

Note of course that if $f$ is finite-to-one, then $\mathcal{I}_{f}(B)$ contains all finite sets. Also, if $A \leq B$, then $\mathcal{I}_f(B) \subseteq \mathcal{I}_{f}(A)$. In the following, we say that a subset of $[\omega]^\omega$ is $K_\sigma$-regular if it is either contained in a $\sigma$-compact set or contains a superperfect set.

\begin{thm}
   Assume that every $\mathbf\Sigma^1_2$ set is either Baire measurable or $K_\sigma$-regular. Then refining forcing collapses the continuum to $\mathfrak{d}$.
\end{thm}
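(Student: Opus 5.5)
The plan is to show that below every condition $B$, the generic real $r$ together with a ground-model dominating family $\{g_\alpha : \alpha < \mathfrak{d}\}$ computes a surjection of $\mathfrak{d}\times\omega$ onto $\omega^\omega \cap V$. This gives $\vert \mathfrak{c}^V \vert \leq \mathfrak{d}^V$ in the extension. The template is Theorem~\ref{thm:refcoll}, where a single fixed condition made $\omega^\omega\cap V$ countable. The difference is that here we must work below an arbitrary Borel $\omega$-refining $B$, and the uniform coding is replaced by coding relative to interval partitions, each of which is dominated by some $g_\alpha$.

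\textbf{Step 1: extract an interval partition from $B$.} Redo the proof of the preceding theorem, replacing the Borel-level appeal to the perfect set/superperfect dichotomy by the hypothesis. The sets $A_{0,1}$ and $A_0$ built from $B$ are $\mathbf\Sigma^1_2$ (indeed simpler). The family $A_0$ is centered modulo finite, so it contains no superperfect set, and $K_\sigma$-regularity then puts it inside a $\sigma$-compact set. This yields a finite-to-one $f$ for which $\{f[x] : x \in B\}$ is $\subseteq^*$-dense. Next consider the ideal $\mathcal{I}_f(B)$ from the last lemma. It is $\mathbf\Sigma^1_2$ (or co-$\mathbf\Sigma^1_2$), so by hypothesis it is Baire measurable or $K_\sigma$-regular, and it contains the finite sets. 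By Talagrand/Jalali-Naini it is meager. Hence there is an interval partition $\langle J_k : k\in\omega\rangle$ such that every set meeting all but finitely many of the sets $\bigcup_{k\in F}J_k$, for some infinite $F$, is $\mathcal{I}_f(B)$-positive. Since $\mathfrak{d}$ is the cofinality of interval partitions under domination, this partition is dominated by (coarsened from) one $g_\alpha$ in the fixed family.

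\textbf{Step 2: density argument (expected main obstacle).} Given a condition $B$, I would find $\alpha<\mathfrak{d}$ and a Borel $\omega$-refining $B' \subseteq B$ with the following property. Every $r\in B'$ that refines $V$ codes all ground reals via the partition determined by $g_\alpha$, in the sense that from $r$ and $g_\alpha$ one reads off a countable sequence $\langle y^n_i\rangle$ covering $\omega^\omega\cap V$. To build $B'$, transplant the partial functions $p_s$ of Lemma~\ref{lem:partialfunctions} onto the blocks $J_k$ pulled back by $f$. The positivity of the relevant sets in $\mathcal{I}_f(B)$ guarantees that the combinatorial construction from the proof of Theorem~\ref{thm:reapnotuniv} can still be carried out inside $B$. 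Concretely, every countable family is refined by some $x\in B$ whose $f$-image meets the prescribed blocks on both sides of the $p_s$ for $s$ in the fronts $S_{\bar y}$. Then define $B'$ exactly like the condition in Theorem~\ref{thm:refcoll}, intersected with $B$. $B'$ is Borel by the Dilworth argument, and it is $\omega$-refining by the construction just described. The hard part is making the independence of the $p_s$ survive the passage through $f$ and the partition, so that Claim~\ref{claim:evdiff1}-style counting still works. I would first try taking $p_s$ constant on each block $f^{-1}[J_k]$, so that independence is inherited blockwise.

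\textbf{Step 3: conclude.} By genericity some $B'$ built from some $g_\alpha$ lies in the generic filter. Then, exactly as in Theorem~\ref{thm:refcoll}, every $y\in\omega^\omega\cap V$ must appear among the $y^n_i$ witnessing $r\in B'$. Otherwise $r$ would fail to refine the ground-model set defined from $p(y)$. Hence $\omega^\omega\cap V$ is covered by the countably many reals computed from $r$ and $g_\alpha$. Letting $\alpha$ vary over the names shows that the continuum of $V$ is collapsed to (at most) $\mathfrak{d}$.
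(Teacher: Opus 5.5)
Your Step~1 is essentially the right input (drop the detour through $A_0$ and just take $f=\operatorname{id}$). The argument breaks down at Step~2, which carries all the content and which you leave unresolved.

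Steps~2--3 also aim at a much stronger statement than the theorem. Suppose a single $B'\le B$ in the generic filter forced $\omega^\omega\cap V$ to be covered by the countably many $y^n_i$. Then refining forcing would collapse the continuum to $\omega$ below \emph{every} condition, so it would be nowhere proper. The paper gets this only below one specially constructed condition (Theorem~\ref{thm:refcoll}), and it explicitly leaves open whether refining forcing is proper somewhere.

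Nor is there a visible way to make $B\cap p'$ $\omega$-refining. In Theorem~\ref{thm:reapnotuniv} the reaping real is assembled point by point, with complete freedom to put $m_{2j}, m_{2j+1}$ on both sides of $p_{t_j}$. By contrast, the only thing Step~1 tells you about $B$ is that $\{x\in B : x\subseteq^* a\}$ stays $\omega$-refining when $a$ is a union of infinitely many blocks. That lets you force elements of $B$ to \emph{avoid} blocks, never to \emph{hit} prescribed sets. Hitting is what you need to meet both $p_s^{-1}(0)$ and $p_s^{-1}(1)$ for every $s$ in every front, and making $p_s$ constant on blocks does not change this. Your closing phrase ``letting $\alpha$ vary over the names'' is also inconsistent with Step~3 as written, where a single $\alpha$ already yields countability.

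The missing idea is that no $p_s$-coding is needed: Step~1 suffices once you let $\alpha$ depend on the condition. Let $f_\alpha$ collapse the $g_\alpha$-intervals to points. Every member of the dual filter of $\mathcal{I}(B)=\mathcal{I}_{\operatorname{id}}(B)$ meets almost all $g_\alpha$-intervals. Hence $f_\alpha^{-1}(a)$ is $\mathcal{I}(B)$-positive for every infinite $a$, i.e.\ $B_{f_\alpha,a}=\{x\in B : f_\alpha[x]\subseteq^* a\}$ is a condition below $B$. Given a ground real $z$, take $a$ such that every infinite subset of $a$ computes $z$, e.g.\ $a=e[\{z\restriction n : n\in\omega\}]$ for an effective bijection $e\colon 2^{<\omega}\to\omega$. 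Then $B_{f_\alpha,a}\Vdash z\le_T f_\alpha[\dot r]$. By density, every ground real is computable from $f_\alpha[r]$ for \emph{some} $\alpha<\mathfrak{d}$, so $\omega^\omega\cap V$ is covered by $\mathfrak{d}\cdot\aleph_0$ reals in $V[r]$. The bound is $\mathfrak{d}$ rather than $\omega$ precisely because $\alpha$ may change as the condition is strengthened: for $B''\le B$ you only have $\mathcal{I}(B'')\supseteq\mathcal{I}(B)$, so the partition found for $B$ need not work for $B''$.

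Step~1 also needs a few smaller repairs:
\begin{enumerate}
\item Talagrand's theorem gives that every dual-filter element meets all but finitely many blocks. Equivalently, every set containing infinitely many whole blocks is positive; your phrasing garbles this.
\item In the $K_\sigma$ case, apply the hypothesis to the dual filter. It is centered, so it has no superperfect subset, as in Claim~\ref{claim:bndfilter}.
\item Your $A_0$ step uses only $K_\sigma$-regularity, so it does not cover the Baire-measurable alternative. It is dispensable anyway.
\end{enumerate}
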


\begin{proof}
  Let $\{g_\alpha : \alpha < \mathfrak{d} \}$ be a dominating family consisting of strictly increasing functions. For each $\alpha$, let $f_{\alpha}$ be the finite-to-one function mapping an interval $[g(n), g(n+1))$ to $n$. Now suppose that $B$ is an arbitrary condition. The ideal $\mathcal{I}(B) = \mathcal{I}_{\operatorname{id}}(B)$ is $\Sigma^1_2$, so, by the regularity assumptions, there is $\alpha < \mathfrak{d}$, such that for any $x \in \mathcal{I}(B)^*$, for all but finitely many $n$, $x \cap [g_{\alpha}(n), g_{\alpha}(n+1)) \neq \emptyset$.   In the case of Baire measurability, this is due to a well-known result of Talagrand, see \cite[Theorem 4.1.2]{BartoszynskiJudah1995}. For $K_\sigma$-regularity, this is the same as Claim~\ref{claim:bndfilter}.
  
  In particular, for any infinite $a \subseteq \omega$, $$\bigcup_{n \in a} [g_{\alpha}(n), g_{\alpha}(n+1)) = f_\alpha^{-1}(a) \in \mathcal{I}(B)^+.$$
  In other words, $B_{f_\alpha,a}\subseteq B$ is $\omega$-refining and $\mathcal{I}_{f_\alpha}(B)$ is the ideal of finite sets. 
  
  Recall that for any real $x \in 2^\omega$, there is a set $a \in [\omega]^{\omega}$ such that $x$ is computable from any infinite subset of $a$.\footnote{For instance, pick an effective bijection $e\colon 2^{<\omega} \to \omega$ and let $a = e[\{x \restriction n : n \in \omega \}]$.} Thus, since for any $r \in B_{f_\alpha,a}$, $f_\alpha[r] \subseteq^* a$, $$B_{f,a} \Vdash \check x \text{ is computable from } f_\alpha[\dot r].$$

By genericity, we obtain that every ground model real is computable from $f_\alpha[r]$, for some $\alpha < \mathfrak{d}$ and $r$ the generic refining real.
\end{proof}

\begin{quest}
    Is refining forcing proper somewhere? Is there some Borel $\omega$-refining family $B$ such that each refining real $r \in B$ over a model $M \ni B$ is universally refining over $M$?
\end{quest}

\bibliographystyle{plain}
\bibliography{ref}

@article {Solecki1994,
    AUTHOR = {Sławomir Solecki},
     TITLE = {Covering analytic sets by families of closed sets},
   JOURNAL = {J. Symbolic Logic},
  FJOURNAL = {The Journal of Symbolic Logic},
    VOLUME = {59},
      YEAR = {1994},
    NUMBER = {3},
     PAGES = {1022--1031},
      ISSN = {0022-4812,1943-5886},
   MRCLASS = {54H05 (03E15 04A15)},
  MRNUMBER = {1295987},
MRREVIEWER = {Edward\ Azoff},
       DOI = {10.2307/2275926},
       URL = {https://doi.org/10.2307/2275926},
}

@article {Kellner2012,
    AUTHOR = {Kellner, J.},
     TITLE = {Non elementary proper forcing},
   JOURNAL = {Rend. Semin. Mat. Univ. Politec. Torino},
  FJOURNAL = {Rendiconti del Seminario Matematico. Universit\`a{} e
              Politecnico Torino},
    VOLUME = {70},
      YEAR = {2012},
    NUMBER = {3},
     PAGES = {207--246},
      ISSN = {0373-1243,2704-999X},
   MRCLASS = {03E35 (03E40)},
  MRNUMBER = {3305544},
MRREVIEWER = {Andrzej\ Ros\l anowski},
}

@article {Shelah2004,
    AUTHOR = {Shelah, S.},
     TITLE = {Properness without elementaricity},
   JOURNAL = {J. Appl. Anal.},
  FJOURNAL = {Journal of Applied Analysis},
    VOLUME = {10},
      YEAR = {2004},
    NUMBER = {2},
     PAGES = {169--289},
      ISSN = {1425-6908,1869-6082},
   MRCLASS = {03E40 (03E05 03E47)},
  MRNUMBER = {2115943},
MRREVIEWER = {Andrzej\ Ros\l anowski},
       DOI = {10.1515/JAA.2004.169},
       URL = {https://doi.org/10.1515/JAA.2004.169},
}

@article {AsperoKaragila,
    AUTHOR = {Asper\'o, David and Karagila, Asaf},
     TITLE = {Dependent choice, properness, and generic absoluteness},
   JOURNAL = {Rev. Symb. Log.},
  FJOURNAL = {The Review of Symbolic Logic},
    VOLUME = {14},
      YEAR = {2021},
    NUMBER = {1},
     PAGES = {225--249},
      ISSN = {1755-0203,1755-0211},
   MRCLASS = {03E25 (03A05 03E35 03E55 03E57)},
  MRNUMBER = {4277122},
MRREVIEWER = {Arnold\ W.\ Miller},
       DOI = {10.1017/S1755020320000143},
       URL = {https://doi.org/10.1017/S1755020320000143},
}

@incollection {Jackson2010,
    AUTHOR = {Jackson, Steve},
     TITLE = {Structural consequences of {AD}},
 BOOKTITLE = {Handbook of set theory. {V}ols. 1, 2, 3},
     PAGES = {1753--1876},
 PUBLISHER = {Springer, Dordrecht},
      YEAR = {2010},
      ISBN = {978-1-4020-4843-2},
   MRCLASS = {03E60 (03E15 03E45)},
  MRNUMBER = {2768700},
MRREVIEWER = {Daniel\ W.\ Cunningham},
       DOI = {10.1007/978-1-4020-5764-9\_22},
       URL = {https://doi.org/10.1007/978-1-4020-5764-9_22},
}

@incollection {Larson2025,
    AUTHOR = {Larson, Paul B.},
     TITLE = {An introduction to {${AD}^+$}},
 BOOKTITLE = {Higher recursion theory and set theory},
    SERIES = {Lect. Notes Ser. Inst. Math. Sci. Natl. Univ. Singap.},
    VOLUME = {44},
     PAGES = {145--182},
 PUBLISHER = {World Sci. Publ., Hackensack, NJ},
      YEAR = {[2025] \copyright 2025},
      ISBN = {978-981-98-0657-7; 978-981-98-0658-4; 978-981-98-0851-9},
   MRCLASS = {03E60},
  MRNUMBER = {4900331},
MRREVIEWER = {A.\ Kanamori},
       DOI = {10.1142/9789},
       URL = {https://doi.org/10.1142/9789},
}

@article{Solecki1999,
title = {Analytic ideals and their applications},
journal = {Annals of Pure and Applied Logic},
volume = {99},
number = {1},
pages = {51-72},
year = {1999},
issn = {0168-0072},
doi = {https://doi.org/10.1016/S0168-0072(98)00051-7},
url = {https://www.sciencedirect.com/science/article/pii/S0168007298000517},
author = {Sławomir Solecki}
}

@article{FarahZapletal2006,
title = {Four and more},
journal = {Annals of Pure and Applied Logic},
volume = {140},
number = {1},
pages = {3-39},
year = {2006},
note = {Cardinal Arithmetic at work: the 8th Midrasha Mathematicae Workshop},
issn = {0168-0072},
doi = {https://doi.org/10.1016/j.apal.2005.09.002},
url = {https://www.sciencedirect.com/science/article/pii/S0168007205001284},
author = {Ilijas Farah and Jindřich Zapletal}
}

@article{Dilworth,
 ISSN = {0003486X, 19398980},
 URL = {http://www.jstor.org/stable/1969503},
 author = {R. P. Dilworth},
 journal = {Annals of Mathematics},
 number = {1},
 pages = {161--166},
 publisher = {[Annals of Mathematics, Trustees of Princeton University on Behalf of the Annals of Mathematics, Mathematics Department, Princeton University]},
 title = {A Decomposition Theorem for Partially Ordered Sets},
 urldate = {2025-08-06},
 volume = {51},
 year = {1950}
}

@book{KanoveiSabokZapletal,
    AUTHOR = {Kanovei, Vladimir and Sabok, Marcin and Zapletal, Jind\v{r}ich},
     TITLE = {Canonical {R}amsey theory on {P}olish spaces},
    SERIES = {Cambridge Tracts in Mathematics},
    VOLUME = {202},
 PUBLISHER = {Cambridge University Press, Cambridge},
      YEAR = {2013},
     PAGES = {viii+269},
      ISBN = {978-1-107-02685-8},
   MRCLASS = {03-02 (03E05 03E15 03E40 05D10 91A44)},
  MRNUMBER = {3135065},
MRREVIEWER = {Miroslav\ Repick\'y},
       DOI = {10.1017/CBO9781139208666},
       URL = {https://doi.org/10.1017/CBO9781139208666},
}

@article{SchilhanCM,
    author = {Schilhan, Jonathan},
    title = {Canonical models for cardinal invariants of the continuum},
    journal = {in preparation},
    year = {202x}
}

@book {Moschovakis,
    AUTHOR = {Moschovakis, Yiannis N.},
     TITLE = {Descriptive set theory},
    SERIES = {Mathematical Surveys and Monographs},
    VOLUME = {155},
   EDITION = {Second},
 PUBLISHER = {American Mathematical Society, Providence, RI},
      YEAR = {2009},
     PAGES = {xiv+502},
      ISBN = {978-0-8218-4813-5},
   MRCLASS = {03-02 (03E15 54H05)},
  MRNUMBER = {2526093},
       DOI = {10.1090/surv/155},
       URL = {https://doi.org/10.1090/surv/155},
}

@article {BrendleHjorthSpinas,
    AUTHOR = {Brendle, J\"org and Hjorth, Greg and Spinas, Otmar},
     TITLE = {Regularity properties for dominating projective sets},
   JOURNAL = {Ann. Pure Appl. Logic},
  FJOURNAL = {Annals of Pure and Applied Logic},
    VOLUME = {72},
      YEAR = {1995},
    NUMBER = {3},
     PAGES = {291--307},
      ISSN = {0168-0072,1873-2461},
   MRCLASS = {03E15 (03E40 03E55 03E60)},
  MRNUMBER = {1327118},
MRREVIEWER = {Miroslav\ Repick\'y},
       DOI = {10.1016/0168-0072(94)00027-Z},
       URL = {https://doi.org/10.1016/0168-0072(94)00027-Z},
}

@article {Spinas2004,
    AUTHOR = {Spinas, Otmar},
     TITLE = {Analytic countably splitting families},
   JOURNAL = {J. Symbolic Logic},
  FJOURNAL = {The Journal of Symbolic Logic},
    VOLUME = {69},
      YEAR = {2004},
    NUMBER = {1},
     PAGES = {101--117},
      ISSN = {0022-4812,1943-5886},
   MRCLASS = {03E05 (03E15 03E17 54A35)},
  MRNUMBER = {2039350},
MRREVIEWER = {Boaz\ Tsaban},
       DOI = {10.2178/jsl/1080938830},
       URL = {https://doi.org/10.2178/jsl/1080938830},
}

@Book{BartoszynskiJudah1995,
  author =    {Bartoszynski, T. and Judah, H.},
  title =     {Set Theory: On the Structure of the Real Line},
  year =      {1995},
  series =    {Ak Peters Series},
  publisher = {Taylor \& Francis},
  isbn =      {9781568810447},
  url =       {https://books.google.at/books?id=Nkzbkj-c83YC},
  lccn =      {95034063}
}

@Book{Kechris1995,
  author =     {Kechris, Alexander S.},
  title =      {Classical descriptive set theory},
  year =       {1995},
  volume =     {156},
  series =     {Graduate Texts in Mathematics},
  publisher =  {Springer-Verlag, New York},
  isbn =       {0-387-94374-9},
  pages =      {xviii+402},
  doi =        {10.1007/978-1-4612-4190-4},
  url =        {http://dx.doi.org/10.1007/978-1-4612-4190-4},
  mrclass =    {03E15 (03-01 03-02 04A15 28A05 54H05 90D44)},
  mrnumber =   {1321597},
  mrreviewer = {Jakub Jasi\AA ski}
}

@article{Spinas2008,
author = {Otmar Spinas},
journal = {Fundamenta Mathematicae},
language = {eng},
number = {2},
pages = {179-195},
title = {Perfect set theorems},
url = {http://eudml.org/doc/283359},
volume = {201},
year = {2008},
}

@BOOK{Zapletal2004,
  title     = "Descriptive set theory and definable forcing",
  author    = "Zapletal, Jind{\v r}ich",
  publisher = "American Mathematical Society",
  month     =  sep,
  year      =  2014,
  address   = "Providence, RI",
  language  = "en",
url = "https://bookstore.ams.org/memo-167-793/"
}

@book {Zapletal2008,
    AUTHOR = {Zapletal, Jind\v{r}ich},
     TITLE = {Forcing idealized},
    SERIES = {Cambridge Tracts in Mathematics},
    VOLUME = {174},
 PUBLISHER = {Cambridge University Press, Cambridge},
      YEAR = {2008},
     PAGES = {vi+314},
      ISBN = {978-0-521-87426-7},
   MRCLASS = {03-02 (03E15 03E35 03E40 28A05 54A35)},
  MRNUMBER = {2391923},
MRREVIEWER = {Miroslav\ Repick\'{y}},
       DOI = {10.1017/CBO9780511542732},
       URL = {https://doi.org/10.1017/CBO9780511542732}}

@phdthesis{Khomskii,
    author = {Yurii Khomskii},
    title = {Regularity Properties and Definability in the Real Number Continuum},
    school = {Universiteit van Amsterdam},
    year = {2012}
}

@Inbook{Cummings2010,
author="Cummings, James",
title="Iterated Forcing and Elementary Embeddings",
bookTitle="Handbook of Set Theory",
year="2010",
publisher="Springer Netherlands",
address="Dordrecht",
pages="775--883",
isbn="978-1-4020-5764-9",
doi="10.1007/978-1-4020-5764-9_13",
url="https://doi.org/10.1007/978-1-4020-5764-9_13"
}

@InProceedings{FengMagidorWoodin,
author="Feng, Qi
and Magidor, Menachem
and Woodin, Hugh",
editor="Judah, Haim
and Just, Winfried
and Woodin, Hugh",
title="Universally Baire Sets of Reals",
booktitle="Set Theory of the Continuum",
year="1992",
publisher="Springer US",
address="New York, NY",
pages="203--242",
isbn="978-1-4613-9754-0"
}

@article{Sabok2012,
title = {Complexity of Ramsey null sets},
journal = {Advances in Mathematics},
volume = {230},
number = {3},
pages = {1184-1195},
year = {2012},
issn = {0001-8708},
doi = {https://doi.org/10.1016/j.aim.2012.03.001},
url = {https://www.sciencedirect.com/science/article/pii/S0001870812000941},
author = {Marcin Sabok}
}

\end{document}